\documentclass{amsart}
\usepackage[T1]{fontenc}
\usepackage[left=1in, right=1in]{geometry}
\usepackage{graphicx} 
\usepackage{amsmath}
\usepackage{amsthm}
\usepackage{amssymb}
\usepackage{url}
\usepackage{dsfont}
\usepackage{float}
\usepackage{hyperref}
\usepackage[frak=esstix]{mathalpha}
\usepackage{mathtools}
\usepackage[english]{babel}
\DeclarePairedDelimiter{\ceil}{\lceil}{\rceil}
\theoremstyle{plain}
\newtheorem{theorem}{Theorem}[section]

\newtheorem{lemma}[theorem]{Lemma}

\newtheorem{proposition}[theorem]{Proposition}
\theoremstyle{definition}
\newtheorem{definition}[theorem]{Definition}
\newtheorem{remark}{Remark}[section]

\newtheorem*{theorem*}{Theorem}
\newcommand{\interior}[1]{%
  {\kern0pt#1}^{\mathrm{o}}%
}

\newcommand{\spec}{\operatorname{spec}}
\newcommand{\res}{\operatorname{res}}
\newcommand{\ess}{\operatorname{ess}}

\newcommand{\rg}{\operatorname{rg}}

\numberwithin{equation}
{section}

\title[Stable Blow-Up on a Sphere for a Quadratic-Derivative NLW]
{Stable blow-up on a sphere for a quadratic-derivative nonlinear wave equation}

\author[M. Del Pino, O. Gough, and M. Musso]%
{Manuel del Pino, Oliver Gough, and Monica Musso}
\begin{document}
\begin{abstract}
We study finite-time blow-up for the nonlinear wave equation
\begin{equation*}
v_{tt}-\Delta v=|\nabla_x v|^2
\end{equation*}
in dimensions \(n\ge2\), under radial symmetry. For every prescribed radius
$r_0>0$, we construct solutions which blow up in finite time $T>0$ on the sphere
$\{|x|=r_0\}$ with logarithmic Type-I rate. The leading singular dynamics
are governed by ``generalised self-similar'' profiles of the associated
one-dimensional equation, while the radial geometry generates a
curvature correction of size $\mathcal{O}((\frac{T}{r_0})^2)$. A key simplification in our approach is a logarithmic radial correction which removes the first-order radial drift
and reduces the geometry to a decaying inverse-square forcing.

We further prove asymptotic stability of the resulting family under radial
perturbations. A new feature compared with the one-dimensional theory is
that the stable blow-up family is not fully explicit. To overcome this, we develop
spectral and semigroup estimates on an extended light cone, together with
Lipschitz dependence on the modulation parameters for the spectral projections, the stable flow, and the non-explicit correction.
\end{abstract}
\maketitle
\section{Introduction}
We consider the Cauchy problem for the nonlinear wave equation (NLW) with quadratic spatial-derivative nonlinearity in arbitrary spatial dimensions $n \geq 2$,
\begin{equation}
    v_{tt} - \Delta v = |\nabla_x v|^2 \qquad (x,t)\in \mathbb{R}^n\times \mathbb{R}_{\geq 0}
    \label{Nd-equation}
\end{equation}
under radial symmetry, with blow-up occurring away from the origin at radius $r_0>0$. This equation is motivated by effective field theory models in cosmology, where scalar perturbations can lead to nonlinear wave equations with quadratic terms involving spatial derivatives; see, for example, \cite{eckmann2023instabilities, hassani2022new}. Beyond this physical motivation, \eqref{Nd-equation} serves as an archetypal model for a quadratic derivative semilinear wave equation. Classical work of John \cite{john1979blow}, together with Klainerman \cite{klainerman1986null} and Christodoulou's \cite{christodoulou1986global} independent work on the null condition, places \eqref{Nd-equation} for $n=3$ within the small-data global-existence/blow-up dichotomy for quadratic derivative nonlinear wave equations. In particular, the nonlinearity $|\nabla_x v|^2$
fails the classical null condition, so in three space dimensions global existence is not guaranteed even for small initial data. Our concern here, however, is not the general small-data problem; rather, we
give a precise description of a Type-I singularity by constructing
a family of stable radial solutions, smooth in the backward annular light
cone, which diverge logarithmically on any prescribed sphere
\(\{|x|=r_0\}\) as \(t\uparrow T\).

More specifically, within the broad literature on singularity formation for nonlinear wave equations, we focus on questions of blow-up profile, blow-up rate, and stability. Without attempting a comprehensive survey of this vast literature, we mention only a few representative works most relevant to the current analysis. For nonlinear wave equations with power nonlinearities, substantial progress has been made on these questions. In the semilinear power-law setting, a foundational work of Merle and Zaag \cite{merle2007existence} established universality of blow-up profiles in one spatial dimension, which was later extended to radial symmetry outside the origin in \cite{merle2011blow}. There has since been the work \cite{boughrara2026radial} providing an explicit construction which illustrates the behaviour established in \cite{merle2011blow}. The area was then placed within a robust spectral and semigroup framework by Donninger and collaborators, allowing for a systematic analysis of stability for self-similar blow-up in more general settings; for semilinear wave equations see \cite{csobo2024blowup,donninger2017stable,ostermann2024stable} and for related results on wave maps see \cite{donninger2026blowup,donninger2025stable,glogic2025existence}.

By contrast, much less is known for nonlinear wave equations with derivative nonlinearities, particularly in the scalar semilinear setting. Much of the existing literature on derivative nonlinear wave equations concerns lifespan estimates or qualitative properties of blow-up sets, rather than the construction of explicit blow-up profiles and their stability; for recent work on the regularity of the blow-up curve in one-dimensional derivative-type models, see \cite{bchatnia2026regularity}. 

A first result in the direction of the present work was obtained by Speck \cite{speck2020stable}, who proved stable ODE-type blow-up for a class of quasilinear wave equations with derivative nonlinearities, whose framework can be adapted to any spatial dimension. Amongst this class, the author considered 
\begin{equation}\label{eq:Specks_eq}
    u_{tt} - \frac{1}{1+u_t}\Delta u = (u_t)^2
\end{equation}
and proved stability of logarithmic ODE-type blow-up, namely solutions of order $-\log(T-t)$ driven by the Riccati mechanism $u_{tt}=(u_t)^2$.
As noted in Section 1 of \cite{speck2020stable}, for the associated semilinear equation to \eqref{eq:Specks_eq}, without the weight in front of the Laplacian, the generic mechanism remains open. Nevertheless, in a recent work of the second author \cite{gough2025stable}, the existence and stability of a family of non-trivial Type-I blow-up solutions was established for the one-dimensional equation \eqref{eq:time_deriv_1_d}, which we discuss below.  

Indeed, for the semilinear setting, comparable results have appeared only very recently. Of particular relevance is the work of Ghoul, Liu, and Masmoudi \cite{ghoul2025blow} on the one-dimensional analogue of \eqref{Nd-equation}
\begin{equation}\label{eq:1d_masmoudi}
u_{tt}-u_{xx} = (u_x)^2.
\end{equation}
They proved non-existence of smooth self-similar blow-up profiles, instead constructing smooth ``generalised self-similar'' singularities consisting of self-similar profiles corrected by logarithmic terms, together with a proof of their asymptotic stability. Due to the derivative nonlinearity, their stability analysis adapts the spectral framework used by Donninger and collaborators to a setting involving non-compact perturbations of the linearised operator. 

In subsequent work, the second author \cite{gough2025stable} extended this method to the one-dimensional time-derivative equation
\begin{equation}\label{eq:time_deriv_1_d}
    u_{tt}-u_{xx} = (u_t)^2
\end{equation}
by constructing analogous Type-I blow-up solutions bifurcating from the Riccati-ODE blow-up solutions associated with $u_{tt}=(u_t)^2$ and proving their asymptotic stability. Afterwards, Liu and Raees \cite{raees2025stable} obtained analogous results for two quadratic time-derivative models, including \eqref{eq:time_deriv_1_d} and the null form
\begin{equation}\label{eq:null_form_1d}
    u_{tt}-u_{xx} = (u_t)^2 - (u_x)^2
\end{equation}
which enjoys small-data global existence but also admits stable ODE-type blow-up for suitably large data. 

In all four works corresponding to equations \eqref{eq:Specks_eq}--\eqref{eq:null_form_1d}, the solution itself exhibits Type-I blow-up of order $-\log(T-t)$, associated with the ODE blow-up $u_{tt}=(u_t)^2$. Whether this behaviour is generic remains an open question. In fact, for the null form \eqref{eq:null_form_1d}, the change of variables $v(x,t)=e^{-u(x,t)}$ reduces \eqref{eq:null_form_1d} to the homogeneous linear wave equation, so the general solution can be written logarithmically as $u(x,t)=-\log(v_{\mathrm{lin}}(x,t))$; this may suggest a generic mechanism.

This should be contrasted with the ``milder'' singularity associated with shock formation, or geometric blow-up, present in many quasilinear hyperbolic models: rather than the solution itself blowing up, which instead remains bounded, certain derivatives become singular as a result of characteristic intersections. Consequently, the solution can often be continued in a weak sense. For an overview of shock formation, and of the rather different methods used there, we refer to \cite{holzegel2016small}. This distinction is relevant here because Speck’s equation \eqref{eq:Specks_eq} exhibits the same logarithmic blow-up as the semilinear models relevant to this work, despite lying in the quasilinear regime in which shock formation is more prevalent.  

The current work extends the above investigation to higher dimensions focusing on the spatial derivative nonlinearity. We study \eqref{Nd-equation} under radial symmetry, outside the origin, and construct stable blow-up solutions whose \textit{leading-order} behaviour is described by the same generalised self-similar solutions to \eqref{eq:1d_masmoudi} given in \cite{ghoul2025blow} but which now blow up on a sphere. Although we choose to focus on the nonlinearity $|\nabla_x v|^2$ because of its physical significance, the method  also extends, with minor modifications, to $(v_t)^2$ and $(v_t)^2 - |\nabla_x v|^2$.

The contribution of this paper is not only the construction of radial
blow-up solutions on a sphere, but also the stability of the
resulting non-explicit family. Before this, we first exhibit an explicit logarithmic radial correction which reduces the higher-dimensional geometry to a decaying inverse-square forcing, so
that the unknown correction is even smaller, of order $\mathcal{O}((\frac{T}{r_0})^2)$. The
stable object nevertheless contains this correction, which is constructed
implicitly by a Lyapunov--Perron argument. This introduces a difficulty
absent from the explicit stability theory: the modulation
analysis must be carried out around an implicitly constructed background.
Since changing the blow-up time changes the similarity frame, comparing
nearby frames requires control on an interval larger than the unit light
cone. This is why we develop the linearised spectral and semigroup analysis
on
\[
    \mathcal I_{\alpha_0}=(-R_{\alpha_0},R_{\alpha_0}), \qquad R_{\alpha_0}>1. 
\]
\subsection{Main results}
Under radial symmetry, equation \eqref{Nd-equation} reduces to 
\begin{equation}\label{eq:eq_in_rad_sym_main_results_section}
    v_{tt}-v_{rr}-\frac{n-1}{r}v_r = (v_r)^2.
\end{equation}
For blow-up on a sphere \(r=r_0>0\), the leading singular dynamics are
expected to be governed by the one-dimensional model
\begin{equation}\label{eq:1d_main_results_sec}
    v_{tt}-v_{rr}=(v_r)^2.
\end{equation}
In fact, there is a stronger observation. Given $r_0>0$, let $v(r,t) = u(r,t) - \frac{n-1}{2}\log\left(\frac{r}{r_0}\right)$. Equation \eqref{eq:eq_in_rad_sym_main_results_section} then reduces to
\begin{equation*}
    u_{tt} - u_{rr} = (u_r)^2 - \frac{(n-1)(n-3)}{4r^2}.
\end{equation*}

Thus, away from the origin $r=0$, the key insight is that the radial equation can be viewed as a perturbation to the one-dimensional equation \eqref{eq:1d_main_results_sec}. In \cite{ghoul2025blow}, it was shown for  the one-dimensional equation \eqref{eq:1d_main_results_sec} that the relevant singular objects are not smooth exact self-similar profiles, but a five-parameter $(\alpha,\beta,\kappa,x_0,T)$ family of generalised self-similar solutions with logarithmic growth, and with the stable branches corresponding to $\beta = 0$ and $\beta =\infty$.
The stability of, say, $\beta =0$, follows automatically from $\beta=\infty$ by reflective symmetry. In the radial problem \eqref{eq:eq_in_rad_sym_main_results_section}, there is no longer symmetry in $r$, so although we work with $\beta = \infty$, we expect  our argument to apply for $\beta =0$.

We show that the same blow-up mechanism persists in higher dimensions under radial symmetry, with the blow-up set now given by the sphere $\{|x|=r_0\}$, and further prove stability of these new solutions.

However, a more naive substitution of the one-dimensional profile into the radial
equation \eqref{eq:eq_in_rad_sym_main_results_section} reveals \textit{two} geometric scales in similarity variables and the relevant norm: a leading
radial-drift scale of size $\mathcal{O}((\frac{T}{r_0})e^{-\tau})$, and a smaller
scale of size $\mathcal{O}((\frac{T}{r_0})^2e^{-2\tau})$. The logarithmic correction  $-\frac{n-1}{2}\log(\frac{r}{r_0})$ in the ansatz actually absorbs the first scale explicitly, leaving
only the second scale in the equation for the unknown correction. It also reveals the special role of dimension
\(n=3\), where the forcing vanishes and any logarithmically
corrected one-dimensional solution provides a closed-form solution to the radial equation in $\mathbb{R}^{3+1}$.

Before we state the first theorem giving existence of a family of blow-up solutions, we denote the following quantities. Given $\alpha_0>0$, let
\begin{align*}
    R_{\alpha_0} \coloneqq \frac{1+\sqrt{1+\alpha_0}}{2}, \qquad 
    \delta_0 \coloneqq \min\left\{1,\frac{\alpha_0}{4}\right\}, \qquad 
    c_{\alpha_0} \coloneqq \frac{\alpha_0+\delta_0}{\sqrt{1+\alpha_0-\delta_0}-\frac{1+\sqrt{1+\alpha_0}}{2}}.
\end{align*}
For any $r_0>0$, $T>0$, and $R\geq 1$, the one-dimensional extended ($R>1$) backward light cone with vertex $(r_0,T)$ is given by 
\begin{equation*}
    \Gamma_{R}^{(1)}(r_0,T) \coloneqq \{(r,t)\in \mathbb R_{+}\times[0,T):\ |r-r_0|\leq R(T-t)\}.
\end{equation*}
In $\mathbb{R}^n$, $\Gamma_{R}^{(1)}(r_0,T)$ corresponds to the space-time region
\begin{equation*}
    \mathcal{A}_{R}(r_0,T)
:= \bigl\{(x,t) \in\mathbb{R}^n \times [0,T):\bigl||x|-r_0\bigr|\le R(T-t)\}.
\end{equation*}
This set, with $R=1$, now defines the domain of dependence for blow-up dynamics on $\{|x|=r_0\}$.
Finally, we denote each time slice of $\mathcal{A}_{R}(r_0,T)$ as the annuli/spherical shells
\begin{equation*}
    A_{R(T-t)}(r_0)
:= \bigl\{x\in\mathbb R^n:\bigl||x|-r_0\bigr|\le R(T-t)\}. 
\end{equation*}

\begin{theorem}[Existence of a family of radial blow-up solutions on $|x|=r_0$]\label{Existence_of_sol_Main_Theorem}
Fix $n\geq 2$, $r_0>0$ and $\omega_0 \in (0,1)$. For any   
\[
\alpha_0>0,\quad
k \ge \lceil c_{\alpha_0} \rceil + 1,\quad \text{and} \quad 0<\gamma<\omega_0
\]
so that $\omega = \omega_0 - \gamma >0$, there exists $0<\varepsilon<\frac{1}{2R_{\alpha_0}}$ with the following property. For all $T_0>0$ satisfying
\[
0<\frac{T_0}{r_0}<\varepsilon
\]
and $\kappa_0 \in \mathbb{R}$,
there exists a three-parameter family of smooth solutions to \eqref{Nd-equation}, arising from smooth initial data, well-defined in $\mathcal{A}_{R_{\alpha_0}}(r_0,T_0)$, given by 
\begin{equation*}
v_{\alpha_0,\kappa_0,T_0}^{(n)}(x,t) = -\alpha_0\log\left(1-\frac{t}{T_0}\right) -\alpha_0 \log\left(\sqrt{1+\alpha_0}+\frac{|x|-r_0}{T_0-t}\right) - \frac{(n-1)}{2}\log\left(\frac{|x|}{r_0}\right) + \zeta_{\alpha_0,T_0}^{(n)}(|x|,t) + \kappa_0 ,
\end{equation*}
where the correction $\zeta_{\alpha_0,T_0}(r,t)$ satisfies, for all $t\in[0,T_0)$,
\begin{align}
(T_0-t)^{-\frac12+j}\,
\|\partial_r^j \zeta_{\alpha_0,T_0}(\cdot,t)\|_{L^2(B_{R_{\alpha_0}(T_0-t)}(r_0))}
&\lesssim \left(\frac{T_0}{r_0}\right)^2\left(1-\frac{t}{T_0}\right)^{\omega},\label{eq:thm_1.1_estimate_1}
\qquad j=0,\dots,k+1,\\
(T_0-t)^{\frac12+j}\,
\|\partial_r^j \partial_t \zeta_{\alpha_0,T_0}(\cdot,t)\|_{L^2(B_{R_{\alpha_0}(T_0-t)}(r_0))}
&\lesssim \left(\frac{T_0}{r_0}\right)^2\left(1-\frac{t}{T_0}\right)^{\omega},
\qquad j=0,\dots,k.\label{eq:thm_1.1_estimate_2}
\end{align}
Finally, if $n=3$, then $\zeta^{(3)}_{\alpha_0,T_0}\equiv0$, provides a \textit{closed-form} family
\begin{equation*}
v^{(3)}_{\alpha_0,\kappa_0,T_0}(x,t)
=
-\alpha_0\log\left(1-\frac{t}{T_0}\right)
-\alpha_0\log\left(\sqrt{1+\alpha_0}
+\frac{|x|-r_0}{T_0-t}\right)
-\log\left(\frac{|x|}{r_0}\right)
+\kappa_0,
\end{equation*}
smooth in $\mathcal{A}_1(r_0,T_0)$ whenever $\frac{T_0}{r_0}<1$.
\end{theorem}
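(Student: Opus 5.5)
The plan is to pass to similarity variables around the sphere and to treat the curvature forcing as a small, exponentially decaying perturbation of the one-dimensional problem studied in \cite{ghoul2025blow}.

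\textbf{Reduction and the case $n=3$.} Write $v=u-\frac{n-1}{2}\log(r/r_0)$. As computed above, $u$ then solves
\[
u_{tt}-u_{rr}=(u_r)^2-\frac{(n-1)(n-3)}{4r^2}.
\]
Introduce
\[
y=\frac{r-r_0}{T_0-t},\qquad \tau=-\log\Bigl(1-\frac{t}{T_0}\Bigr),
\]
and split $u=U_{\alpha_0}(y)+\alpha_0\tau+\zeta+\kappa_0$, where
\[
U_{\alpha_0}+\alpha_0\tau=-\alpha_0\log\Bigl(1-\frac{t}{T_0}\Bigr)-\alpha_0\log\bigl(\sqrt{1+\alpha_0}+y\bigr)
\]
is the $\beta=\infty$ generalised self-similar solution of \eqref{eq:1d_main_results_sec}. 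A direct check shows this expression solves \eqref{eq:1d_main_results_sec} exactly. It is smooth for $y>-\sqrt{1+\alpha_0}$, and this half-line contains $[-R_{\alpha_0},R_{\alpha_0}]$ because $R_{\alpha_0}<\sqrt{1+\alpha_0}$.

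For $n=3$ the forcing term vanishes, so $\zeta\equiv0$ gives an exact solution. Smoothness in $\mathcal A_1(r_0,T_0)$ requires only $r>0$ there, and this holds because $|r-r_0|\le T_0-t<T_0<r_0$.

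\textbf{Equation for $\zeta$.} For general $n$, $\zeta$ satisfies the Ghoul--Liu--Masmoudi linearised operator plus the quadratic term $(\partial_y\zeta)^2$ plus a forcing
\[
F(y,\tau)=-\frac{(n-1)(n-3)}{4}\,\frac{(T_0e^{-\tau})^2}{(r_0+T_0e^{-\tau}y)^2}.
\]
On $|y|\le R_{\alpha_0}$ the restriction $T_0/r_0<\varepsilon<1/(2R_{\alpha_0})$ keeps $r\ge r_0/2$. Hence $F$ is smooth there, and $\|F(\cdot,\tau)\|_{H^{k+1}(\mathcal I_{\alpha_0})}\lesssim (T_0/r_0)^2e^{-2\tau}$.

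I will write this as a first-order system $\partial_\tau\Phi=\mathbf L\Phi+\mathbf N(\Phi)+\mathbf F(\tau)$ on $\mathcal H^k=H^{k+1}\times H^k(\mathcal I_{\alpha_0})$. The analysis for $\mathbf L$ will be set up on the extended interval, as announced in the introduction. The steps are:
\begin{enumerate}
\item Show $\mathbf L$ generates a semigroup whose growth bound is at most $-\omega_0$ modulo finitely many unstable eigenvalues. These come from the symmetries, namely the blow-up time $T$, the constant $\kappa$ and the parameter $\alpha$, including the eigenvalue $1$.
\item Obtain this by writing $\mathbf L$ as a dissipative part plus a non-compact perturbation, handled as in \cite{ghoul2025blow}. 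The condition $k\ge\lceil c_{\alpha_0}\rceil+1$ is exactly what makes the highest-derivative energy dissipative enough on $(-R_{\alpha_0},R_{\alpha_0})$.
\item Use that the outgoing characteristics at $\pm R_{\alpha_0}$ exit the interval, so no boundary conditions are needed.
\end{enumerate}

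\textbf{Construction of $\zeta$ by Lyapunov--Perron.} Let $\mathbf P$ be the Riesz projection onto the unstable subspace. I seek $\Phi$ as a fixed point of
\[
\Phi(\tau)=\int_{\tau_0}^{\tau}\mathbf S(\tau-s)(1-\mathbf P)\bigl[\mathbf N(\Phi)+\mathbf F\bigr](s)\,ds-\int_{\tau}^{\infty}e^{(\tau-s)\mathbf L}\mathbf P\bigl[\mathbf N(\Phi)+\mathbf F\bigr](s)\,ds,
\]
taken in the space with norm $\sup_\tau e^{\omega\tau}\|\Phi(\tau)\|$, where $\omega=\omega_0-\gamma$. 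The forcing decays like $e^{-2\tau}$, faster than $e^{-\omega\tau}$. The loss $\gamma$ absorbs the polynomial factors on the stable part, and the backward integral converges on the unstable part since those eigenvalues have $\operatorname{Re}\ge0$. The nonlinearity $(\partial_y\zeta)^2$ is quadratic, and $H^k$ is an algebra for $k\ge1$. A contraction therefore holds on a ball of radius $C(T_0/r_0)^2$, using smallness of $\varepsilon$; the freedom in $\tau_0$ can be taken $\tau_0=0$. The fixed point solves the full equation.

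Undoing similarity variables converts $e^{-\omega\tau}=(1-t/T_0)^\omega$ and $\|\partial_y^j\cdot\|_{L^2_y}$ into $(T_0-t)^{j-\frac12}\|\partial_r^j\cdot\|_{L^2_r}$, which yields \eqref{eq:thm_1.1_estimate_1} and \eqref{eq:thm_1.1_estimate_2}. Smoothness follows from propagation of higher regularity, i.e. running the argument for every $k$ and noting the solution is the same by uniqueness. Initial data are the trace at $t=0$, extended smoothly outside the cone by finite speed of propagation.

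\textbf{Main obstacle.} The hard step is the linear theory on the extended interval $\mathcal I_{\alpha_0}$. One needs three things there: the dissipative energy estimate valid up to $|y|=R_{\alpha_0}$, control of the non-compact perturbation, and the exact identification of the unstable spectrum, so that nothing else lies in $\operatorname{Re}\lambda>-\omega_0$. I would first try to adapt the weighted energy of \cite{ghoul2025blow} using the weight $(\sqrt{1+\alpha_0}+y)^{-1}$, which stays bounded on $[-R_{\alpha_0},R_{\alpha_0}]$. I would then locate the eigenvalues by solving the hypergeometric-type eigenvalue ODE explicitly.
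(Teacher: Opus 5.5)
Your overall architecture is the paper's. It uses the conjugation $u=v+\frac{n-1}{2}\log(r/r_0)$, similarity variables about $(r_0,T_0)$, and the forcing bound $(T_0/r_0)^2e^{-2\tau}$ on $\mathcal I_{\alpha_0}$ via $T_0/r_0<1/(2R_{\alpha_0})$. The linear theory is driven by the top-order dissipation $k+\frac12-c_{\alpha_0}\ge\frac32$. Your Lyapunov--Perron map with $\tau_0=0$ is exactly the paper's modified Duhamel map with $\mathbf q_0=\mathbf 0$, so $\mathbf q(0)\in\rg\mathbf P$. Writing the unstable part as $e^{(\tau-s)\mathbf L}\mathbf P$ is slightly more economical than the paper's $\mathbf P_0+\tau\mathbf L\mathbf P_0$ and $e^{\tau}\mathbf P_1$: for this theorem it needs only that $\rg\mathbf P$ is finite-dimensional, not the Jordan structure from Lemma~\ref{Appendix_lemma_eigenfunctions}.

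The gap is in the step you flag as the main obstacle. Locating the eigenvalues on $\mathcal I_{\alpha_0}$ by solving \eqref{Eigenequation} explicitly is not a viable plan. It is a genuine Heun equation with regular singular points $\pm1$, $-\sqrt{1+\alpha}$, $\infty$. At $y=-\sqrt{1+\alpha}$ the exponents are $0,-1$, and a Frobenius computation shows the exponent $-1$ solution needs a logarithm unless $\lambda=1$. So this singularity is not apparent, there is no hypergeometric reduction, and there is no closed form to exploit.

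The missing idea is that no new eigenvalue analysis is needed. A nontrivial smooth eigenfunction on $[-R_{\alpha_0},R_{\alpha_0}]$ restricts to a nontrivial one on $[-1,1]$. If it vanished on $[-1,1]$, analyticity at $y=\pm1$ (from the Frobenius analysis) would force it to vanish just beyond $\pm1$. Uniqueness for the regular ODE on $(1,R_{\alpha_0}]$ and $[-R_{\alpha_0},-1)$ would then kill it. Hence the unit-cone mode stability of \cite{ghoul2025blow} transfers directly; this is Proposition~\ref{mode_stability}.

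Likewise, no weight such as $(\sqrt{1+\alpha_0}+y)^{-1}$ is used. The paper works with the unweighted $H^{k+1}\times H^k$ energy and stabilises the free operator with the rank-one boundary term $\frac{1}{R_{\alpha_0}}|q_1(-R_{\alpha_0})|^2$. It then peels off a finite-rank part via subcoercivity (Lemma~\ref{subcoercivity}). Decay on $\rg(\mathbf I-\mathbf P)$ comes from a uniform resolvent bound and Gearhart--Pr\"uss.

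Your smoothness argument also fails as stated. The threshold $\varepsilon$ and the contraction constants depend on $k$, so for a fixed $T_0$ you cannot rerun the fixed point in every $\mathcal H^k$. Argue instead from the datum $\mathbf q(0)=-\int_0^\infty e^{-s\mathbf L}\mathbf P[\mathbf N(\mathbf q)+\mathbf F](s)\,ds$. It lies in the finite-dimensional space $\rg\mathbf P$, which consists of smooth (indeed explicit) modes, and the forcing is smooth. Persistence of regularity for the local Cauchy problem then propagates all higher norms on each slice $t<T_0$, since a low norm stays bounded. This is also where the smoothness of the initial data in the theorem comes from.
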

\begin{remark}[A prescribed radius and small parameter]
The radius \(r_0>0\) is prescribed by the desired blow-up set and is not treated
as a modulation parameter.  Once \(r_0\) is fixed, the relevant 
small parameter is \(\frac{T}{r_0}\), which measures the size of the backward light cone
relative to the radius of the blow-up sphere.
\end{remark}
\begin{remark}[The logarithmic radial correction]
The correction $-\frac{n-1}{2}\log(\frac{r}{r_0})$
is not part of the singular one-dimensional profile. It is an accurate and explicit ansatz which exactly removes the first-order radial drift in the
Laplacian. After this transformation, the radial geometry enters only through
an inverse-square forcing. In similarity variables and the relevant norm, this forcing is
$\mathcal{O}((\frac{T}{r_0})^2 e^{-2\tau})$. Thus, this transformation reveals a sharper perturbative structure of the radial problem and explains why the remaining correction
\(\zeta_{\alpha,T}\) is one order smaller in \(\frac{T}{r_0}\) than the logarithmic radial correction itself.
\end{remark}
\begin{remark}[The dimension \(n=3\)]
Furthermore, the coefficient of the inverse-square forcing vanishes when \(n=3\).
Consequently, in three spatial dimensions the logarithmic radial
transformation reduces the radial equation exactly to the one-dimensional
quadratic spatial-derivative model. Thus the leading one-dimensional
profile, together with this explicit correction, gives
a closed-form family of radial blow-up solutions. In Theorem~\ref{Existence_of_sol_Main_Theorem} one may therefore
take $\zeta_{\alpha,T}^{(3)}\equiv 0$.
\end{remark}
\begin{remark}(Equivalent norms).
    Although we formulated the estimates for $\zeta_{\alpha_0,T_0}$ in the radial coordinate $r$, we note that because $r\sim r_0$, an equivalent norm could be used for the decay estimates \eqref{eq:thm_1.1_estimate_1}-\eqref{eq:thm_1.1_estimate_2} taken over the annuli/shells $A_{R(T-t)}(r_0)$ in $\mathbb{R}^n$. 
\end{remark}
\begin{remark}(Smooth, global in space blow-up solutions).
    By the finite propagation speed for wave equations such as \eqref{Nd-equation}, we may construct \textit{global} in space solutions using a cut-off argument applied to the initial data of $v_{\alpha_0,\kappa_0,T_0}$.
\end{remark}
\begin{remark}(Almost explicit initial data).
    Although \(\zeta_{\alpha_0,T_0}\) is implicit, we construct it so that it arises from initial data lying in the three-dimensional symmetry space generated by the one-dimensional profile, associated to eigenvalues $0$ and $1$. Thus the Cauchy data are explicit up to three coefficients determined by the Lyapunov--Perron correction.
\end{remark}
Theorem~\ref{Existence_of_sol_Main_Theorem} transports the one-dimensional generalised self-similar
blow-up mechanism to the higher-dimensional radial case. The leading
singularity is inherited from the one-dimensional equation, but the radial
geometry produces a higher-dimensional correction. After the 
logarithmic radial transformation, the correction is driven by a decaying
inverse-square forcing and is of order $\mathcal{O}((\frac{T}{r_0})^2)$ in similarity
variables. To the best of our knowledge, this gives the first construction of a stable family of nontrivial blow-up solutions for
a quadratic derivative nonlinear wave equation 
in dimensions \(n\geq 2\). 

The next theorem shows that this family is stable under radial perturbations of the initial data. Specifically, starting from initial data close to one of these higher-dimensional radial blow-up solutions, we show that the perturbed evolution still blows up on $\{|x|=r_0\}$ and, after modulation of the parameters $(\alpha,\kappa,T)$, converges to a nearby member of the same family. 

\begin{theorem}[Asymptotic stability of the radial blow-up solutions]\label{thm:stability}
Fix $n\geq 2$, $r_0>0$ and $\omega_0 \in (0,1)$. For any 
\[
\alpha_0>0,\quad
k \ge \lceil c_{\alpha_0} \rceil + 1,\quad 0<\gamma<\omega_0, \quad \text{and} \quad \kappa_0 \in \mathbb{R},
\]
so that $\omega = \omega_0 - \gamma$, there exists $0<\varepsilon<\frac{1}{2R_{\alpha_0}}$, such that the family of solutions from Theorem \ref{Existence_of_sol_Main_Theorem} has the following property. Given
\begin{align*}
    0<\frac{T_0}{r_0}<\varepsilon
\end{align*}
there exists $\delta_2=\delta_2(T_0)>0$ and $M_2>1$ such that for any $(f,g)\in H_{\text{rad}}^{k+1}(\mathbb{R}^n)\times H_{\text{rad}}^k(\mathbb{R}^n)$ with $\|(f,g)\|_{H^{k+1}(\mathbb{R}^n)\times H^k(\mathbb{R}^n)}\leq \frac{\delta_2}{M_2^2}$, there exist parameters 
\begin{equation*}
        \alpha^{\star} \in [\alpha_0-\frac{\delta_2}{M_2},\alpha_0+\frac{\delta_2}{M_2}], \qquad \kappa^{\star}\in [\kappa_0-\frac{\delta_2}{M_2},\kappa_0+\frac{\delta_2}{M_2}],  \qquad T^{\star}\in [T_0-\frac{\delta_2}{M_2} T_0,T_0+\frac{\delta_2}{M_2} T_0] 
    \end{equation*}
    and a unique solution $v(x,t):\mathcal{A}_{1}(r_0,T^{\star})\to \mathbb{R}$ to \eqref{Nd-equation} with initial data 
    \begin{align*}
        v(x,0) = v_{\alpha_0,\kappa_0,T_0}(x,0) + f(x), \qquad \partial_t v(x,0) = \partial_t v_{\alpha_0,\kappa_0,T_0}(x,0) + g(x) \qquad (x\in A_{T^{\star}}(r_0))
    \end{align*}
    of the form 
    \begin{equation*}
        v(x,t) = v_{\alpha^{\star},\kappa^{\star},T^{\star}}(x,t) + \eta(|x|,t).
    \end{equation*}
Moreover, for all \(0\le t<T^\star\), we have the convergence rates
\[
(T^\star-t)^{-\frac12+j}
\|\partial_r^j\eta(\cdot,t)\|_{L^2(B_{T^\star-t}(r_0))}
\lesssim \delta_2 \left(1-\frac{t}{T^{\star}}\right)^\omega,
\qquad j=0,1,\dots,k+1,
\]
and
\[
(T^\star-t)^{\frac12+j}
\|\partial_r^j\partial_t\eta(\cdot,t)\|_{L^2(B_{T^\star-t}(r_0))}
\lesssim \delta_2 \left(1-\frac{t}{T^{\star}}\right)^\omega,
\qquad j=0,1,\dots,k. 
\]
As a corollary, the same result holds for the fully explicit family $v_{\alpha,\kappa,T}^{(3)}$.
\end{theorem}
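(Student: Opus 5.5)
The proof follows the Donninger-type framework adapted by Ghoul--Liu--Masmoudi, carried out around the implicit background of Theorem~\ref{Existence_of_sol_Main_Theorem}.

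\textbf{Step 1: similarity variables and the perturbation equation.} We substitute $v=u-\frac{n-1}{2}\log(r/r_0)$, so that $u_{tt}-u_{rr}=(u_r)^2-\frac{(n-1)(n-3)}{4r^2}$. We then pass to similarity variables $y=\frac{r-r_0}{T-t}$, $\tau=-\log(T-t)+\log T$, and write the solution as the modulated background
\[
U_{\alpha(\tau),\kappa(\tau),T}+\zeta_{\alpha(\tau),T}+\phi .
\]
The first-order system for $\Phi=(\phi,\partial_\tau\phi+y\partial_y\phi)$ on $\mathcal I_{\alpha_0}$ then takes the form
\[
\partial_\tau\Phi=\mathbf L_{\alpha}\Phi+\mathbf L'_{\alpha,\tau}\Phi+\mathbf N(\Phi)-\partial_\tau\alpha\,\partial_\alpha(\cdots)-\partial_\tau\kappa\,(\cdots).
\]
Here $\mathbf L_\alpha$ is the one-dimensional linearised operator of \cite{ghoul2025blow}, posed on the extended interval. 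The term $\mathbf L'$ collects the linearisation at $\zeta$ together with the $r^{-2}$ forcing; it has size $\mathcal O((T/r_0)^2e^{-2\tau})$ by \eqref{eq:thm_1.1_estimate_1}--\eqref{eq:thm_1.1_estimate_2}. The nonlinearity $\mathbf N$ is quadratic in $\partial_y\phi$, and $H^{k+1}\times H^k$ is an algebra for $k$ large, as encoded by $k\ge\lceil c_{\alpha_0}\rceil+1$.

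\textbf{Step 2: linear theory.} We use the semigroup estimates for $\mathbf L_\alpha$ developed on $\mathcal I_{\alpha_0}$. The unstable and neutral spectrum consists of the eigenvalues $1$ (time translation) and $0$ (from $\kappa$ and $\alpha$, the latter possibly carrying a Jordan block). Let $\mathbf P_\alpha$ be the Riesz projection onto this part. On $\ker\mathbf P_\alpha$ the semigroup decays like $e^{-\omega_0\tau}$, and we give up $\gamma$ to absorb the non-compact perturbation. We also need the Lipschitz dependence of $\mathbf P_\alpha$, of the stable flow, and of $\zeta_{\alpha,T}$ in $(\alpha,T)$. Since $\mathbf L'$ is integrable in $\tau$, a Duhamel/Gronwall argument shows that the perturbed linear evolution inherits the $e^{-\omega\tau}$ decay.

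\textbf{Step 3: modulation and nonlinear fixed point.} The modulation laws for $\alpha(\tau)$ and $\kappa(\tau)$ are chosen so that the $0$-eigenspace component of $\Phi$ vanishes for all $\tau$. This gives ODEs $|\dot\alpha|+|\dot\kappa|\lesssim\|\Phi\|^2+(T/r_0)^2e^{-2\tau}\|\Phi\|$, which are integrable, so $\alpha\to\alpha^\star$ and $\kappa\to\kappa^\star$. The remaining growing direction (eigenvalue $1$) is handled Lyapunov--Perron style by a correction term $\mathbf C(\Phi,\alpha,\kappa)$ subtracted from the data. We close a contraction in the space
\[
\Bigl\{\Phi:\ \sup_\tau e^{\omega\tau}\|\Phi(\tau)\|\le\delta\Bigr\}\times\{\text{modulation paths}\}
\]
for data of size $\delta_2/M_2^2$.

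\textbf{Step 4: removing the correction by varying $T$.} The initial data $(f,g)$ are posed in the $T_0$ frame, but the solution is sought in the $T$ frame. The transported data map $T\mapsto \Phi_0(T)$ is therefore evaluated on $\mathcal I_{\alpha_0}\supset[-1,1]$, which is exactly why $R_{\alpha_0}>1$ is used; this uses $T\in[T_0(1-\delta_2/M_2),T_0(1+\delta_2/M_2)]$. By Brouwer's theorem (or the intermediate value theorem in one dimension) applied to $T\mapsto\mathbf C$, using the Lipschitz bounds from Step 2, we find $T^\star$ with $\mathbf C=0$. Undoing the variables converts $e^{-\omega\tau}$ decay of $\Phi$ into the stated bounds on $\eta$, with the weights $(T^\star-t)^{\mp1/2+j}$ coming from the scaling of $\partial_r^j$ and $L^2$ on $B_{T^\star-t}(r_0)$. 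The $n=3$ corollary is the special case $\zeta\equiv0$.

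\textbf{Main obstacle.} The hardest part is the Lipschitz control in $(\alpha,T)$ of the implicit correction $\zeta_{\alpha,T}$ and of the stable projections. Without it, the modulation ODE and the fixed-point argument for $T^\star$ do not close. I would try to obtain this by differentiating the Lyapunov--Perron fixed-point equation defining $\zeta$ in its parameters and re-running the contraction in a weighted space on $\mathcal I_{\alpha_0}$.
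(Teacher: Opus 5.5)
Your outline is workable, but it follows a different route from the paper.

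\textbf{What the paper does.} The paper never lets the parameters depend on $\tau$. For each fixed $(\alpha,\kappa,T)$ it:
\begin{itemize}
\item solves $\partial_\tau\mathbf v=\mathbf L_\alpha\mathbf v+\mathbf H_{\alpha,T}(\mathbf v)+\mathbf N(\mathbf v)$ on $(-1,1)$;
\item uses a Lyapunov--Perron correction $\mathbf C_{\alpha,T}$ that removes all of $\rg\mathbf P_\alpha=\operatorname{span}\{\mathbf g_{0,\alpha},\mathbf f_{0,\alpha},\mathbf f_{1,\alpha}\}$;
\item expands the frame-changed data as in Lemma~\ref{initial_data_decomposition};
\item kills all three coefficients at once by Brouwer's theorem on a box in $(\alpha,\kappa,T)$, as in Proposition~\ref{prop:finite_dim_reduction}.
\end{itemize}
Contrary to your Step 1, no $r^{-2}$ forcing survives in the perturbation equation, because the background already solves the full equation. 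Also, the linearisation at the correction decays like $(T/r_0)^2e^{-\omega\tau}$, not $e^{-2\tau}$.

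\textbf{What your route costs.} You modulate $(\alpha,\kappa)$ dynamically and shoot only in $T$. This brings in several extra tasks.
\begin{itemize}
\item Your equation contains $\dot\alpha\,\partial_\alpha\zeta_{\alpha,T}$. So you need $C^1$ dependence of the implicit correction on $\alpha$, which is strictly more than the Lipschitz bound \eqref{eq:lipschitz_in_params}.
\item You must handle the time-dependent operator $\mathbf L_{\alpha(\tau)}$ and its projections.
\item Since $\partial_\alpha$ of the profile is $\tau\mathbf f_{0,\alpha}+\mathbf g_{0,\alpha}$, the $\kappa$-law reads $\dot\kappa=-\tau\dot\alpha+\dots$. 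This is harmless under exponential decay, but it is not the bound you wrote.
\item You still have to choose $(\alpha(0),\kappa(0))$ so that the $0$-component of the initial perturbation vanishes. That is a two-dimensional problem of the same kind as the paper's.
\end{itemize}
The paper's static route needs only continuity of the parameter-to-solution map (for Brouwer) and the Lipschitz estimate. That estimate comes from simply rerunning the contraction in Proposition~\ref{prop:existence_of_correction}. So the obstacle you single out is cheap in the paper, and becomes hard only because of your dynamic modulation.

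\textbf{Two ingredients missing from your sketch, needed in either route.}

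First, the shooting in $T$ works only if the frame-comparison error (the paper's $\mathbf h_{\alpha,T}$) is bounded by $(\overline T/r_0)(|\alpha-\alpha_0|+|\lambda-1|)$, with $\lambda=T/T_0$. A bound by the size $(T_0/r_0)^2$ of the correction is not enough, because $\delta_2$ is chosen after $T_0$ and may be far smaller than $(T_0/r_0)^2$. Parameter-Lipschitz bounds do not control the dilation part $\mathbf q_{\alpha_0,T_0}(0,\lambda\,\cdot)-\mathbf q_{\alpha_0,T_0}(0,\cdot)$ in $\mathcal H^k$ without losing a derivative. The paper obtains $\lesssim(\overline T/r_0)^2|\lambda-1|$ because, with $\mathbf q_0=\mathbf 0$, the datum $\mathbf q_{\alpha_0,T_0}(0)$ lies in the explicit smooth span $\rg\mathbf P_{\alpha_0}$.

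Second, the order of choices matters. One fixes $M_2$ first, then the ceiling $\overline T$ with $M_2\overline T/r_0$ small, then $T_0\in(0,\overline T)$, then $\delta_2(T_0)$. The reason is that the linear term $\mathbf H_{\alpha,T}(\mathbf v)$ feeds the unstable coefficients at size $\lesssim(\overline T/r_0)\delta_2$, while the modulation box only has size $\delta_2/M_2$.

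\textbf{Two misattributions.} The threshold $k\ge\lceil c_{\alpha_0}\rceil+1$ is what makes $\mathbf L_\alpha$ dissipative modulo a finite-rank operator; the algebra property needs only $k\ge1$. And $\gamma$ is sacrificed in the Lipschitz-in-$\alpha$ bound for $\mathbf S_\alpha\widetilde{\mathbf P}_\alpha$, which carries a factor $1+\tau$, not to absorb the non-compact perturbation.
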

\begin{remark}[Stability of a non-explicit blow-up family]
The stable object in Theorem~\ref{thm:stability} is not only the explicit leading-order
one-dimensional profile.  It contains the higher-dimensional correction
\(\zeta_{\alpha,T}\), which is constructed perturbatively and depends on the
modulation parameters.  Thus the theorem proves stability of a genuinely
higher-dimensional blow-up family.  As \(t\to T\), this family
asymptotically approaches the one-dimensional profile.
\end{remark}

\subsection{Structure of the paper}
Section~\ref{section_two} extends the  spectral and semigroup analysis given in \cite{ghoul2025blow} to an enlarged light cone $\mathcal{I}_{\alpha_0}$. The purpose is to derive exponential decay for the semigroup on the stable subspace. This machinery is crucial  for Sections \ref{section_three}--\ref{section_four}.

Section~\ref{section_three} uses this extended-light-cone analysis to construct the family of radial blow-up solutions to \eqref{Nd-equation}, thereby proving Theorem~\ref{Existence_of_sol_Main_Theorem}.

Section \ref{section_four} proves asymptotic stability. We first construct stable perturbations
around the non-explicit background family, then perform a finite-dimensional
modulation argument, and finally translate the result back to the original
radial variables to prove Theorem \ref{thm:stability}.
\subsection{Proof strategy and new difficulties}

The proof has three main ideas.
\begin{itemize}
    \item First, in Section~\ref{section_two}, we extend the spectral and semigroup 
analysis of the one-dimensional linearised operator to an $\alpha_0$-dependent, but fixed, interval
\[
    \mathcal{I}_{\alpha_0}=(-R_{\alpha_0},R_{\alpha_0}), \qquad R_{\alpha_0}>1.
\]
This extension is needed for the later modulation of the blow-up time. Indeed, 
when initial data written in the \(T_0\)-frame are compared with data in the 
\(T\)-frame, the spatial variable is rescaled by
\[
    y_{T_0}=\frac{T}{T_0}y_T.
\]
Thus, if \(T>T_0\), the comparison requires control beyond the unit light-cone. 
This extended analysis gives uniform spectral projections, stable semigroup 
bounds, and Lipschitz dependence on \(\alpha\). One new feature is proving constant rank of the projections in an extended and varying phase space. The constant-rank argument must therefore be carried out uniformly in the
parameter \(\alpha\); the special-parameter reduction available in the unit-cone
setting is no longer sufficient. We address this by a more general ODE analysis in Lemma \ref{Appendix_lemma_eigenfunctions}.
\item Second, in Section~\ref{section_three}, we construct the radial blow-up family.
A direct substitution of the one-dimensional profile into the radial equation
produces a decaying radial-drift term of size $\mathcal{O}(\frac{T}{r_0}e^{-\tau})$.
Rather than carrying this term through the fixed-point and modulation arguments,
we absorb it explicitly by the logarithmic radial correction
\[
    u=v+\frac{n-1}{2}\log\left(\frac r{r_0}\right),
\]
normalised so that the correction vanishes at the blow-up set. This conjugation
isolates the leading curvature contribution and reduces the remaining geometric
forcing to the inverse-square term
$
    \mathcal{O}((\frac{T}{r_0})^2e^{-2\tau}).
$
Consequently, the unknown correction \(\mathbf q_{\alpha,T}\) is one order smaller
in \(\frac{T}{r_0}\) than the explicit logarithmic radial correction. In dimension
\(n=3\), the inverse-square forcing vanishes, yielding a closed-form radial
blow-up family.
In similarity coordinates, the resulting equation is an asymptotically autonomous perturbation of the 
one-dimensional linearised flow. We then use the stable/unstable decomposition derived in Section \ref{section_two}
and a modified Lyapunov--Perron argument to construct a decaying correction 
\(\mathbf{q}_{\alpha,T}\). For the specific family given in the stability theorem we choose the 
free `datum' \(\mathbf{q}_0=\mathbf{0}\), so that \(\mathbf{q}_{\alpha,T}(0)\) lies in a three-dimensional 
symmetry space
\[
    \rg(\mathbf{P}_{\alpha})=\operatorname{span}\{\mathbf{g}_{0,\alpha},\mathbf{f}_{0,\alpha},\mathbf{f}_{1,\alpha}\}.
\]
\item In Section~\ref{section_four}, we prove stability of this non-explicit 
family. Given modulation parameters \((\alpha,\kappa,T)\), the logarithmic normalisation
again cancels the explicit radial drift in the perturbation equation. Thus the
principal part is the one-dimensional operator \(\mathbf L_\alpha\); the remaining
linear perturbation is the decaying term \(\mathbf H_{\alpha,T}(v)\), coming from
the non-explicit correction \(\mathbf q_{\alpha,T}\), together with the quadratic
nonlinearity. These terms are first handled by a modified Duhamel fixed-point 
argument on the stable subspace.

The remaining task is to choose \((\alpha,\kappa,T)\) so that the finite-dimensional
correction vanishes. The initial-data comparison gives leading modulation 
terms together with small errors. One new error is \(\mathbf{h}_{\alpha,T}\), which measures the 
difference between the implicit corrections in the \(T_0\) and \(T\) frames. 
Although \(\mathbf{q}_{\alpha,T}(0)\in \rg(\mathbf{P}_{\alpha})\), its non-explicit nature means that this contribution is best viewed as a small perturbation to an already invertible system. The smallness parameter is \(\frac{T}{r_0}\), which controls the curvature forcing,
the implicit correction \(\mathbf q_{\alpha,T}\), and the frame-comparison error
\(\mathbf h_{\alpha,T}\). At the same time, \(T\) is one of the modulation
parameters. We therefore first choose a final time ceiling \(\overline T\) with
\(\frac{\overline T}{r_0}\) sufficiently small, then fix \(T_0\in(0,\overline T)\), and
only afterwards modulate \(T\) in a two-sided interval around \(T_0\) contained
in \((0,\overline T)\). A Brouwer fixed-point argument then selects 
\((\alpha^\star,\kappa^\star,T^\star)\), forcing the finite-dimensional correction 
to vanish and turning the modified solution into a genuine solution with the 
prescribed initial data.
\end{itemize}

\subsection{Notation}
In this paper we adopt the following notation.
For $r_0\in\mathbb R$ and $R>0$, we denote the open interval centred at $r_0$ by
\[
B_R(r_0):=(r_0-R,\,r_0+R).
\]
Its closure is $\overline{B}_R(r_0)=[r_0-R,\,r_0+R]$. For $r_0\in\mathbb{R}$, $T>0$ and $R\geq 1$ the (extended) past light cone is
\[
\Gamma_{R}(r_0,T):=\{(r,t)\in \mathbb R\times[0,T):\ |r-r_0|\le R(T-t)\}.
\]

For $a,b\in\mathbb R$, we write $a\lesssim b$ if there exists a constant $C>0$ (independent of the variables under consideration) such that $a\le C\,b$. If $C$ may depend on a parameter $p$, we write $a\lesssim_p b$.

If $\Omega\subset\mathbb R$ is a domain, then $C^\infty(\overline{\Omega})$ denotes smooth functions on $\Omega$ with derivatives continuous up to $\partial\Omega$. If $\Omega$ is bounded and $k\in\mathbb N\cup\{0\}$, we use the convention 
\[
\|f\|_{H^k(\Omega)}^2:=\sum_{i=0}^k \|\partial_y^i f\|_{L^2(\Omega)}^2,
\qquad
\|f\|_{\dot H^k(\Omega)}:=\|\partial_y^k f\|_{L^2(\Omega)},
\quad f\in C^\infty(\overline{\Omega}).
\]
The Sobolev space $H^k(\Omega)$ is the completion of $C^\infty(\overline{\Omega})$ with respect to $\|\cdot\|_{H^k(\Omega)}$. For $k\ge0$, we use the product space
\[
\mathcal{H}^k(-1,1):=H^{k+1}(-1,1)\times H^k(-1,1)
\] and for convenience, we use the following equivalent notations 
\[\mathcal{H}^k(-R_{\alpha_0},R_{\alpha_0})\coloneqq \mathcal H_{\alpha_0}^k := H^{k+1}(\mathcal{I}_{\alpha_0})\times H^{k}(\mathcal{I}_{\alpha_0}).
\]
We use boldface for tuples of functions; e.g.
\[
\mathbf f\equiv(f_1,f_2)^{\!\top},\qquad
\mathbf{q}(\tau) = \mathbf q(\tau,\cdot)\equiv\big(q_1(\tau,\cdot),\,q_2(\tau,\cdot)\big)^{\!\top}.
\]
Linear operators acting on such tuples are also written in boldface, e.g. $\mathbf{L}$. 
Moreover $\mathbf{q}^{(i)}$ will denote the i-th component of the tuple $\mathbf{q}$.  
If $\mathbf{L}$ is a closed linear operator on a Banach space $X$, we denote its domain by $\mathcal{D}(\mathbf{L})$, its spectrum by $\spec(\mathbf{L})$, and its point and essential spectrum by $\spec_{p}(\mathbf{L})$ and $\spec_{\text{ess}}(\mathbf{L})$, respectively. The resolvent set is $\res(\mathbf{L}):=\mathbb C\setminus \spec(\mathbf{L})$.
The space of bounded operators on $X$ is denoted by $\mathcal L(X)$. 

When parameters are fixed and the context is clear, for parameter-dependent functions or operators, e.g. $\widetilde{\mathbf{q}}_{\alpha,T}$ or $\mathbf{P}_{\alpha}^{(\alpha_0)}$ we may omit the parameters.

For $c\in \mathbb{R}$ we denote by $\lceil c \rceil$ the smallest integer greater than or equal to $c$. Conversely, $\lfloor c \rfloor$ will denote the largest integer less than or equal to $c$. 
\section{Spectral analysis on an extended light cone}\label{section_two}
This section concerns the one-dimensional problem
\begin{equation}\label{eq:1_d_in_r_section_2}
    u_{tt} - u_{rr} = (u_r)^2
\end{equation}
in the radial coordinate $(r,t)$. Given a space-time blow-up point $(r_0,T)$ with $r_0>0$, we define the standard similarity coordinates 
\begin{align*}
    \tau & \coloneqq \tau_{T} \coloneqq \log(T)-\log(T-t) \\
    y &\coloneqq y_{T} \coloneqq \frac{r-r_0}{T-t}.
\end{align*}
The change of variables $(r,t)\to (y,\tau) $ is a smooth diffeomorphism from the interior of the extended ($R>1$) light cone $\Gamma_{R}(r_0,T)$ onto $(-R,R)\times [0,\infty)$  whose inverse is given by 
\begin{align*}
    r &= r_0 + yTe^{-\tau} \\
    t &= T-Te^{-\tau}.
\end{align*}
In these coordinates, the equation \eqref{eq:1_d_in_r_section_2} reads
\begin{equation}\label{eq:sec_2_1d_eq_in_sim_coordinates}
    (y^2-1)U_{yy}+2yU_{y\tau} + 2y U_y + U_{\tau} + U_{\tau\tau} = (U_y)^2.
\end{equation}
Given $\alpha>0,T>0,r_0>0$, and $\kappa \in \mathbb{R}$ we begin with the family of solutions to \eqref{eq:1_d_in_r_section_2}
\begin{equation*}
    u_{\alpha,\kappa,T,r_0}(r,t)=-\alpha \log\left(1-\frac{t}{T}\right)
       - \alpha \log\left(\sqrt{1+\alpha}+\frac{r-r_0}{T-t}\right)
       + \kappa,
\end{equation*}
which, in similarity coordinates centred at $(r_0,T)$, read
\begin{equation}\label{eq_sol_family_sim_coordinates}
    U_{\alpha,\kappa}(\tau,y) \coloneqq \alpha \tau + \widetilde{U}_{\alpha,\kappa}(y) = \alpha \tau - \alpha \log(\sqrt{1+\alpha}+y)+\kappa.
\end{equation}
Here, the function $\widetilde{U}_{\alpha,\kappa}$ is the self-similar part of the profile. 
Letting $\mathbf{q}(\tau,y)=(q_1(\tau,y),q_2(\tau,y))^{\top} = (q_1, \partial_\tau q_1 + y\partial_y q_1)^{\top}$, the associated linearised operator around the family \eqref{eq_sol_family_sim_coordinates} is 
\begin{equation}
\widetilde{\mathbf L}_\alpha \mathbf{q}
:=
\begin{pmatrix}
-y\partial_y q_1 + q_2\\[2mm]
\partial_{yy}q_1 - \dfrac{2\alpha}{\sqrt{1+\alpha}+y}\,\partial_y q_1 - q_2 - y\partial_y q_2
\end{pmatrix}.
\end{equation}
By the finite propagation speed of \eqref{Nd-equation}, and more generally semilinear wave equations with power nonlinearities in the solution and derivatives, it is sufficient to analyse the dynamics in the light cone $y\in [-1,1]$. Thus, $\alpha>0$ is required to ensure such solutions arise from smooth initial data, remaining smooth and well-defined in the light cone up until the blow-up point. In particular, the condition $\alpha>0$ ensures the nondegeneracy of the logarithm term in $\Gamma_{1}(r_0,T)$. However, once an $\alpha_0>0$ is assigned, a crucial observation is that these solutions are actually defined on the larger ($\alpha_0$-dependent) interval $y\in (-\sqrt{1+\alpha_0},\infty)$. We choose to work on a symmetrical interval so that a uniform constant $R_{\alpha_0}>1$ may describe an extended light cone:
\begin{equation*}
    \Gamma_{R_{\alpha_0}}(r_0,T) = \{(r,t)\in \mathbb{R}\times [0,T) : |r-r_0|\leq R_{\alpha_0}(T-t)\} = \{|y|\leq R_{\alpha_0}\}.
\end{equation*}
Thus, we fix $\alpha_0>0$ and for concreteness choose the `halfway point' between $1$ and $\sqrt{1+\alpha_0}$ and the corresponding extended interval denoted by
\[
R_{\alpha_0}:=\frac{1+\sqrt{1+\alpha_0}}{2},\qquad \mathcal I_{\alpha_0}:=(-R_{\alpha_0},R_{\alpha_0}).
\]
For $\alpha$ close to $\alpha_0$, the profile $U_{\alpha,\kappa}(\tau,y)=\alpha\tau-\alpha\log(\sqrt{1+\alpha}+y)+\kappa$
is still smooth on $\overline{\mathcal I_{\alpha_0}}$ provided such $\alpha$ satisfy $\sqrt{1+\alpha}>R_{\alpha_0}$.
Thus, for concreteness we henceforth fix the following notation for $\delta_0$ as 
\[
\delta_0:=\min\left\{1,\frac{\alpha_0}{4}\right\},
\qquad \alpha\in [\alpha_0-\delta_0,\alpha_0+\delta_0].
\]
This guarantees that for $\alpha \in [\alpha_0-\delta_0,\alpha_0+\delta_0]$ we have $\alpha\geq \frac{3}{4}\alpha_0>0$ and, by the basic inequality
\begin{equation*}
    \sqrt{1+\frac{3\alpha_0}{4}} > \frac{1+\sqrt{1+\alpha_0}}{2}, \qquad \text{for all} \quad \alpha_0>0
\end{equation*}
then indeed $\sqrt{1+\alpha}>R_{\alpha_0}$ for $\alpha \in [\alpha_0-\delta_0,\alpha_0+\delta_0]$. Hence $\sqrt{1+\alpha}+y \geq k_{\alpha_0}>0$ uniformly for all $y\in \overline{\mathcal{I}_{\alpha_0}}$ and all $\alpha \in [\alpha_0-\delta_0,\alpha_0+\delta_0]$, where $k_{\alpha_0}$ is some fixed constant depending on $\alpha_0$.

In conclusion, given any $\alpha_0>0$ the linearised operator $\mathbf L_\alpha$ will be shown to be well-defined as a densely defined operator
$\mathbf L_\alpha^{(\alpha_0)}:D(\mathbf L_\alpha)\subset \mathcal H^k(\mathcal I_{\alpha_0})\to \mathcal H^k(\mathcal I_{\alpha_0})$
for all $\alpha\in [\alpha_0-\delta_0,\alpha_0+\delta_0]$.
In this section, we carry out a full spectral and semigroup analysis for this operator and extend the analogous results shown in \cite{ghoul2025blow}. This extension will provide us with the machinery to construct solutions to the higher-dimensional equation on a \textit{larger} interval than the light cone, which is crucial in proving stability of such solutions. 
\begin{remark}
    Since the linearised operator is not self-adjoint, we carry out the whole of this section with complex-valued function spaces. However, since the operator $\mathbf{L}_{\alpha}$ has real components, it is clear that if the initial data is real-valued, then so is the solution. 
\end{remark}
Finally, throughout the section, we will keep $\alpha_0>0$ fixed, but this is of course arbitrary and loses no generality. Therefore we will often omit the $\alpha_0$ in $\mathbf{L}_\alpha^{(\alpha_0)}$.

Within this section, there are several new features. Specifically, there are now more boundary contributions in the high-order energy identities. Secondly, compared to the unit light cone we modify the trace term with a weight $1/R_{\alpha_0}$. Thirdly, for the free resolvent construction which we solve explicitly, we have to cross the singularities, which requires a more delicate ODE analysis. We also carry out an ODE analysis to prove non-existence of further generalised eigenfunctions in Lemma \ref{Appendix_lemma_eigenfunctions} which generalises the result of \cite{ghoul2025blow} to any $\alpha>0$ (rather than $\alpha=3$), which crucially requires us to consider $\sqrt{1+\alpha}\notin \mathbb{N}\setminus \{1\}$. Since we had to restrict to $\alpha \in [\alpha_0-\delta_0,\alpha_0+\delta_0]$ earlier in the work, we do not have the freedom to choose such a convenient $\alpha_0>0$.
\subsection{Mode stability}
The first step is to extend the mode stability statement to the larger domain $y \in \mathcal{I}_{\alpha_0}$ which amounts to studying possible \textit{unstable eigenvalues} of the linearised operator ($\mathfrak{Re}\lambda \geq 0$) -- specifically ruling out such eigenvalues except for those induced by symmetries of the equation. To do so, we present the eigenequation for $\alpha \in [\alpha_0-\delta_0,\alpha_0+\delta_0]$
\begin{equation}
    (\lambda^2 + \lambda)\varphi + \left((2\lambda+2)y+\frac{2\alpha}{\sqrt{1+\alpha}+y}\right)\partial_y \varphi + (y^2-1)\partial_{yy}\varphi = 0, \quad y\in [-R_{\alpha_0},R_{\alpha_0}].
    \label{Eigenequation}
\end{equation}
Indeed, if $\mathbf{q}=(q_1,q_2)^\top$ solves $\mathbf{L}_{\alpha}\mathbf{q}=\lambda \mathbf{q}$, then $q_1$ solves \eqref{Eigenequation}. Conversely if $\varphi$ solves \eqref{Eigenequation}, then $\mathbf{\Psi} \coloneqq (\varphi,\lambda \varphi+y\partial_y \varphi)$ solves $\mathbf{L}_{\alpha}\mathbf{\Psi}=\lambda \mathbf{\Psi}$.

Note that \eqref{Eigenequation} has four `regular singular points': $1,-1,-\sqrt{1+\alpha}$ and $\infty$ where the coefficients degenerate. However, as mentioned, by our choice of $\mathcal{I}_{\alpha_0}$ and $\delta_0>0$, then for all $\alpha \in [\alpha_0-\delta_0,\alpha_0+\delta_0]$, $y\in \mathcal{I}_{\alpha_0}$, we have $y>-\sqrt{1+\alpha}$. This means that we avoid the singular point $y=-\sqrt{1+\alpha}$.

Now, to apply Frobenius theory, we apply a change of variables  $\varphi(2z-1)=\phi(z)$ so that \eqref{Eigenequation} reads as the standard-form Heun ODE
\begin{equation}\label{eigen_heun_standard_form}
    \phi'' + \left[\frac{\lambda-\sqrt{1+\alpha}}{z}+\frac{\lambda+\sqrt{1+\alpha}}{z-1}+\frac{2}{z+\frac{\sqrt{1+\alpha}-1}{2}} \right]\phi' +\frac{\lambda^2+\lambda}{z(z-1)}\phi = 0, \quad z\in \left[\frac{-R_{\alpha_0}+1}{2},\frac{R_{\alpha_0}+1}{2} \right].
\end{equation}
Note that this change of variables moves the singular points $-1 \mapsto 0$, $1\mapsto  1$, $\infty \mapsto \infty$, $-\sqrt{1+\alpha}\mapsto -\frac{\sqrt{1+\alpha}-1}{2}$. 
\begin{definition}
    (Mode stability) For $\alpha \in [\alpha_0-\delta_0,\alpha_0+\delta_0]$, the solutions $U_{\alpha,\kappa}$ are mode stable if the existence of a nontrivial smooth function $\varphi \in C^\infty([-R_{\alpha_0},R_{\alpha_0}])$ solving \eqref{Eigenequation} implies that $\lambda \in \{0,1\}$ or $\mathfrak{Re}\lambda<0$.  
\end{definition}
\begin{remark}(Explicit spectral gap)
    We will actually prove a stronger result, where in the analogous statement above, $\mathfrak{Re}\lambda<0$ is replaced by $\mathfrak{Re}\lambda<-1$.
\end{remark}
We now show that mode stability on the larger interval follows easily from the one-dimensional analysis. 
\begin{proposition}[Mode stability]\label{mode_stability}
For $\alpha \in [\alpha_0-\delta_0,\alpha_0+\delta_0]$, $U_{\alpha,\kappa}$ is mode stable. Namely, there are no non-trivial smooth solutions $\varphi \in C^\infty([-R_{\alpha_0},R_{\alpha_0}])$ to \eqref{Eigenequation} for $\mathfrak{Re}\lambda>-1$, except $\lambda\in \{0,1\}$. 
\end{proposition}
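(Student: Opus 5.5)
The plan is to reduce to the unit light cone and then show directly that the only smooth eigenfunctions with $\mathfrak{Re}\lambda>-1$ are the symmetry modes.

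\textbf{Step 1: restriction.} Suppose $\varphi\in C^\infty([-R_{\alpha_0},R_{\alpha_0}])$ is a nontrivial solution of \eqref{Eigenequation} with $\mathfrak{Re}\lambda>-1$. Since $R_{\alpha_0}>1$, its restriction to $[-1,1]$ is smooth there and solves the same equation. The restriction is also nontrivial. Indeed, $\pm1$ are the only singular points inside $\mathcal I_{\alpha_0}$, and on $(1,R_{\alpha_0}]$ and $[-R_{\alpha_0},-1)$ the equation is a regular second-order ODE. If $\varphi$ vanished on $[-1,1]$, then $\varphi$ and $\varphi'$ would vanish at $1^+$ and $-1^-$, and uniqueness for the regular ODE would give $\varphi\equiv0$. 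So it suffices to prove mode stability on $[-1,1]$, with the sharper threshold $\mathfrak{Re}\lambda>-1$.

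\textbf{Step 2: Frobenius exponents.} In the Heun form \eqref{eigen_heun_standard_form}, the exponents at $z=0$ (that is, $y=-1$) are $0$ and $1-\lambda+\sqrt{1+\alpha}$. At $z=1$ (that is, $y=1$) they are $0$ and $1-\lambda-\sqrt{1+\alpha}$.

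Smoothness at $y=1$ forces the analytic branch, unless $1-\lambda-\sqrt{1+\alpha}$ is a nonnegative integer. That would require $\mathfrak{Re}\lambda\le 1-\sqrt{1+\alpha}<0$, and then $\lambda$ is real. Such an exceptional exponent has to be handled separately, and the case $\mathfrak{Re}\lambda\in(-1,0)$ is where I expect the main difficulty.

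\textbf{Step 3: the substitution $\psi=\varphi'$.} Rather than connection coefficients, I would differentiate \eqref{Eigenequation} and use the explicit form of the profile. Let $\psi=\varphi'$ and define $w$ by $\psi=(\sqrt{1+\alpha}+y)^{-2\alpha/(\dots)}\,w$, a weight chosen to absorb the first-order drift coming from $-\alpha\log(\sqrt{1+\alpha}+y)$. This should reduce the problem to a hypergeometric equation with singularities only at $\pm1$ and $\infty$.

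Concretely, the symmetry modes are known: $\lambda=1$ comes from time translation, with $\varphi=\partial_T U\propto 1/(\sqrt{1+\alpha}+y)+\text{const}$, and $\lambda=0$ comes from $\kappa$ and $\alpha$, with $\varphi=1$. I would look for a factorization: writing $\varphi=c+\chi$ and using the first-order structure, the equation for $\chi'$ should become a Gauss hypergeometric equation on $[-1,1]$. For such an equation, a solution that is analytic at both endpoints exists only when it terminates, that is, when it is a polynomial or a known Jacobi-type solution. The Gauss connection formula then forces a Gamma-function pole, which pins $\lambda$ to $\{0,1\}$ or to $\lambda\le -1$.

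This mirrors, and should essentially reproduce, the one-dimensional argument of \cite{ghoul2025blow} on the unit cone, which I would invoke directly where possible.

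\textbf{Main obstacle.} The hardest part is pushing the threshold down to $\mathfrak{Re}\lambda>-1$, in particular handling the resonant real values of $\lambda$ where Frobenius exponents differ by integers and logarithmic terms must be excluded. There I would check the log coefficient directly via the recursion.
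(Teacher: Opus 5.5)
Your Step 1 is the paper's argument: restrict to $[-1,1]$, show the restriction is nontrivial, and reduce to mode stability on the unit cone. The nontriviality step, however, is not justified as you state it. The points $y=\pm1$ are singular ($y^2-1$ vanishes there), so $\varphi(1)=\varphi'(1)=0$ does not let you apply uniqueness for the regular ODE on $(1,R_{\alpha_0}]$. That theorem needs initial data at a regular point, and at a singular point vanishing Cauchy data can fail to force vanishing: for $x\varphi''-2\varphi'=0$, the solution $\varphi=x^3$ has $\varphi(0)=\varphi'(0)=0$. The paper closes this with the Frobenius fact \cite[Prop.~A.1]{ghoul2025blow} that smooth solutions are analytic at $\pm1$. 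Since $\varphi$ is flat at $1$, it then vanishes on $[1,1+\epsilon)$, and only after that is regular uniqueness applied. Alternatively, flatness plus the singular Gr\"onwall bound $E(y)\le E(y_0)\bigl(\frac{y-1}{y_0-1}\bigr)^{C}$ for $E=|\varphi|+|\varphi'|$, letting $y_0\downarrow1$, also works.

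The more serious problem is that Steps 2--3 do not prove the unit-cone statement, and the mechanism in Step 3 fails.
\begin{itemize}
\item At $y=-\sqrt{1+\alpha}$, equation \eqref{Eigenequation} has exponents $0,-1$. The Frobenius recursion for the exponent $-1$ is obstructed at its first step by the term $2\sqrt{1+\alpha}(\lambda-1)c_0$. Hence for every $\lambda\neq1$ the solutions carry a $\log(\sqrt{1+\alpha}+y)$ term.
\item This nontrivial local monodromy cannot be removed by a power weight. For $\psi=\varphi'$, the only weight that cancels the first-order pole is $(\sqrt{1+\alpha}+y)^{-1}$. After it, the zeroth-order coefficient still has a double pole at $y=-\sqrt{1+\alpha}$, with numerator $-2\alpha$ there.
\item You are therefore left with a genuine four-point Heun equation, and no Gauss connection formula is available.
\item Your resonance bookkeeping in Step 2 is also incomplete. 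At $y=-1$, the exponent $1-\lambda+\sqrt{1+\alpha}$ is a positive integer for $\lambda=1+\sqrt{1+\alpha}-n$, $n\in\mathbb N$. These include values with $\mathfrak{Re}\lambda>0$, not only $\lambda\in(-1,0)$.
\end{itemize}

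The paper proves none of this. After the restriction step it simply invokes \cite[Prop.~3.11]{ghoul2025blow}, which yields $\lambda\in\{0,1\}$ for $\mathfrak{Re}\lambda>-1$ on $[-1,1]$. If you replace Steps 2--3 by that citation and repair the nontriviality argument as above, your proof coincides with the paper's.
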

\begin{proof}
Consider $\mathfrak{Re}\lambda>-1$. Suppose we have a non-trivial $\varphi\in C^\infty([-R_{\alpha_0},R_{\alpha_0}])$ solving \eqref{Eigenequation} on
$[-R_{\alpha_0},R_{\alpha_0}]$, then its restriction $\varphi|_{[-1,1]}\in C^\infty[-1,1]$ solves \eqref{Eigenequation} on $[-1,1]$. It remains to justify that the solution is also non-trivial on $[-1,1]$ since the mode stability in \cite[Prop.~3.11]{ghoul2025blow} proves that necessarily $\lambda\in \{0,1\}$.\\
Indeed, suppose for contradiction that $\varphi|_{[-1,1]} \equiv 0$, but $\varphi|_{(1,R_{\alpha_0}]}\not\equiv 0$. By the Frobenius analysis of \eqref{eigen_heun_standard_form} in \cite[Prop.~A.1]{ghoul2025blow}, any smooth solution of \eqref{Eigenequation} is analytic at $1$, thus $\varphi|_{[1,1+\varepsilon)}\equiv0$ for some $\epsilon>0$. However, since the ODE is regular in $(1,R_{\alpha_0}]$, the solution must be $0$ in the rest of the interval by uniqueness, which is a contradiction. The argument for $[-R_{\alpha_0},-1)$ is symmetrical.
\end{proof}
\subsection{Semigroup generation}
We now establish a functional framework to go beyond the light cone and prove generation of a $C_0$-semigroup, whose standard theory can be found in \cite{engel2000one}. First we have the \textit{tilde}-linearised operator, which is defined on a \textit{dense} subset of the eventual Sobolev space.

For $\mathbf{q}=(q_1,q_2)^\top$, we consider the linearised evolution in similarity time for $\alpha \in [\alpha_0-\delta_0,\alpha_0+\delta_0]$,
\[
\partial_{\tau} \mathbf{q} = \widetilde{\mathbf L}_\alpha \mathbf{q},
\]
where
\begin{equation}
\widetilde{\mathbf L}_\alpha \mathbf{q}
:=
\begin{pmatrix}
-y\partial_y q_1 + q_2\\[2mm]
\partial_{yy}q_1 - \dfrac{2\alpha}{\sqrt{1+\alpha}+y}\,\partial_y q_1 - q_2 - y\partial_y q_2
\end{pmatrix}.
\label{eq:Ltilde_alpha_Ia}
\end{equation}
We decompose
\begin{equation}
\widetilde{\mathbf L}_\alpha = \widetilde{\mathbf L} + \mathbf L_{\alpha,1},
\label{eq:decomp_Ltilde}
\end{equation}
where the (modified) free wave operator is
\begin{equation}
\widetilde{\mathbf L}\mathbf{q} :=
\begin{pmatrix}
-y\partial_y q_1 + q_2 - q_1(-R_{\alpha_0})\\[2mm]
\partial_{yy}q_1 - q_2 - y\partial_y q_2
\end{pmatrix},
\label{eq:Ltilde_free_Ia}
\end{equation}
and the remaining term is the ``potential''
\begin{equation}
\mathbf L_{\alpha,1}\mathbf{q}:=
\begin{pmatrix}
q_1(-R_{\alpha_0})\\[2mm]
-\dfrac{2\alpha}{\sqrt{1+\alpha}+y}\,\partial_y q_1
\end{pmatrix}.
\label{eq:Lalpha1_Ia}
\end{equation}
\begin{remark}
    Here we `stabilised' the free wave operator by adding a rank-one correction at $y=-R_{\alpha_0}$. As we will see, this choice is tailored to the energy $\langle\cdot,\cdot\rangle_0$ defined below, which yields dissipativity. 
\end{remark}
Now, for $k\ge 0$, we define 
\[
\mathcal{H}^k(-R_{\alpha_0},R_{\alpha_0})\coloneqq \mathcal H_{\alpha_0}^k := H^{k+1}(\mathcal{I}_{\alpha_0})\times H^{k}(\mathcal{I}_{\alpha_0}).
\]
We introduce the following sesquilinear form. For $\mathbf{q}\in C^1[\overline{\mathcal{I}_{\alpha_0}}]\times C[\overline{\mathcal{I}_{\alpha_0}}]$, set
\begin{equation}
\langle \mathbf{q},\tilde{\mathbf{q}}\rangle_{0}
:=\int_{-R_{\alpha_0}}^{R_{\alpha_0}} \partial_y q_1\,\overline{\partial_y \tilde q_1}\,dy
+\int_{-R_{\alpha_0}}^{R_{\alpha_0}} q_2\,\overline{\tilde q_2}\,dy
+ \frac{1}{R_{\alpha_0}}q_1(-R_{\alpha_0})\,\overline{\tilde q_1(-R_{\alpha_0})}.
\label{eq:ip0_alpha}
\end{equation}
For $k\ge 1$, and $\mathbf{q}\in C^{k+1}[\overline{\mathcal{I}_{\alpha_0}}]\times C^{k}[\overline{\mathcal{I}_{\alpha_0}}]$, define
\begin{equation}
\langle \mathbf{q},\tilde{\mathbf{q}}\rangle_{k}
:=\int_{-R_{\alpha_0}}^{R_{\alpha_0}} \partial_y^{k+1} q_1\,\overline{\partial_y^{k+1}\tilde q_1}\,dy
+\int_{-R_{\alpha_0}}^{R_{\alpha_0}} \partial_y^{k} q_2\,\overline{\partial_y^{k}\tilde q_2}\,dy
+\langle \mathbf{q},\tilde{\mathbf{q}}\rangle_{0}.
\label{eq:ipk_alpha}
\end{equation}
We write the corresponding norm as
\[
\|\mathbf{q}\|_{k}^2 := \langle \mathbf{q},\mathbf{q}\rangle_{k}.
\]
We first observe that with the above choice of quadratic form, we have equivalence to the Sobolev norm.
\begin{lemma}[Equivalence of norms]
\label{lem:norm_equivalence_alpha}
For every $k\ge 0$ there exists a constant $C_{k,\alpha_0}\ge 1$ such that for all
$\mathbf{q}\in C^{k+1}[-R_{\alpha_0},R_{\alpha_0}]\times C^{k}[-R_{\alpha_0},R_{\alpha_0}]$,
\[
C_{k,\alpha_0}^{-1}\big(\|q_1\|_{H^{k+1}(\mathcal{I}_{\alpha_0})}+\|q_2\|_{H^{k}(\mathcal{I}_{\alpha_0})}\big)
\le \|\mathbf{q}\|_{k}
\le C_{k,\alpha_0}\big(\|q_1\|_{H^{k+1}(\mathcal{I}_{\alpha_0})}+\|q_2\|_{H^{k}(\mathcal{I}_{\alpha_0})}\big).
\]
Consequently, the completion of $C^{k+1}[-R_{\alpha_0},R_{\alpha_0}]\times C^{k}[-R_{\alpha_0},R_{\alpha_0}]$
under $\|\cdot\|_{k}$ is $\mathcal H_{\alpha_0}^k$.
\end{lemma}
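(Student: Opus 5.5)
The plan is to prove the two inequalities directly, using the one-dimensional Sobolev embedding with trace on a bounded interval. Since $\mathcal{I}_{\alpha_0}$ is fixed and bounded with length $2R_{\alpha_0}$, all constants will depend only on $k$ and $R_{\alpha_0}$, hence only on $k$ and $\alpha_0$. We will work with squared norms throughout, which is harmless because $a+b\simeq(a^2+b^2)^{1/2}$.

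\emph{Upper bound.} Each integral term in $\|\mathbf q\|_k^2$ is directly dominated by $\|q_1\|_{H^{k+1}}^2+\|q_2\|_{H^k}^2$. Only the trace term $\frac{1}{R_{\alpha_0}}|q_1(-R_{\alpha_0})|^2$ needs a separate estimate. For this we use the elementary one-dimensional trace inequality
\[
|f(-R_{\alpha_0})|\le \|f\|_{L^\infty(\mathcal I_{\alpha_0})}\lesssim_{\alpha_0}\|f\|_{H^1(\mathcal I_{\alpha_0})},
\]
valid for $f\in C^1$. It follows by picking a point $y_*$ where $|f(y_*)|\le (2R_{\alpha_0})^{-1/2}\|f\|_{L^2}$ and writing $f(-R_{\alpha_0})=f(y_*)-\int_{-R_{\alpha_0}}^{y_*}f'$, then applying Cauchy--Schwarz.

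\emph{Lower bound.} The main step is to control the lower-order quantities $\|q_1\|_{L^2}$ and $\|\partial_y^j q_1\|_{L^2}$ for $2\le j\le k$, and likewise $\|\partial_y^j q_2\|_{L^2}$ for $1\le j\le k-1$, using only the terms actually present in the norm. For $q_1$ we apply the fundamental theorem of calculus from the left endpoint:
\[
q_1(y)=q_1(-R_{\alpha_0})+\int_{-R_{\alpha_0}}^y\partial_yq_1 .
\]
This gives $\|q_1\|_{L^2}^2\lesssim_{\alpha_0}|q_1(-R_{\alpha_0})|^2+\|\partial_yq_1\|_{L^2}^2\lesssim_{\alpha_0}\|\mathbf q\|_0^2$. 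Here the weight $1/R_{\alpha_0}$ only affects constants.

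For the intermediate derivatives we will use the interpolation (Gagliardo--Nirenberg or Ehrling) inequality on a bounded interval:
\[
\|\partial_y^jf\|_{L^2}\lesssim_{j,m,\alpha_0}\|f\|_{L^2}+\|\partial_y^mf\|_{L^2},\qquad 0\le j\le m.
\]
We apply it with $f=\partial_yq_1$, $m=k$, which controls $\partial_y^{j}q_1$ for $1\le j\le k+1$ by $\|\partial_yq_1\|_{L^2}+\|\partial_y^{k+1}q_1\|_{L^2}$. We apply it with $f=q_2$, $m=k$, which controls all derivatives of $q_2$ up to order $k$. If one prefers a self-contained argument, this inequality can be proved by induction on $m$. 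The idea is to use the mean-value choice of a point where $\partial_y^{j}f$ is small relative to $\partial_y^{j-1}f$. Alternatively, one can argue by contradiction via Rellich compactness, the only polynomials $f$ with $\partial_y^mf=0$ and $f=0$ being trivial.

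The case $k=0$ is just the two estimates above, so no interpolation is needed there.

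\emph{Completion.} Given the equivalence on the dense subspace $C^{k+1}\times C^k$, the completion under $\|\cdot\|_k$ coincides with the completion under the $H^{k+1}\times H^k$ norm. The latter is $\mathcal H^k_{\alpha_0}$ by definition, since smooth-up-to-the-boundary functions are dense in $H^{m}$ of an interval. The form $\langle\cdot,\cdot\rangle_k$ extends continuously, and the trace term extends by the trace inequality.

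I expect the only step requiring care to be the interpolation inequality for the intermediate derivatives. It is standard, so I would cite it, for example from Adams' Sobolev spaces, with the constant tracked to depend only on the interval length $2R_{\alpha_0}$.
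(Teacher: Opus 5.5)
Your argument is correct. The paper states this lemma without proof, and your argument is the standard one that fills the gap: the one-dimensional trace bound $|q_1(-R_{\alpha_0})|\lesssim_{\alpha_0}\|q_1\|_{H^1(\mathcal{I}_{\alpha_0})}$ gives the upper estimate. For the lower estimate, the fundamental theorem of calculus from $y=-R_{\alpha_0}$ recovers $\|q_1\|_{L^2}$ from the boundary term and $\|\partial_y q_1\|_{L^2}$, and interpolation on the bounded interval controls the intermediate derivatives. The completion statement then follows as you say, since the dense subspace $C^{k+1}\times C^{k}\supset (C^\infty[-R_{\alpha_0},R_{\alpha_0}])^2$ has the same Cauchy sequences under equivalent norms, and the paper defines $H^{k+1}(\mathcal{I}_{\alpha_0})\times H^{k}(\mathcal{I}_{\alpha_0})$ as the completion of smooth functions.
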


For $\alpha\in [\alpha_0-\delta_0,\alpha_0+\delta_0]$ we define the dense domain
\[
D(\widetilde{\mathbf L}_\alpha)=D(\widetilde{\mathbf L})
:= C^\infty[-R_{\alpha_0},R_{\alpha_0}]\times C^\infty[-R_{\alpha_0},R_{\alpha_0}]
\subset \mathcal H_{\alpha_0}^k,
\]
and view
\[
\widetilde{\mathbf L} : D(\widetilde{\mathbf L})\subset \mathcal H_{\alpha_0}^k\to \mathcal H_{\alpha_0}^k,
\qquad
\widetilde{\mathbf L}_\alpha : D(\widetilde{\mathbf L}_\alpha)\subset \mathcal H_{\alpha_0}^k\to \mathcal H_{\alpha_0}^k
\]
as densely defined linear operators.
The subsequent steps are:
(i) prove dissipativity of $\widetilde{\mathbf L}$ in $\langle\cdot,\cdot\rangle_{k}$,
(ii) prove that the free resolvent $\lambda-\widetilde{\mathbf L}$ has dense range for some $\lambda>-\frac{1}{2}$,
and then apply the Lumer--Phillips theorem on $\left(\mathcal{H}_{\alpha_0}^k,\|\cdot\|_k 
\right)$ to obtain semigroup generation by the closure of $\widetilde{\mathbf L}$.
\begin{lemma}[Dissipativity of the free operator]\label{Dissipativity_free_op}
For every $k\ge 0$ and every $\mathbf q=(q_1,q_2)^\top\in C^{k+1}[-R_{\alpha_0},R_{\alpha_0}]\times C^{k}[-R_{\alpha_0},R_{\alpha_0}]$,
\begin{equation}
\mathfrak{Re}\langle \mathbf{\widetilde{L}}\mathbf q,\mathbf q\rangle_{k}
\le -\frac12 \|\mathbf q\|_{k}^2.
\end{equation}
\end{lemma}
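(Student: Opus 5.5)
The plan is to verify the inequality by direct integration by parts. The interior terms produce the damping $-\frac12$, and all boundary contributions at $y=\pm R_{\alpha_0}$ are shown to be non-positive. The $1/R_{\alpha_0}$-weighted trace term is exactly what makes the boundary form at $y=-R_{\alpha_0}$ non-positive. Write $R=R_{\alpha_0}>1$. Since $\langle\cdot,\cdot\rangle_k$ is the top-order part plus $\langle\cdot,\cdot\rangle_0$, it suffices to prove two things. First, $\mathfrak{Re}\langle\widetilde{\mathbf L}\mathbf q,\mathbf q\rangle_0\le-\frac12\|\mathbf q\|_0^2$. Second, the top-order part is bounded by $-\frac12\big(\|\partial_y^{k+1}q_1\|_{L^2}^2+\|\partial_y^kq_2\|_{L^2}^2\big)$. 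For $\mathbf q$ only in $C^{k+1}\times C^k$, the quantity $\partial_y^{k+1}(\widetilde{\mathbf L}\mathbf q)_1$ involves $\partial_y^{k+2}q_1$. I will first compute for smooth $\mathbf q$, where every integration by parts is legitimate, and pass to $C^{k+1}\times C^k$ by density if needed.

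\emph{Level $k=0$.} Write $a_\pm=\partial_yq_1(\pm R)$, $b_\pm=q_2(\pm R)$ and $c=q_1(-R)$.
The first-component term is $\mathfrak{Re}\int(-\partial_yq_1-y\partial_y^2q_1+\partial_yq_2)\overline{\partial_yq_1}$. Using $\mathfrak{Re}\int y\,\partial_y^2q_1\overline{\partial_yq_1}=\frac12[y|\partial_yq_1|^2]-\frac12\int|\partial_yq_1|^2$, it equals
\[
-\tfrac12\int|\partial_yq_1|^2-\tfrac R2(|a_+|^2+|a_-|^2)+\mathfrak{Re}\int\partial_yq_2\overline{\partial_yq_1}.
\]
The same computation for the second component gives
\[
-\tfrac12\int|q_2|^2-\tfrac R2(|b_+|^2+|b_-|^2)+\mathfrak{Re}\int\partial_y^2q_1\overline{q_2}.
\]
The two cross terms combine into the boundary term $\mathfrak{Re}[q_2\overline{\partial_yq_1}]_{-R}^{R}$. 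At $y=-R$ we have $(\widetilde{\mathbf L}\mathbf q)_1(-R)=Ra_-+b_--c$, so the trace term contributes $\frac1R\mathfrak{Re}\big((Ra_-+b_-)\bar c\big)-\frac1R|c|^2$.

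After subtracting the target $-\frac1{2R}|c|^2$, the boundary terms split into two pieces. At $y=R$ the piece is $-\frac R2(|a_+|^2+|b_+|^2)+\mathfrak{Re}(b_+\bar a_+)\le\frac{1-R}{2}(|a_+|^2+|b_+|^2)\le0$, since $R>1$. At $y=-R$ the piece is $-\frac12\langle M(a_-,b_-,c),(a_-,b_-,c)\rangle$ with
\[
M=\begin{pmatrix}R&1&-1\\1&R&-1/R\\-1&-1/R&1/R\end{pmatrix}.
\]
I will check that $M$ is positive semidefinite. Its diagonal entries and $2\times2$ principal minors ($R^2-1$, $1-1/R^2$, $0$) are non-negative, and a short expansion gives $\det M=R(1-R^{-2})-(R-R^{-1})=0$, so all principal minors are non-negative. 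This is the step I regard as the crux. The specific weight $1/R$ on the trace is precisely what makes this boundary form degenerate but non-positive; with weight $1$ the form would generally fail to be non-positive when $R>1$.

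\emph{Top order.} Differentiating $k$ times gives $\partial_y^{k+1}(\widetilde{\mathbf L}\mathbf q)_1=-y\partial_y^{k+2}q_1-(k+1)\partial_y^{k+1}q_1+\partial_y^{k+1}q_2$. The constant trace term disappears for $k\ge0$ derivatives of order $\ge1$. Likewise $\partial_y^k(\widetilde{\mathbf L}\mathbf q)_2=\partial_y^{k+2}q_1-(k+1)\partial_y^kq_2-y\partial_y^{k+1}q_2$. The same integrations by parts give interior damping $-(k+\frac12)\le-\frac12$ for both $\|\partial_y^{k+1}q_1\|_{L^2}^2$ and $\|\partial_y^kq_2\|_{L^2}^2$. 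The cross terms again reduce to $\mathfrak{Re}[\partial_y^kq_2\overline{\partial_y^{k+1}q_1}]_{-R}^R$. At each endpoint this yields a $2\times2$ form $-\frac R2(|A|^2+|B|^2)\pm\mathfrak{Re}(B\bar A)\le\frac{1-R}2(|A|^2+|B|^2)\le0$, where $A,B$ are the boundary values of $\partial_y^{k+1}q_1$ and $\partial_y^kq_2$. Adding the level-$0$ estimate then gives $\mathfrak{Re}\langle\widetilde{\mathbf L}\mathbf q,\mathbf q\rangle_k\le-\frac12\|\mathbf q\|_k^2$.
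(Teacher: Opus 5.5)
Your proof is correct and is essentially the paper's argument: integrate by parts to extract the interior damping $-(k+\tfrac12)$, then check that every boundary form at $y=\pm R_{\alpha_0}$ is non-positive, with the $1/R_{\alpha_0}$-weighted trace absorbed at $y=-R_{\alpha_0}$. The only difference is cosmetic. You certify $M\succeq 0$ through its principal minors, while the paper writes the same form as an explicit sum of squares, $-\frac{R}{2}\bigl|\partial_yq_1(-R)+\frac{q_2(-R)}{R}-\frac{q_1(-R)}{R}\bigr|^2-\frac{R^2-1}{2R}|q_2(-R)|^2$ with $R=R_{\alpha_0}$, which is exactly the factorisation $M=R\,uu^{\top}+\frac{R^2-1}{R}\,e_2e_2^{\top}$ with $u=(1,\tfrac1R,-\tfrac1R)^{\top}$.
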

\begin{proof}
    By definition, for $k=0$ 
    \begin{align*}
        \mathfrak{Re}\langle \mathbf{\widetilde{L}}\mathbf q,\mathbf q\rangle_{0} &= \mathfrak{Re}\int_{-R_{\alpha_0}}^{R_{\alpha_0}}\partial_y(-y\partial_y q_1+q_2)\overline{\partial_y q_1}\,dy + \mathfrak{Re}\int_{-R_{\alpha_0}}^{R_{\alpha_0}}(\partial_{yy}q_1 - q_2-y\partial_y q_2)\overline{q}_2\,dy \\
        &+\frac{1}{R_{\alpha_0}}\mathfrak{Re}((-y\partial_y q_1+q_2-q_1)\overline{q}_1)|_{-R_{\alpha_0}} \\
        &= -\frac{1}{2}\left(\|\partial_y q_1\|_{L_{\alpha_0}^2}^2 + \|q_2\|_{L_{\alpha_0}^2}^2\right) + \left[-\frac{y}{2}|\partial_y q_1|^2\right]_{-R_{\alpha_0}}^{R_{\alpha_0}}+\mathfrak{Re}\left[\partial_y q_1 \overline{q}_2 \right]_{-R_{\alpha_0}}^{R_{\alpha_0}} + \left[-\frac{y}{2}|q_2|^2\right]_{-R_{\alpha_0}}^{R_{\alpha_0}} \\
        &+\frac{1}{R_{\alpha_0}}\mathfrak{Re}(-y\partial_y q_1+q_2-q_1)\overline{q}_1)\rvert_{-R_{\alpha_0}} \\
        &= -\frac{1}{2}\left(\|\partial_y q_1\|_{L_{\alpha_0}^2}^2 + \|q_2\|_{L_{\alpha_0}^2}^2+ \frac{1}{R_{\alpha_0}}|q_1(-R_{\alpha_0})|^2\right) -\frac{R_{\alpha_0}}{2}\left|\partial_y q_1(R_{\alpha_0})-\frac{q_2(R_{\alpha_0})}{R_{\alpha_0}}\right|^2 \\
        &- \frac{R_{\alpha_0}^2-1}{2R_{\alpha_0}}\left|q_2(R_{\alpha_0})\right|^2 
        -\frac{R_{\alpha_0}}{2}\left|\partial_y q_1(-R_{\alpha_0})+\frac{q_2(-R_{\alpha_0})}{R_{\alpha_0}} -\frac{q_1(-R_{\alpha_0})}{R_{\alpha_0}}\right|^2 - \frac{R_{\alpha_0}^2-1}{2R_{\alpha_0}}\left|q_2(-R_{\alpha_0})\right|^2 \\
        &\leq -\frac{1}{2}\|\mathbf{q}\|_0^2.
    \end{align*}
    Now for $k\geq 1$, 
    \begin{align*}
        \mathfrak{Re}\langle \mathbf{\widetilde{L}}\mathbf q,\mathbf q\rangle_{k}  &= -\left(k+\frac{1}{2}\right)\left(\|\partial_y^{k+1}q_1\|_{L_{\alpha_0}^2}^2+\|\partial_y^k q_2\|_{L_{\alpha_0}^2}^2\right) - \frac{R_{\alpha_0}}{2}\left|\partial_y^{k+1}q_1(R_{\alpha_0})-\frac{\partial_y^k q_2(R_{\alpha_0})}{R_{\alpha_0}}\right|^2 + \mathfrak{Re}\langle  \mathbf{\widetilde{L}}\mathbf q,\mathbf q\rangle_{0} \\
        & -\frac{R_{\alpha_0}}{2}\left|\partial_y^{k+1}q_1(-R_{\alpha_0})+\frac{\partial_y^k q_2(-R_{\alpha_0})}{R_{\alpha_0}}\right|^2 - \frac{R_{\alpha_0}^2-1}{2R_{\alpha_0}}\left(|\partial_y^k q_2(R_{\alpha_0})|^2 + |\partial_y^k q_2(-R_{\alpha_0})|^2\right) \\
        &\leq -\frac{1}{2}\left(\|\partial_y^{k+1}q_1\|_{L_{\alpha_0}^2}^2+\|\partial_y^k q_2\|_{L_{\alpha_0}^2}^2\right) -\frac{1}{2}\|\mathbf{q}\|_0^2 = -\frac{1}{2}\|\mathbf{q}\|_k^2
    \end{align*}
\end{proof}
We now construct the resolvent $(\lambda-\widetilde{\mathbf L})^{-1}$ for some $\lambda>-\frac{1}{2}$. We choose $\lambda =0$ for simplicity. 
\begin{lemma}[Dense range of $-\widetilde{\mathbf L}$ on $\mathcal{I}_{\alpha_0}$]
\label{lem:dense_range_L_alpha}
Let $k\ge 0$. For every $\mathbf f\in \mathcal{H}_{\alpha_0}^k$
and $\varepsilon>0$ there exists $\mathbf q\in (C^\infty[-R_{\alpha_0},R_{\alpha_0}])^2$ such that
\[
\|-\widetilde{\mathbf L}\mathbf q-\mathbf f\|_{k}<\varepsilon.
\]
In fact, if $\mathbf f\in (C^\infty[-R_{\alpha_0},R_{\alpha_0}])^2$, then there exists a unique
$\mathbf q\in (C^\infty[-R_{\alpha_0},R_{\alpha_0}])^2$ solving $-\widetilde{\mathbf L}\mathbf q=\mathbf f$ on $\mathcal{I}_{\alpha_0}$.
\end{lemma}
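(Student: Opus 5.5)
The plan is to solve $-\widetilde{\mathbf L}\mathbf q=\mathbf f$ explicitly for smooth $\mathbf f$ and then get the approximate statement by density. Write $c:=q_1(-R_{\alpha_0})$ for the unknown trace. The first component of $-\widetilde{\mathbf L}\mathbf q=\mathbf f$ reads $y\partial_y q_1-q_2+c=f_1$, so $q_2=y\partial_yq_1+c-f_1$. Substituting this into the second component $-\partial_{yy}q_1+q_2+y\partial_yq_2=f_2$ gives the scalar equation
\[
(y^2-1)\partial_{yy}q_1+2y\partial_yq_1=f_2+f_1+y\partial_yf_1-c .
\]
The left side is the exact derivative $\partial_y\big((y^2-1)\partial_yq_1\big)$, and the right side is $F-c$ with $F:=f_2+\partial_y(yf_1)$. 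Set $G(y):=\int_{0}^{y}F(s)\,ds$, which is smooth on $[-R_{\alpha_0},R_{\alpha_0}]$. Integrating once gives
\[
(y^2-1)\partial_yq_1=G(y)-cy+d
\]
for a second constant $d$.

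The main obstacle is crossing the singular points $y=\pm1$, which now lie in the interior of $\mathcal I_{\alpha_0}$. For $\partial_yq_1$ to be smooth across them, the right-hand side must vanish at $y=\pm1$. This gives the linear system $G(1)-c+d=0$ and $G(-1)+c+d=0$, which has the unique solution
\[
c=\tfrac12\big(G(1)-G(-1)\big),\qquad d=-\tfrac12\big(G(1)+G(-1)\big).
\]
With these choices, $H(y):=G(y)-cy+d$ is smooth and vanishes at $\pm1$. By Hadamard's lemma, $H(y)=(y-1)(y+1)K(y)$ with $K\in C^\infty[-R_{\alpha_0},R_{\alpha_0}]$. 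On $(1,R_{\alpha_0}]$ and $[-R_{\alpha_0},-1)$ the factor $y^2-1$ does not vanish, so $\partial_yq_1=K$ holds on the whole interval. This is where the extended cone genuinely differs from the unit-cone case: there $\pm1$ were endpoints, whereas here they are interior points and must be handled by this vanishing condition.

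Next define $q_1(y):=c+\int_{-R_{\alpha_0}}^{y}K(s)\,ds$, which enforces the trace condition $q_1(-R_{\alpha_0})=c$ consistently, and set $q_2:=y\partial_yq_1+c-f_1$. Both are smooth, and reversing the computation shows that $-\widetilde{\mathbf L}\mathbf q=\mathbf f$.

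For uniqueness, take $\mathbf f=0$, so $F=0$ and $G=0$. The vanishing conditions at $\pm1$ then force $c=d=0$, hence $(y^2-1)\partial_yq_1=0$. So $\partial_yq_1=0$ away from $\pm1$, and by continuity everywhere; thus $q_1$ is constant, equal to $q_1(-R_{\alpha_0})=c=0$. Finally $q_2=y\partial_yq_1+c-f_1=0$.

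For general $\mathbf f\in\mathcal H^k_{\alpha_0}$, note that $(C^\infty[-R_{\alpha_0},R_{\alpha_0}])^2$ is dense in $\mathcal H^k_{\alpha_0}$ by Lemma~\ref{lem:norm_equivalence_alpha}. Choose smooth $\tilde{\mathbf f}$ with $\|\tilde{\mathbf f}-\mathbf f\|_k<\varepsilon$ and let $\mathbf q$ be the exact smooth solution for $\tilde{\mathbf f}$; then $\|-\widetilde{\mathbf L}\mathbf q-\mathbf f\|_k<\varepsilon$.
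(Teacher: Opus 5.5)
Your proposal is correct and follows essentially the same route as the paper. You reduce to the divergence-form equation $\partial_y\big((y^2-1)\partial_y q_1\big)=F-q_1(-R_{\alpha_0})$, fix the trace $q_1(-R_{\alpha_0})=\frac12\int_{-1}^1F$ and the remaining integration constant by requiring the right-hand side to vanish at the interior singular points $y=\pm1$, and conclude by density. The differences are cosmetic: you integrate from $0$ rather than from $-R_{\alpha_0}$, and you use Hadamard's lemma where the paper uses a Taylor expansion with integral remainder. Your explicit uniqueness argument also usefully fills in a step that the paper only asserts.
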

\begin{proof}
Since $(C^\infty[-R_{\alpha_0},R_{\alpha_0}])^2$ is dense in $\mathcal H_{\alpha_0}^k$, it suffices to solve
$-\widetilde{\mathbf L}\mathbf q=\mathbf f$ for smooth $\mathbf f=(f_1,f_2)^\top$.

By definition $-\widetilde{\mathbf L}\mathbf q=\mathbf f$ reads
\begin{equation}
\label{eq:system_free_resolvent}
\begin{cases}
y\partial_y q_1 - q_2 + q_1(-R_{\alpha_0})= f_1,\\
q_2 + y\partial_y q_2 - \partial_{yy}q_1 = f_2.
\end{cases}
\end{equation}
Substituting $q_2$ from the top equation of \eqref{eq:system_free_resolvent} yields the scalar ODE
\begin{equation}
\label{eq:scalar_div_form}
\partial_y\!\Big((y^2-1)\partial_y q_1\Big)=F(y)-q_1(-R_{\alpha_0}),
\qquad
F(y):=f_2(y)+y\partial_y f_1(y)+f_1(y).
\end{equation}
Integrating from $-R_{\alpha_0}$ to $y$
\begin{equation*}
    \partial_y q_1(y) = \frac{1}{y^2-1}\left((R_{\alpha_0}^2-1)\partial_y q_1(-R_{\alpha_0}) + \int_{-R_{\alpha_0}}^{y}F(z)-q_1(-R_{\alpha_0})\,dz\right)\coloneqq \frac{N(y)}{y^2-1},
\end{equation*}
we note that $N\in C^\infty[-R_{\alpha_0},R_{\alpha_0}]$. To avoid a singularity at $y=\pm1$ we need $N(\pm1)=0$. Thus, subtracting $N(1)$ from $N(-1)$ yields the first condition
\begin{align}
\label{eq:choice_anchor1}
q_1(-R_{\alpha_0})&=\frac12\int_{-1}^1 F(z)\,dz \iff \int_{-1}^{1}F(z)-q_1(-R_{\alpha_0})dz =0.
\end{align}
Secondly, $N(-1)=0$, say, implies the condition
\begin{align}
\label{eq:choice_anchor2}
    (R_{\alpha_0}^2-1)\partial_y q_1(-R_{\alpha_0}) = -\int_{-R_{\alpha_0}}^{-1}F(z)-q_1(-R_{\alpha_0})dz. 
\end{align}
Using both \eqref{eq:choice_anchor1}--\eqref{eq:choice_anchor2} allows us to simplify $N(y)$ to 
\begin{equation*}
    N(y) = \int_{-1}^{y}F(z)-q_1(-R_{\alpha_0})dz.
\end{equation*}
It now remains to show that $\partial_y q_1$ given by 
\begin{equation}\label{derivative_q_1_candidate}
    \partial_y q_1(y) = \frac{1}{y^2-1}N(y) 
\end{equation}
is smooth on $[-R_{\alpha_0},R_{\alpha_0}]$ and it suffices to show smoothness only at $y=\pm 1$ since $N(y)$ and $1/(y^2-1)$ are regular elsewhere. For $y=-1$, since $F\in C^\infty[-R_{\alpha_0},R_{\alpha_0}]$, we can Taylor expand $N(y)$ up to arbitrary $n\in \mathbb{N}$ around $y=-1$ 
\begin{equation*}
\partial_y q_1(y) = \frac{1}{y-1}\left(N'(-1) + \dots + \frac{1}{n!}(y+1)^{n-1} N^{(n)}(-1)+ \frac{1}{n!(y+1)}\int_{-1}^{y}(y-t)^n N^{(n+1)}(t)\,dt \right).
\end{equation*}
Note that the remainder is smooth at $y=-1$. Thus $\partial_y q_1 \in C^n$ at $y=-1$ for arbitrary $n\in \mathbb{N}$. The argument for $y=1$ is identical. Therefore $q_1\in C^\infty[-R_{\alpha_0},R_{\alpha_0}]$, which implies that $q_2(y) = y\partial_y q_1 + q_1(-R_{\alpha_0}) - f_1 \in C^\infty[-R_{\alpha_0},R_{\alpha_0}]$.
\end{proof}
\begin{proposition}[Semigroup generation by $\mathbf{L}$]\label{semigroup_generation_free_operator}
    For $k\geq 0$, the operator $\widetilde{\mathbf{L}}:\mathcal{D}(\widetilde{\mathbf{L}})\subset \mathcal{H}^k(-R_{\alpha_0},R_{\alpha_0})\to \mathcal{H}^k(-R_{\alpha_0},R_{\alpha_0})$ is densely defined, closable and its closure $\mathbf{L}$ generates a strongly continuous semigroup $\mathbf{S}:[0,\infty)\to \mathcal{L}(\mathcal{H}^k(\mathcal{I}_{\alpha_0}))$ with growth bound 
\begin{equation}\label{growth_bound_free_op}
        \|\mathbf{S}(\tau)\|_{\mathcal{L}(\mathcal{H}_{\alpha_0}^k)}\leq C_{k,\alpha_0}e^{-\frac{1}{2}\tau}
    \end{equation}
    for all $\tau \geq 0$.
\end{proposition}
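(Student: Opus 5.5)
The plan is to apply the Lumer--Phillips theorem on the Hilbert space $(\mathcal H^k_{\alpha_0},\langle\cdot,\cdot\rangle_k)$, using the two lemmas just established. Then we transfer the resulting contraction-type bound back to the Sobolev norm via Lemma~\ref{lem:norm_equivalence_alpha}.

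First, $\widetilde{\mathbf L}$ is densely defined, since $(C^\infty[-R_{\alpha_0},R_{\alpha_0}])^2$ is dense in $\mathcal H^k_{\alpha_0}$; this follows from the completion statement in Lemma~\ref{lem:norm_equivalence_alpha}. Next we shift the operator: set $\widetilde{\mathbf L}_{1/2}:=\widetilde{\mathbf L}+\tfrac12$. Lemma~\ref{Dissipativity_free_op} gives
\[
\mathfrak{Re}\langle\widetilde{\mathbf L}_{1/2}\mathbf q,\mathbf q\rangle_k\le 0
\]
on the domain, so $\widetilde{\mathbf L}_{1/2}$ is dissipative with respect to $\langle\cdot,\cdot\rangle_k$. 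Dissipative, densely defined operators on Hilbert spaces are closable, and their closures are again dissipative (Engel--Nagel, Prop.~II.3.14). Hence $\widetilde{\mathbf L}$ is closable, and its closure $\mathbf L$ satisfies $\overline{\widetilde{\mathbf L}_{1/2}}=\mathbf L+\tfrac12$.

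For the range condition, Lumer--Phillips in its closure form requires that $\mu-\widetilde{\mathbf L}_{1/2}$ have dense range for some $\mu>0$. Taking $\mu=\tfrac12$ gives $\mu-\widetilde{\mathbf L}_{1/2}=-\widetilde{\mathbf L}$, whose range is dense in $\mathcal H^k_{\alpha_0}$ by Lemma~\ref{lem:dense_range_L_alpha}. It is precisely for this reason that $\lambda=0>-\tfrac12$ was chosen there. Lumer--Phillips then yields that $\mathbf L+\tfrac12$ generates a contraction semigroup in $\|\cdot\|_k$. Consequently $\mathbf L$ generates $\mathbf S(\tau)$ with
\[
\|\mathbf S(\tau)\mathbf f\|_k\le e^{-\tau/2}\|\mathbf f\|_k .
\]

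Finally, the norm equivalence constant $C_{k,\alpha_0}$ converts this into the bound \eqref{growth_bound_free_op} in the standard $H^{k+1}\times H^k$ operator norm; the constant squared is absorbed into $C_{k,\alpha_0}$.

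The only point needing care is that dissipativity and dense range must refer to the same inner product and to the same shifted operator. Dissipativity was proved for smooth $\mathbf q$, which is exactly $D(\widetilde{\mathbf L})$, so there is no genuine obstacle. One should still note that the rank-one boundary term in $\widetilde{\mathbf L}$ is continuous on $\mathcal H^k_{\alpha_0}$ by the trace (Sobolev embedding) inequality, so closability is not affected by it.
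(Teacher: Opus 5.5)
Your proposal is correct and follows essentially the same route as the paper: Lumer--Phillips on $(\mathcal H^k_{\alpha_0},\|\cdot\|_k)$ for the shifted operator $\widetilde{\mathbf L}+\tfrac12$. It uses the dissipativity lemma and the dense range of $-\widetilde{\mathbf L}$, which is $\mu=\tfrac12$ for the shifted operator, and then rescales $\mathbf S(\tau)=e^{-\tau/2}\mathbf T(\tau)$ and converts norms via Lemma~\ref{lem:norm_equivalence_alpha}. Your explicit handling of closability and of the requirement $\mu>0$ for the shifted operator is, if anything, slightly more careful than the paper's phrasing.
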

\begin{proof}
    We verify the conditions of the Lumer Phillips Theorem \cite[Theorem 3.15]{engel2000one} on the Banach space $\left(\mathcal{H}_{\alpha_0}^k,\|\cdot\|_k 
\right)$, with $\|\cdot\|_k$ defined as in  \eqref{eq:ip0_alpha}--\eqref{eq:ipk_alpha}. Indeed, $\mathcal{D}(\widetilde{\mathbf{L}}) = (C^{\infty}[-R_{\alpha_0},R_{\alpha_0}])^2$ is dense in $\mathcal{H}^k(-R_{\alpha_0},R_{\alpha_0})$, so $\widetilde{\mathbf{L}}$ is densely defined. By Lemma \ref{Dissipativity_free_op}, the operator $\widetilde{\mathbf{L}}+\frac{1}{2}$ is dissipative, and by Lemma \ref{lem:dense_range_L_alpha}, $\rg(\lambda-\widetilde{\mathbf{L}})$ is dense in $\mathcal{H}_{\alpha_0}^k$ for $\lambda=0$ (and hence all $\lambda>-\frac{1}{2}$, by dissipativity). Thus the closure $\overline{\widetilde{\mathbf{L}}+\frac{1}{2}}\coloneqq \mathbf{L}+\frac{1}{2}$ generates a contraction semigroup $\mathbf{T}$ by the Lumer Phillips theorem, and the rescaled semigroup $\mathbf{S}(\tau)\coloneqq e^{-\frac{\tau}{2}}\mathbf{T}(\tau)$ defines the semigroup for $\mathbf{L}$ satisfying the $\mathcal{H}^k$--operator bound \eqref{growth_bound_free_op} by the equivalence of norms in Lemma \ref{lem:norm_equivalence_alpha}. 
\end{proof}
\begin{proposition}[Semigroup generation by $\mathbf{L}_{\alpha}^{(\alpha_0)}$]\label{Semigroup_generation_lin_operator}
    For $\alpha \in [\alpha_0-\delta_0,\alpha_0+\delta_0]$, $k\geq 0$, the operator $\mathbf{L}_{\alpha,1}$ is bounded on $\mathcal{H}^k(-R_{\alpha_0},R_{\alpha_0})$. As a corollary, $\mathbf{L}_{\alpha}\coloneqq \mathbf{L}+\mathbf{L}_{\alpha,1}$ generates a strongly continuous semigroup $\mathbf{S}_{\alpha}:[0,\infty)\to \mathcal{L}(\mathcal{H}_{\alpha_0}^k)$ satisfying 
    \begin{equation}
\label{growth_bound_linearised_opp}
\|\mathbf{S}_{\alpha}(\tau)\|_{\mathcal{L}(\mathcal{H}_{\alpha_0}^k)}\leq C_{k,\alpha_0}e^{(-\frac{1}{2}+\|\mathbf{L}_{\alpha,1}\|_{\mathcal{L}(\mathcal{H}_{\alpha_0}^k)})\tau}.
    \end{equation}
    for all $\tau\geq 0$. Moreover, the map $\alpha \mapsto \mathbf{L}_{\alpha}^{(\alpha_0)}$ is Lipschitz on $[\alpha_0-\delta_0,\alpha_0+\delta_0]$; for $\alpha_1, \alpha_2 \in [\alpha_0-\delta_0,\alpha_0+\delta_0]$
    \begin{equation}\label{eq:lipschitz_L_alpha}
        \|\mathbf{L}_{\alpha_1}-\mathbf{L}_{\alpha_2}\|_{\mathcal{L}(\mathcal{H}_{\alpha_0}^k)}\leq K_{k,\alpha_0}|\alpha_1-\alpha_2|.
    \end{equation}
\end{proposition}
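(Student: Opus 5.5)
The proof reduces to three tasks: show that $\mathbf{L}_{\alpha,1}$ is bounded on $\mathcal{H}^k_{\alpha_0}$ uniformly in $\alpha$, apply the bounded perturbation theorem to the semigroup $\mathbf{S}$ of Proposition~\ref{semigroup_generation_free_operator}, and check that the coefficient of $\mathbf{L}_{\alpha,1}$ is Lipschitz in $\alpha$.

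\textbf{Boundedness of $\mathbf{L}_{\alpha,1}$.} The operator has two components.

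The first component is $\mathbf{q}\mapsto q_1(-R_{\alpha_0})$, regarded as a constant function on $\mathcal{I}_{\alpha_0}$. Its $H^{k+1}(\mathcal{I}_{\alpha_0})$-norm is $\sqrt{2R_{\alpha_0}}\,|q_1(-R_{\alpha_0})|$. By the one-dimensional Sobolev trace inequality $|q_1(-R_{\alpha_0})|\lesssim_{\alpha_0}\|q_1\|_{H^1(\mathcal I_{\alpha_0})}$, so this component is controlled by $\|\mathbf q\|_{k}$; alternatively one can read the trace off directly from the $\frac{1}{R_{\alpha_0}}|q_1(-R_{\alpha_0})|^2$ term in $\|\cdot\|_0$.

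The second component is multiplication of $\partial_yq_1$ by the coefficient
\[
V_\alpha(y)\coloneqq -\frac{2\alpha}{\sqrt{1+\alpha}+y}.
\]
Since $\sqrt{1+\alpha}+y\geq k_{\alpha_0}>0$ on $\overline{\mathcal I_{\alpha_0}}$ for all $\alpha\in[\alpha_0-\delta_0,\alpha_0+\delta_0]$, each derivative satisfies
\[
\partial_y^jV_\alpha=(-1)^{j+1}\,2\alpha\, j!\,(\sqrt{1+\alpha}+y)^{-1-j},
\]
which is bounded in $L^\infty(\mathcal I_{\alpha_0})$ uniformly in $\alpha$ for $j\leq k$. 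By the Leibniz rule,
\[
\|V_\alpha\partial_yq_1\|_{H^k}\lesssim_{k,\alpha_0}\|\partial_y q_1\|_{H^k}\leq\|q_1\|_{H^{k+1}}.
\]
Combining the two components with Lemma~\ref{lem:norm_equivalence_alpha} gives
\[
\sup_{\alpha}\|\mathbf L_{\alpha,1}\|_{\mathcal L(\mathcal H^k_{\alpha_0})}<\infty .
\]

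\textbf{Semigroup generation.} By the bounded perturbation theorem \cite[III.1.3]{engel2000one}, $\mathbf L_\alpha=\mathbf L+\mathbf L_{\alpha,1}$ with domain $\mathcal D(\mathbf L)$ generates a $C_0$-semigroup $\mathbf S_\alpha$. The growth estimate also follows from that theorem: if $\|\mathbf S(\tau)\|\leq Me^{\omega\tau}$, then $\|\mathbf S_\alpha(\tau)\|\leq Me^{(\omega+M\|\mathbf L_{\alpha,1}\|)\tau}$. Measured in the norm $\|\cdot\|_k$, the free semigroup is a contraction up to the factor $e^{-\tau/2}$, so $M=1$ there. This gives the bound $e^{(-\frac12+\|\mathbf L_{\alpha,1}\|_k)\tau}$ in $\|\cdot\|_k$. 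Passing to the $\mathcal H^k$ norm via Lemma~\ref{lem:norm_equivalence_alpha} produces the constant $C_{k,\alpha_0}$ in \eqref{growth_bound_linearised_opp}, with the operator norm in the exponent read in the $\|\cdot\|_k$ norm; equivalently, after adjusting constants, in the equivalent $\mathcal{H}^k$ operator norm. This is the one point needing care, because $M$ appears multiplicatively in the exponent and so must equal $1$ for the stated form.

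\textbf{Lipschitz dependence on $\alpha$.} The trace part of $\mathbf L_{\alpha,1}$ is independent of $\alpha$, so
\[
\mathbf L_{\alpha_1}-\mathbf L_{\alpha_2}=\bigl(0,(V_{\alpha_1}-V_{\alpha_2})\partial_yq_1\bigr).
\]
The map $(\alpha,y)\mapsto V_\alpha(y)$ is smooth on the compact set $[\alpha_0-\delta_0,\alpha_0+\delta_0]\times\overline{\mathcal I_{\alpha_0}}$, since the denominator is bounded below by $k_{\alpha_0}$. Hence $\partial_\alpha\partial_y^jV_\alpha$ is bounded uniformly there for $j\le k$. The mean value theorem then gives
\[
\|\partial_y^j(V_{\alpha_1}-V_{\alpha_2})\|_{L^\infty}\lesssim|\alpha_1-\alpha_2|,
\]
and the same Leibniz estimate as above yields \eqref{eq:lipschitz_L_alpha}.

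I expect no step to be a genuine obstacle. The only real inputs are the uniform lower bound $\sqrt{1+\alpha}+y\geq k_{\alpha_0}$ and the bookkeeping of constants in the perturbation bound.
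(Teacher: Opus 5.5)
Your proposal is correct and follows essentially the same route as the paper: a uniform lower bound on $\sqrt{1+\alpha}+y$ gives $W^{k,\infty}$ control of the potential together with a Sobolev/trace bound for $q_1(-R_{\alpha_0})$, the bounded perturbation theorem gives generation, and a mean-value argument in $\alpha$ gives \eqref{eq:lipschitz_L_alpha}. Your observation that the perturbation theorem should be applied in the $\|\cdot\|_k$ norm, where the free semigroup has $M=1$, is a useful refinement; the paper only cites the theorem. One caveat: rewriting the exponent in terms of the Sobolev operator norm changes $\|\mathbf L_{\alpha,1}\|$ by a multiplicative constant in the exponent, not merely in the prefactor, which is harmless here since the bound is only used qualitatively.
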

\begin{proof}
    Recall that for $\delta_0 \coloneqq \min\{1,\frac{\alpha_0}{4} \}$ and $\alpha \in [\alpha_0-\delta_0,\alpha_0+\delta_0]$, the denominator of the potential, $\partial_y U_{\alpha,\kappa}$, is uniformly non--zero
    \begin{equation*}
        \sqrt{1+\alpha}+y \geq \sqrt{1+\frac{3\alpha_0}{4}} - R_{\alpha_0}>0.
    \end{equation*}
    Thus for 
    $\mathbf{q}\in \mathcal{H}^k(\mathcal{I}_{\alpha_0})$
    \begin{align*}
\|\mathbf{L}_{\alpha,1}\mathbf{q}\|_{\mathcal{H}_{\alpha_0}^k} &= \|q_1(-R_{\alpha_0})\|_{H^{k+1}(\mathcal{I}_{\alpha_0})}+\left\|-\frac{2\alpha}{\sqrt{1+\alpha}+y}\partial_y q_1\right\|_{H^k(\mathcal{I}_{\alpha_0})} \\
    &\leq C_{k,\alpha_0}\left( \|q_1\|_{L^\infty(\mathcal{I}_{\alpha_0})}+
    \left\| \frac{2\alpha}{\sqrt{1+\alpha}+y} \right\|_{W^{k,\infty}(\mathcal{I}_{\alpha_0})} 
    \|\partial_y q_1\|_{H^k(\mathcal{I}_{\alpha_0})}\right)\\
    &\leq C_{k,\alpha_0}\|q_1\|_{H^{k+1}(\mathcal{I}_{\alpha_0})} \leq C_{k,\alpha_0}\|\mathbf{q}\|_{\mathcal{H}^k(\mathcal{I}_{\alpha_0})}.
    \end{align*}
    Thus, by the bounded perturbation theorem \cite[Theorem 1.3]{engel2000one}, $\mathbf{L}_{\alpha}$ also generates a strongly continuous semigroup, with growth bound \eqref{growth_bound_linearised_opp}.

    Now, to prove the Lipschitz estimate \eqref{eq:lipschitz_L_alpha}, by definition of the operator, we use the product estimate
    \begin{align*}
        \|(\mathbf{L}_{\alpha_1}-\mathbf{L}_{\alpha_2})\mathbf{q}\|_{\mathcal{H}_{\alpha_0}^k} = 2\|\partial_y U_{\alpha_1,\kappa}\partial_y q_1 - \partial_y U_{\alpha_2,\kappa}\partial_y q_1\|_{H_{\alpha_0}^k} \lesssim_k \|\partial_y U_{\alpha_1,\kappa}-\partial_y U_{\alpha_2,\kappa}\|_{W^{k,\infty}_{\alpha_0}}\|\mathbf{q}\|_{\mathcal{H}_{\alpha_0}^k}.
    \end{align*}
    Using the fundamental theorem of calculus in the parameter $\alpha>0$, we have for each $y\in[-R_{\alpha_0},R_{\alpha_0}]$
    \begin{equation*}
        \partial_y U_{\alpha_2,\kappa}-\partial_y U_{\alpha_1,\kappa} = \int_{\alpha_1}^{\alpha_2}\partial_a \partial_y U_{a,\kappa}(y)\,da = (\alpha_2-\alpha_1)\int_{0}^{1}\partial_a \partial_y U_{a(s),\kappa}(y)\,ds, \quad a(s) \coloneqq \alpha_1+(\alpha_2-\alpha_1)s.
    \end{equation*}
    Since $a(s)\in[\alpha_0-\delta_0,\alpha_0+\delta_0]$ and $y\in[-R_{\alpha_0},R_{\alpha_0}]$, we have
$\sqrt{1+a(s)}+y\geq k(\alpha_0)>0$; hence $\partial_a\partial_y^{m+1}U_{a,\kappa}$
is continuous and uniformly bounded on this set. Thus we 
    differentiate $m\leq k$ times in $y$ under the integral, commuting derivatives in $a,y$, which gives 
    \begin{equation*}
        \partial_y^m[\partial_y U_{\alpha_2,\kappa}(y)-\partial_y U_{\alpha_1,\kappa}(y)] = (\alpha_2-\alpha_1)\int_{0}^{1}\partial_a \partial_y^{m+1}U_{a(s),\kappa}(y)\,ds.
    \end{equation*}
    Then for all integers $0\leq m\leq k$
    \begin{equation*}
        \|\partial_y^m(\partial_y U_{\alpha_2,\kappa}-\partial_y U_{\alpha_1,\kappa})\|_{L^{\infty}(\mathcal{I}_{\alpha_0})} \leq |\alpha_2-\alpha_1|\sup_{a\in [\alpha_0-\delta_0,\alpha_0+\delta_0]}\|\partial_a \partial_y^{m+1}U_{a,\kappa}(\cdot)\|_{L^\infty(\mathcal{I}_{\alpha_0})}.
    \end{equation*}
    Taking the maximum over all $m\leq k$ yields the $W^{k,\infty}$ norm and the Lipschitz constant $K_{k,\alpha_0}>0$. 
\end{proof}
\subsection{A compact decomposition}
Since the perturbation $\mathbf{L}_{\alpha,1}$ is only a bounded operator on $\mathcal{H}^k(-R_{\alpha_0},R_{\alpha_0})$ and not compact, the goal is to decompose $\mathbf{L}_{\alpha}$ into the sum of a maximally dissipative operator and a finite-rank (compact) operator. This will allow us to identify the unstable and stable parts of the spectrum. Note, however, that $\mathbf{L}_{\alpha}$ does not always exhibit dissipation for every choice of $\alpha_0>0$. Indeed, for all $\alpha\in [\alpha_0-\delta_0,\alpha_0+\delta_0]$
\begin{align*}
    \mathfrak{Re}\langle \mathbf{L}_{\alpha}\mathbf{q},\mathbf{q}\rangle_{\dot{H}^1(\mathcal{I}_{\alpha_0})\times L^2(\mathcal{I}_{\alpha_0})} = -\frac{1}{2}\left(\|\partial_y q_1 \|_{L_{\alpha_0}^2}^2+\|q_2\|_{L_{\alpha_0}^2}^2\right)
    -\frac{R_{\alpha_0}^2-1}{2R_{\alpha_0}}\left(|q_2(R_{\alpha_0})|^2+|q_2(-R_{\alpha_0})|^2\right) \\
    - \frac{R_{\alpha_0}}{2}\left(\left|\partial_y q_1(R_{\alpha_0})-\frac{q_2(R_{\alpha_0})}{R_{\alpha_0}}\right|^2 
    +\left|\partial_y q_1(-R_{\alpha_0})+\frac{q_2(-R_{\alpha_0})}{R_{\alpha_0}} \right|^2\right)  + \mathfrak{Re}\int_{-R_{\alpha_0}}^{R_{\alpha_0}}-\frac{2\alpha}{\sqrt{1+\alpha}+y}\partial_y q_1\overline{q_2}dy \\
    \leq -\frac{1}{2}\left(\|\partial_y q_1 \|_{L_{\alpha_0}^2}^2+\|q_2\|_{L_{\alpha_0}^2}^2\right) + c_{\alpha_0}\left(\|\partial_y q_1\|_{L_{\alpha_0}^2}^2 + \|q_2\|_{L_{\alpha_0}^2}^2\right)
\end{align*}
where we denote as $c_{\alpha_0}$ the uniform (in $\alpha \in [\alpha_0-\delta_0,\alpha_0+\delta_0]$) bound
\begin{equation}\label{c_alpha_0}
     \sup_{\alpha}\left\|\frac{\alpha}{\sqrt{1+\alpha}+y}\right\|_{L_{\alpha_0}^{\infty}} \leq \frac{(\alpha_0+\delta_0)}{\sqrt{1+\alpha_0-\delta_0}-\frac{1+\sqrt{1+\alpha_0}}{2}} = \frac{(\alpha_0+\min\{1,\frac{\alpha_0}{4}\})}{\sqrt{1+\alpha_0-\min\{1,\frac{\alpha_0}{4}\}}-\frac{1+\sqrt{1+\alpha_0}}{2}} \coloneqq c_{\alpha_0}.
\end{equation}
We demonstrate the following important property of this regularity threshold in relation to the one-dimensional analysis. In order to import certain results from the one-dimensional analysis, we need that for $\alpha \in [\alpha_0-\delta_0,\alpha_0+\delta_0]$, there holds $\lceil c_{\alpha_0} \rceil + 1 \geq \lceil \sqrt{1+\alpha} \rceil +2$. Indeed, we will see that our regularity assumption is much stronger. Denoting the denominator as $D$ in the definition of $c_{\alpha_0}$, we recall from the discussion at the beginning of Section \ref{section_two} the positivity 
\begin{align*}
    D\coloneqq \sqrt{1+\alpha_0 - \delta_0} - \frac{1+\sqrt{1+\alpha_0}}{2} \geq \sqrt{1+\frac{3}{4}\alpha_0}-\frac{1+\sqrt{1+\alpha_0}}{2}>0.
\end{align*}
Moreover 
\begin{equation*}
    D<\sqrt{1+\alpha_0}-\frac{1+\sqrt{1+\alpha_0}}{2}<\frac{\sqrt{1+\alpha_0}-1}{2}
\end{equation*}
and thus 
\begin{equation*}
    c_{\alpha_0} = \frac{\alpha_0+\delta_0}{D}>\frac{2\alpha_0}{\sqrt{1+\alpha_0}-1}=2(1+\sqrt{1+\alpha_0}).
\end{equation*}
\begin{remark}
    The constant $c_{\alpha_0}$ will play a crucial role in determining the required regularity of initial data. Indeed, as $\alpha_0$ increases, we require more regularity. For example, in \cite{ghoul2025blow} it was shown that any $H^k(-1,1)$ solution with $k\geq \lceil \sqrt{1+\alpha }\rceil+2$ to the eigenequation \eqref{Eigenequation} is actually smooth. In this section we will also see that dissipativity depends on the regularity exceeding this constant.
\end{remark}
\begin{remark}
    We also remark that making the choice of $\frac{\alpha_0}{4}$ in the definition of $\delta_0>0$ prevents the regularity from becoming absurdly large for $\alpha_0>0$ small. In this paper, we do not concern ourselves with optimising the regularity of initial data, which is a trade-off between the interval $\mathcal{I}_{\alpha_0}$ and the choice of $\delta_0$. 
\end{remark}
Despite the lack of dissipation in $\dot{H}^1 \times L^2$, we will see that by considering $\mathbf{L}_{\alpha}^{(\alpha_0)}$ in Sobolev spaces with regularity $k\geq \lceil c_{\alpha_0} \rceil +1$, we obtain more dissipation. Therefore, to proceed, we state a technical lemma for commutation between $\mathbf{L}_{\alpha}$ and derivatives. 
\begin{lemma}[Commuting with derivatives]\label{Commutation_derivatives}
    For $\alpha\in [\alpha_0-\delta_0,\alpha_0+\delta_0]$ and $k\geq 0$, we have
    \begin{equation*}
        \partial_y^k \mathbf{L}_{\alpha} = \mathbf{L}_{\alpha,k}\partial_y^k + \mathbf{L}_{\alpha,k}'
    \end{equation*}
    where \begin{equation*}
        \mathbf{L}_{\alpha,k}\coloneqq \begin{pmatrix}
            -k -y\partial_y & 1 \\
            \partial_{yy} -\frac{2\alpha}{\sqrt{1+\alpha}+y}\partial_y & -1-k-y\partial_y
        \end{pmatrix}
    \end{equation*}
    and $\mathbf{L}_{\alpha,k}'$ satisfies the pointwise bound 
    \begin{equation*}
        |\mathbf{L}_{\alpha,k}'\mathbf{q}| \lesssim \begin{pmatrix}
            0 \\
            \sum_{j=0}^{k-1}|\partial_y^{k-j+1}U_{\alpha,\kappa}||\partial_y^{j+1} q_1| 
        \end{pmatrix}
        \lesssim_{\alpha_0,k} \begin{pmatrix}
            0 \\ 
            \sum_{j=0}^{k-1}|\partial_y^{j+1}q_1|
        \end{pmatrix}
    \end{equation*}
\end{lemma}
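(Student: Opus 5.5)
The plan is a direct computation, differentiating each component of $\mathbf{L}_\alpha\mathbf q$ $k$ times with Leibniz's rule. Since $\mathbf L_\alpha$ is the closure of $\widetilde{\mathbf L}_\alpha$ on smooth functions, we verify the identity for $\mathbf q\in (C^\infty[-R_{\alpha_0},R_{\alpha_0}])^2$. We interpret $\mathbf L_\alpha$ via the differential expression \eqref{eq:Ltilde_alpha_Ia}. The rank-one terms $\pm q_1(-R_{\alpha_0})$ in the decomposition \eqref{eq:decomp_Ltilde} cancel, and in any case a constant is annihilated by $\partial_y^k$ for $k\ge1$. The case $k=0$ is trivial, with $\mathbf L_{\alpha,0}'=0$.

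The first component is $-y\partial_y q_1+q_2$. The key identity is the commutator $\partial_y^k(y\partial_y f)=y\partial_y^{k+1}f+k\partial_y^k f$, proved by induction or directly from Leibniz, since only the terms with zero or one derivative on $y$ survive. Applying it with $f=q_1$ gives
\[
\partial_y^k(-y\partial_y q_1+q_2)=(-k-y\partial_y)\partial_y^k q_1+\partial_y^k q_2.
\]
This is exactly the first row of $\mathbf L_{\alpha,k}\partial_y^k\mathbf q$, so the first component of $\mathbf L_{\alpha,k}'$ is zero. In the second component, the same identity gives
\[
\partial_y^k(-q_2-y\partial_y q_2)=(-1-k-y\partial_y)\partial_y^k q_2,
\]
and $\partial_y^k\partial_{yy}q_1=\partial_{yy}\partial_y^kq_1$ trivially.

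The only remaining term is the potential. Writing $-\frac{2\alpha}{\sqrt{1+\alpha}+y}=2\partial_yU_{\alpha,\kappa}$ by \eqref{eq_sol_family_sim_coordinates}, Leibniz gives
\[
\partial_y^k\big(2\partial_yU_{\alpha,\kappa}\,\partial_y q_1\big)=2\partial_yU_{\alpha,\kappa}\,\partial_y^{k+1}q_1+2\sum_{j=0}^{k-1}\binom{k}{j}\partial_y^{k-j+1}U_{\alpha,\kappa}\,\partial_y^{j+1}q_1 .
\]
The first term is the potential part of $\mathbf L_{\alpha,k}\partial_y^k\mathbf q$. The sum defines the second component of $\mathbf L_{\alpha,k}'\mathbf q$, which yields the first pointwise bound with constants $2\binom{k}{j}$.

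For the second bound, the plan is to use the uniform lower bound $\sqrt{1+\alpha}+y\ge \sqrt{1+\tfrac34\alpha_0}-R_{\alpha_0}>0$ established before Proposition~\ref{Semigroup_generation_lin_operator}. Explicitly,
\[
\partial_y^{m}U_{\alpha,\kappa}=(-1)^{m}(m-1)!\,\alpha(\sqrt{1+\alpha}+y)^{-m},
\]
so for $2\le m\le k+1$ these derivatives are bounded uniformly in $y\in\overline{\mathcal I_{\alpha_0}}$ and $\alpha\in[\alpha_0-\delta_0,\alpha_0+\delta_0]$ by a constant depending only on $\alpha_0$ and $k$.

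No step is a real obstacle. The only care needed is bookkeeping the commutator $[\partial_y^k,y\partial_y]=k\partial_y^k$, which produces the shifts $-k$ and $-1-k$, and checking that the index range $j\le k-1$ captures exactly the terms in which at least one derivative falls on $\partial_yU_{\alpha,\kappa}$.
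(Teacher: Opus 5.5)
Your proposal is correct. The paper states this lemma without proof, and your direct computation is exactly the intended verification: the commutator identity $\partial_y^k(y\partial_y f)=y\partial_y^{k+1}f+k\partial_y^kf$ handles the transport terms, and Leibniz's rule applied to $2\partial_yU_{\alpha,\kappa}\,\partial_yq_1$ produces the terms with $j\le k-1$ that form $\mathbf{L}_{\alpha,k}'$. The uniform bound then follows from the explicit formula for $\partial_y^mU_{\alpha,\kappa}$ together with the lower bound $\sqrt{1+\alpha}+y\geq\sqrt{1+\tfrac34\alpha_0}-R_{\alpha_0}>0$.
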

We now introduce the following equivalent Sobolev inner product. For $k\geq 0$, set 
\begin{equation*}
    \langle \langle \Psi,\Psi \rangle \rangle_k \coloneqq (\partial_y^k\Psi,\partial_y^k \Psi)_{L_{\alpha_0}^2}+(\Psi,\Psi)_{L_{\alpha_0}^2}
\end{equation*}
whereas we denote $(\cdot,\cdot)_k$ as the standard $H^k$ inner product with all intermediate derivatives. For tuples $\mathbf{q} = (q_1,q_2)^\top$ we will denote
\begin{equation*}
    \langle \langle \mathbf{q},\widetilde{\mathbf{q}}\rangle \rangle_k \coloneqq \langle \langle q_1, \widetilde{q}_1 \rangle \rangle_{k+1} + \langle \langle q_2,\widetilde{q}_2 \rangle \rangle_k.
\end{equation*}
We now use the following subcoercivity result about the space $H^k$, which has been used for the NLS and the supercritical power-nonlinear wave equation, for which a proof can be found in \cite{merle2022blow,kim2024self}.
\begin{lemma}[Subcoercivity]\label{subcoercivity}
    Given $m\geq 1$, there exists $\epsilon_n>0$ with $\lim_{n\to \infty}\epsilon_n =0$, $\{\Pi_i\}_{1\leq i\leq n}\subset H^{m}(-R_{\alpha_0},R_{\alpha_0})$, and $c_n >0$ such that for all $n\geq 0$ and $\Psi\in H^{m}(-R_{\alpha_0},R_{\alpha_0})$,
    \begin{equation*}
        \epsilon_n \langle \langle \Psi,\Psi\rangle \rangle_m \geq \|\Psi\|_{H_{\alpha_0}^{m-1}}^2 -c_n \sum_{i=1}^{n}|(\Psi,\Pi_i)_{L_{\alpha_0}^2}|^2.
    \end{equation*}
\end{lemma}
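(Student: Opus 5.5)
We argue by contradiction combined with compactness, which is the standard route for subcoercivity estimates of this type. The indexing $n$ in the statement refers to the number of finite-rank constraints, not the dimension. The plan is to take $\{\Pi_i\}_{i\ge1}$ to be a fixed orthonormal basis of $L^2(\mathcal I_{\alpha_0})$ consisting of smooth functions, for instance rescaled Legendre polynomials on $(-R_{\alpha_0},R_{\alpha_0})$. These lie in every $H^m(\mathcal I_{\alpha_0})$. For each $n$ we then define
\[
\epsilon_n:=\sup\Big\{\|\Psi\|_{H^{m-1}_{\alpha_0}}^2 \ :\ \langle\langle\Psi,\Psi\rangle\rangle_m=1,\ (\Psi,\Pi_i)_{L^2_{\alpha_0}}=0\ \text{for } 1\le i\le n\Big\}.
\]

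First, $\epsilon_n$ is finite. By the interpolation (Gagliardo--Nirenberg/Ehrling) inequality on a bounded interval, $\|\Psi\|_{H^{m-1}}^2\le C\,(\|\partial_y^m\Psi\|_{L^2}^2+\|\Psi\|_{L^2}^2)=C\langle\langle\Psi,\Psi\rangle\rangle_m$. So $\langle\langle\cdot,\cdot\rangle\rangle_m$ is equivalent to the full $H^m$ norm, and $\epsilon_n\le C$. The sequence $\epsilon_n$ is also nonincreasing in $n$, since the constraint set shrinks.

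Next, we show $\epsilon_n\to0$ by contradiction. Suppose $\epsilon_n\ge\epsilon>0$ for all $n$. Then we can pick $\Psi_n$ with $\langle\langle\Psi_n,\Psi_n\rangle\rangle_m=1$, $\Psi_n\perp\Pi_1,\dots,\Pi_n$, and $\|\Psi_n\|_{H^{m-1}}^2\ge\epsilon/2$. By Rellich, the embedding $H^m(\mathcal I_{\alpha_0})\hookrightarrow H^{m-1}(\mathcal I_{\alpha_0})$ is compact, so a subsequence converges weakly in $H^m$ and strongly in $H^{m-1}$ (hence in $L^2$) to some $\Psi$. For each fixed $i$, $(\Psi,\Pi_i)=\lim(\Psi_n,\Pi_i)=0$. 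Completeness of the basis forces $\Psi=0$. But then $\|\Psi_n\|_{H^{m-1}}\to0$, contradicting the lower bound $\epsilon/2$. This compactness step is the only genuinely non-elementary point, and it is the step where the boundedness of $\mathcal I_{\alpha_0}$ is used.

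Finally, we remove the constraint. Given any $\Psi\in H^m$, write $\Psi=\Psi^\perp+P_n\Psi$, where $P_n\Psi=\sum_{i\le n}(\Psi,\Pi_i)\Pi_i$. Then
\[
\|\Psi\|_{H^{m-1}}^2\le(1+\eta)\|\Psi^\perp\|_{H^{m-1}}^2+(1+\eta^{-1})\|P_n\Psi\|_{H^{m-1}}^2 .
\]
For the first term, apply the definition of $\epsilon_n$ to $\Psi^\perp$ (by homogeneity) to get $\|\Psi^\perp\|_{H^{m-1}}^2\le\epsilon_n\langle\langle\Psi^\perp,\Psi^\perp\rangle\rangle_m$. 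Then bound
\[
\langle\langle\Psi^\perp,\Psi^\perp\rangle\rangle_m\le 2\langle\langle\Psi,\Psi\rangle\rangle_m+2\langle\langle P_n\Psi,P_n\Psi\rangle\rangle_m .
\]
Both $\|P_n\Psi\|_{H^{m-1}}^2$ and $\langle\langle P_n\Psi,P_n\Psi\rangle\rangle_m$ are at most $C_n\sum_{i\le n}|(\Psi,\Pi_i)|^2$, with $C_n$ depending on $\max_{i\le n}\|\Pi_i\|_{H^m}$. Taking $\eta=1$, we obtain the statement with $\epsilon_n$ replaced by $4\epsilon_n$, which still tends to zero, and with a suitable $c_n$.
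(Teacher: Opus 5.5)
Your argument is correct and complete. The constrained supremum $\epsilon_n$ is finite and nonincreasing by the intermediate-derivative (Ehrling) estimate, it tends to $0$ by Rellich compactness of $H^m(\mathcal I_{\alpha_0})\hookrightarrow H^{m-1}(\mathcal I_{\alpha_0})$ together with completeness of the $L^2$-basis, and the unconstrained inequality follows by splitting off the $L^2$-projection onto $\Pi_1,\dots,\Pi_n$; positivity of $\epsilon_n$ is automatic since $\|\Psi\|_{H^{m-1}}\ge\|\Psi\|_{L^2}$. The paper gives no proof of its own and only refers to \cite{merle2022blow,kim2024self}, where the lemma rests on this same compactness of the Sobolev embedding on a bounded domain, so your proposal is essentially the cited argument written out in full.
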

\begin{proposition}[Maximal dissipativity]\label{maximal_dissipativity_proposition}
    For all $\alpha \in [\alpha_0-\delta_0,\alpha_0+\delta_0]$, $k\geq \lceil c_{\alpha_0} \rceil + 1$, $\varepsilon>0$ sufficiently small, there exist functions $(\mathbf{\Pi}_{\alpha_0,i})_{1\leq i\leq N}\subset \mathcal{H}^k(-R_{\alpha_0},R_{\alpha_0})$ such that for the finite-rank operator
    \begin{equation*}
        \widehat{\mathbf{P}}_{\alpha_0} \coloneqq \sum_{i=1}^{N}\langle \langle \cdot,\mathbf{\Pi}_{\alpha_0,i}\rangle \rangle_k\mathbf{\Pi}_{\alpha_0,i},
    \end{equation*}
    the modified operator 
    \begin{equation*}
        \widehat{\mathbf{L}}_{\alpha}^{(\alpha_0)} \coloneqq \mathbf{L}_{\alpha}^{(\alpha_0)}-\widehat{\mathbf{P}}_{\alpha_0}
    \end{equation*}
    is dissipative: 
    \begin{equation}\label{maximal_dissipativity_estimate}
        \forall \mathbf{q}\in \mathcal{D}(\mathbf{L}_{\alpha}^{(\alpha_0)}),\quad \mathfrak{Re}\langle \langle -\widehat{\mathbf{L}}_{\alpha}^{(\alpha_0)}\mathbf{q},\mathbf{q}\rangle\rangle_k \geq \left(k+\frac{1}{2}-c_{\alpha_0}-\varepsilon \right)\langle \langle \mathbf{q},\mathbf{q}\rangle \rangle_k 
        \geq \left(\frac{3}{2}-\varepsilon\right)\langle \langle \mathbf{q},\mathbf{q}\rangle \rangle_k
    \end{equation}
    and is maximal: 
    \begin{equation*}
        \lambda - \widehat{\mathbf{L}}_{\alpha}^{(\alpha_0)} \, \text{is surjective for some} \,\, \lambda>0.
    \end{equation*}
\end{proposition}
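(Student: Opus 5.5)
Our plan follows the standard Merle--Rapha\"el--Rodnianski--Szeftel / Ghoul--Liu--Masmoudi scheme, now on $\mathcal I_{\alpha_0}$, with the new boundary terms at $y=\pm R_{\alpha_0}$ tracked explicitly.

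\textbf{Top-order energy identity.} Since $\partial_y^{k}$ commutes with $\mathbf L_\alpha$ up to lower order (Lemma~\ref{Commutation_derivatives}), we compute
\[
\mathfrak{Re}\,\big(\mathbf L_{\alpha,k}\partial_y^k\mathbf q,\partial_y^k\mathbf q\big)
\]
in $\dot H^1\times L^2$ for the pair $(\partial_y^k q_1,\partial_y^k q_2)$, where the first component is placed in $\dot H^{1}$ so that the top derivatives are $\partial_y^{k+1}q_1$ and $\partial_y^kq_2$. We repeat the integration by parts of Lemma~\ref{Dissipativity_free_op}. The transport terms $-k-y\partial_y$ and $-1-k-y\partial_y$ produce
\[
-\Big(k+\tfrac12\Big)\Big(\|\partial_y^{k+1}q_1\|^2+\|\partial_y^kq_2\|^2\Big)
\]
together with boundary terms at $\pm R_{\alpha_0}$. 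As in Lemma~\ref{Dissipativity_free_op}, these boundary terms combine into the negative squares
\[
-\tfrac{R_{\alpha_0}}{2}\Big|\partial_y^{k+1}q_1 \mp \tfrac{\partial_y^k q_2}{R_{\alpha_0}}\Big|^2 \quad\text{and}\quad -\tfrac{R_{\alpha_0}^2-1}{2R_{\alpha_0}}\,|\partial_y^kq_2|^2,
\]
using $R_{\alpha_0}>1$, so they can be discarded. The potential term is bounded by Cauchy--Schwarz and \eqref{c_alpha_0}:
\[
\Big|\int \tfrac{2\alpha}{\sqrt{1+\alpha}+y}\,\partial_y^{k+1}q_1\,\overline{\partial_y^kq_2}\Big|\le c_{\alpha_0}\Big(\|\partial_y^{k+1}q_1\|^2+\|\partial_y^kq_2\|^2\Big).
\]
The commutator $\mathbf L'_{\alpha,k}$ involves only $\partial_y^{j+1}q_1$ with $j\le k-1$, so it is bounded by $\eta\,\langle\langle\mathbf q,\mathbf q\rangle\rangle_k+C_\eta\|\mathbf q\|^2_{\mathcal H^{k-1}}$. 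The $L^2$ part of $\langle\langle\cdot,\cdot\rangle\rangle_k$, namely $(\mathbf L_\alpha\mathbf q,\mathbf q)_{L^2}$, is also controlled by $\eta\,\langle\langle\mathbf q,\mathbf q\rangle\rangle_k+C_\eta\|\mathbf q\|^2_{\mathcal H^{k-1}}$; this uses that the $\partial_{yy}q_1$ contribution only needs $H^2$ of $q_1$, and $k\ge1$. Altogether,
\[
\mathfrak{Re}\,\langle\langle-\mathbf L_\alpha\mathbf q,\mathbf q\rangle\rangle_k\ge\Big(k+\tfrac12-c_{\alpha_0}-\eta\Big)\langle\langle\mathbf q,\mathbf q\rangle\rangle_k-C_\eta\|\mathbf q\|^2_{\mathcal H^{k-1}} .
\]
Every constant here is uniform in $\alpha\in[\alpha_0-\delta_0,\alpha_0+\delta_0]$.

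\textbf{Absorbing lower order by a finite-rank projection.} We apply Lemma~\ref{subcoercivity} to each component, with $m=k+1$ for $q_1$ and $m=k$ for $q_2$, choosing $n$ so large that $C_\eta\epsilon_n\le\eta$. This gives
\[
C_\eta\|\mathbf q\|^2_{\mathcal H^{k-1}}\le\eta\,\langle\langle\mathbf q,\mathbf q\rangle\rangle_k+C\sum_i|(\mathbf q,\mathbf\Pi_i)_{L^2}|^2 .
\]
Each $L^2$ pairing is a bounded functional on $(\mathcal H^k_{\alpha_0},\langle\langle\cdot,\cdot\rangle\rangle_k)$, so by Riesz it can be written as $\langle\langle\mathbf q,\widetilde{\mathbf\Pi}_i\rangle\rangle_k$. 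Rescaling the $\widetilde{\mathbf\Pi}_i$ by $\sqrt C$ yields the $\mathbf\Pi_{\alpha_0,i}$, and then $\mathfrak{Re}\,\langle\langle\widehat{\mathbf P}_{\alpha_0}\mathbf q,\mathbf q\rangle\rangle_k=\sum_i|\langle\langle\mathbf q,\mathbf\Pi_{\alpha_0,i}\rangle\rangle_k|^2$ exactly cancels the negative term. Setting $\varepsilon=2\eta$ gives \eqref{maximal_dissipativity_estimate}. The second inequality in \eqref{maximal_dissipativity_estimate} follows from $k\ge\lceil c_{\alpha_0}\rceil+1\ge c_{\alpha_0}+1$. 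The $\mathbf\Pi_{\alpha_0,i}$ can be chosen independently of $\alpha$, because all the constants above are uniform in $\alpha$.

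\textbf{Maximality.} Write $\widehat{\mathbf L}_\alpha=\mathbf L+(\mathbf L_{\alpha,1}-\widehat{\mathbf P}_{\alpha_0})$. By Propositions~\ref{semigroup_generation_free_operator} and~\ref{Semigroup_generation_lin_operator}, $\mathbf L_{\alpha,1}-\widehat{\mathbf P}_{\alpha_0}$ is bounded, so $\widehat{\mathbf L}_\alpha$ generates a $C_0$-semigroup. Hence $\lambda-\widehat{\mathbf L}_\alpha$ is bijective for all $\lambda$ larger than its growth bound, and surjectivity for some $\lambda>0$ follows. Together with dissipativity in the equivalent norm $\langle\langle\cdot,\cdot\rangle\rangle_k$, this is maximal dissipativity.

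\textbf{Main obstacle.} The delicate step is the top-order identity. The boundary terms at $\pm R_{\alpha_0}$ must have the right sign: this uses $R_{\alpha_0}>1$ (outgoing characteristics at both ends), and it must be checked that they remain nonpositive for the shifted operators $\mathbf L_{\alpha,k}$. One must also verify that the potential term is the only top-order non-dissipative term, so that the threshold is exactly $c_{\alpha_0}$.
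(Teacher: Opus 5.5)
Your proposal is correct and follows essentially the paper's route: the same top-order energy identity, with the boundary squares made nonpositive by $R_{\alpha_0}>1$; the same $c_{\alpha_0}$ bound on the potential term; and subcoercivity plus Riesz representation to build $\widehat{\mathbf{P}}_{\alpha_0}$. For maximality you invoke the bounded perturbation theorem for $\mathbf{L}+(\mathbf{L}_{\alpha,1}-\widehat{\mathbf{P}}_{\alpha_0})$, whereas the paper writes $\lambda-\widehat{\mathbf{L}}_\alpha=(\lambda-\mathbf{L}_\alpha)\bigl(\mathbf{I}+(\lambda-\mathbf{L}_\alpha)^{-1}\widehat{\mathbf{P}}_{\alpha_0}\bigr)$ and inverts by a Neumann series; this amounts to the same thing. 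The only step you leave implicit is passing the estimate from smooth $\mathbf{q}$ (where the integrations by parts and boundary traces make sense) to all of $\mathcal{D}(\mathbf{L}_\alpha)$. This is routine, because $(C^\infty[-R_{\alpha_0},R_{\alpha_0}])^2$ is a core for $\mathbf{L}$ by construction and $\widehat{\mathbf{L}}_\alpha-\mathbf{L}$ is bounded; the paper carries this out explicitly via the free resolvent.
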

\begin{proof}
    For $\mathbf{q}\in (C^\infty[-R_{\alpha_0},R_{\alpha_0}])^2$ and $\alpha \in [\alpha_0-\delta_0,\alpha_0+\delta_0]$, by definition of the commutation in Lemma \ref{Commutation_derivatives}
    \begin{align*}
        \langle \langle -\mathbf{L}_{\alpha}\mathbf{q},\mathbf{q}\rangle \rangle_k &= (-\mathbf{L}_{\alpha}\mathbf{q},\mathbf{q})_{\dot{H}_{\alpha_0}^{k+1}\times \dot{H}_{\alpha_0}^k}+(-\mathbf{L}_{\alpha}\mathbf{q},\mathbf{q})_{L_{\alpha_0}^2\times L_{\alpha_0}^2} \\
        &=(-(\partial_y^{k+1}\mathbf{L}_{\alpha}\mathbf{q})_1,\partial_y^{k+1}\mathbf{q})_{L_{\alpha_0}^2} + (-(\partial_y^k\mathbf{L}_{\alpha}\mathbf{q})_2,\partial_y^k \mathbf{q})_{L_{\alpha_0}^2} + (-\mathbf{L}_{\alpha}\mathbf{q},\mathbf{q})_{L_{\alpha_0}^2\times L_{\alpha_0}^2} \\
        &= (-(\mathbf{L}_{\alpha,k+1}\partial_y^{k+1}\mathbf{q})_1,\partial_y^{k+1}q_1)_{L_{\alpha_0}^2} + (-(\mathbf{L}_{\alpha,k}\partial_y^k \mathbf{q})_2,\partial_y^k q_2)_{L_{\alpha_0}^2} \\
        &+(-(\mathbf{L}_{\alpha,k+1}'\mathbf{q})_1,\partial_y^{k+1}q_1)_{L_{\alpha_0}^2} + (-(\mathbf{L}_{\alpha,k}'\mathbf{q})_2,\partial_y^k q_2)_{L_{\alpha_0}^2} + (-\mathbf{L}_{\alpha}\mathbf{q},\mathbf{q})_{L_{\alpha_0}^2\times L_{\alpha_0}^2}.
    \end{align*}
    For the first two terms of the final equality, using integration by parts we bound
    \begin{align*}(-(\mathbf{L}_{\alpha,k+1}\partial_y^{k+1}\mathbf{q})_1,\partial_y^{k+1}q_1)_{L_{\alpha_0}^2} + (-(\mathbf{L}_{\alpha,k}\partial_y^k \mathbf{q})_2,\partial_y^k q_2)_{L_{\alpha_0}^2} = \left(k+\frac{1}{2}\right)(\|\partial_y^{k+1}q_1\|_{L_{\alpha_0}^2}^2+\|\partial_y^{k}q_2\|_{L_{\alpha_0}^2}^2) \\
    -2(\partial_y U_{\alpha,\kappa}\partial_y^{k+1}q_1,\partial_y^k q_2)_{L_{\alpha_0}^2} + \frac{R_{\alpha_0}-1}{4}\left[|\partial_y^{k+1}q_1(R_{\alpha_0})+\partial_y^k q_2(R_{\alpha_0})|^2 + |\partial_y^{k+1}q_1(-R_{\alpha_0})-\partial_y^k q_2(-R_{\alpha_0})|^2\right] \\
    +\frac{R_{\alpha_0}+1}{4}\left[|\partial_y^{k+1}q_1(R_{\alpha_0})-\partial_y^k q_2(R_{\alpha_0})|^2+|\partial_y^{k+1}q_1(-R_{\alpha_0})+\partial_y^k q_2(-R_{\alpha_0})|^2 \right] 
    \end{align*}
    \begin{equation*}
        \geq \left( k+\frac{1}{2} - c_{\alpha_0}\right)(\|\partial_y^{k+1}q_1\|_{L_{\alpha_0}^2}^2+\|\partial_y^{k}q_2\|_{L_{\alpha_0}^2}^2).
    \end{equation*}
    where $c_{\alpha_0}$  is as given in \eqref{c_alpha_0} and appears from bounding 
    \begin{equation*}
        2(\partial_y U_{\alpha,\kappa}\partial_y^{k+1}q_1,\partial_y^k q_2)_{L_{\alpha_0}^2} \leq \sup_{\alpha}\|\partial_y  U_{\alpha,\kappa}\|_{L_{\alpha_0}^\infty}\left(\|\partial_y^{k+1}q_1\|_{L_{\alpha_0}^2}^2 + \|\partial_y^k q_2\|_{L_{\alpha_0}^2}^2 \right). 
    \end{equation*}For the remaining three terms, by Young's inequality and Lemma \ref{Commutation_derivatives}, for $k\geq 1$,
    \begin{align*}
        |(-(\mathbf{L}_{\alpha,k+1}'\mathbf{q})_1,\partial_y^{k+1}q_1)_{L_{\alpha_0}^2}| + |(-(\mathbf{L}_{\alpha,k}'\mathbf{q})_2,\partial_y^k q_2)_{L_{\alpha_0}^2}| + |(-\mathbf{L}_{\alpha}\mathbf{q},\mathbf{q})_{L_{\alpha_0}^2\times L_{\alpha_0}^2}| \\
        \leq \frac{\varepsilon}{2}\|\partial_y^k q_2\|_{L_{\alpha_0}^2}^2 + C(\varepsilon,\alpha_0,k)\left(\|q_1\|_{H_{\alpha_0}^k}^2+\|q_2\|_{H_{\alpha_0}^{k-1}}^2\right).
    \end{align*}
    Collecting terms, 
    \begin{align*}
        \mathfrak{Re}\langle \langle -\mathbf{L}_{\alpha}\mathbf{q},\mathbf{q}\rangle \rangle_k \geq \left(k+\frac{1}{2}-c_{\alpha_0}-\frac{\varepsilon}{2}\right)(\|\partial_y^{k+1}q_1\|_{L_{\alpha_0}^2}^2+\|\partial_y^{k}q_2\|_{L_{\alpha_0}^2}^2) - C(\varepsilon,\alpha_0,k)\left(\|q_1\|_{H_{\alpha_0}^k}^2+\|q_2\|_{H_{\alpha_0}^{k-1}}^2\right).
    \end{align*}
    Now we apply Lemma \ref{subcoercivity} to the latter terms for $q_1$ and $q_2$ respectively and derive 
    \begin{align*}
        \mathfrak{Re}\langle \langle -\mathbf{L}_{\alpha}\mathbf{q},\mathbf{q}\rangle \rangle_k \geq \left(k+\frac{1}{2}-c_{\alpha_0}-\frac{\varepsilon}{2}\right)\|\mathbf{q}\|_{\mathcal{H}_{\alpha_0}^k}^2- \frac{\varepsilon}{2}\|\mathbf{q}\|_{\mathcal{H}_{\alpha_0}^k}^2 \\
        - \widetilde{C}(\varepsilon,\alpha_0,k)\left(\sum_{i=1}^{N_1}(q_1,\Pi_{i}^{(1)})_{L_{\alpha_0}^2}^2 + \sum_{i=1}^{N_2}(q_2,\Pi_{i}^{(2)})_{L_{\alpha_0}^2}^2 \right).
    \end{align*}
    By the Riesz representation theorem, we obtain the existence of $(\mathbf{\Pi}_{i,\alpha_0})_{1\leq i\leq N}\subset \mathcal{H}_{\alpha_0}^k$ such that 
    \begin{align*}
        \langle \langle \mathbf{q},\mathbf{\Pi}_{i,\alpha_0}\rangle \rangle_k &= \sqrt{\widetilde{C}(\varepsilon,\alpha_0,k)}(q_1,\Pi_i^{(1)})_{L_{\alpha_0}^2}, \quad 1\leq i \leq N_1, \\
        \langle \langle \mathbf{q},\mathbf{\Pi}_{i,\alpha_0} \rangle \rangle_k &= \sqrt{\widetilde{C}(\varepsilon,\alpha_0,k)}(q_2,\Pi_i^{(2)})_{L_{\alpha_0}^2}, \quad N_1+1 \leq i \leq N_1+N_2 \coloneqq N.
    \end{align*}
    By defining $\widehat{\mathbf{P}}_{\alpha_0}\coloneqq \sum_{i=1}^{N}\langle \langle \cdot,\mathbf{\Pi}_{i,\alpha_0}\rangle \rangle_k \mathbf{\Pi}_{i,\alpha_0}$ and $\widehat{\mathbf{L}}_{\alpha} = \mathbf{L}_{\alpha} - \widehat{\mathbf{P}}_{\alpha_0}$ we obtain the dissipativity \eqref{maximal_dissipativity_estimate} for $\mathbf{q}\in (C^\infty[\mathcal{I}_{\alpha_0}])^2$.

    It remains to show the same estimate for $\mathbf{q}\in \mathcal{D}(\mathbf{L}_{\alpha})$. Indeed, let $\mathbf{q}\in \mathcal{D}(\mathbf{L}_{\alpha})$, then $\mathbf{L}_{\alpha}\mathbf{q} \in \mathcal{H}_{\alpha_0}^k$. Since the perturbation $\mathbf{L}_{\alpha,1}$ is bounded on $\mathcal{H}_{\alpha_0}^k$, then we also have that $\mathbf{L}\mathbf{q}\in \mathcal{H}_{\alpha_0}^k$. Now using the density of $(C^\infty[\mathcal{I}_{\alpha_0}])^2$ in $\mathcal{H}_{\alpha_0}^k$, we take $\{\mathbf{f}_n\}_{n\in\mathbb{N}}\subset (C^\infty[\mathcal{I}_{\alpha_0}])^2$ with $\|\mathbf{f}_n-(-\mathbf{L}\mathbf{q})\|_{\mathcal{H}_{\alpha_0}^k}\to 0$. However, for each $n\in \mathbb{N}$, Lemma \ref{lem:dense_range_L_alpha} guarantees a unique $\mathbf{q}_n \in (C^\infty[\mathcal{I}_{\alpha_0}])^2$ with $-\mathbf{L}\mathbf{q}_n = \mathbf{f}_n$. 
    
    We claim $\|\mathbf{q}_n -\mathbf{q}\|_{\mathcal{H}_{\alpha_0}^k}\to 0$. Firstly by the dissipativity of the free wave operator in Lemma \ref{Dissipativity_free_op} 
    \begin{equation*}
        \|\mathbf{q}_n-\mathbf{q}_m\|_{\mathcal{H}_{\alpha_0}^k} \lesssim \|\mathbf{L}\mathbf{q}_n - \mathbf{L}\mathbf{q}_m\|_{\mathcal{H}_{\alpha_0}^k} = \|\mathbf{f}_n -\mathbf{f}_m\|_{\mathcal{H}_{\alpha_0}^k} \to 0,
    \end{equation*}
    we see that $\mathbf{q}_n$ is a Cauchy sequence in $\mathcal{H}_{\alpha_0}^k$; we denote its limit as $\mathbf{q}' \in \mathcal{H}_{\alpha_0}^k$. By the closedness of $\mathbf{L}$, we have $-\mathbf{L}\mathbf{q}=-\mathbf{L}\mathbf{q}'$ and by Proposition \ref{semigroup_generation_free_operator}, since $0\in \res(\mathbf{L})$, we may invert $-\mathbf{L}$ to conclude that $\mathbf{q}=\mathbf{q}'$.

    As for the dissipative estimate, by writing $\widehat{\mathbf{L}}_{\alpha}=\mathbf{L}+\mathbf{L}_{\alpha,1}-\widehat{\mathbf{P}}_{\alpha_0}$, noting the boundedness of $\widehat{\mathbf{P}}_{\alpha_0},\mathbf{L}_{\alpha,1}$ in $\mathcal{H}_{\alpha_0}^k$, then $\|\widehat{\mathbf{L}}_{\alpha}\mathbf{q}_n-\widehat{\mathbf{L}}_{\alpha}\mathbf{q}\|_{\mathcal{H}_{\alpha_0}^k}\to 0$, and thus the estimate \eqref{maximal_dissipativity_estimate} holds for $\mathbf{q}\in \mathcal{D}(\mathbf{L}_{\alpha})$. 

    Finally, to show $\widehat{\mathbf{L}}_{\alpha}$ is maximal, i.e. $\lambda-\widehat{\mathbf{L}}_{\alpha}$ is surjective for all $\lambda>0$, we actually only need to show surjectivity for one $\lambda>0$ by the dissipativity proven above. Indeed, by Proposition \ref{Semigroup_generation_lin_operator} we have a finite growth bound $\omega = -\frac{1}{2}+\|\mathbf{L}_{\alpha,1}\|_{\mathcal{L}(\mathcal{H}_{\alpha_0}^k)}$ for all $\alpha \in [\alpha_0-\delta_0,\alpha_0+\delta_0]$. Thus, by the Hille-Yosida Theorem \cite[Theorem 3.5]{engel2000one} we can take $\mathfrak{Re}\lambda>0$ large enough so that $\lambda\in \res(\mathbf{L}_{\alpha})$. We write
    \begin{equation}\label{Neumann_product}
        \lambda -\widehat{\mathbf{L}}_{\alpha} = (\lambda - \mathbf{L}_{\alpha})\left(\mathbf{I}+(\lambda-\mathbf{L}_{\alpha})^{-1}\widehat{\mathbf{P}}_{\alpha_0}\right)
    \end{equation}
    and then note by the boundedness of $\widehat{\mathbf{P}}_{\alpha_0}$ and for $\mathfrak{Re}\lambda>0$ sufficiently large, then 
    \begin{equation*}
        \|(\lambda-\mathbf{L}_{\alpha})^{-1}\widehat{\mathbf{P}}_{\alpha_0}\|_{\mathcal{L}(\mathcal{H}_{\alpha_0}^k)}<1
    \end{equation*}
    so $\left(\mathbf{I}+(\lambda-\mathbf{L}_{\alpha})^{-1}\widehat{\mathbf{P}}_{\alpha_0}\right)^{-1}$ is well defined by a Neumann series. Therefore the product \eqref{Neumann_product} is also invertible, and in particular surjective. 
\end{proof}
\subsection{Spectral analysis of the generator}
We can now give a sufficient description of the spectrum of $\mathbf{L}_{\alpha}^{(\alpha_0)}:\mathcal{D}(\mathbf{L}_{\alpha}^{(\alpha_0)})\subset \mathcal{H}^k(-R_{\alpha_0},R_{\alpha_0})\to \mathcal{H}^k(-R_{\alpha_0},R_{\alpha_0})$.
\begin{proposition}\label{spectral_inclusion_prop}
    For $\alpha \in [\alpha_0-\delta_0,\alpha_0+\delta_0]$, $k\geq \lceil c_{\alpha_0} \rceil + 1$ we have 
    \begin{equation}\label{spectral_inclusion}
        \spec(\mathbf{L}_{\alpha}^{(\alpha_0)}) \subset \{z\in \mathbb{C}:\mathfrak{Re}z \leq -1 \}\cup \{0,1\}
    \end{equation}
    where $\{0,1\}\subset \spec_p(\mathbf{L}_{\alpha}^{(\alpha_0)})$ are the eigenvalues arising from symmetries. The geometric eigenspaces of eigenvalues $0$ and $1$ are spanned, respectively, by $\mathbf{f}_{0,\alpha}$, $\mathbf{f}_{1,\alpha}$ and are given by 
    \begin{align*}
        \mathbf{f}_{0,\alpha}(y) = \begin{pmatrix}
            1 \\
            0
        \end{pmatrix}
        \quad \text{and} \quad 
        \mathbf{f}_{1,\alpha}(y) = \begin{pmatrix}
            \frac{\alpha\sqrt{1+\alpha}}{\sqrt{1+\alpha}+y} \\
            \frac{\alpha(1+\alpha)}{(\sqrt{1+\alpha}+y)^2} 
        \end{pmatrix}
    \end{align*}
    Moreover, there is a generalised eigenfunction $\mathbf{g}_{0,\alpha}$ of the eigenvalue $0$ given by 
    \begin{align*}
        \mathbf{g}_{0,\alpha}(y) = \begin{pmatrix}
            -\log(\sqrt{1+\alpha}+y)-\frac{\alpha}{2\sqrt{1+\alpha}}\frac{1}{\sqrt{1+\alpha}+y} \\
            1-\frac{y}{\sqrt{1+\alpha}+y}+\frac{\alpha}{2\sqrt{1+\alpha}}\frac{y}{(\sqrt{1+\alpha}+y)^2}
        \end{pmatrix}
    \end{align*}
    satisfying $\mathbf{L}_{\alpha}\mathbf{g}_{0,\alpha} = \mathbf{f}_{0,\alpha}$, and $\mathbf{L}_{\alpha}^2 \mathbf{g}_{0,\alpha} = \mathbf{0}$. 
    \end{proposition}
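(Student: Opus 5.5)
The plan is to combine the compact decomposition of Proposition \ref{maximal_dissipativity_proposition} with mode stability (Proposition \ref{mode_stability}). Write $\mathbf{L}_\alpha = \widehat{\mathbf{L}}_\alpha + \widehat{\mathbf{P}}_{\alpha_0}$. By the Lumer--Phillips theorem, the maximal dissipativity estimate \eqref{maximal_dissipativity_estimate} with $k\ge\lceil c_{\alpha_0}\rceil+1$ shows that $\widehat{\mathbf{L}}_\alpha$ generates a semigroup with growth bound at most $-(k+\tfrac12-c_{\alpha_0}-\varepsilon)\le -(\tfrac32-\varepsilon)$. Hence $\{\mathfrak{Re}\lambda > -\tfrac32+\varepsilon\}\subset\res(\widehat{\mathbf{L}}_\alpha)$. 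For such $\lambda$ we factor
\[
\lambda-\mathbf{L}_\alpha=\bigl(\mathbf{I}-\widehat{\mathbf{P}}_{\alpha_0}(\lambda-\widehat{\mathbf{L}}_\alpha)^{-1}\bigr)(\lambda-\widehat{\mathbf{L}}_\alpha).
\]
Since $\widehat{\mathbf{P}}_{\alpha_0}$ is finite rank, the operator-valued function $\lambda\mapsto \widehat{\mathbf{P}}_{\alpha_0}(\lambda-\widehat{\mathbf{L}}_\alpha)^{-1}$ is compact and analytic. By the Fredholm alternative, $\lambda-\mathbf{L}_\alpha$ fails to be invertible precisely when $1$ is an eigenvalue of the compact factor, and then $\lambda\in\spec_p(\mathbf{L}_\alpha)$. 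Thus in the half-plane $\mathfrak{Re}\lambda>-\tfrac32+\varepsilon$ (in particular in $\mathfrak{Re}\lambda>-1$) the spectrum consists only of isolated eigenvalues of finite algebraic multiplicity.

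Next, I will rule out eigenvalues with $\mathfrak{Re}\lambda>-1$ other than $0,1$. Let $\mathbf{q}\in\mathcal{D}(\mathbf{L}_\alpha)$ satisfy $\mathbf{L}_\alpha\mathbf{q}=\lambda\mathbf{q}$. Then $q_1\in H^{k+1}(\mathcal{I}_{\alpha_0})$ solves \eqref{Eigenequation} weakly, and the second component is recovered as $q_2=\lambda q_1+y\partial_yq_1$. Away from $y=\pm1$ the ODE is regular, so $q_1$ is smooth there. At $y=\pm1$, the Frobenius exponents of \eqref{eigen_heun_standard_form} are $0$ and $\pm\sqrt{1+\alpha}\mp\lambda$ (plus an integer shift), i.e.\ $0$ and $1\pm\ldots$, following \cite[Prop.~A.1]{ghoul2025blow}. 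Since $k+1\ge c_{\alpha_0}+1>\sqrt{1+\alpha}+2$, the non-analytic branch (including possible logarithmic terms) cannot lie in $H^{k+1}$ near $\pm1$. Hence $q_1$ is analytic at $\pm1$ and so $q_1\in C^\infty([-R_{\alpha_0},R_{\alpha_0}])$. Proposition \ref{mode_stability} then forces $\lambda\in\{0,1\}$. This regularity step is where the threshold $\lceil c_{\alpha_0}\rceil+1\ge\lceil\sqrt{1+\alpha}\rceil+2$ noted above is used.

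For the eigenspaces, direct substitution shows $\mathbf{L}_\alpha\mathbf{f}_{0,\alpha}=0$, $\mathbf{L}_\alpha\mathbf{f}_{1,\alpha}=\mathbf{f}_{1,\alpha}$ and $\mathbf{L}_\alpha\mathbf{g}_{0,\alpha}=\mathbf{f}_{0,\alpha}$. These are routine computations: $\mathbf{f}_{1,\alpha}$ is $\partial_T$ of the family (time translation) and $\mathbf{g}_{0,\alpha}$ is $\partial_\alpha$ of the profile up to adding $\mathbf{f}_{0,\alpha},\mathbf{f}_{1,\alpha}$ multiples. All three are smooth on $\overline{\mathcal{I}_{\alpha_0}}$ because $\sqrt{1+\alpha}+y\ge k_{\alpha_0}>0$, so they lie in $\mathcal{D}(\mathbf{L}_\alpha)$. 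It remains to show the geometric eigenspaces are one-dimensional. A smooth eigenfunction is determined by its restriction to $[-1,1]$, by the uniqueness argument in the proof of Proposition \ref{mode_stability}. On $[-1,1]$ the one-dimensionality is the content of \cite{ghoul2025blow}; for general $\alpha$ one argues directly on \eqref{Eigenequation}. At $\lambda=0$, the equation reduces to a first-order ODE for $\partial_y\varphi$ whose solutions are multiples of $(\sqrt{1+\alpha}+y)^{2}/(1-y^2)^{\ldots}$-type expressions singular at $\pm1$, so smoothness forces $\partial_y\varphi=0$. At $\lambda=1$ one performs a reduction of order using the known solution $\mathbf{f}_{1,\alpha}^{(1)}$: the second solution involves $\int(\ldots)(1-y)^{-(1+\sqrt{1+\alpha})}\ldots$ and is non-smooth at $y=\pm1$ unless trivial, which is where the generalised ODE analysis (Lemma \ref{Appendix_lemma_eigenfunctions}) with $\sqrt{1+\alpha}\notin\mathbb{N}\setminus\{1\}$ is needed.

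The main obstacle I expect is the regularity upgrade and the uniqueness at $\lambda=1$ uniformly in $\alpha$. The exponents at $\pm1$ can differ by integers, producing logarithmic Frobenius solutions, and one must verify both that these fail to be $H^{k+1}$ and, at $\lambda=1$, that no second smooth solution exists. I would first attempt this through an explicit reduction-of-order formula near $y=1$, checking that the coefficient of the singular branch is a nonzero integral of a positive integrand. Only then would I fall back to the generic-$\alpha$ Frobenius case analysis.
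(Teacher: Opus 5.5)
Your proposal is correct and follows essentially the paper's route: a finite-rank perturbation of the maximally dissipative $\widehat{\mathbf{L}}_\alpha$ confines the spectrum in $\mathfrak{Re}\lambda>-1$ to eigenvalues, the Frobenius regularity upgrade from \cite[Prop.~A.1]{ghoul2025blow} together with Proposition~\ref{mode_stability} leaves only $\{0,1\}$, $\mathbf{f}_{0,\alpha},\mathbf{f}_{1,\alpha},\mathbf{g}_{0,\alpha}$ are verified directly, and one-dimensionality comes from smooth solutions near $y=1$ (your Fredholm factorisation and explicit regularity step for general $\lambda$ just spell out what the paper compresses into its essential-spectrum remark). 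One correction: Lemma~\ref{Appendix_lemma_eigenfunctions} plays no role here, since it rules out solutions of $\mathbf{L}_\alpha\mathbf{v}=\mathbf{g}_{0,\alpha}$ and matters only for the rank of $\mathbf{P}_{0,\alpha}$ in Proposition~\ref{prop:projection_rank}; for $\lambda\in\{0,1\}$ the exponents at $y=1$ are $0$ and $1-\lambda-\sqrt{1+\alpha}<0$, so the second solution is singular at $y=1$ (though not necessarily at $y=-1$) whether or not $\sqrt{1+\alpha}$ is an integer, and this alone gives one-dimensionality.
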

    \begin{proof}
        By Proposition \ref{maximal_dissipativity_proposition}, for $\alpha\in [\alpha_0-\delta_0,\alpha_0+\delta_0]$, $k\geq \lceil c_{\alpha_0} \rceil + 1$ and $\varepsilon \in (0,\frac{1}{2}]$, $\mathbf{I}+\widehat{\mathbf{L}}_{\alpha}:\mathcal{H}^k(-R_{\alpha_0},R_{\alpha_0}) \to \mathcal{H}^k(-R_{\alpha_0},R_{\alpha_0})$ is maximally dissipative. Thus 
        \begin{equation}\label{Spectral_inclusion_L_hat}
            \spec(\widehat{\mathbf{L}}_{\alpha}) \subset \{z\in \mathbb{C}:\mathfrak{Re}z\leq -1\}. 
        \end{equation}
        Since $\mathbf{L}_{\alpha} = \widehat{\mathbf{L}}_{\alpha}+\widehat{\mathbf{P}}_{\alpha_0}$, with $\widehat{\mathbf{P}}_{\alpha_0}$ compact, $\spec_{\ess}(\mathbf{L}_{\alpha})=\spec_{\ess}(\widehat{\mathbf{L}}_{\alpha})\subset \spec(\widehat{\mathbf{L}}_{\alpha})$. We recall from the mode stability in Proposition \ref{mode_stability} that all eigenvalues of $\mathbf{L}_{\alpha}$ in $\{z\in \mathbb{C}:\mathfrak{Re}z >-1 \}$ lie in $\{0,1\}$, which yields \eqref{spectral_inclusion}. 

        A direct computation shows that $\mathbf{f}_{0,\alpha}$, $\mathbf{f}_{1,\alpha}$ satisfy $\mathbf{L}_{\alpha}\mathbf{f}_{0,\alpha} =\mathbf{0}$, and $\mathbf{L}_{\alpha}\mathbf{f}_{1,\alpha} = \mathbf{f}_{1,\alpha}$, as well as $\mathbf{L}_{\alpha}\mathbf{g}_{0,\alpha} = \mathbf{f}_{0,\alpha}$. From the Frobenius analysis in \cite[Prop. A.1]{ghoul2025blow}, for $k\geq \lceil \sqrt{1+\alpha}\rceil +2$ any $H^{k+1}(-1,1)\times H^k(-1,1)$--eigenfunction of $\mathbf{L}_{\alpha}$ is necessarily $C^\infty[-1,1]$ and in fact analytic at $y=\pm 1$. Moreover, the set of locally smooth solutions around $y=1$ is one--dimensional. Since the ODE is regular elsewhere in $[-R_{\alpha_0},R_{\alpha_0}]$, and noting that $c_{\alpha_0}>\sqrt{1+\alpha_0}+2$ for all $\alpha_0>0$, then the geometric eigenspaces of eigenvalues $0$ and $1$ of the operator $\mathbf{L}_{\alpha}:\mathcal{H}_{\alpha_0}^k \to \mathcal{H}_{\alpha_0}^k$ are one-dimensional, and thus spanned by $\mathbf{f}_{0,\alpha}$, $\mathbf{f}_{1,\alpha}$.  
    \end{proof}
Now, for $\alpha\in [\alpha_0-\delta_0,\alpha_0+\delta_0]$, and $k\geq \lceil c_{\alpha_0} \rceil + 1$, we define the spectral projections $\mathbf{P}_{0,\alpha}^{(\alpha_0)}:\mathcal{H}_{\alpha_0}^k\to \mathcal{H}_{\alpha_0}^k$ and $\mathbf{P}_{1,\alpha}^{(\alpha_0)}:\mathcal{H}_{\alpha_0}^k\to \mathcal{H}_{\alpha_0}^k$ associated to eigenvalues $0$ and $1$ respectively:
    \begin{align*}
        \mathbf{P}_{0,\alpha}^{(\alpha_0)} &\coloneqq \frac{1}{2\pi i}\int_{\gamma_0}(z\mathbf{I}-\mathbf{L}_{\alpha}^{(\alpha_0)})^{-1}dz \qquad \gamma_0:[0,1]\to \mathbb{C}, \quad s\mapsto \frac{1}{4}e^{2\pi i s} \\
        \mathbf{P}_{1,\alpha}^{(\alpha_0)} &\coloneqq \frac{1}{2\pi i}\int_{\gamma_1}(z\mathbf{I}-\mathbf{L}_{\alpha}^{(\alpha_0)})^{-1}dz \qquad \gamma_1:[0,1]\to \mathbb{C}, \quad s\mapsto 1+\frac{1}{2}e^{2\pi i s}.
    \end{align*}
    We remark that $\mathbf{P}_{i,\alpha}^{(\alpha_0)}$ are not orthogonal projections because of the non-self-adjoint nature of $\mathbf{L}_{\alpha}^{(\alpha_0)}$.
\begin{proposition}\label{prop:projection_rank}
    For all $\alpha \in [\alpha_0-\delta_0,\alpha_0+\delta_0]$, $k\geq \lceil c_{\alpha_0}\rceil +1$, the projections $\mathbf{P}_{0,\alpha}^{(\alpha_0)}$, $\mathbf{P}_{1,\alpha}^{(\alpha_0)}$ have ranks 2 and 1, respectively. 
\end{proposition}
\begin{proof}
    We first note that for any fixed $\alpha \in [\alpha_0-\delta_0,\alpha_0+\delta_0]$,  $\mathbf{P}_{\lambda,\alpha}$ ($\lambda \in \{0,1\}$) have finite rank since isolated spectrum outside the essential spectrum consists of eigenvalues of finite algebraic multiplicity, hence the Riesz projection has finite rank. 
    
    Secondly, since $\mathbf{L}_{\alpha}$ depends smoothly on the parameter $\alpha$, $\alpha \mapsto \mathbf{P}_{\lambda,\alpha}$ is a continuous map and so the rank of $\mathbf{P}_{\lambda,\alpha}$ is the same for all $\alpha \in [\alpha_0-\delta_0,\alpha_0+\delta_0]$ by \cite[Lemma 4.10]{kato2013perturbation}. Thus we prove the claim for a single fixed $\alpha=\alpha_0$ in the interval. (We write $\alpha$ instead of $\alpha_0$ for convenience). We study $\mathbf{P}_{0,\alpha}$ first. 
    
    Note that $\rg(\mathbf{P}_{\lambda,\alpha})\subset \mathcal{D}(\mathbf{L}_{\alpha})$. Indeed, let $\mathbf{v}\in \rg(\mathbf{P}_{\lambda,\alpha})$, then by the density of $\mathcal{D}(\mathbf{L}_{\alpha})$ in $\mathcal{H}^k$, we take $(\mathbf{u}_n)_{n\in \mathbb{N}}\subset \mathcal{D}(\mathbf{L}_{\alpha})$ with $\mathbf{u}_n \to \mathbf{v}$. Using the fact that $\mathbf{P}_{\lambda,\alpha}\mathcal{D}(\mathbf{L}_{\alpha})\subset \mathcal{D}(\mathbf{L}_{\alpha})$ we define $\mathbf{v}_n \coloneqq \mathbf{P}_{\lambda,\alpha}\mathbf{u}_n \in \rg(\mathbf{P}_{\lambda,\alpha})\cap \mathcal{D}(\mathbf{L}_{\alpha})$. Then $\mathbf{P}_{\lambda,\alpha}\mathbf{u}_n \to \mathbf{P}_{\lambda,\alpha}\mathbf{v} = \mathbf{v}$ by the continuity of $\mathbf{P}_{\lambda,\alpha}$. Since the restricted operator $\mathbf{L}_{\alpha}\rvert_{\mathcal{D}(\mathbf{L}_{\alpha})\cap \rg\mathbf{P}_{\lambda,\alpha}}$ is continuous, then $\mathbf{L}_{\alpha}\mathbf{v}_n \to \mathbf{w}$ for some $\mathbf{w}\in \rg(\mathbf{P}_{\lambda,\alpha})$, and then by the closedness of $\mathbf{L}_{\alpha}$, we have that $\mathbf{v}\in \mathcal{D}(\mathbf{L}_{\alpha})$.

    Thus we can define $\mathbf{A}\coloneqq \mathbf{L}_{\alpha}\rvert_{\rg(\mathbf{P}_{0,\alpha})}$, then $\mathbf{A}$ is bounded operator on the finite-dimensional subspace $\rg(\mathbf{P}_{0,\alpha})$ with $\spec(\mathbf{A})=\{0\}$. This implies that $\mathbf{A}$ is nilpotent, i.e. there exists a minimal $n\in \mathbb{N}$ such that $\mathbf{A}^n = \mathbf{0}$. We claim $n=2$.

    Suppose for contradiction that $n=1$. Then $\mathbf{A}=\mathbf{0}$ on $\rg(\mathbf{P}_{0,\alpha})$ which implies every element of $\rg(\mathbf{P}_{0,\alpha})$ is an eigenfunction and therefore a multiple of $\mathbf{f}_{0,\alpha}$. However $\mathbf{g}_{0,\alpha}\in \rg(\mathbf{P}_{0,\alpha})$, which is linearly independent of $\mathbf{f}_{0,\alpha}$. Thus $n\geq 2$. 
    
    Now, suppose for contradiction that $n>2$. This means that there exists some $\mathbf{u}\in \rg(\mathbf{P}_{0,\alpha})$ such that $\mathbf{A}^2 \mathbf{u}$ is a non-trivial element of 
    $\ker(\mathbf{A})\subset \ker(\mathbf{L}_{\alpha})= \text{span} (\mathbf{f}_{0,\alpha})$, i.e. $\mathbf{L}_{\alpha}^2\mathbf{u}=c_1 \mathbf{f}_{0,\alpha}$ for some $c_1\neq 0$. Without loss of generality, take $c_1=1$. Recall that $\mathbf{g}_{0,\alpha}$ satisfies $\mathbf{L}_{\alpha}\mathbf{g}_{0,\alpha}=\mathbf{f}_{0,\alpha}$, this implies that $\mathbf{L}_{\alpha}(\mathbf{L}_{\alpha}\mathbf{u}-\mathbf{g}_{0,\alpha}) = \mathbf{0}$, i.e $\mathbf{L}_{\alpha}\mathbf{u}-\mathbf{g}_{0,\alpha} = c_2 \mathbf{f}_{0,\alpha} = c_2 \mathbf{L}_{\alpha}\mathbf{g}_{0,\alpha}$. Rearranging, we have $\mathbf{L}_{\alpha}(\mathbf{u}-c_2\mathbf{g}_{0,\alpha})=\mathbf{g}_{0,\alpha}$. That is, there exists some $\mathbf{v}\in \mathcal{D}(\mathbf{L}_{\alpha})$ such that $\mathbf{L}_{\alpha}\mathbf{v} = \mathbf{g}_{0,\alpha}$. However, there are no such solutions for $k\geq \lceil c_{\alpha} \rceil +1$ by Lemma \ref{Appendix_lemma_eigenfunctions}. By a similar ODE analysis, it can be verified that $\mathbf{P}_{1,\alpha}^{(\alpha_0)}$ has rank 1. 
\end{proof}
\subsection{A uniform resolvent estimate}
We now prove a uniform bound for the resolvent operator in $|\alpha-\alpha_0|\leq \delta_0$. This will be the final ingredient in proving the semigroup bounds on the stable/unstable spectra after invoking the Gearhardt--Prüss--Greiner theorem. The next proposition confines possible unstable eigenvalues to a compact set. Thus, for fixed $0<\omega_0<1$ we define the compact rectangle 
\begin{equation}
    R_{m,n} \coloneqq \{z\in \mathbb{C}:\mathfrak{Re}z\in [-\omega_0,m], \mathfrak{Im}z \in [-n,n]\}
\end{equation}
and the `complement' of $R_{m,n}$ to the right of $z=-\omega_0$
\begin{equation}
    R_{m,n}' \coloneqq \{z\in \mathbb{C}:\mathfrak{Re}z\geq -\omega_0\}\setminus R_{m,n}.
\end{equation}
\begin{proposition}\label{prop:resolvent_estimate}
Let $k\geq \lceil c_{\alpha_0}\rceil +1$, $0<\omega_0<1$. There exists $m> 1,n>0$ such that $R_{m,n}'\subset \res(\mathbf{L}_{\alpha}^{(\alpha_0)})$ and 
\begin{equation}\label{resolvent_estimate}
    \sup_{|\alpha-\alpha_0|\leq \delta_0}\sup_{\lambda \in R_{m,n}'}\|(\lambda-\mathbf{L}_{\alpha}^{(\alpha_0)})^{-1}\|_{\mathcal{L}(\mathcal{H}^k(\mathcal{I}_{\alpha_0}))} \leq C
\end{equation}
where $C>0$. 
\end{proposition}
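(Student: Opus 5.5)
The approach is to use the decomposition $\mathbf{L}_{\alpha}=\widehat{\mathbf{L}}_{\alpha}+\widehat{\mathbf{P}}_{\alpha_0}$ from Proposition~\ref{maximal_dissipativity_proposition}, in which $\widehat{\mathbf{P}}_{\alpha_0}$ is finite rank and independent of $\alpha$. The key fact is that, with $\varepsilon\le\frac12$, the estimate \eqref{maximal_dissipativity_estimate} gives, by Lumer--Phillips in the equivalent norm $\langle\langle\cdot,\cdot\rangle\rangle_k$, that $\widehat{\mathbf{L}}_\alpha+1$ is maximally dissipative. Hence for $\mathfrak{Re}\lambda>-1$,
\[
\|(\lambda-\widehat{\mathbf{L}}_{\alpha})^{-1}\|\le \frac{C_{k,\alpha_0}}{\mathfrak{Re}\lambda+1}\le \frac{C_{k,\alpha_0}}{1-\omega_0},
\]
uniformly in $\alpha\in[\alpha_0-\delta_0,\alpha_0+\delta_0]$ and in $\mathfrak{Re}\lambda\ge-\omega_0$. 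Next I factor as in \eqref{Neumann_product}, but with $\widehat{\mathbf{L}}_\alpha$ instead:
\[
\lambda-\mathbf{L}_\alpha=\bigl(\mathbf{I}-\widehat{\mathbf{P}}_{\alpha_0}(\lambda-\widehat{\mathbf{L}}_\alpha)^{-1}\bigr)(\lambda-\widehat{\mathbf{L}}_\alpha).
\]
It then suffices to show that $\|\widehat{\mathbf{P}}_{\alpha_0}(\lambda-\widehat{\mathbf{L}}_\alpha)^{-1}\|\le\frac12$ for $\lambda\in R'_{m,n}$, uniformly in $\alpha$. A Neumann series then yields $R'_{m,n}\subset\res(\mathbf{L}_\alpha)$ and the bound $\|(\lambda-\mathbf{L}_\alpha)^{-1}\|\le 2C_{k,\alpha_0}/(1-\omega_0)$.

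Since $\widehat{\mathbf{P}}_{\alpha_0}=\sum_{i=1}^N\langle\langle\cdot,\mathbf{\Pi}_{i}\rangle\rangle_k\mathbf{\Pi}_i$, the required smallness reduces to showing
\[
\sup_{\alpha}\bigl|\langle\langle(\lambda-\widehat{\mathbf{L}}_\alpha)^{-1}\mathbf{f},\mathbf{\Pi}_i\rangle\rangle_k\bigr|=\sup_\alpha\bigl|\langle\langle \mathbf{f},(\overline\lambda-\widehat{\mathbf{L}}_\alpha^{*})^{-1}\mathbf{\Pi}_i\rangle\rangle_k\bigr|\to0
\]
as $|\lambda|\to\infty$ in $\{\mathfrak{Re}\lambda\ge-\omega_0\}$, uniformly for $\|\mathbf f\|\le1$. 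That is, I need $\|(\overline\lambda-\widehat{\mathbf{L}}_\alpha^*)^{-1}\mathbf{\Pi}_i\|\to0$ uniformly in $\alpha$. I would argue in two regimes. If $\mathfrak{Re}\lambda\ge m$ is large, the dissipative bound gives a resolvent norm of at most $C/(m+1)$, which is small, so no further work is needed. If instead $\mathfrak{Re}\lambda\in[-\omega_0,m]$ and $|\mathfrak{Im}\lambda|\ge n$, I first approximate $\mathbf{\Pi}_i$ to within $\eta$ in $\mathcal{H}^k_{\alpha_0}$ by elements $\widetilde{\mathbf{\Pi}}_i$ of $\mathcal{D}(\widehat{\mathbf{L}}_\alpha^*)$. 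The resulting error costs at most $C\eta/(1-\omega_0)$ by the uniform bound. For the approximants I use the identity
\[
(\mu-A)^{-1}x=\mu^{-1}x+\mu^{-1}(\mu-A)^{-1}Ax,
\]
which gives $\|(\overline\lambda-\widehat{\mathbf{L}}_\alpha^*)^{-1}\widetilde{\mathbf{\Pi}}_i\|\lesssim |\lambda|^{-1}\bigl(\|\widetilde{\mathbf{\Pi}}_i\|+\|\widehat{\mathbf{L}}^*_\alpha\widetilde{\mathbf{\Pi}}_i\|\bigr)$.

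The main obstacle is making this uniform in $\alpha$. The domain of the adjoint may depend on $\alpha$, and $\|\widehat{\mathbf L}_\alpha^*\widetilde{\mathbf\Pi}_i\|$ must be bounded uniformly. To avoid adjoints I would instead choose $\widetilde{\mathbf{\Pi}}_i$ smooth, with a core for the adjoint independent of $\alpha$. Alternatively, I would proceed by contradiction plus compactness: if the claim failed, there would be sequences $\alpha_j\to\alpha_*$ (by compactness of the $\alpha$-interval) and $\lambda_j$ with $|\lambda_j|\to\infty$ along which the norm stays bounded below. Using the Lipschitz bound \eqref{eq:lipschitz_L_alpha} and the second resolvent identity,
\[
(\lambda-\widehat{\mathbf L}_{\alpha_j})^{-1}-(\lambda-\widehat{\mathbf L}_{\alpha_*})^{-1}=(\lambda-\widehat{\mathbf L}_{\alpha_j})^{-1}(\mathbf L_{\alpha_j}-\mathbf L_{\alpha_*})(\lambda-\widehat{\mathbf L}_{\alpha_*})^{-1},
\]
which is $O(|\alpha_j-\alpha_*|)$ uniformly in $\lambda$, I reduce the problem to the single operator $\widehat{\mathbf L}_{\alpha_*}$. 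For that operator, the decay follows from the identity above applied to smooth approximants. Choosing $\eta$, then $n$ and $m$, completes the argument. Finally, since $m>1$, the points $0$ and $1$ lie in $R_{m,n}$, consistent with Proposition~\ref{spectral_inclusion_prop}.
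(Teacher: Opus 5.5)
Your argument is correct, but it takes a genuinely different route from the paper's.

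\textbf{The paper's route.} The paper proves a direct a priori coercivity estimate. It combines two pieces:
\begin{itemize}
\item the real part of $((\lambda-\mathbf{L}_\alpha)\mathbf q,\mathbf q)_{\dot H^{k+1}\times\dot H^k}$, which controls the top-order norms with weight $\mathfrak{Re}\lambda+k+\frac12-c_{\alpha_0}$;
\item the imaginary part of $\langle(\lambda-\mathbf{L}_\alpha)\mathbf q,\mathbf q\rangle_0$, which controls the low-order energy with weight $|\mathfrak{Im}\lambda|$, up to lower-order errors.
\end{itemize}
Together these give $\|\mathbf q\|\lesssim\|(\lambda-\mathbf{L}_\alpha)\mathbf q\|$ for $|\mathfrak{Im}\lambda|\ge n$. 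The region $\mathfrak{Re}\lambda\ge m$ is handled by Hille--Yosida. The inclusion $R'_{m,n}\subset\res(\mathbf{L}_\alpha)$ is imported from the spectral inclusion (mode stability), not from the estimate itself.

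\textbf{Your route.} You recycle the maximal dissipativity of $\widehat{\mathbf{L}}_\alpha$ and treat $\widehat{\mathbf{P}}_{\alpha_0}$ as a finite-rank perturbation. It becomes small at large $|\lambda|$ because the adjoint resolvent tends strongly to zero on the finitely many $\mathbf{\Pi}_i$. The Neumann series then yields the resolvent-set inclusion and the uniform bound at once. This needs no new integration by parts and no appeal to mode stability inside $R'_{m,n}$.

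\textbf{What each buys.} The price of your route is that $m,n$ are obtained non-constructively. Uniformity in $\alpha$ also needs an extra argument, whereas in the paper it comes for free from the uniform coefficient bound $c_{\alpha_0}$.

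Your compactness and second-resolvent-identity argument handles that uniformity correctly. However, the obstacle you worry about is not actually there. Write $\widehat{\mathbf{L}}_\alpha=\mathbf{L}+(\mathbf{L}_{\alpha,1}-\widehat{\mathbf{P}}_{\alpha_0})$, where the bracket is bounded uniformly in $\alpha$. Then, taking adjoints with respect to $\langle\langle\cdot,\cdot\rangle\rangle_k$:
\begin{itemize}
\item $\mathcal{D}(\widehat{\mathbf{L}}_\alpha^*)=\mathcal{D}(\mathbf{L}^*)$ for every $\alpha$;
\item $\|\widehat{\mathbf{L}}_\alpha^*\widetilde{\mathbf{\Pi}}_i\|\le\|\mathbf{L}^*\widetilde{\mathbf{\Pi}}_i\|+C\|\widetilde{\mathbf{\Pi}}_i\|$ with $C$ independent of $\alpha$.
\end{itemize}
So approximating each $\mathbf{\Pi}_i$ from the dense, $\alpha$-independent set $\mathcal{D}(\mathbf{L}^*)$ already gives the decay uniformly in $\alpha$, and the contradiction argument can be dropped.
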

\begin{proof}
    Firstly, by Proposition \ref{spectral_inclusion_prop}, for $\alpha \in [\alpha_0-\delta_0,\alpha_0+\delta_0]$, $k\geq \lceil c_{\alpha_0}\rceil +1$, we have $\spec(\mathbf{L}_{\alpha}^{(\alpha_0)})\subset \{\mathfrak{Re}z\leq -1\}\cup \{0,1\}$ which implies that for any $m>1, n>0$, we have $R_{m,n}'\subset \{\mathfrak{Re}z>-1 \}\setminus \{0,1\}\subset \res(\mathbf{L}_{\alpha})$. 
    It therefore suffices to prove the existence of $m> 1, n>0$ such that for all $\lambda \in R_{m,n}'$, $\alpha \in [\alpha_0-\delta_0,\alpha_0+\delta_0]$, and $\mathbf{q}\in \mathcal{D}(\mathbf{L}_{\alpha})$ we have 
\begin{equation}
    \left|\left( (\lambda - \mathbf{L}_{\alpha})\mathbf{q},\mathbf{q} \right)_{\dot{H}_{\alpha_0}^{k+1}\times \dot{H}_{\alpha_0}^k} \right| + \left| \left\langle (\lambda-\mathbf{L}_{\alpha})\mathbf{q},\mathbf{q} \right\rangle_0 \right| \gtrsim \|\mathbf{q}\|_{\mathcal{H}_{\alpha_0}^k}^2.
\label{desired_estimate}
\end{equation}
    Indeed, if \eqref{desired_estimate} holds, then by Cauchy Schwarz, we have 
    \begin{equation}
        \|\mathbf{q}\|_{\mathcal{H}_{\alpha_0}^k} \lesssim \|(\lambda-\mathbf{L}_{\alpha})\mathbf{q}\|_{\mathcal{H}_{\alpha_0}^k}.
        \label{estimate_implies_resolvent_bound}
    \end{equation}
    This implies the resolvent estimate \eqref{resolvent_estimate}, since for $\lambda \in \res(\mathbf{L}_{\alpha})$, we have the standard fact
    \begin{equation*}
        \inf_{\{\mathbf{q}\neq \mathbf{0}\}}\frac{\|(\lambda-\mathbf{L}_{\alpha})\mathbf{q}\|_{\mathcal{H}_{\alpha_0}^k}}{\|\mathbf{q}\|_{\mathcal{H}_{\alpha_0}^k}} = \frac{1}{\|(\lambda-\mathbf{L}_{\alpha})^{-1}\|_{\mathcal{H}_{\alpha_0}^k}}.
    \end{equation*}
    We proceed by proving \eqref{desired_estimate} for $\mathbf{q}\in (C^\infty[-R_{\alpha_0},R_{\alpha_0}])^2$, since by density and the same argument as in the proof of Proposition \ref{maximal_dissipativity_proposition}, the estimate also holds for $\mathbf{q}\in \mathcal{D}(\mathbf{L}_{\alpha})$. Now we divide the set $R_{m,n}'$ into two parts, defined by
    \begin{align*}
        S_{m,n}^{(1)}&\coloneq \{z\in \mathbb{C}:\mathfrak{Re}z\geq -\omega_0,|\mathfrak{Im}z|\geq n\} \\
        S_{m,n}^{(2)}&\coloneq \{z\in \mathbb{C}:\mathfrak{Re}z\geq m,\mathfrak{Im}z\in[-n,n]\}
    \end{align*}
    and prove the estimate on each set separately. In fact, the majority of the proof is devoted to the harder task of  estimating \eqref{desired_estimate} for $\lambda\in S_{m,n}^{(1)}$. By definition we first have 
    \begin{align*}
        &\left((\lambda - \mathbf{L}_{\alpha})\mathbf{q},\mathbf{q} \right)_{\dot{H}_{\alpha_0}^{k+1}\times \dot{H}_{\alpha_0}^k} = \int_{-R_{\alpha_0}}^{R_{\alpha_0}}\partial_y^{k+1}(\lambda q_1+y\partial_y q_1 -q_2)\overline{\partial_y^{k+1}q_1}dy \\
        &+ \int_{-R_{\alpha_0}}^{R_{\alpha_0}}\partial_y^k\left((\lambda+1)q_2-\partial_{yy}q_1 + \frac{2\alpha}{\sqrt{1+\alpha}+y}\partial_y q_1 + y\partial_y q_2\right)\overline{\partial_y^k q_2}\,dy .
    \end{align*}
    Taking real parts after integrating by parts, we obtain
\begin{align*}
&\mathfrak{Re}\left((\lambda - \mathbf{L}_{\alpha})\mathbf{q},\mathbf{q} \right)_{\dot{H}_{\alpha_0}^{k+1}\times \dot{H}_{\alpha_0}^k} = \left(\mathfrak{Re}\lambda + k +\frac{1}{2}\right)
\left(\|\partial_y^{k+1}q_1\|_{L_{\alpha_0}^2}^2
+\|\partial_y^{k}q_2\|_{L_{\alpha_0}^2}^2\right)\\
&+ \mathfrak{Re} \sum_{i=0}^{k}\int_{-R_{\alpha_0}}^{R_{\alpha_0}}
C_i^k \partial_y^{k-i+1}q_1
\partial_y^{i}\left(\frac{2\alpha}{\sqrt{1+\alpha}+y}\right)
\overline{\partial_y^k q_2}\,dy +\frac{R_{\alpha_0}}{2}\left|\partial_y^{k+1}q_1(R_{\alpha_0})
-\frac{1}{R_{\alpha_0}}\partial_y^k q_2(R_{\alpha_0})\right|^2 \\
&+\frac{R_{\alpha_0}}{2}\left|\partial_y^{k+1}q_1(-R_{\alpha_0})
+\frac{1}{R_{\alpha_0}}\partial_y^k q_2(-R_{\alpha_0})\right|^2 + \frac{R_{\alpha_0}^2-1}{2R_{\alpha_0}}
\left(|\partial_y^k q_2(R_{\alpha_0})|^2+|\partial_y^k q_2(-R_{\alpha_0})|^2\right).
\end{align*}
    To bound, we  recall the `regularity constant' $c_{\alpha_0}$ defined in \eqref{c_alpha_0} 
    \begin{equation*}
        \sup_{|\alpha-\alpha_0|\leq\delta_0}\left|\int_{-R_{\alpha_0}}^{R_{\alpha_0}}\frac{2\alpha}{\sqrt{1+\alpha}+y}\partial_y^{k+1}q_1 \overline{\partial_y^k q_2}\,dy\right| \leq c_{\alpha_0}\left(\|\partial_y^{k+1}q_1\|_{L_{\alpha_0}^2}^2 + \|\partial_y^{k}q_2\|_{L_{\alpha_0}^2}^2\right).
    \end{equation*}
    Next, using the fact that
    \begin{equation*}
        \left| \partial_y^{i}\left(\frac{2\alpha}{\sqrt{1+\alpha}+y}\right) \right| \lesssim_{i}\frac{\alpha}{(\sqrt{1+\alpha}+y)^{i+1}} \leq \frac{\alpha_0 +\delta_0}{(\sqrt{1+\alpha_0-\delta_0}-R_{\alpha_0})^{i+1}} = M(\alpha_0)
    \end{equation*}
    which is well defined and independent of $\alpha$ by our choice of $R_{\alpha_0}, \delta_0(\alpha_0)$, we apply the Gagliardo--Nirenberg inequality 
    \begin{equation*}
        \left|\sum_{i=1}^{k}\int_{-R_{\alpha_0}}^{R_{\alpha_0}}C_i^k \partial_y^{k-i+1}q_1\,\partial_y^i\left(\frac{2\alpha}{\sqrt{1+\alpha}+y}\right)\overline{\partial_y^k q_2}\,dy\right| \leq M_{\alpha_0,\varepsilon'}\|\partial_y q_1\|_{L_{\alpha_0}^2}^2 + \varepsilon'\left(\|\partial_y^{k+1}q_1\|_{L_{\alpha_0}^2}^2 + \|\partial_y^{k}q_2\|_{L_{\alpha_0}^2}^2\right).
    \end{equation*}
    Putting everything together and using the real part on the previous page as a lower bound, for $k\geq \lceil c_{\alpha_0}\rceil +1$ 
    \begin{equation}\label{eq:real_estimate}
    \begin{split}
        \left|\left((\lambda-\mathbf{L}_{\alpha})\mathbf{q},\mathbf{q} \right)_{\dot{H}_{\alpha_0}^{k+1}\times \dot{H}_{\alpha_0}^k} \right| &\geq \left(\mathfrak{Re}\lambda + k +\frac{1}{2}-c_{\alpha_0}-\varepsilon'\right)\left(\|\partial_y^{k+1}q_1\|_{L_{\alpha_0}^2}^2 + \|\partial_y^{k}q_2\|_{L_{\alpha_0}^2}^2\right) - M_{\alpha_0,\varepsilon'}\|\partial_y q_1\|_{L_{\alpha_0}^2}^2 \\
        &\geq \left(\mathfrak{Re}\lambda + \frac{3}{2}-\varepsilon' \right)\left(\|\partial_y^{k+1}q_1\|_{L_{\alpha_0}^2}^2 + \|\partial_y^{k}q_2\|_{L_{\alpha_0}^2}^2\right) - M_{\alpha_0,\varepsilon'}\|\partial_y q_1\|_{L_{\alpha_0}^2}^2.
    \end{split}
    \end{equation}
    Now we move on to the $k=0$ term of \eqref{desired_estimate}. By definition of the inner product, integration by parts, and separating real and imaginary parts, we write 
    \begin{align*}
        &\left\langle (\lambda-\mathbf{L}_{\alpha})\mathbf{q},\mathbf{q} \right\rangle_0 = \int_{-R_{\alpha_0}}^{R_{\alpha_0}}\partial_y\left(\lambda q_1 +y\partial_y q_1 -q_2\right)\overline{\partial_y}q_1\,dy + \frac{1}{R_{\alpha_0}}(\lambda q_1 +y\partial_y q_1 -q_2)\overline{q_1}\big\rvert_{y=-R_{\alpha_0}} \\
        &+\int_{-R_{\alpha_0}}^{R_{\alpha_0}}\left((\lambda+1)q_2 +y\partial_y q_2 + \frac{2\alpha}{\sqrt{1+\alpha}+y}\partial_y q_1-\partial_{yy}q_1 \right)\overline{q_2}\,dy  \\
        &= \left(\mathfrak{Re}\lambda+\frac{1}{2}\right)\left(\|\partial_y q_1\|_{L_{\alpha_0}^2}^2+\| q_2\|_{L_{\alpha_0}^2}^2\right) + \frac{\mathfrak{Re}\lambda}{R_{\alpha_0}}|q_1(-R_{\alpha_0})|^2 + \frac{R_{\alpha_0}^2 -1}{2R_{\alpha_0}}\left(|q_2(R_{\alpha_0})|^2 + |q_2(-R_{\alpha_0})|^2\right) \\
        &+ \frac{R_{\alpha_0}}{2}\left(\left|\partial_y q_1(R_{\alpha_0}) -\frac{1}{R_{\alpha_0}}q_2(R_{\alpha_0}) \right|^2+\left|\partial_y q_1(-R_{\alpha_0})+\frac{1}{R_{\alpha_0}}q_2(-R_{\alpha_0}) \right|^2 \right) \\
        &+ \int_{-R_{\alpha_0}}^{R_{\alpha_0}}\frac{2\alpha}{\sqrt{1+\alpha}+y}\partial_y q_1 \overline{q_2}\,dy + \frac{1}{R_{\alpha_0}}(y\partial_y q_1 -q_2)\overline{q_1}\big\rvert_{y=-R_{\alpha_0}} \\
    &+i\mathfrak{Im}\lambda\left(\|\partial_y q_1\|_{L_{\alpha_0}^2}^2
+\|q_2\|_{L_{\alpha_0}^2}^2
+\frac{1}{R_{\alpha_0}}|q_1(-R_{\alpha_0})|^2\right) 
+ \frac12 \int_{-R_{\alpha_0}}^{R_{\alpha_0}}
y\Bigl(\partial_{yy}q_1\,\overline{\partial_y q_1}
-\overline{\partial_{yy}q_1}\,\partial_y q_1\Bigr)\,dy \\
&+ \frac12 \int_{-R_{\alpha_0}}^{R_{\alpha_0}}
y\Bigl(\partial_y q_2\,\overline{q_2}
-\overline{\partial_y q_2}\,q_2\Bigr)\,dy 
+ \int_{-R_{\alpha_0}}^{R_{\alpha_0}}
\Bigl(\partial_y q_1\,\overline{\partial_y q_2}
-\overline{\partial_y q_1}\,\partial_y q_2\Bigr)\,dy \\
&- \frac12\Bigl(\partial_y q_1(R_{\alpha_0})\overline{q_2(R_{\alpha_0})}
-\overline{\partial_y q_1(R_{\alpha_0})}\,q_2(R_{\alpha_0})\Bigr)
+ \frac12\Bigl(\partial_y q_1(-R_{\alpha_0})\overline{q_2(-R_{\alpha_0})}
-\overline{\partial_y q_1(-R_{\alpha_0})}\,q_2(-R_{\alpha_0})\Bigr).
    \end{align*}
    We now bound the terms
    \begin{align*}
        \left|\int_{-R_{\alpha_0}}^{R_{\alpha_0}}\frac{2\alpha}{\sqrt{1+\alpha}+y}\partial_y q_1 \overline{q_2}\, dy\right| \leq c_{\alpha_0}\left(\|\partial_y q_1 \|_{L_{\alpha_0}^2}^2 + \|q_2\|_{L_{\alpha_0}^2}^2 \right) 
    \end{align*}
    and 
    \begin{align*}
        \left|\frac{1}{R_{\alpha_0}}(y\partial_y q_1 - q_2)\overline{q_1}|_{y=-R_{\alpha_0}} \right| &\lesssim_{\alpha_0}\left(\|\partial_y q_1\|_{L_{\alpha_0}^\infty}+\|q_2\|_{L_{\alpha_0}^\infty} \right)\|q_1\|_{L_{\alpha_0}^\infty} 
        \lesssim_{\alpha_0}\left(\|\partial_y q_1\|_{H_{\alpha_0}^1}+\|q_2\|_{H_{\alpha_0}^1} \right)\|q_1\|_{H_{\alpha_0}^1} \\
        &\lesssim_{\alpha_0} \varepsilon'\left(\|\partial_y^{k+1} q_1\|_{L_{\alpha_0}^2}^2+\|\partial_y^k q_2\|_{L_{\alpha_0}^2}^2 \right) + M_0\langle \mathbf{q},\mathbf{q}\rangle_0.
    \end{align*}
    For the other imaginary terms, 
    \begin{align*}
        \left|\frac12 \int_{-R_{\alpha_0}}^{R_{\alpha_0}}
y\Bigl(\partial_y q_2\,\overline{q_2}
-\overline{\partial_y q_2}\,q_2\Bigr)\,dy\right| \lesssim_{\alpha_0}\varepsilon'\|\partial_y^{k}q_2\|_{L_{\alpha_0}^2}^2 + M_1\|q_2\|_{L_{\alpha_0}^2}^2,\\
\left|\int_{-R_{\alpha_0}}^{R_{\alpha_0}}
\Bigl(\partial_y q_1\,\overline{\partial_y q_2}
-\overline{\partial_y q_1}\,\partial_y q_2\Bigr)\,dy \right|\lesssim_{\alpha_0} \varepsilon' \|\partial_y^k q_2\|_{L_{\alpha_0}^2}^2 + M_2 \langle \mathbf{q},\mathbf{q}\rangle_0, \\
\left|\frac12 \int_{-R_{\alpha_0}}^{R_{\alpha_0}}
y\Bigl(\partial_{yy}q_1\,\overline{\partial_y q_1}
-\overline{\partial_{yy}q_1}\,\partial_y q_1\Bigr)\,dy \right|\lesssim_{\alpha_0} \varepsilon'\|\partial_y^{k+1}q_1\|_{L_{\alpha_0}^2}^2 + M_3\langle \mathbf{q},\mathbf{q}\rangle_0,\\
\left|\frac12\left[\partial_y q_1\overline{q_2}
-\overline{\partial_y q_1}\,q_2\right]_{-R_{\alpha_0}}^{R_{\alpha_0}} \right| \lesssim_{\alpha_0} \varepsilon'\left(\|\partial_y^{k+1}q_1\|_{L_{\alpha_0}^2}^2+\|\partial_y^k q_2\|_{L_{\alpha_0}^2}^2\right) + M_4\langle \mathbf{q},\mathbf{q}\rangle_0
    \end{align*}
    of which all can be bounded by $\varepsilon'(\|\partial_y^{k+1}q_1\|_{L_{\alpha_0}^2}^2+\|\partial_y^k q_2\|_{L_{\alpha_0}^2}^2) + M_i\langle \mathbf{q},\mathbf{q}\rangle_0 $.
    Using these estimates and bounding below by the imaginary terms instead, we have
    \begin{align}\label{eq:imaginary_estimate}
        \left|\left\langle (\lambda-\mathbf{L}_{\alpha})\mathbf{q},\mathbf{q} \right\rangle_0 \right| \geq \left(|\mathfrak{Im}\lambda| - c_{\alpha_0} - \sum_{i=0}^{4}M_i\right)\langle \mathbf{q},\mathbf{q}\rangle_0 - 4\varepsilon'\left(\|\partial_y^{k+1}q_1\|_{L_{\alpha_0}^2}^2+\|\partial_y^k q_2\|_{L_{\alpha_0}^2}^2\right).
    \end{align}
    Combining \eqref{eq:real_estimate}--\eqref{eq:imaginary_estimate} yields
    \begin{align*}
         &\left|\left( (\lambda - \mathbf{L}_{\alpha})\mathbf{q},\mathbf{q} \right)_{\dot{H}_{\alpha_0}^{k+1}\times \dot{H}_{\alpha_0}^k} \right| + \left| \left\langle (\lambda-\mathbf{L}_{\alpha})\mathbf{q},\mathbf{q} \right\rangle_0 \right| \\
         &\geq \left(|\mathfrak{Im}\lambda| - c_{\alpha_0} - \sum_{i=0}^{4}M_i - M_{\alpha_0,\varepsilon'}\right)\langle \mathbf{q},\mathbf{q}\rangle_0 
         +\left(\mathfrak{Re}\lambda + \frac{3}{2}-5\varepsilon' \right)\left(\|\partial_y^{k+1}q_1\|_{L_{\alpha_0}^2}^2 + \|\partial_y^{k}q_2\|_{L_{\alpha_0}^2}^2\right) \\
         &\geq \frac{1}{2}\|\mathbf{q}\|_k^2
    \end{align*}
    for sufficiently small $\varepsilon'$  and large enough $n$ and $\mathfrak{Re}\lambda \geq -\omega_0$. This implies that the estimate \eqref{desired_estimate} holds for $\lambda \in S_{m,n}^{(1)}$. Finally, for $\lambda \in S_{m,n}^{(2)}$, by Proposition \ref{Semigroup_generation_lin_operator}, and the Hille--Yosida theorem we have the resolvent bound
    \begin{align}\label{eq:lambda_in_s_2}
        \|(\lambda-\mathbf{L}_{\alpha}^{(\alpha_0)})^{-1}\|_{\mathcal{L}(\mathcal{H}_{\alpha_0}^k)} \leq \frac{C_{k,\alpha_0}}{\mathfrak{Re}\lambda-(-\frac{1}{2}+\|\mathbf{L}_{\alpha,1}\|_{\mathcal{L}(\mathcal{H}_{\alpha_0}^k)})}.
    \end{align}
    Recall that $\|\mathbf{L}_{\alpha,1}\|_{\mathcal{L}(\mathcal{H}_{\alpha_0}^k)}\lesssim c_{\alpha_0}<\infty$ for $\alpha \in [\alpha_0-\delta_0,\alpha_0+\delta_0]$, thus we may take $m$ sufficiently large so that \eqref{eq:lambda_in_s_2} has a uniform bound for all $\lambda \in S_{m,n}^{(2)}$. 
\end{proof}
\subsection{Stable and unstable dynamics}
Now we can finally give a sufficient description of the linearised flow, which will provide us with the machinery needed for Section \ref{section_three}--\ref{section_four}.
\begin{proposition}\label{prop:linearised_evolution}
    Let $k\geq \lceil c_{\alpha_0} \rceil +1$, $0<\omega_0<1$. Then for any $\alpha \in [\alpha_0-\delta_0,\alpha_0+\delta_0]$ and $\gamma \in (0,\omega_0)$, with $\omega \coloneqq \omega_0 - \gamma$, the following properties hold.
    
    \medskip
\noindent(i) Commutation properties.
For every $\tau \ge 0$,
\begin{align}\label{eq:commutation}
    \mathbf{S}_{\alpha}(\tau)\mathbf{P}_{1,\alpha}^{(\alpha_0)} 
      = \mathbf{P}_{1,\alpha}^{(\alpha_0)}\mathbf{S}_{\alpha}(\tau),
    \qquad
    \mathbf{S}_{\alpha}(\tau)\mathbf{P}_{0,\alpha}^{(\alpha_0)}
      = \mathbf{P}_{0,\alpha}^{(\alpha_0)}\mathbf{S}_{\alpha}(\tau).
\end{align}

\medskip
\noindent(ii) Dynamics on invariant subspaces. 
For every $\tau \ge 0$,
\begin{align}
    \mathbf{S}_{\alpha}(\tau)\mathbf{P}_{1,\alpha}^{(\alpha_0)} 
        &= e^{\tau}\,\mathbf{P}_{1,\alpha}^{(\alpha_0)}, \\
    \mathbf{S}_{\alpha}(\tau)\mathbf{P}_{0,\alpha}^{(\alpha_0)} 
        &= \mathbf{P}_{0,\alpha}^{(\alpha_0)} + \tau\,\mathbf{L}_{\alpha}\mathbf{P}_{0,\alpha}\label{eq:0_eigenspace_dynamic}, \\
    \bigl\|\mathbf{S}_{\alpha}(\tau)\widetilde{\mathbf{P}}_{\alpha}^{(\alpha_0)}\mathbf{q}\bigr\|_{\mathcal{H}^k(\mathcal{I}_{\alpha_0})}
        &\le M e^{-\omega_0 \tau}\,
           \bigl\|\widetilde{\mathbf{P}}_{\alpha}^{(\alpha_0)}\mathbf{q}\bigr\|_{\mathcal{H}^k(\mathcal{I}_{\alpha_0})} \label{eq:stable_semigroup_decay}
\end{align}
where
\[
    \widetilde{\mathbf{P}}_{\alpha}^{(\alpha_0)}
      \coloneqq \mathbf{I}-\mathbf{P}_{\alpha}^{(\alpha_0)}, \qquad \mathbf{P}_{\alpha}^{(\alpha_0)} \coloneqq \mathbf{P}_{0,\alpha}^{(\alpha_0)} + \mathbf{P}_{1,\alpha}^{(\alpha_0)},
    \qquad
    M = M(\omega_0,\alpha_0).
\]

\medskip
\noindent(iii) Ranges of the spectral projections.
\[
    \operatorname{rg}\,\mathbf{P}_{0,\alpha}^{(\alpha_0)} = 
        \operatorname{span}\{\mathbf{f}_{0,\alpha},\,\mathbf{g}_{0,\alpha}\},
    \qquad
    \operatorname{rg}\,\mathbf{P}_{1,\alpha}^{(\alpha_0)} = 
        \operatorname{span}\{\mathbf{f}_{1,\alpha}\}.
\]

\medskip
\noindent(iv) Lipschitz dependence on parameters. For $\alpha_1,\alpha_2 \in [\alpha_0-\delta_0,\alpha_0+\delta_0]$,
\begin{align}
    \|\mathbf{P}_{0,\alpha_1}^{(\alpha_0)}-\mathbf{P}_{0,\alpha_2}^{(\alpha_0)}\|_{\mathcal{L}(\mathcal{H}_{\alpha_0}^k)} &\lesssim_{\alpha_0} |\alpha_1-\alpha_2|\label{eq:lipschitz_dep_riesz_projec}, \\
    \|\mathbf{P}_{1,\alpha_1}^{(\alpha_0)}-\mathbf{P}_{1,\alpha_2}^{(\alpha_0)}\|_{\mathcal{L}(\mathcal{H}_{\alpha_0}^k)} &\lesssim_{\alpha_0} |\alpha_1-\alpha_2|
    \end{align}
    and for all $\tau \geq 0$
    \begin{equation}\label{eq:lipschitz_dep_stable_semigroup}
         \|\mathbf{S}_{\alpha_1}(\tau)\widetilde{\mathbf{P}}_{\alpha_1}^{(\alpha_0)}-\mathbf{S}_{\alpha_2}(\tau)\widetilde{\mathbf{P}}_{\alpha_2}^{(\alpha_0)}\|_{\mathcal{L}(\mathcal{H}_{\alpha_0}^k)} \lesssim_{\alpha_0,\omega_0,\gamma,k} e^{-\omega\tau}|\alpha_1-\alpha_2|.
    \end{equation}
\end{proposition}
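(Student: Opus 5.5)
Items (i)--(iii) follow from standard Riesz-projection theory combined with Propositions \ref{spectral_inclusion_prop} and \ref{prop:projection_rank}, and (iv) follows from the resolvent identity plus the uniform resolvent bound of Proposition \ref{prop:resolvent_estimate}. I will prove them in that order.

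For (i): the resolvent $(z-\mathbf{L}_\alpha)^{-1}$ commutes with $\mathbf{S}_\alpha(\tau)$, so integrating over $\gamma_0,\gamma_1$ shows that the Riesz projections commute with the semigroup.

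For (iii): both $\mathbf{f}_{0,\alpha}$ and $\mathbf{g}_{0,\alpha}$ are annihilated by $\mathbf{L}_\alpha^2$. Any generalised eigenvector of an isolated eigenvalue lies in the range of its Riesz projection, so both lie in $\rg\mathbf{P}_{0,\alpha}$. They are linearly independent, and $\rg\mathbf{P}_{0,\alpha}$ has rank $2$ by Proposition \ref{prop:projection_rank}, so they span it. Likewise $\mathbf{f}_{1,\alpha}$ spans $\rg \mathbf{P}_{1,\alpha}$, which has rank one.

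For the first two formulas in (ii): on the finite-dimensional invariant range, $\mathbf{L}_\alpha$ restricts to a bounded operator, so $\mathbf{S}_\alpha(\tau)$ acts there as the matrix exponential. On $\rg\mathbf{P}_{1,\alpha}$ the restriction is the identity, giving the factor $e^\tau$. On $\rg\mathbf{P}_{0,\alpha}$ the restriction $\mathbf{A}$ satisfies $\mathbf{A}^2=0$ (Proposition \ref{prop:projection_rank}), so $e^{\tau\mathbf{A}}=\mathbf{I}+\tau\mathbf{A}$, which is \eqref{eq:0_eigenspace_dynamic}.

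For the decay \eqref{eq:stable_semigroup_decay}: restrict $\mathbf{S}_\alpha$ to the closed invariant subspace $\rg\widetilde{\mathbf{P}}_\alpha$. The spectrum of the restricted generator is $\spec(\mathbf{L}_\alpha)\setminus\{0,1\}\subset\{\mathfrak{Re} z\le-1\}$. For $z\in R_{m,n}'$ the reduced resolvent equals $(z-\mathbf{L}_\alpha)^{-1}\widetilde{\mathbf{P}}_\alpha$, which is bounded uniformly by Proposition \ref{prop:resolvent_estimate}. On the compact rectangle $R_{m,n}$, after projecting away $\{0,1\}$, the reduced resolvent is analytic, so it is bounded by continuity and compactness. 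To make this bound uniform in $\alpha$, I would also use continuity in $\alpha$ together with compactness of $[\alpha_0-\delta_0,\alpha_0+\delta_0]\times R_{m,n}$. The Gearhart--Prüss--Greiner theorem then gives growth bound below $-\omega_0$ on the Hilbert space $\mathcal{H}^k_{\alpha_0}$. To get a constant $M$ independent of $\alpha$, I would use the quantitative version of the theorem, in which $M$ depends only on the resolvent sup along $\mathfrak{Re} z=-\omega_0$ and on the semigroup bound \eqref{growth_bound_linearised_opp}; both are uniform in $\alpha$.

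For (iv): the resolvent identity gives
\[(z-\mathbf{L}_{\alpha_1})^{-1}-(z-\mathbf{L}_{\alpha_2})^{-1}=(z-\mathbf{L}_{\alpha_1})^{-1}(\mathbf{L}_{\alpha_1}-\mathbf{L}_{\alpha_2})(z-\mathbf{L}_{\alpha_2})^{-1}.\]
Combining this with \eqref{eq:lipschitz_L_alpha} and uniform bounds on the compact contours $\gamma_0,\gamma_1$ (which avoid the spectrum uniformly in $\alpha$) yields the Lipschitz bounds for the projections.

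For the stable semigroup estimate \eqref{eq:lipschitz_dep_stable_semigroup}, which I expect to be the main obstacle, I would use the Duhamel formula
\[\mathbf{S}_{\alpha_1}(\tau)-\mathbf{S}_{\alpha_2}(\tau)=\int_0^\tau \mathbf{S}_{\alpha_1}(\tau-s)(\mathbf{L}_{\alpha_1}-\mathbf{L}_{\alpha_2})\mathbf{S}_{\alpha_2}(s)\,ds.\]
This formula is valid because the two generators differ by a bounded operator; I would justify it on the domain and then extend by density. I would insert $\mathbf{I}=\mathbf{P}_{\alpha_1}+\widetilde{\mathbf{P}}_{\alpha_1}$ before $\mathbf{S}_{\alpha_1}(\tau-s)$ and compose with $\widetilde{\mathbf{P}}_{\alpha_2}$ on the right, splitting the difference into three parts:
\begin{itemize}
\item a stable--stable convolution, bounded by $\int_0^\tau e^{-\omega_0(\tau-s)}e^{-\omega_0 s}\,ds\,|\alpha_1-\alpha_2|\lesssim \tau e^{-\omega_0\tau}|\alpha_1-\alpha_2|\lesssim_\gamma e^{-\omega\tau}|\alpha_1-\alpha_2|$, which is why the loss $\gamma$ is needed;
\item a projection-difference term $\mathbf{S}_{\alpha_2}(\tau)(\widetilde{\mathbf{P}}_{\alpha_1}-\widetilde{\mathbf{P}}_{\alpha_2})$, controlled by the Lipschitz bounds on the projections;
\item an unstable part coming from $\mathbf{P}_{\alpha_1}$, where the integrand grows.
\end{itemize}
For the unstable part I would avoid the growing integral altogether. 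Since $\mathbf{P}_{\alpha_1}\widetilde{\mathbf{P}}_{\alpha_1}=0$, one has
\[\mathbf{P}_{\alpha_1}\mathbf{S}_{\alpha_2}(\tau)\widetilde{\mathbf{P}}_{\alpha_2}=(\mathbf{P}_{\alpha_1}-\mathbf{P}_{\alpha_2})\mathbf{S}_{\alpha_2}(\tau)\widetilde{\mathbf{P}}_{\alpha_2},\]
which decays like $e^{-\omega_0\tau}|\alpha_1-\alpha_2|$. To organise the argument, I would write
\[\mathbf{S}_{\alpha_1}(\tau)\widetilde{\mathbf{P}}_{\alpha_1}-\mathbf{S}_{\alpha_2}(\tau)\widetilde{\mathbf{P}}_{\alpha_2}=\widetilde{\mathbf{P}}_{\alpha_1}\bigl[\mathbf{S}_{\alpha_1}(\tau)\widetilde{\mathbf{P}}_{\alpha_1}-\mathbf{S}_{\alpha_2}(\tau)\widetilde{\mathbf{P}}_{\alpha_2}\bigr]+\mathbf{P}_{\alpha_1}\bigl[\cdots\bigr].\]
The second bracket reduces to $-(\mathbf{P}_{\alpha_1}-\mathbf{P}_{\alpha_2})\mathbf{S}_{\alpha_2}(\tau)\widetilde{\mathbf{P}}_{\alpha_2}$, which is controlled as above. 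Only the stable projection $\widetilde{\mathbf{P}}_{\alpha_1}$ of the Duhamel integral remains, and it is handled by the first two bullets.
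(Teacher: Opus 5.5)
Your proposal is correct and follows essentially the same route as the paper, including for \eqref{eq:lipschitz_dep_stable_semigroup}. There, your splitting $\widetilde{\mathbf P}_{\alpha_1}[\cdots]+\mathbf P_{\alpha_1}[\cdots]$ is precisely the paper's decomposition: its $\mathbf D_{\alpha_1,\alpha_2}$ is your first bracket divided by $\alpha_1-\alpha_2$, and its remaining term is $-\mathbf P_{\alpha_1}\mathbf S_{\alpha_2}(\tau)\widetilde{\mathbf P}_{\alpha_2}/(\alpha_1-\alpha_2)$. Your remarks on uniformity of $M$ in $\alpha$, via joint continuity of the reduced resolvent and a quantitative Gearhart--Pr\"uss--Greiner theorem, make explicit what the paper leaves implicit.

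One slip: with your Duhamel formula, the projection-difference term is $\mathbf S_{\alpha_1}(\tau)\widetilde{\mathbf P}_{\alpha_1}(\widetilde{\mathbf P}_{\alpha_1}-\widetilde{\mathbf P}_{\alpha_2})$, which decays like $e^{-\omega_0\tau}|\alpha_1-\alpha_2|$. It is not $\mathbf S_{\alpha_2}(\tau)(\widetilde{\mathbf P}_{\alpha_1}-\widetilde{\mathbf P}_{\alpha_2})$ as written in your second bullet, which can grow like $e^{\tau}|\alpha_1-\alpha_2|$.
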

\begin{proof}
Immediately from the definition of a $C_0$-semigroup, it is standard that it commutes with its generator and the resolvent operator, hence also the Riesz projections, which justifies \eqref{eq:commutation}. 

We next prove $(iii)$. Proposition \ref{spectral_inclusion_prop} implies that $\operatorname{span}\{\mathbf{f}_{0,\alpha},\,\mathbf{g}_{0,\alpha}\}\subseteq \rg(\mathbf{P}_{0,\alpha}^{(\alpha_0)})$, and Proposition \ref{prop:projection_rank} gives us equality of the sets. The same argument holds for $\rg(\mathbf{P}_{1,\alpha}^{(\alpha_0)})$. 

For $(ii)$, we prove only the polynomial growth \eqref{eq:0_eigenspace_dynamic} as the argument for $\mathbf{P}_{1,\alpha}^{(\alpha_0)}$ is identical and even simpler since there is no generalised mode. Indeed, for $|\alpha-\alpha_0|\leq \delta_0$, $\mathbf{q}\in \mathcal{H}^k(\mathcal{I}_{\alpha_0})$, by definition of the semigroup and the commutation in \textit{(i)}, we have the equations
\begin{align*}
    &\partial_{\tau}\mathbf{S}_{\alpha}(\tau)\mathbf{P}_{0,\alpha}\mathbf{q}=\mathbf{L}_{\alpha}\mathbf{S}_{\alpha}(\tau)\mathbf{P}_{0,\alpha}\mathbf{q}=\mathbf{S}_{\alpha}(\tau)\mathbf{L}_{\alpha}\mathbf{P}_{0,\alpha}\mathbf{q} \\
    &\partial_{\tau}\mathbf{S}_{\alpha}(\tau)\mathbf{L}_{\alpha}\mathbf{P}_{0,\alpha}\mathbf{q} = \mathbf{S}_{\alpha}(\tau)\mathbf{L}_{\alpha}^2\mathbf{P}_{0,\alpha}\mathbf{q} = 0
\end{align*}
which can be solved to obtain $\mathbf{S}_{\alpha}(\tau)\mathbf{P}_{0,\alpha} = \tau \mathbf{L}_{\alpha}\mathbf{P}_{0,\alpha} + \mathbf{P}_{0,\alpha}$. For the decay estimate \eqref{eq:stable_semigroup_decay} of $(ii)$, we use a corollary of the Gearhardt--Prüss--Greiner theorem \cite[Theorem 1.11]{engel2000one}. First, define $\mathbf{A}_{\alpha} \coloneqq \mathbf{L}_{\alpha}\rvert_{\rg\,\widetilde{\mathbf{P}}_{\alpha}}$. By Proposition \ref{spectral_inclusion_prop}, $\spec(\mathbf{A}_{\alpha}) \subset \{\mathfrak{Re}z \leq -1\}$. Moreover, note that the compact set $R_{m,n}$ defined in Proposition \ref{prop:resolvent_estimate} satisfies $R_{m,n}\subset \res(\mathbf{A}_{\alpha})$ for any $m,n>0$ whence the resolvent is uniformly bounded for $\alpha \in [\alpha_0-\delta_0,\alpha_0+\delta_0]$. Then, for $\lambda \in R_{m,n}'$, we have the uniform bound \eqref{resolvent_estimate}. Together, both bounds imply 
\begin{equation*}
    \|(\lambda - \mathbf{A}_{\alpha})^{-1}\|_{\mathcal{L}({\mathcal{H}_{\alpha_0}^k})}\leq C
\end{equation*}
for all $\lambda \in \{\mathfrak{Re}z\geq -\omega_0\}$ and $|\alpha-\alpha_0|\leq \delta_0$. Thus, the Gearhardt--Prüss--Greiner theorem yields the uniform semigroup bound in $(ii)$.

We now prove Lipschitz dependence on parameter $\alpha$ for the Riesz projection, say $\mathbf{P}_{0,\alpha}$ given in \eqref{eq:lipschitz_dep_riesz_projec}. 
Firstly, by Proposition \ref{spectral_inclusion_prop}, $\gamma_0 \subset \res(\mathbf{L}_{\alpha})$. Taking $\alpha_1,\alpha_2 \in [\alpha_0-\delta_0,\alpha_0+\delta_0]$, we have the second resolvent identity 
\begin{equation}\label{eq:second_resolvent_identity}
    (z-\mathbf{L}_{\alpha_1})^{-1}-(z-\mathbf{L}_{\alpha_2})^{-1} = (z-\mathbf{L}_{\alpha_1})^{-1}(\mathbf{L}_{\alpha_1}-\mathbf{L}_{\alpha_2})(z-\mathbf{L}_{\alpha_2})^{-1}.
\end{equation}
Then 
\begin{equation}\label{eq:difference_of_resolvents}
    \mathbf{P}_{\alpha_1}-\mathbf{P}_{\alpha_2} = \frac{1}{2\pi i}\int_{\gamma_0}(z-\mathbf{L}_{\alpha_1})^{-1}(\mathbf{L}_{\alpha_1}-\mathbf{L}_{\alpha_2})(z-\mathbf{L}_{\alpha_2})^{-1}\,dz.
\end{equation}
Note that the quantity $M_{\Gamma} \coloneqq \sup_{\{|\alpha-\alpha_0|\leq\delta_0\}}\sup_{z\in \gamma_0}\|(z-\mathbf{L}_{\alpha})^{-1}\|_{\mathcal{L}(\mathcal{H}^k)}<\infty$ by continuity of the resolvent and compactness of $\{|\alpha-\alpha_0|\leq\delta_0\}$ and $\gamma_0$. Therefore, using the Lipschitz dependence on $\alpha$ of $\mathbf{L}_{\alpha}$ from Proposition \ref{Semigroup_generation_lin_operator}, we may bound \eqref{eq:difference_of_resolvents} as 
\begin{equation*}
    \|\mathbf{P}_{\alpha_1}-\mathbf{P}_{\alpha_2}\|_{\mathcal{L}(\mathcal{H}^k)} \leq \frac{|\gamma_0|M_{\Gamma}^2}{2
\pi}\|\mathbf{L}_{\alpha_1}-\mathbf{L}_{\alpha_2}\|_{\mathcal{L}(\mathcal{H}^k)} \lesssim_{\alpha_0,k}|\alpha_1-\alpha_2|.
\end{equation*}
Next, we write out the proof for the third Lipschitz estimate, \eqref{eq:lipschitz_dep_stable_semigroup}, from \textit{(iv)}. For this, we fix $\mathbf{q}\in \mathcal{D}(\mathbf{L}_{\alpha}^{(\alpha_0)})$ and for $\alpha_1,\alpha_2 \in [\alpha_0-\delta_0,\alpha_0+\delta_0]$, we first define the function 
    \begin{equation*}
        \mathbf{D}_{\alpha_1,\alpha_2}(\tau) \coloneqq \frac{\mathbf{S}_{\alpha_1}(\tau)\widetilde{\mathbf{P}}_{\alpha_1}\mathbf{q}-\widetilde{\mathbf{P}}_{\alpha_1}\mathbf{S}_{\alpha_2}(\tau)\widetilde{\mathbf{P}}_{\alpha_2}\mathbf{q}}{\alpha_1-\alpha_2}.
    \end{equation*}
    Note that by definition of the semigroup and the commutation identities of \textit{(i)}, this function solves the initial value problem
    \begin{align*}
    \begin{cases}
        \partial_{\tau}\mathbf{D}_{\alpha_1,\alpha_2}(\tau) &= \mathbf{L}_{\alpha_1}\mathbf{D}_{\alpha_1,\alpha_2} + \widetilde{\mathbf{P}}_{\alpha_1}\frac{\mathbf{L}_{\alpha_1}-\mathbf{L}_{\alpha_2}}{\alpha_1-\alpha_2}\mathbf{S}_{\alpha_2}(\tau)\widetilde{\mathbf{P}}_{\alpha_2}\mathbf{q}\\
        \mathbf{D}_{\alpha_1,\alpha_2}(0) &= \frac{\widetilde{\mathbf{P}}_{\alpha_1}\mathbf{q}-\widetilde{\mathbf{P}}_{\alpha_1}\widetilde{\mathbf{P}}_{\alpha_2}\mathbf{q}}{\alpha_1-\alpha_2} = \widetilde{\mathbf{P}}_{\alpha_1}\frac{\mathbf{P}_{\alpha_2}-\mathbf{P}_{\alpha_1}}{\alpha_1-\alpha_2}. 
    \end{cases}
    \end{align*}
    Thus, by Duhamel's principle we have 
    \begin{align*}
        \mathbf{D}_{\alpha_1,\alpha_2}(\tau) = \mathbf{S}_{\alpha_1}(\tau)\widetilde{\mathbf{P}}_{\alpha_1}\frac{\mathbf{P}_{\alpha_2}-\mathbf{P}_{\alpha_1}}{\alpha_1-\alpha_2}\mathbf{q} + \int_{0}^{\tau}\mathbf{S}_{\alpha_1}(\tau-\tau')\widetilde{\mathbf{P}}_{\alpha_1}\frac{\mathbf{L}_{\alpha_1}-\mathbf{L}_{\alpha_2}}{\alpha_1-\alpha_2}\mathbf{S}_{\alpha_2}(\tau')\widetilde{\mathbf{P}}_{\alpha_2}\mathbf{q}\,d\tau'.
    \end{align*}
    We now estimate this quantity, using the established decay estimate from $\textit{(ii)}$ and the Lipschitz-dependence on parameters of $\mathbf{P}_{\lambda,\alpha}$ and $\mathbf{L}_{\alpha}$: 
    \begin{equation*}
        \|\mathbf{D}_{\alpha_1,\alpha_2}(\tau)\|_{\mathcal{H}^k} \lesssim_{\alpha_0,\omega_0,k} e^{-\omega_0\tau}\|\mathbf{q}\|_{\mathcal{H}^k}(1+\tau) \lesssim_{\gamma}e^{-(\omega_0-\gamma)\tau}\|\mathbf{q}\|_{\mathcal{H}^k}
    \end{equation*}
    where the implicit gamma-dependent constant $C(\gamma)\to \infty$ as $\gamma \to 0$. Therefore, by adding and subtracting, the true quantity we want to estimate can be written as 
    \begin{align*}
        \frac{(\mathbf{S}_{\alpha_1}(\tau)\widetilde{\mathbf{P}}_{\alpha_1}-\mathbf{S}_{\alpha_2}(\tau)\widetilde{\mathbf{P}}_{\alpha_2})\mathbf{q}}{\alpha_1-\alpha_2} = \mathbf{D}_{\alpha_1,\alpha_2}(\tau) + \frac{(\widetilde{\mathbf{P}}_{\alpha_1}\mathbf{S}_{\alpha_2}(\tau)\widetilde{\mathbf{P}}_{\alpha_2}-\mathbf{S}_{\alpha_2}(\tau)\widetilde{\mathbf{P}}_{\alpha_2})\mathbf{q}}{\alpha_1-\alpha_2}.
    \end{align*}
     The remaining term can be written as 
     \begin{equation*}
     \frac{(\widetilde{\mathbf{P}}_{\alpha_1}\mathbf{S}_{\alpha_2}(\tau)\widetilde{\mathbf{P}}_{\alpha_2}-\mathbf{S}_{\alpha_2}(\tau)\widetilde{\mathbf{P}}_{\alpha_2})\mathbf{q}}{\alpha_1-\alpha_2} = \frac{(\mathbf{P}_{\alpha_2}\mathbf{S}_{\alpha_2}(\tau)\widetilde{\mathbf{P}}_{\alpha_2}-\mathbf{P}_{\alpha_1}\mathbf{S}_{\alpha_2}(\tau)\widetilde{\mathbf{P}}_{\alpha_2})\mathbf{q}}{\alpha_1-\alpha_2} = \frac{(\mathbf{P}_{\alpha_2}-\mathbf{P}_{\alpha_1})\mathbf{S}_{\alpha_2}(\tau)\widetilde{\mathbf{P}}_{\alpha_2}\mathbf{q}}{\alpha_1-\alpha_2}.
     \end{equation*}
     Thus, by using the Lipschitz dependence on parameters of $\mathbf{P}_{\lambda,\alpha}$, it can be bounded as
     \begin{equation*}
         \left\|\frac{(\widetilde{\mathbf{P}}_{\alpha_1}\mathbf{S}_{\alpha_2}(\tau)\widetilde{\mathbf{P}}_{\alpha_2}-\mathbf{S}_{\alpha_2}(\tau)\widetilde{\mathbf{P}}_{\alpha_2})\mathbf{q}}{\alpha_1-\alpha_2}\right\|_{\mathcal{H}^k} = \left\|\frac{(\mathbf{P}_{\alpha_2}-\mathbf{P}_{\alpha_1})\mathbf{S}_{\alpha_2}(\tau)\widetilde{\mathbf{P}}_{\alpha_2}\mathbf{q}}{\alpha_1-\alpha_2}\right\|_{\mathcal{H}^k}
 \lesssim_{\alpha_0,\omega_0}e^{-\omega_0\tau}\|\mathbf{q}\|_{\mathcal{H}^k}.     \end{equation*}
 Finally, 
 \begin{equation*}
     \left\|\frac{(\mathbf{S}_{\alpha_1}(\tau)\widetilde{\mathbf{P}}_{\alpha_1}-\mathbf{S}_{\alpha_2}(\tau)\widetilde{\mathbf{P}}_{\alpha_2})\mathbf{q}}{\alpha_1-\alpha_2}\right\|_{\mathcal{H}^k}\lesssim_{\alpha_0,\omega_0,\gamma}(e^{-(\omega_0-\gamma)\tau}+e^{-\omega_0\tau})\|\mathbf{q}\|_{\mathcal{H}^k}\lesssim e^{-(\omega_0-\gamma)\tau}\|\mathbf{q}\|_{\mathcal{H}^k}.
 \end{equation*}
\end{proof}
\textit{To avoid ambiguity, we now  fix $\omega_0 \in (0,1)$ and then $\gamma \in (0,\omega_0)$. This fixes $\omega \coloneqq \omega_0-\gamma \in (0,\omega_0)$ for the rest of this work. Furthermore, to ensure we have both the Lipschitz estimate \eqref{eq:lipschitz_dep_stable_semigroup}  and the semigroup decay estimate, we will weaken the decay from $\omega_0$ to $\omega$}.  
\section{Construction of radially symmetric solutions}\label{section_three}
We now restrict attention to the construction of radially symmetric
blow-up solutions on the sphere \(\{|x|=r_0\}\). The
equation \eqref{Nd-equation} in radial variables, say for $v=v(r,t)$ becomes
\begin{equation}\label{radial_eq_v}
     v_{tt}- v_{rr}-\frac{n-1}{r} v_r=( v_r)^2.
\end{equation}
After introducing the following transformation
\[
    u(r,t)=v(r,t)+\frac{n-1}{2}\log\left(\frac{r}{r_0}\right),
\]
a direct calculation shows that \(v\) solves \eqref{radial_eq_v} if
and only if \(u\) solves
\begin{equation}\label{eq:transformed_in_u}
    u_{tt}-u_{rr}
    =
    (u_r)^2
    -
    \frac{(n-1)(n-3)}{4r^2}.
\end{equation}
This new transformation shows how the radial problem is converted into a one-dimensional equation with
an inverse-square forcing term.
Away from $r=0$, the leading-order singular dynamics are governed by the one-dimensional
equation
\begin{equation}\label{eq:1d_in_r}
    u_{tt}-u_{rr}=(u_r)^2,
\end{equation}
and the stable branch of singular solutions
\begin{equation}\label{eq:family_masmoudi_solutions}
    u_{\alpha,\kappa,T,r_0}(r,t)
    =
    -\alpha\log\left(1-\frac{t}{T}\right)
    -\alpha\log\left(
        \sqrt{1+\alpha}
        +
        \frac{r-r_0}{T-t}
    \right) + \kappa 
\end{equation}
whose linearised dynamics we studied in Section \ref{section_two}. We therefore seek
solutions of the transformed equation \eqref{eq:transformed_in_u} in the form
\[
    u(r,t)
    =
    u_{\alpha,\kappa,T,r_0}(r,t)
    +
    \zeta(r,t)
\]
which means an ansatz to the original equation \eqref{radial_eq_v} of
\[
    v(r,t)
    =
    u_{\alpha,\kappa,T,r_0}(r,t)
    -
    \frac{n-1}{2}\log\left(\frac r{r_0}\right)
    +
    \zeta(r,t).
\]
Substituting this into \eqref{radial_eq_v} gives the equation for $\zeta$:
\begin{equation}\label{eq:for_zeta}
    \zeta_{tt} - \zeta_{rr} + \frac{2\alpha}{\sqrt{1+\alpha}(T-t)+r-r_0}\zeta_r = -\frac{(n-1)(n-3)}{4r^2} + (\zeta_r)^2. 
\end{equation}
We now formulate this in similarity coordinates and a first order vector form i.e. 
\begin{align*}
    \tau &\coloneqq \tau_{T_0} \coloneqq  \log(T_0)-\log(T_0-t), \\
    y &\coloneqq y_{T_0} \coloneqq \frac{r-r_0}{T_0-t}
\end{align*}
and let $\zeta(r,t) = \eta(\tau,y)$, and then define $\mathbf{q} = (q_1,q_2)^{\top} =(\eta,\eta_\tau + y\eta_y)$. Using 
\begin{align*}
\partial_t \zeta 
  &= (T_0-t)^{-1}\!\left(\partial_\tau\eta + y\,\partial_y\eta\right), \\[4pt]
\partial_{tt} \zeta 
  &= (T_0-t)^{-2}\!\left(
        y^{2}\partial_{yy}\eta 
      + 2y\,\partial_{y\tau}\eta 
      + 2y\,\partial_y\eta 
      + \partial_\tau\eta 
      + \partial_{\tau\tau}\eta
     \right), \\[4pt]
\partial_r \zeta 
  &= (T_0-t)^{-1}\partial_y\eta, \\[4pt]
\partial_{rr} \zeta 
  &= (T_0-t)^{-2}\partial_{yy}\eta,
\end{align*}
and multiplying both sides by $(T_0-t)^2$, we have the scalar form of \eqref{eq:for_zeta}:
\begin{equation*}
    (y^2-1)\eta_{yy} + 2y\eta_{y\tau} + 2y \eta_y + \eta_{\tau} + \eta_{\tau \tau} + \frac{2\alpha_0}{\sqrt{1+\alpha_0}+y}\eta_y = -\frac{(n-1)(n-3)}{4}\left(\frac{T_0 e^{-\tau}}{r_0+yT_0e^{-\tau}}\right)^2 + (\eta_y)^2.
\end{equation*}
Indeed, we see that as $\tau \to \infty$, the equation is asymptotically autonomous with the principal operator given by the linearised operator $\mathbf{L}_{\alpha_0}^{(\alpha_0)}$ studied in Section \ref{section_two} around the singular solutions to the one-dimensional equation. The equation for $\mathbf{q}$ is then
\begin{equation}\label{eq:for_q_sim_vector_form}
    \partial_{\tau}\mathbf{q} = \mathbf{L}_{\alpha_0}\mathbf{q} + \mathbf{\mathcal{E}}_{n,T_0}(\tau,y) + \mathbf{N}(\mathbf{q})
\end{equation}
with \begin{equation}
    \mathbf{\mathcal{E}}_{n,T_0}(\tau,y) = \begin{pmatrix}
        0 \\
        \nu_n \sigma_{T_0}(\tau,y)
    \end{pmatrix}, \qquad \nu_n \coloneqq -\frac{(n-1)(n-3)}{4}, \qquad \sigma_{T_0}(\tau,y) = \frac{\left(\frac{T_0}{r_0}\right)^2e^{-2\tau}}{(1+\frac{T_0}{r_0}e^{-\tau}y)^2},
\end{equation}
the linearised operator:
\begin{equation}
    \mathbf{L}_{\alpha_0} \coloneqq \begin{pmatrix}
        -y\partial_y & 1 \\
        \partial_{yy} + 2(\partial_y U_{\alpha_0,\kappa_0})\partial_y & -1 -y\partial_y 
    \end{pmatrix}
\end{equation}
and, $\mathbf{N}(\mathbf{q})$, the nonlinearity 
\begin{equation*}
    \begin{pmatrix}
        0 \\
        (\partial_y q_1)^2
    \end{pmatrix}.
\end{equation*}
\subsection{A special solution for \texorpdfstring{$n=3$}{n=3}}
Before we look at the general dimensional case $n\geq 2$, including $n=3$, we note that there is a canonical closed-form solution for $n=3$. Indeed, since $\nu_3 = 0$, then any solution $u$ solving 
\begin{equation*}
    u_{tt} - u_{rr} = (u_r)^2
\end{equation*}
provides an exact solution to 
\begin{equation}\label{eq:radial_n=3}
    v_{tt} - v_{rr} - \frac{2}{r}v_r = (v_r)^2
\end{equation}
by the simple transformation 
\begin{equation*}
    v(r,t) = u(r,t) - \log\left(\frac{r}{r_0}\right).
\end{equation*}
Thus, in particular, we have the family of closed-form, radial blow-up solutions to \eqref{eq:radial_n=3} for three spatial dimensions
\begin{equation}\label{special_n=3_family}
    v^{(3)}_{\alpha,\kappa,T}(r,t) = -\alpha\log\left(1-\frac{t}{T}\right)-\alpha\log\left(\sqrt{1+\alpha}+\frac{r-r_0}{T-t}\right)-\log\left(\frac{r}{r_0}\right) + \kappa,
\end{equation}
which, in similarity coordinates, are given by
\begin{equation}\label{special_n=3_family:sim_coordinates}
    v^{(3)}_{\alpha,\kappa,T}(r,t)\coloneqq V_{\alpha,\kappa,T}^{(3)}(\tau,y) = \alpha\tau -\alpha\log(\sqrt{1+\alpha}+y) -\log\left(1+\frac{T}{r_0}ye^{-\tau}\right)+\kappa.
\end{equation}
Given $r_0>0$, the extra term $\log(\frac{r}{r_0})$ is smooth in the $(r,t)$-variable backward light cone $\Gamma^{(1)}(r_0,T)$ associated to \eqref{eq:radial_n=3} when $\frac{T}{r_0}<1$. Equivalently, it is well defined in shrinking annuli in $(x,t) \in \mathbb{R}^{n+1}$ which do not intersect the origin. We will elaborate on this geometry more precisely in the next subsection. We choose the normalising constant $-\log(r_0)$ so that the correction vanishes as $r\to r_0$. I.e., in similarity coordinates this new term decays as we approach the blow-up set; $\log(\frac{r}{r_0}) \to 0$ as $\tau \to \infty$. 

Moreover, one computes that in similarity coordinates and vectorial form, the linearised operator associated to \eqref{eq:radial_n=3} around the family \eqref{special_n=3_family} is precisely $\mathbf{L}_{\alpha_0}$. Thus, we will show how asymptotic stability of $v^{(3)}_{\alpha,\kappa,T}$ follows as a corollary.

The solution \eqref{special_n=3_family}-\eqref{special_n=3_family:sim_coordinates} will be seen as a special case for the trivial solution $\zeta \equiv 0$ and $\mathbf{q}=\mathbf{0}$. However, as is our main task, we show stability of solutions whose correction is non-explicit, i.e. those which contain a non-zero, non-explicit $\mathbf{q}$. 

\subsection{Nonlinear correction in all dimensions}
To solve \eqref{eq:for_q_sim_vector_form} we will use Banach's fixed point theorem, for which we need several estimates. 
\begin{lemma}\label{lem:estimates_v2}
        Let $\alpha_0>0$ and $k\geq 0$. Then for any $0<T< \frac{r_0}{2R_{\alpha_0}}$, $\tau \geq 0$, $y\in [-R_{\alpha_0},R_{\alpha_0}]$ we have 
\begin{equation*}
    \|\sigma_{T}(\tau,\cdot)\|_{H^k(-R_{\alpha_0},R_{\alpha_0})} \lesssim_{k} \left(\frac{T}{r_0}\right)^2 e^{-2\tau} 
\end{equation*}
and so
\begin{equation*}
    \|\mathbf{\mathcal{E}}_{n,T}(\tau,\cdot)\|_{\mathcal{H}^k(-R_{\alpha_0},R_{\alpha_0})}\lesssim_{n,k}\left(\frac{T}{r_0}\right)^2e^{-2\tau}. 
\end{equation*}
We also have the following parameter-Lipschitz estimate: for $0<T_1,T_2<\frac{r_0}{2R_{\alpha_0}}$
\begin{equation}\label{eq:lipschitz_param_E}
    \|\mathcal{E}_{n,T_1}(\tau,\cdot)-\mathcal{E}_{n,T_2}(\tau,\cdot)\|_{\mathcal{H}_{\alpha_0}^k}\lesssim_{\alpha_0,k} e^{-2\tau}\left(\frac{T_1+T_2}{r_0} \right)\left|\frac{T_1}{r_0}-\frac{T_2}{r_0}\right|
\end{equation}
Finally, for $k\geq 2$, $\mathbf{q},\mathbf{q}'\in \mathcal{H}^k(\mathcal{I}_{\alpha_0})$
\begin{align*}
                \|\mathbf{N}(\mathbf{q})\|_{\mathcal{H}^k} &\lesssim_k \|\mathbf{q}\|_{\mathcal{H}^k}^2 \\
            \|\mathbf{N}(\mathbf{q}) - \mathbf{N}(\mathbf{q}')\|_{\mathcal{H}^k} &\lesssim_k (\|\mathbf{q}\|_{\mathcal{H}^k}+\|\mathbf{q}'\|_{\mathcal{H}^k})\|\mathbf{q}-\mathbf{q}'\|_{\mathcal{H}^k}.
            \end{align*}
\end{lemma}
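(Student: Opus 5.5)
The three estimates are independent, and I would prove them in order: the forcing bound, its parameter-Lipschitz version, and then the nonlinearity. The only input is the smallness $\varepsilon := \frac{T}{r_0}<\frac{1}{2R_{\alpha_0}}$.

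\emph{Forcing bound.} Write
\[
\sigma_T(\tau,y)=\varepsilon^2 e^{-2\tau}\, g(\varepsilon e^{-\tau}y), \qquad g(s):=(1+s)^{-2}.
\]
For $\tau\ge0$ and $|y|\le R_{\alpha_0}$ we have $s=\varepsilon e^{-\tau}y$ with $|s|\le \varepsilon R_{\alpha_0}<\frac12$. Hence every derivative $g^{(j)}$ is bounded on this range by a constant depending only on $j$.

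Differentiating $j$ times in $y$ gives
\[
\partial_y^j\sigma_T=\varepsilon^{2+j}e^{-(2+j)\tau}g^{(j)}(\varepsilon e^{-\tau}y),
\]
and the prefactor $\varepsilon^{j}e^{-j\tau}\le 1$. Integrating over an interval of length $2R_{\alpha_0}$ and summing over $j\le k$ yields $\|\sigma_T(\tau,\cdot)\|_{H^k}\lesssim_k \varepsilon^2e^{-2\tau}$. The $R_{\alpha_0}$-dependence enters only through the length of the interval; since $\varepsilon R_{\alpha_0}<\frac12$ it can also be absorbed. The bound for $\mathcal E_{n,T}$ follows at once, because only its second component, $\nu_n\sigma_T$, is nonzero and it is measured in $H^k$.

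\emph{Parameter-Lipschitz bound.} Set $h(\varepsilon):=\varepsilon^2 g(\varepsilon e^{-\tau}y)$. By the fundamental theorem of calculus in $\varepsilon$, the difference is $\varepsilon_1-\varepsilon_2$ times the integral of $\partial_\varepsilon h$ along the segment between $\varepsilon_1$ and $\varepsilon_2$. Here
\[
\partial_\varepsilon h = 2\varepsilon\, g(\cdot)+\varepsilon^2 e^{-\tau}y\, g'(\cdot),
\]
and every point of the segment satisfies $\varepsilon\le \varepsilon_1+\varepsilon_2$ and $\varepsilon<\frac{1}{2R_{\alpha_0}}$, so the same uniform bounds on $g$ apply. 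Applying $\partial_y^j$ commutes with this integral and only produces further factors of $\varepsilon e^{-\tau}\le 1$. Each term therefore carries at least one factor of $\varepsilon$, bounded by $\varepsilon_1+\varepsilon_2$; the term $\varepsilon^2 e^{-\tau}y\,g'$ is bounded by $\varepsilon R_{\alpha_0}\cdot\varepsilon\lesssim \varepsilon_1+\varepsilon_2$. Multiplying back the $e^{-2\tau}$ prefactor gives \eqref{eq:lipschitz_param_E}, with the constant depending on $\alpha_0$ and $k$ through $R_{\alpha_0}$.

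\emph{Nonlinearity.} Here $\mathbf N(\mathbf q)=(0,(\partial_yq_1)^2)$, so we need $\|(\partial_yq_1)^2\|_{H^k}\lesssim \|\partial_y q_1\|_{H^k}^2\le\|q_1\|_{H^{k+1}}^2$. This holds because $H^k(\mathcal I_{\alpha_0})$ is a Banach algebra for $k\ge1$ on a bounded interval in one dimension, via Leibniz and Sobolev embedding $H^1\hookrightarrow L^\infty$. The Lipschitz estimate follows from the factorization
\[
(\partial_yq_1)^2-(\partial_yq_1')^2=\partial_y(q_1+q_1')\,\partial_y(q_1-q_1')
\]
together with the same algebra property. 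The assumption $k\ge2$ is more than enough.

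\emph{Main obstacle.} The only real point is keeping the constants uniform in $\tau\ge0$. This works because the argument $\varepsilon e^{-\tau}y$ stays in $[-\frac12,\frac12]$, so no singularity at $r=0$ is approached, and because each $y$-derivative brings an extra factor $\varepsilon e^{-\tau}\le1$ rather than a growing one.
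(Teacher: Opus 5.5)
Your proposal is correct and follows the paper's proof essentially step for step. The shared steps are the explicit formula for $\partial_y^j\sigma_T$ with the uniform lower bound $1+\frac{T}{r_0}e^{-\tau}y\ge\frac12$, the estimate $H^k\lesssim W^{k,\infty}$ on the bounded interval, and the Banach algebra property of $H^k$ for $\mathbf N$.

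The only variation is the Lipschitz bound. You obtain it from the mean value theorem in $\varepsilon=T/r_0$, whereas the paper factors the difference of $(j+2)$-th powers directly. Your route is cleanest after noting $\partial_\varepsilon\bigl[\varepsilon^2(1+\varepsilon e^{-\tau}y)^{-2}\bigr]=2\varepsilon(1+\varepsilon e^{-\tau}y)^{-3}$. This identity also fixes your slightly loose claim that $y$-derivatives only produce factors $\varepsilon e^{-\tau}$, which is not true when the derivative hits the explicit $y$ in $y\,g'$.

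One aside is wrong, though harmlessly so: the $R_{\alpha_0}$-dependence cannot be absorbed. For $j=0$ one has $\|\sigma_T(\tau,\cdot)\|_{L^2(\mathcal I_{\alpha_0})}\simeq \varepsilon^2e^{-2\tau}\sqrt{2R_{\alpha_0}}$. The condition $\varepsilon R_{\alpha_0}<\frac12$ only turns $\varepsilon^2\sqrt{R_{\alpha_0}}$ into a bound of size $\varepsilon^{3/2}$. So the constant genuinely depends on $\alpha_0$, exactly as the paper's proof records with $\lesssim_{\alpha_0,k}$. That is all that is needed, since $\alpha_0$ is fixed throughout.
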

\begin{proof}
    Fix $k\geq 0$, $\alpha_0>0$ and let $R_{\alpha_0}>1$ be given from Section \ref{section_two}. For $0\leq j \leq k$, computing the $j-th$ derivative of $\sigma_{T}$ in $y$, we have
    \begin{equation*}
        \partial_y^j \sigma_{T}(\tau,\cdot) = (-1)^j(j+1)!\frac{\left(\frac{T}{r_0}e^{-\tau}\right)^{j+2}}{(1+\frac{T}{r_0}e^{-\tau}y)^{j+2}}.
    \end{equation*}
    Thus, for concreteness, to keep the dynamics uniformly bounded away from $r=0$ we consider only $0<T<\frac{r_0}{2R_{\alpha_0}}<\frac{r_0}{2}$. (Indeed, one could choose any $\theta \in (0,1)$ so that $\frac{T}{r_0}<\frac{\theta}{R_{\alpha_0}}$, but we let $\theta =\frac{1}{2}$ for simplicity). Therefore 
    \begin{equation*}
        \|\partial_y^j \sigma_{T}(\tau,\cdot)\|_{L^\infty(\mathcal{I}_{\alpha_0})} \lesssim_j \frac{\left(\frac{T}{r_0}e^{-\tau}\right)^{j+2}}{(1+\frac{T}{r_0}e^{-\tau}y)^{j+2}} \leq 2\left(\frac{T}{r_0}e^{-\tau}\right)^{j+2}.
    \end{equation*}
    Since we work on a compact interval $y\in \mathcal{I}_{\alpha_0}$, we use the simple bound
    \begin{equation*}
        \|\sigma_{T}(\tau,\cdot)\|_{H^k(\mathcal{I}_{\alpha_0})} \lesssim_{\alpha_0}\|\sigma_{T}(\tau,\cdot)\|_{W^{k,\infty}(\mathcal{I}_{\alpha_0})} \lesssim_{\alpha_0,k}\left(\frac{T}{r_0}e^{-\tau}\right)^{2}.
    \end{equation*}
    Then, by definition of the product norm $\mathcal{H}^k$ we have
    \begin{equation*}
        \|\mathbf{\mathcal{E}}_{n,T}(\tau,\cdot)\|_{\mathcal{H}^k(\mathcal{I}_{\alpha_0})} \lesssim_n \|\sigma_{T}(\tau,\cdot)\|_{H^k}\lesssim_{n,\alpha_0,k}\left(\frac{T}{r_0}e^{-\tau}\right)^2. 
    \end{equation*}
    The estimates for the nonlinearity are the same as in the one-dimensional analysis.
    To prove \eqref{eq:lipschitz_param_E}, we first let $0<T_1,T_2<\frac{r_0}{2R_{\alpha_0}}$, then for $0\leq j \leq k$
    \begin{equation*}
        \partial_y^j \sigma_{T} = (-1)^j(j+1)!e^{-(j+2)\tau}\left(\frac{T}{r_0+Te^{-\tau}y}\right)^{j+2}
    \end{equation*}
    so we have 
    \begin{align*}
        |\partial_y^j \sigma_{T_1}(\tau,y)-\partial_y^j\sigma_{T_2}(\tau,y)| = (j+1)!e^{-(j+2)\tau}\left|\left(\frac{T_1}{r_0+T_1e^{-\tau}y}\right)^{j+2} - \left(\frac{T_2}{r_0+T_2e^{-\tau}y}\right)^{j+2}\right| \\
        =(j+1)! e^{-(j+2)\tau}\left|\frac{r_0(T_1-T_2)}{(r_0+T_1e^{-\tau}y)(r_0+T_2e^{-\tau}y)}\sum_{\ell=0}^{j+1}\left(\frac{T_1}{r_0+T_1e^{-\tau}y}\right)^{j+1-\ell}\left(\frac{T_2}{r_0+T_2e^{-\tau}y}\right)^{\ell}\right|.
    \end{align*}
    We bound the sum as 
    \begin{equation*}
        \sum_{\ell=0}^{j+1}\left(\frac{T_1}{r_0+T_1e^{-\tau}y}\right)^{j+1-\ell}\left(\frac{T_2}{r_0+T_2e^{-\tau}y}\right)^{\ell} \lesssim_{\alpha_0,j}\left(\frac{T_1+T_2}{r_0} \right)^{j+1}\lesssim_{\alpha_0,j} \left(\frac{T_1+T_2}{r_0} \right)
    \end{equation*}
    and using the uniform bound on the denominator again, for all $y\in [-R_{\alpha_0},R_{\alpha_0}]$ one has 
    \begin{align*}
        |\partial_y^j \sigma_{T_1}(\tau,y)-\partial_y^j\sigma_{T_2}(\tau,y)| \lesssim_{j,\alpha_0} e^{-2\tau}\left(\frac{T_1+T_2}{r_0} \right)\left(\frac{|T_1-T_2|}{r_0}\right). 
    \end{align*}
    Summing over the $L^2(\mathcal{I}_{\alpha_0})$-norms for $0\leq j \leq k$
    \begin{equation*}
        \|\sigma_{T_1}(\tau,\cdot)-\sigma_{T_2}(\tau,\cdot)\|_{H^k} \lesssim_{\alpha_0,k}e^{-2\tau}\left(\frac{T_1+T_2}{r_0} \right)\frac{|T_1-T_2|}{r_0}
    \end{equation*}
    which implies \eqref{eq:lipschitz_param_E}.
\end{proof}
We now introduce the function spaces on which we carry out the fixed point arguments. Namely, solutions whose $\mathcal{H}^k$-norm decays exponentially in similarity time at the rate $\omega=\omega_0-\gamma>0$ given in Proposition \ref{prop:linearised_evolution}:
\begin{align*}
    \mathcal{X} ^k(-R_{\alpha_0},R_{\alpha_0}) &\coloneqq \{\mathbf{q}\in C\left([0,\infty);\mathcal{H}^k(-R_{\alpha_0},R_{\alpha_0}) \right):\|\mathbf{q}\|_{\mathcal{X}_{\alpha_0}^k} \coloneqq \sup_{\tau \geq 0}e^{\omega \tau}\|\mathbf{q}(\tau)\|_{\mathcal{H}_{\alpha_0}^k}<\infty\}.
\end{align*}
We also define the closed balls centred at $0$ of size $\varepsilon >0$ for the respective function spaces as  
    \begin{align*}
        B_{\varepsilon}^{\mathcal{H}^k}(-R_{\alpha_0},R_{\alpha_0}) &\coloneqq \{\mathbf{f}\in \mathcal{H}^k(-R_{\alpha_0},R_{\alpha_0}):\|\mathbf{f}\|_{\mathcal{H}_{\alpha_0}^k}\leq \varepsilon\} \\
        B_{\varepsilon}^{\mathcal{X}^k}(-R_{\alpha_0},R_{\alpha_0}) &\coloneqq \{\mathbf{q}\in \mathcal{X}^k(-R_{\alpha_0},R_{\alpha_0}): \|\mathbf{q}\|_{\mathcal{X}_{\alpha_0}^k}\leq \varepsilon\}.
    \end{align*}
We now wish to construct solutions to the nonlinear problem 
\begin{equation}\label{eq:sec_3_eq_radial}
    \partial_{\tau}\mathbf{q} = \mathbf{L}_{\alpha_0}\mathbf{q} + \mathbf{\mathcal{E}}_{n,T_0}(\tau,y) + \mathbf{N}(\mathbf{q})
\end{equation}
for some initial condition that will be determined later. By Duhamel's principle, solutions to \eqref{eq:sec_3_eq_radial} satisfy 
\begin{equation}
    \mathbf{q}_{\alpha_0,T_0}(\tau,y) = \mathbf{S}_{\alpha_0}(\tau)\mathbf{q}_{\alpha_0,T_0}(0,y) + \int_{0}^{\tau}\mathbf{S}_{\alpha_0}(\tau-\tau')\bigl[\mathbf{\mathcal{E}}_{n,T_0}(\tau',y) + \mathbf{N}(\mathbf{q}_{\alpha_0,T_0}(\tau',y))\bigr]d\tau'.
\end{equation}
Due to the presence of the unstable modes of the linearised operator $\mathbf{L}_{\alpha_0}$ as seen in Proposition \ref{prop:linearised_evolution}, we need to suppress such unstable trajectories. This leads us to a modified Duhamel formulation. Given a small but free `data' $\mathbf{q}_0\in \mathcal{H}^k$ we consider 
\begin{equation*}
    \mathbf{q}_{\alpha_0,T_0}(\tau,y) = \mathbf{S}_{\alpha_0}(\tau)(\mathbf{q}_0-\mathbf{C}_{\alpha_0,T_0}[\mathbf{q}_0,\mathbf{q}_{\alpha_0,T_0}]) + \int_{0}^{\tau}\mathbf{S}_{\alpha_0}(\tau-\tau')\bigl[\mathbf{\mathcal{E}}_{n,T_0}(\tau',y) + \mathbf{N}(\mathbf{q}_{\alpha_0,T_0}(\tau',y))\bigr]d\tau'
\end{equation*}
where the \textit{correction operator} $\mathbf{C}_{\alpha,T}:\mathcal{H}^k \times \mathcal{X}^k \to \mathcal{H}^k$ is given by 
\begin{align*}
    \mathbf{C}_{\alpha,T}[\mathbf{q}_0,\mathbf{q}] =  &\mathbf{P}_{\alpha}^{(\alpha_0)}\mathbf{q}_0 + \mathbf{P}_{0,\alpha}^{(\alpha_0)}\int_{0}^{\infty}\mathbf{\mathcal{E}}_{n,T}(\tau') + \mathbf{N}(\mathbf{q}(\tau'))d\tau' + \mathbf{L}_{\alpha}^{(\alpha_0)}\mathbf{P}_{0,\alpha}^{(\alpha_0)}\int_{0}^{\infty}(-\tau')\bigl[\mathbf{\mathcal{E}}_{n,T}(\tau') + \mathbf{N}(\mathbf{q}(\tau'))\bigr]d\tau' \\
    &+ \mathbf{P}_{1,\alpha}^{(\alpha_0)}\int_{0}^{\infty}e^{-\tau'}\bigl[\mathbf{\mathcal{E}}_{n,T}(\tau') + \mathbf{N}(\mathbf{q}(\tau'))\bigr]d\tau'.
\end{align*}
We will see that by modifying the Duhamel equation in this way, the right-hand side becomes contractive. 
\begin{proposition}\label{prop:existence_of_correction}
    Let $\alpha_0>0$ and $k \geq \lceil c_{\alpha_0} \rceil +1$. Then there exist constants $\varepsilon>0$, $C>1$ such that for all $\mathbf{q}_0 \in B_{\frac{\varepsilon}{C}}^{\mathcal{H}_{\alpha_0}^k}$, $\alpha \in [\alpha_0-\delta_0,\alpha_0+\delta_0]$, and $0<T<\min\{\frac{1}{2R_{\alpha_0}},\varepsilon\}r_0 \coloneqq \overline{T}_1$, there exists a unique solution $\mathbf{q}_{\alpha,T}\in B_{\varepsilon}^{\mathcal{X}_{\alpha_0}^k}$ satisfying 
    \begin{equation*}
    \mathbf{q}_{\alpha,T}(\tau,y) = \mathbf{S}_{\alpha}(\tau)(\mathbf{q}_0-\mathbf{C}_{\alpha,T}[\mathbf{q}_0,\mathbf{q}_{\alpha,T}]) + \int_{0}^{\tau}\mathbf{S}_{\alpha}(\tau-\tau')\bigl[\mathbf{\mathcal{E}}_{n,T}(\tau',y) + \mathbf{N}(\mathbf{q}_{\alpha,T}(\tau',y))\bigr]d\tau'
\end{equation*}
for all $\tau \geq 0$. In fact, $\mathbf{q}_{\alpha,T}$ satisfies the sharper bound 
\begin{equation}
    \|\mathbf{q}_{\alpha,T}\|_{\mathcal{X}_{\alpha_0}^k} \lesssim_{n,k,\omega_0,\gamma,\alpha_0}\|\mathbf{q}_0\|_{\mathcal{H}_{\alpha_0}^k} + \left(\frac{T}{r_0}\right)^2.
\end{equation}Moreover we have the following two properties:
\begin{itemize}
    \item The data-to-solution map $\mathbf{q}_0 \mapsto \mathbf{q}_{\alpha,T}$ is Lipschitz continuous.
    \item The map $(\alpha,T) \mapsto \mathbf{q}_{\alpha,T}$ is Lipschitz continuous; for $\alpha_1,\alpha_2 \in [\alpha_0-\delta_0,\alpha_0+\delta_0]$, $0<T_1,T_2<\min\{\frac{1}{2R_{\alpha_0}},\varepsilon\}r_0$ we have 
    \begin{equation}\label{eq:lipschitz_in_params}
        \|\mathbf{q}_{\alpha_1,T_1}-\mathbf{q}_{\alpha_2,T_2}\|_{\mathcal{X}_{\alpha_0}^k}\lesssim |\alpha_1-\alpha_2|\|\mathbf{q}_0\|_{\mathcal{H}_{\alpha_0}^k} + \frac{T_1+T_2}{r_0}\left(\frac{|T_1-T_2|}{r_0} +|\alpha_1-\alpha_2|\right)
    \end{equation}
\end{itemize}
\end{proposition}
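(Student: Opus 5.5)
The plan is a Banach fixed-point argument on $B_\varepsilon^{\mathcal{X}_{\alpha_0}^k}$ for the map
\[
\mathbf{K}_{\alpha,T}(\mathbf{q})(\tau)\coloneqq \mathbf{S}_{\alpha}(\tau)(\mathbf{q}_0-\mathbf{C}_{\alpha,T}[\mathbf{q}_0,\mathbf{q}]) + \int_0^\tau \mathbf{S}_\alpha(\tau-\tau')\mathbf{F}_T(\mathbf{q})(\tau')\,d\tau', \qquad \mathbf{F}_T(\mathbf{q})\coloneqq \mathcal{E}_{n,T}+\mathbf{N}(\mathbf{q}).
\]
The first step is the standard algebraic rewriting. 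Using Proposition \ref{prop:linearised_evolution}(ii), we have $\mathbf{S}_\alpha(\tau)\mathbf{P}_{0,\alpha}=\mathbf{P}_{0,\alpha}+\tau\mathbf{L}_\alpha\mathbf{P}_{0,\alpha}$ and $\mathbf{S}_\alpha(\tau)\mathbf{P}_{1,\alpha}=e^\tau\mathbf{P}_{1,\alpha}$. Inserting these, the $\mathbf{P}_\alpha\mathbf{q}_0$ part cancels, and the unstable components of the Duhamel integral combine with the correction into tails:
\[
\mathbf{K}_{\alpha,T}(\mathbf{q})(\tau)=\mathbf{S}_\alpha(\tau)\widetilde{\mathbf{P}}_\alpha\mathbf{q}_0+\int_0^\tau \mathbf{S}_\alpha(\tau-\tau')\widetilde{\mathbf{P}}_\alpha\mathbf{F}_T(\mathbf{q})\,d\tau' -\int_\tau^\infty\Big[\mathbf{P}_{0,\alpha}+(\tau-\tau')\mathbf{L}_\alpha\mathbf{P}_{0,\alpha}+e^{\tau-\tau'}\mathbf{P}_{1,\alpha}\Big]\mathbf{F}_T(\mathbf{q})\,d\tau'.
\]
Here $\mathbf{L}_\alpha\mathbf{P}_{0,\alpha}$ is bounded because its range is finite-dimensional. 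For $\mathbf{q}\in B_\varepsilon^{\mathcal{X}^k}$, Lemma \ref{lem:estimates_v2} gives $\|\mathbf{F}_T(\mathbf{q})(\tau')\|_{\mathcal{H}^k}\lesssim (T/r_0)^2e^{-2\tau'}+\varepsilon^2e^{-2\omega\tau'}$. This is exactly where $k\ge2$ and $T<r_0/(2R_{\alpha_0})$ enter.

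The second step bounds each piece. The stable decay \eqref{eq:stable_semigroup_decay} with rate $\omega_0>\omega$ gives the first term a bound $\lesssim e^{-\omega_0\tau}\|\mathbf{q}_0\|$. The convolution term is bounded by $\int_0^\tau e^{-\omega_0(\tau-\tau')}e^{-2\omega\tau'}d\tau'\lesssim e^{-\omega\tau}$, since $2\omega>\omega$ and $\omega_0>\omega$. The tail integrals are bounded by $\int_\tau^\infty(1+\tau'-\tau)e^{-2\omega\tau'}d\tau'\lesssim e^{-2\omega\tau}$. Altogether,
\[
\|\mathbf{K}(\mathbf{q})\|_{\mathcal{X}^k}\le C_1\big(\|\mathbf{q}_0\|+(T/r_0)^2+\varepsilon^2\big),
\]
with constants uniform in $\alpha$ because $M$ and the projection bounds are uniform (Propositions \ref{prop:resolvent_estimate} and \ref{prop:linearised_evolution}). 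The difference bound for $\mathbf{N}$ yields $\|\mathbf{K}(\mathbf{q})-\mathbf{K}(\mathbf{q}')\|_{\mathcal{X}}\le C_1\varepsilon\|\mathbf{q}-\mathbf{q}'\|_{\mathcal{X}}$. We then choose $\varepsilon$ small, $C$ large, and $T/r_0<\varepsilon$, so that $(T/r_0)^2\le\varepsilon^2$; this makes $\mathbf{K}$ a self-map and a contraction. The sharper bound follows by absorbing the $C_1\varepsilon\|\mathbf{q}\|$ term. Lipschitz dependence on $\mathbf{q}_0$ is immediate, since $\mathbf{q}_0$ enters only through the linear term $\mathbf{S}_\alpha\widetilde{\mathbf{P}}_\alpha\mathbf{q}_0$.

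The main obstacle is the parameter Lipschitz estimate \eqref{eq:lipschitz_in_params}. We write
\[
\mathbf{q}_{\alpha_1,T_1}-\mathbf{q}_{\alpha_2,T_2}=\big[\mathbf{K}_{\alpha_1,T_1}(\mathbf{q}_1)-\mathbf{K}_{\alpha_1,T_1}(\mathbf{q}_2)\big]+\big[\mathbf{K}_{\alpha_1,T_1}(\mathbf{q}_2)-\mathbf{K}_{\alpha_2,T_2}(\mathbf{q}_2)\big].
\]
The first bracket is absorbed by the contraction. In the second bracket we vary one factor at a time.

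1. The $\mathbf{q}_0$ term uses \eqref{eq:lipschitz_dep_stable_semigroup} and gives $e^{-\omega\tau}|\alpha_1-\alpha_2|\|\mathbf{q}_0\|$.
2. Varying $T$ in $\mathcal{E}$ uses \eqref{eq:lipschitz_param_E} and gives $e^{-2\tau'}\frac{T_1+T_2}{r_0}\frac{|T_1-T_2|}{r_0}$.
3. Varying $\alpha$ in the Duhamel and tail operators uses \eqref{eq:lipschitz_dep_stable_semigroup}, \eqref{eq:lipschitz_dep_riesz_projec} and \eqref{eq:lipschitz_L_alpha}. These are multiplied against $\|\mathbf{F}_{T_2}(\mathbf{q}_2)\|$.

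The delicate point is item 3. Its naive size $(T/r_0)^2+\varepsilon^2$ must be brought to the claimed form $\frac{T_1+T_2}{r_0}|\alpha_1-\alpha_2|$ plus terms absorbable on the left. To do this I would use the sharper bound $\|\mathbf{q}_2\|_{\mathcal{X}}\lesssim\|\mathbf{q}_0\|+(T_2/r_0)^2$. This gives $\|\mathbf{N}(\mathbf{q}_2)\|\lesssim \|\mathbf{q}_0\|^2+(T/r_0)^4$ and $\|\mathcal{E}\|\lesssim (T/r_0)^2\le \frac{T_1+T_2}{r_0}$, while the $\|\mathbf{q}_0\|^2$ part is bounded by $\|\mathbf{q}_0\|$ and goes into the $|\alpha_1-\alpha_2|\|\mathbf{q}_0\|$ term. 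A further technical issue is the time weights: the convolution of the $e^{-\omega}$ Lipschitz bound against a source decaying like $e^{-2\omega\tau'}$ still closes at rate $e^{-\omega\tau}$, because $\omega<2\omega$. Finally, the term $\tau\mathbf{L}_\alpha\mathbf{P}_{0,\alpha}$ in the tails needs Lipschitz control of $\mathbf{L}_\alpha\mathbf{P}_{0,\alpha}$. I would obtain this by writing $\mathbf{L}_\alpha\mathbf{P}_{0,\alpha}=\frac{1}{2\pi i}\int_{\gamma_0}z(z-\mathbf{L}_\alpha)^{-1}dz$ and repeating the resolvent-identity argument of Proposition \ref{prop:linearised_evolution}(iv).
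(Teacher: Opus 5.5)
Your proposal is correct and follows essentially the same route as the paper: the same rewriting of the modified Duhamel map into a stable Duhamel part plus tail integrals, a Banach contraction on $B_{\varepsilon}^{\mathcal{X}^k}$ with $T/r_0\le\varepsilon$, the sharper bound by absorption, and the parameter-Lipschitz estimate obtained by adding and subtracting $\mathbf{\Phi}_{\alpha_1,T_1}(\mathbf{q}_0,\mathbf{q}_{\alpha_2,T_2})$. Your argument is in fact slightly more careful than the paper's in two places: the paper does not explicitly account for the $|\alpha_1-\alpha_2|\,\|\mathbf{q}_{\alpha_2,T_2}\|_{\mathcal{X}^k}^2$ contribution coming from $\mathbf{N}$, which you control via the sharper bound, and it does not establish Lipschitz dependence of $\mathbf{L}_\alpha\mathbf{P}_{0,\alpha}$, which your representation $\frac{1}{2\pi i}\int_{\gamma_0} z(z-\mathbf{L}_\alpha)^{-1}\,dz$ supplies.
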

\begin{proof}
Using the definition of the correction operator $\mathbf{C}_{\alpha,T}$, $\mathbf{P}_{\alpha}$ and $\widetilde{\mathbf{P}}_{\alpha}$, the equation we want to solve can be written as 
\begin{equation}
\begin{aligned}
\mathbf{q}_{\alpha,T}(\tau,y)
&= \mathbf{S}_{\alpha}(\tau)\Bigl(
    \mathbf{q}_0-\mathbf{P}_{\alpha}\mathbf{q}_0
    - \mathbf{P}_{0,\alpha}\int_{0}^{\infty}\bigl[\mathbf{\mathcal{E}}_{n,T}(\tau') + \mathbf{N}(\mathbf{q}(\tau'))\bigr]d\tau' - \mathbf{L}_{\alpha}\mathbf{P}_{0,\alpha}
      \int_{0}^{\infty}(-\tau')\bigl[\mathbf{\mathcal{E}}_{n,T}(\tau') + \mathbf{N}(\mathbf{q}(\tau'))\bigr]d\tau' \\
&
    - \mathbf{P}_{1,\alpha}
      \int_{0}^{\infty}e^{-\tau'}\bigl[\mathbf{\mathcal{E}}_{n,T}(\tau') + \mathbf{N}(\mathbf{q}(\tau'))\bigr]d\tau'\Bigr)
+ \int_{0}^{\tau}
  \mathbf{S}_{\alpha}(\tau-\tau')
  (\mathbf{P}_{\alpha}+\widetilde{\mathbf{P}}_{\alpha})
  \bigl[\mathbf{\mathcal{E}}_{n,T}(\tau') + \mathbf{N}(\mathbf{q}(\tau'))\bigr]d\tau'.
\end{aligned}
\end{equation}

Now, using the identities for $\mathbf{S}_{\alpha}\mathbf{P}_{\lambda,\alpha}$
given in Proposition \ref{prop:linearised_evolution}, and simplifying the resulting cancellations, we arrive at the more compact expression
\begin{align*}
\mathbf{q}_{\alpha,T}(\tau,y) &= \mathbf{S}_{\alpha}(\tau)\widetilde{\mathbf{P}}_{\alpha}\mathbf{q}_0 + \int_{0}^{\tau}\mathbf{S}_{\alpha}(\tau-\tau')\widetilde{\mathbf{P}}_{\alpha}\bigl[\mathbf{\mathcal{E}}_{n,T}(\tau') + \mathbf{N}(\mathbf{q}(\tau'))\bigr]d\tau' - \mathbf{P}_{0,\alpha}\int_{\tau}^{\infty}\mathbf{\mathcal{E}}_{n,T}(\tau') + \mathbf{N}(\mathbf{q}(\tau'))d\tau' \\
&- \mathbf{L}_{\alpha}\mathbf{P}_{0,\alpha}\int_{\tau}^{\infty}(\tau-\tau')\bigl[ \mathbf{\mathcal{E}}_{n,T}(\tau') + \mathbf{N}(\mathbf{q}(\tau'))\bigr]d\tau' -\mathbf{P}_{1,\alpha}\int_{\tau}^{\infty}e^{\tau-\tau'}\bigl[\mathbf{\mathcal{E}}_{n,T}(\tau') + \mathbf{N}(\mathbf{q}(\tau'))\bigr]d\tau'.
\end{align*}
If we denote as $\mathbf{\Phi}(\mathbf{q}_0,\mathbf{q})$ the fixed point operator, i.e. the right hand side, we claim that there exists a sufficiently small $\varepsilon>0$, and $C>1$ sufficiently large such that if $\mathbf{q}_0 \in B_{\frac{\varepsilon}{C}}^{\mathcal{H}^k}$, $\alpha \in [\alpha_0-\delta_0,\alpha_0+\delta_0]$, and $0<\frac{T}{r_0}<\min\{\frac{1}{2R_{\alpha_0}},\varepsilon\}$, then $\mathbf{\Phi}(\mathbf{q}_0,\cdot):B_{\varepsilon}^{\mathcal{X}^k}\to B_{\varepsilon}^{\mathcal{X}^k} $ is a contraction map. This will imply the existence and uniqueness of a fixed point by Banach's fixed point theorem.
Thus, we have 
\begin{align*}
    \mathbf{\Phi}_{\alpha,T}(\mathbf{q}_0,\mathbf{q})  &= \mathbf{S}_{\alpha}(\tau)\widetilde{\mathbf{P}}_{\alpha}\mathbf{q}_0 + \int_{0}^{\tau}\mathbf{S}_{\alpha}(\tau-\tau')\widetilde{\mathbf{P}}_{\alpha}\bigl[\mathbf{\mathcal{E}}_{n,T}(\tau') + \mathbf{N}(\mathbf{q}(\tau'))\bigr]d\tau' - \mathbf{P}_{0,\alpha}\int_{\tau}^{\infty}\mathbf{\mathcal{E}}_{n,T}(\tau') + \mathbf{N}(\mathbf{q}(\tau'))d\tau' \\
&- \mathbf{L}_{\alpha}\mathbf{P}_{0,\alpha}\int_{\tau}^{\infty}(\tau-\tau')\bigl[ \mathbf{\mathcal{E}}_{n,T}(\tau') + \mathbf{N}(\mathbf{q}(\tau'))\bigr]d\tau' -\mathbf{P}_{1,\alpha}\int_{\tau}^{\infty}e^{\tau-\tau'}\bigl[\mathbf{\mathcal{E}}_{n,T}(\tau') + \mathbf{N}(\mathbf{q}(\tau'))\bigr]d\tau'.
\end{align*}
Taking norms, using the estimates for $\mathcal{E}$, and $\mathbf{N}$ from Lemma \ref{lem:estimates_v2}, the semigroup estimates from Proposition \ref{prop:linearised_evolution}, and performing integration by parts we get 
\begin{align*}
    \|\mathbf{\Phi}_{\alpha,T}(\mathbf{q}_0,\mathbf{q}) (\tau)\|_{\mathcal{H}^k}&\lesssim_{n,k,\omega_0,\alpha_0} \left(\frac{\varepsilon}{C}e^{-\omega \tau} + \left(\frac{T}{r_0}\right)^2e^{-\omega\tau} + \varepsilon^2 e^{-2\omega\tau}\right) 
\end{align*}
for all $\tau\geq 0$. We now restrict to $T$ satisfying $\frac{T}{r_0}\leq \varepsilon$. Therefore, for $C>1$ sufficiently large and $\varepsilon>0$ sufficiently small, we can ensure that
\begin{equation*}
    \|\mathbf{\Phi}_{\alpha,T}(\mathbf{q}_0,\mathbf{q})(\tau)\|_{\mathcal{H}^k}\leq \varepsilon e^{-\omega \tau}
\end{equation*}
for all $\tau \geq 0$ which means $\mathbf{\Phi}_{\alpha,T}(\mathbf{q}_0,\mathbf{q}):B_{\varepsilon}^{\mathcal{X}^k}\to B_{\varepsilon}^{\mathcal{X}^k} $. We now show $\Phi_{\alpha,T}$ is a contraction. Indeed let $\mathbf{q},\mathbf{q}' \in B_{\varepsilon}^{\mathcal{X}^k}$, then by subtracting the iterate applied to these functions, we note that the terms with $\mathcal{E}_{n,T}$ cancel when we subtract, and the estimate follows:
\begin{align*}
    \|\mathbf{\Phi}_{\alpha,T}(\mathbf{q}_0,\mathbf{q}') - \mathbf{\Phi}_{\alpha,T}(\mathbf{q}_0,\mathbf{q})\|_{\mathcal{H}^k} \lesssim \varepsilon e^{-\omega \tau}\|\mathbf{q}-\mathbf{q}'\|_{\mathcal{X}^k}
\end{align*}
Possibly taking $\varepsilon>0$ even smaller, we can ensure that 
\begin{equation*}
    \|\mathbf{\Phi}_{\alpha,T}(\mathbf{q}_0,\mathbf{q})-\mathbf{\Phi}_{\alpha,T}(\mathbf{q}_0,\mathbf{q}')\|_{\mathcal{X}^k}\leq \frac{1}{2}\|\mathbf{q}-\mathbf{q}'\|_{\mathcal{X}^k}
\end{equation*}
Thus, with this $\varepsilon>0$, we have the existence and uniqueness of the fixed point. Let it be $\mathbf{q}$. We now derive the sharper bound. Indeed, we have $\mathbf{q}_{\alpha,T} = \mathbf{\Phi}_{\alpha,T}(\mathbf{q}_0,\mathbf{q}_{\alpha,T})$, then add and subtract the following term
\begin{equation*}
    \mathbf{q}_{\alpha,T} = \mathbf{\Phi}_{\alpha,T}(\mathbf{q}_0,\mathbf{q}_{\alpha,T}) - \mathbf{\Phi}_{\alpha,T}(\mathbf{q}_0,\mathbf{0}) + \mathbf{\Phi}_{\alpha,T}(\mathbf{q}_0,\mathbf{0}).
\end{equation*}
Since trivially $\mathbf{0}\in B_{\varepsilon}^{\mathcal{X}^k}$, then the triangle inequality and the contraction estimate gives
\begin{equation*}
    \|\mathbf{q}_{\alpha,T}\|_{\mathcal{X}^k} \leq \frac{1}{2}\|\mathbf{q}_{\alpha,T}\|_{\mathcal{X}^k}+ \|\mathbf{\Phi}_{\alpha,T}(\mathbf{q}_0,\mathbf{0})\|_{\mathcal{X}^k}.
\end{equation*}
It thus remains to bound $\|\mathbf{\Phi}_{\alpha,T}(\mathbf{q}_0,\mathbf{0})\|_{\mathcal{X}^k}$. Indeed, we obtain
\begin{equation*}
    \|\mathbf{\Phi}_{\alpha,T}(\mathbf{q}_0,\mathbf{0}) (\tau)\|_{\mathcal{H}^k}\lesssim_{n,k,\omega_0,\alpha_0} \left(\|\mathbf{q}_0\|_{\mathcal{H}^k} + \left(\frac{T}{r_0}\right)^2 \right) e^{-\omega\tau}
\end{equation*}
for all $\tau \geq 0$. This means that $\|\mathbf{\Phi}_{\alpha,T}(\mathbf{q}_0,\mathbf{0})\|_{\mathcal{X}^k}\lesssim_{n,k,\omega_0,\alpha_0} \|\mathbf{q}_0\|_{\mathcal{H}^k}+\left(\frac{T}{r_0}\right)^2$ which proves the estimate. 

We now prove that the solution depends Lipschitz continuously on initial data. Let $\mathbf{q}_0, \mathbf{q}_0' \in B_{\frac{\varepsilon}{C}}^{\mathcal{H}_{\alpha_0}^k}$, and for $\alpha \in [\alpha_0-\delta_0,\alpha_0+\delta_0]$, $0<T<\overline{T}_1$ the associated $\mathbf{\Phi}_{\alpha,T}$-fixed points given by $\mathbf{q}_{\alpha,T},\mathbf{q}_{\alpha,T}' \in B_{\varepsilon}^{\mathcal{X}_{\alpha_0}^k}$. Then, adding and subtracting $\mathbf{\Phi}_{\alpha,T}(\mathbf{q}_0,\mathbf{q}_{\alpha,T}')$, we have 
\begin{align*}
    \|\mathbf{q}_{\alpha,T}-\mathbf{q}_{\alpha,T}'\|_{\mathcal{X}_{\alpha_0}^k}&\leq \|\mathbf{\Phi}_{\alpha,T}(\mathbf{q}_0,\mathbf{q}_{\alpha,T})-\mathbf{\Phi}_{\alpha,T}(\mathbf{q}_0,\mathbf{q}_{\alpha,T}')\|_{\mathcal{X}_{\alpha_0}^k}+\|\mathbf{\Phi}_{\alpha,T}(\mathbf{q}_0,\mathbf{q}_{\alpha,T}')-\mathbf{\Phi}_{\alpha,T}(\mathbf{q}_0',\mathbf{q}_{\alpha,T}')\|_{\mathcal{X}_{\alpha_0}^k} \\
    &\leq \frac{1}{2}\|\mathbf{q}_{\alpha,T}-\mathbf{q}_{\alpha,T}'\|_{\mathcal{X}_{\alpha_0}^k}+ \|\mathbf{S}_{\alpha}(\tau)\widetilde{\mathbf{P}}_{\alpha}(\mathbf{q}_0-\mathbf{q}_0')\|_{\mathcal{X}_{\alpha_0}^k}.
\end{align*}
Now, by definition of the $\mathcal{X}^k$ norm, we have 
\begin{equation*}
    \|\mathbf{S}_{\alpha}(\tau)\widetilde{\mathbf{P}}_{\alpha}(\mathbf{q}_0-\mathbf{q}_0')\|_{\mathcal{X}_{\alpha_0}^k} \lesssim \|\widetilde{\mathbf{P}}_{\alpha}(\mathbf{q}_0-\mathbf{q}_0')\|_{\mathcal{H}_{\alpha_0}^k} \lesssim \|\mathbf{q}_0-\mathbf{q}_0'\|_{\mathcal{H}_{\alpha_0}^k}
\end{equation*}
which proves the estimate. 

Finally we prove Lipschitz dependence of the solution on the parameters for fixed initial $\mathbf{q}_0$. Given $\mathbf{q}_0 \in B_{\frac{\varepsilon}{C}}^{\mathcal{H}_{\alpha_0}^k}$, $\alpha_1,\alpha_2 \in [\alpha_0-\delta_0,\alpha_0+\delta_0]$, and $0<T_1,T_2<\overline{T}_1$ we obtain the fixed points $\mathbf{q}_{\alpha_1,T_1}=\mathbf{\Phi}_{\alpha_1,T_1}(\mathbf{q}_0,\mathbf{q}_{\alpha_1,T_1})$ and $\mathbf{q}_{\alpha_2,T_2}=\mathbf{\Phi}_{\alpha_2,T_2}(\mathbf{q}_0,\mathbf{q}_{\alpha_2,T_2})$. Adding and subtracting $\mathbf{\Phi}_{\alpha_1,T_1}(\mathbf{q}_0,\mathbf{q}_{\alpha_2,T_2})$ one has 
\begin{equation*}
    \|\mathbf{q}_{\alpha_1,T_1}-\mathbf{q}_{\alpha_2,T_2}\|_{\mathcal{X}_{\alpha_0}^k} \leq \frac{1}{2}\|\mathbf{q}_{\alpha_1,T_1}-\mathbf{q}_{\alpha_2,T_2}\|_{\mathcal{X}_{\alpha_0}^k} + \|\mathbf{\Phi}_{\alpha_2,T_2}(\mathbf{q}_0,\mathbf{q}_{\alpha_2,T_2})-\mathbf{\Phi}_{\alpha_1,T_1}(\mathbf{q}_0,\mathbf{q}_{\alpha_2,T_2})\|_{\mathcal{X}_{\alpha_0}^k}. 
\end{equation*}
Thus, it remains to consider the second term. Indeed, for simplicity we bound each term of the difference separately. Firstly, by Proposition \ref{prop:linearised_evolution}, for all $\tau \geq 0$
\begin{align*}
     \|(\mathbf{S}_{\alpha_1}(\tau)\widetilde{\mathbf{P}}_{\alpha_1}-\mathbf{S}_{\alpha_2}(\tau)\widetilde{\mathbf{P}}_{\alpha_2})(\mathbf{q}_0)\|_{\mathcal{H}_{\alpha_0}^k} \lesssim e^{-\omega \tau}|\alpha_1-\alpha_2|\|\mathbf{q}_0\|_{\mathcal{H}_{\alpha_0}^k}.
\end{align*}
Secondly, adding and subtracting alternate terms and using the Lipschitz dependence of \eqref{eq:lipschitz_param_E}
\begin{align*}
    &\left\|\int_{0}^{\tau}\mathbf{S}_{\alpha_1}(\tau-\tau')\widetilde{\mathbf{P}}_{\alpha_1}\left[\mathcal{E}_{n,T_1}(\tau',y)+\mathbf{N}(\mathbf{q}_{\alpha_2,T_2}(\tau'))\right]-\mathbf{S}_{\alpha_2}(\tau-\tau')\widetilde{\mathbf{P}}_{\alpha_2}\left[\mathcal{E}_{n,T_2}(\tau',y)+\mathbf{N}(\mathbf{q}_{\alpha_2,T_2}(\tau'))\right]d\tau'\right\|_{\mathcal{H}_{\alpha_0}^k} \\
    &\lesssim \int_{0}^{\tau}e^{-\omega(\tau-\tau')}\|\mathcal{E}_{n,T_1}(\tau',\cdot)-\mathcal{E}_{n,T_2}(\tau',\cdot)\|_{\mathcal{H}_{\alpha_0}^k} + e^{-\omega(\tau-\tau')}|\alpha_1-\alpha_2|\|\mathcal{E}_{n,T_2}(\tau',\cdot)\|_{\mathcal{H}_{\alpha_0}^k} \\
    &\qquad \qquad + \|\left[\mathbf{S}_{\alpha_1}(\tau-\tau')\widetilde{\mathbf{P}}_{\alpha_1}-\mathbf{S}_{\alpha_2}(\tau-\tau')\widetilde{\mathbf{P}}_{\alpha_2}\right]\mathbf{N}(\mathbf{q}_{\alpha_2,T_2})\|_{\mathcal{H}_{\alpha_0}^k}d\tau'\\
    &\lesssim e^{-\omega \tau}\left(\left(\frac{T_1+T_2}{r_0} \right)\frac{|T_1-T_2|}{r_0}+\left(\frac{T_2}{r_0}\right)^2|\alpha_1-\alpha_2|\right).
\end{align*}
The other terms are estimated in exactly the same way. 
\end{proof}
\subsection{Proof of Theorem \ref{Existence_of_sol_Main_Theorem}}
Let $n\geq 2$, $r_0>0$ and $\omega_0 \in (0,1)$. Take also $\alpha_0>0$, $k\geq \lceil c_{\alpha_0}\rceil +1$ and $0<\gamma<\omega_0$. From Proposition \ref{prop:existence_of_correction}, there exists a constant $0<\varepsilon<\frac{1}{2R_{\alpha_0}}$ so that for any $0<T_0<\varepsilon r_0$, we may take $\mathbf{q}_0 = \mathbf{0}$, and there exists a solution satisfying 
\begin{equation*}
    \mathbf{q}_{\alpha_0,T_0}(\tau,y) = \mathbf{S}_{\alpha_0}(\tau)(-\mathbf{C}_{\alpha_0,T_0}[\mathbf{q}_0,\mathbf{q}_{\alpha_0,T_0}]) + \int_{0}^{\tau}\mathbf{S}_{\alpha_0}(\tau-\tau')\bigl[\mathbf{\mathcal{E}}_{n,T_0}(\tau',y) + \mathbf{N}(\mathbf{q}_{\alpha_0,T_0}(\tau',y))\bigr]d\tau'
    \end{equation*}
    and 
    \begin{equation}\label{eq:bounds_in_proof}
\|\mathbf{q}_{\alpha_0,T_0}\|_{\mathcal{X}_{\alpha_0}^k}\lesssim_{n,k,\omega_0,\gamma,\alpha_0} \left(\frac{T_0}{r_0}\right)^2.
    \end{equation}
    This means 
    \begin{equation*}
        \mathbf{q}_{\alpha_0,T_0}(0,y) \in \rg(\mathbf{P}_{\alpha_0}^{(\alpha_0)}) = \text{span}\{\mathbf{g}_{0,\alpha_0},\mathbf{f}_{0,\alpha_0},\mathbf{f}_{1,\alpha_0}\} 
    \end{equation*}
    Since $\zeta_{\alpha_0,T_0}(r,t) = \mathbf{q}_{\alpha_0,T_0}^{(1)}(\tau,y)$, we recall the radial logarithmic transformation so that the solution to \eqref{Nd-equation} can be written as 
        \begin{equation*}
v_{\alpha_0,\kappa_0,T_0}^{(n)}(x,t) = -\alpha_0\log\left(1-\frac{t}{T_0}\right) -\alpha_0 \log\left(\sqrt{1+\alpha_0}+\frac{|x|-r_0}{T_0-t}\right) - \frac{(n-1)}{2}\log\left(\frac{|x|}{r_0}\right) + \zeta_{\alpha_0,T_0}(|x|,t) + \kappa_0.
\end{equation*}
    After passing to physical coordinates, the bounds \eqref{eq:bounds_in_proof} imply those in Theorem \ref{Existence_of_sol_Main_Theorem}.

    Moreover, for $n=3$, we have the closed-form family solution 
    \begin{equation*}
v_{\alpha_0,\kappa_0,T_0}^{(3)}(x,t) = -\alpha_0\log\left(1-\frac{t}{T_0}\right) -\alpha_0 \log\left(\sqrt{1+\alpha_0}+\frac{|x|-r_0}{T_0-t}\right) - \frac{(n-1)}{2}\log\left(\frac{|x|}{r_0}\right) + \kappa_0,
\end{equation*}
smooth in the backward light cone for $\frac{T_0}{r_0}<1$.
\section{Stability of the non-explicit radial blow-up family}\label{section_four}

We now turn to the stability of the family of solutions constructed in
Section \ref{section_three}. Although the analysis of Section
\ref{section_two} was carried out on the extended interval
\(\mathcal{I}_{\alpha_0}=(-R_{\alpha_0},R_{\alpha_0})\), in this section we return to
the light cone \(y\in[-1,1]\), for which the same spectral results remain
valid, see \cite{ghoul2025blow}.

Throughout this section we again fix arbitrarily 
\[
n\geq 2,\qquad r_0>0,\qquad \alpha_0>0,\qquad k\geq \ceil{c_{\alpha_0}}+1,\qquad
0<\omega_0<1,\qquad 0<\gamma<\omega_0.
\]

By Proposition \ref{prop:existence_of_correction}, there exists a constant
$\varepsilon>0$ such that, for every
\[
\alpha\in[\alpha_0-\delta_0,\alpha_0+\delta_0],\qquad
0<T<\min\Big\{\frac{1}{2R_{\alpha_0}},\varepsilon\Big\}r_0 \coloneqq \overline{T}_1,
\quad  \text{and} \quad \kappa\in\mathbb{R},
\]
equation \eqref{radial_eq_v} admits a solution (in similarity coordinates) of the form
\[
\mathcal{U}_{\alpha,\kappa,T}(\tau,y)
=
U_{\alpha,\kappa}(\tau,y) - \frac{n-1}{2}\log\left(1+\frac{T}{r_0}ye^{-\tau}\right)+\mathbf q_{\alpha,T}^{(1)}(\tau,y)
\]
with
\[
\|\mathbf q_{\alpha,T}\|_{\mathcal X^k(-R_{\alpha_0},R_{\alpha_0})}
\lesssim_{\omega_0,\gamma,\alpha_0,n,k,\theta}\left(\frac{T}{r_0}\right)^2
\]
Here, the $T$ in the similarity coordinates is the same $T$ appearing in the subscript of $\mathbf{q}_{\alpha,T}$. Moreover, we consider $\overline{T}_1$ as the first admissible $T$-range to ensure that the background solutions actually exist.

Furthermore, we note the canonical solution when $n=3$ given by 
\begin{equation*}
    U^{(3)}_{\alpha,\kappa,T}(\tau,y)=U_{\alpha,\kappa}(\tau,y) - \log\left(1+\frac{T}{r_0}ye^{-\tau}\right). 
\end{equation*}
\noindent Recall the equation
\begin{equation*}
    u_{tt} - u_{rr} - \frac{(n-1)}{r}u_r = (u_r)^2
\end{equation*}
    which reads in similarity coordinates (centred at parameter $T$) as 
\begin{equation}\label{similarity_variables_stability_sec}
    (y^2-1)U_{yy}+2yU_{y\tau} + 2y U_y + U_{\tau} + U_{\tau\tau} - \frac{(n-1)Te^{-\tau}}{yTe^{-\tau}+r_0}U_y = (U_y)^2.
\end{equation}
We now want to 
study the evolution of perturbations $V$ defined by
\[
  U(\tau,y) = \mathcal{U}_{\alpha,\kappa,T}(\tau,y) + V(\tau,y).
\]
We substitute the ansatz $U = \mathcal{U}_{\alpha,\kappa,T} + V$ into \eqref{similarity_variables_stability_sec}, so that $V$ satisfies 
\begin{equation*}
    (y^2-1)V_{yy} + 2yV_{y\tau} + 2yV_y + V_{\tau} + V_{\tau\tau} - \frac{(n-1)Te^{-\tau}}{yTe^{-\tau}+r_0}V_y - 2(\partial_y \mathcal{U}_{\alpha,\kappa,T})V_y = (V_y)^2
\end{equation*}
which, by definition of $\mathcal{U}_{\alpha,\kappa,T}$, is equivalent to
\begin{equation}
     (y^2-1)V_{yy} + 2yV_{y\tau} + 2yV_y + V_{\tau} + V_{\tau\tau} 
     - 2(\partial_y U_{\alpha,\kappa} + \partial_y\mathbf{q}_{\alpha,T}^{(1)})V_y = (V_y)^2.
     \label{eq_for_V}
\end{equation}
\begin{remark}
This is the second use of the radial logarithmic transformation: the explicit
logarithmic term cancels the radial drift also in the perturbation
equation. Consequently, the principal linear operator is again the
one-dimensional operator \(\mathbf{L}_\alpha\); the only additional linear term
comes from the non-explicit correction \(\mathbf{q}_{\alpha,T}\), and this term
decays in similarity time.
\end{remark}

Again the task is to construct a small and decaying $V$. Thus, to use our work from Section \ref{section_two}, we put this into first-order formulation, defining
\begin{align*}
    \mathbf{v} = \begin{pmatrix}
        V \\
        V_{\tau} + yV_y 
    \end{pmatrix} \coloneqq \begin{pmatrix}
        v_1 \\
        v_2
    \end{pmatrix}
\end{align*}
so that we have the vectorial evolution equation 
\begin{align*}
    \partial_{\tau}\begin{pmatrix}
        v_1 \\
        v_2 
    \end{pmatrix} = \mathbf{L}_{\alpha}\begin{pmatrix}
        v_1 \\
        v_2
    \end{pmatrix} +
    \begin{pmatrix}
       0 \\
       2(\partial_y \mathbf{q}_{\alpha,T}^{(1)})\partial_y v_1 
    \end{pmatrix} + 
    \begin{pmatrix}
        0 \\ 
        (\partial_y v_1)^2
    \end{pmatrix}
\end{align*}
and for compactness we denote as $\mathbf{H}_{\alpha,T}$ the small linear operator 
\begin{align*}
    \mathbf{H}_{\alpha,T}(\mathbf{v}) \coloneqq \begin{pmatrix}
        0 \\
        [2\partial_y \mathbf{q}_{\alpha,T}^{(1)}]\partial_y v_1 
    \end{pmatrix}
\end{align*}
which we will consider as small linear perturbation/forcing term. 
\begin{remark}
    If $\mathbf{q}_{\alpha,T}$ were stationary (i.e. exactly self-similar) a possible route would be to absorb $2(\partial_y \mathbf{q}_{\alpha,T}^{(1)})\partial_y v_1$ into a new $\mathbf{L}_{\alpha}$ and carry out a spectral analysis of a perturbed autonomous operator. This is not the case, however we can exploit the \textit{decay}  of $\mathbf{q}_{\alpha,T}$ and so we can consider the term arising from its linearisation as a perturbation to the original nonlinear evolution associated with $\mathbf{L}_{\alpha}$ and $\mathbf{N}(\cdot)$. 
\end{remark}
The Cauchy problem for $\mathbf{v}$ now reads as
\begin{align}\label{eq:abstract_cauchy_prob_v}
    \begin{cases}
\partial_{\tau}\mathbf{v} = \mathbf{L}_{\alpha}\mathbf{v} + \mathbf{H}_{\alpha,T}(\mathbf{v}) + \mathbf{N}(\mathbf{v}) \\
        \mathbf{v}\left(\tau =0, y\right) = \mathbf{v}_0(y).
    \end{cases}
\end{align}
As before, we introduce the correction term for the modified Duhamel formulation, defined in this case by 
\begin{align*}
    \mathbf{C}_{\alpha,T}[\mathbf{v}_0,\mathbf{v}] =& \mathbf{P}_{\alpha}\mathbf{v}_0 + \mathbf{P}_{0,\alpha}\int_{0}^{\infty}(\mathbf{H}_{\alpha,T}(\mathbf{v})+ \mathbf{N}(\mathbf{v}))\,d\tau' + \mathbf{L}_{\alpha}\mathbf{P}_{0,\alpha}\int_{0}^{\infty}(-\tau')(\mathbf{H}_{\alpha,T}(\mathbf{v})+ \mathbf{N}(\mathbf{v}))\,d\tau' \\
    +& \mathbf{P}_{1,\alpha}\int_{0}^{\infty}e^{-\tau'}(\mathbf{H}_{\alpha,T}(\mathbf{v})+ \mathbf{N}(\mathbf{v}))\,d\tau'
\end{align*}
and the modified Duhamel formulation
\begin{equation*}
    \mathbf{v}_{\alpha,T}(\tau,y) = \mathbf{S}_{\alpha}(\tau)(\mathbf{v}_0 - \mathbf{C}_{\alpha,T}[\mathbf{v}_0, \mathbf{v}_{\alpha,T}]) + \int_{0}^{\tau}\mathbf{S}_{\alpha}(\tau-\tau')(\mathbf{H}_{\alpha,T}(\mathbf{v}_{\alpha,T})+ \mathbf{N}(\mathbf{v}_{\alpha,T}))\,d\tau'.
\end{equation*}
In order to solve for $\mathbf{v}$ via a fixed point argument, we first state an estimate for $\mathbf{H}_{\alpha,T}(\mathbf{v})$.
\begin{lemma}\label{estimates_on_H}
    Let $k\geq 1$. For all $\alpha \in [\alpha_0-\delta_0,\alpha_0+\delta_0]$, $T\leq \overline{T}_1$ and $\mathbf{v} \in \mathcal{H}^k(-1,1)$
    \begin{equation*}
        \|\mathbf{H}_{\alpha,T}(\mathbf{v})\|_{\mathcal{H}^k} = 2\|\partial_y \mathbf{q}_{\alpha,T}^{(1)}\partial_y v_1\|_{H^k} \lesssim \left(\frac{T}{r_0}\right)^2 e^{-\omega \tau}\|\mathbf{v}\|_{\mathcal{H}^k}.
    \end{equation*}
    for all $\tau \geq 0$. 
\end{lemma}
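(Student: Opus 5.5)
The estimate is a product estimate in $H^k(-1,1)$ combined with the decay bound on the background correction from Proposition~\ref{prop:existence_of_correction}. First, by definition of $\mathbf{H}_{\alpha,T}$ only the second component is nonzero. Its $\mathcal{H}^k$ norm is therefore the $H^k(-1,1)$ norm of $2\,\partial_y \mathbf{q}_{\alpha,T}^{(1)}(\tau,\cdot)\,\partial_y v_1$, which gives the stated equality.

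Next I will apply the standard one-dimensional product (algebra) estimate on the bounded interval $(-1,1)$. For $k\geq 1$, $H^k(-1,1)$ is a Banach algebra by the Sobolev embedding $H^1(-1,1)\hookrightarrow L^\infty(-1,1)$, so
\[
\|fg\|_{H^k(-1,1)}\lesssim_k \|f\|_{H^k(-1,1)}\|g\|_{H^k(-1,1)}.
\]
Concretely, Leibniz places all but at most one factor's top derivative in $L^\infty$, controlled by $H^1$. Taking $f=\partial_y\mathbf{q}_{\alpha,T}^{(1)}(\tau)$ and $g=\partial_y v_1$ gives
\[
\|f\|_{H^k}\le \|\mathbf{q}^{(1)}_{\alpha,T}(\tau)\|_{H^{k+1}(-1,1)}, \qquad \|g\|_{H^k}\le\|v_1\|_{H^{k+1}}\le \|\mathbf{v}\|_{\mathcal{H}^k}.
\]

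The remaining step is to bound $\|\mathbf{q}^{(1)}_{\alpha,T}(\tau)\|_{H^{k+1}(-1,1)}$. Since $(-1,1)\subset\mathcal{I}_{\alpha_0}$, restriction is norm-nonincreasing, so this quantity is at most $\|\mathbf{q}_{\alpha,T}(\tau)\|_{\mathcal{H}^k_{\alpha_0}}$. By the definition of the $\mathcal{X}^k_{\alpha_0}$ norm, that is at most $e^{-\omega\tau}\|\mathbf{q}_{\alpha,T}\|_{\mathcal{X}^k_{\alpha_0}}$. The sharper bound of Proposition~\ref{prop:existence_of_correction} (applied with the relevant datum, here $\mathbf{q}_0=\mathbf{0}$) then gives $\lesssim (T/r_0)^2e^{-\omega\tau}$, uniformly in $\alpha\in[\alpha_0-\delta_0,\alpha_0+\delta_0]$ and $T\le\overline{T}_1$. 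Combining these yields the claim, with an implicit constant depending only on $k,n,\omega_0,\gamma,\alpha_0$.

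There is no real obstacle here. The only point needing care is the uniformity of the implicit constant in $(\alpha,T)$, and this is exactly what the uniform bound in Proposition~\ref{prop:existence_of_correction} provides. It is also worth noting that the $H^{k+1}$ regularity of $q_1$ is what allows $\partial_y q_1$ to be placed in $H^k$ with $k\geq1$, so that the algebra property applies.
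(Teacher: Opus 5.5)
Your proposal is correct and follows essentially the same route as the paper: you apply the one-dimensional Banach algebra property of $H^k$ (for $k\geq 1$) to $\partial_y \mathbf{q}^{(1)}_{\alpha,T}\,\partial_y v_1$, and you combine it with the uniform decay bound $\|\mathbf{q}_{\alpha,T}(\tau)\|_{\mathcal{H}^k_{\alpha_0}}\lesssim (T/r_0)^2e^{-\omega\tau}$ from Proposition~\ref{prop:existence_of_correction} with $\mathbf{q}_0=\mathbf{0}$. The only difference is cosmetic: you use the algebra property on $(-1,1)$ and pass from $\mathcal{I}_{\alpha_0}$ by restriction, whereas the paper invokes it on $(-R_{\alpha_0},R_{\alpha_0})$, and your version is slightly more accurate since $\mathbf{v}$ is only defined on $(-1,1)$.
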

\begin{proof}
    We simply recall the properties of the construction $\mathbf{q}_{\alpha,T}$ from Proposition \ref{prop:existence_of_correction}. For  $\alpha \in [\alpha_0-\delta_0,\alpha_0+\delta_0]$ and $0<T<\overline{T}_1$, we have the existence of the following functions satisfying the bound
    \begin{align*}
        \|\mathbf{q}_{\alpha,T}(\tau,\cdot)\|_{\mathcal{H}_{\alpha_0}^k} \lesssim \left(\frac{T}{r_0}\right)^2e^{-\omega \tau}.
    \end{align*}
    for all $\tau\geq 0$.
    Using the Banach algebra property of $H^k(-R_{\alpha_0},R_{\alpha_0})$ for $k>\frac{1}{2}$, we have 
    \begin{equation*}
        \|\mathbf{H}_{\alpha,T}(\mathbf{v})\|_{\mathcal{H}^k} = \|(2\partial_y \mathbf{q}_{\alpha,T}^{(1)})\partial_y v_1\|_{H^k} \lesssim_k \|\partial_y \mathbf{q}_{\alpha,T}^{(1)}\|_{H^k}\|\partial_y v_1\|_{H^k} \lesssim \|\mathbf{q}_{\alpha,T}\|_{\mathcal{H}^k}\|\mathbf{v}\|_{\mathcal{H}^k}.
    \end{equation*}
\end{proof}
\begin{proposition}\label{existence_of_v}
     There exist constants $0<\delta_1<\delta_0, \,M_1>1$ and $\overline{T}_2\leq \overline{T}_1$ with the following property. For every $0<\rho\leq \delta_1$, $\alpha \in [\alpha_0-\delta_0,\alpha_0+\delta_0]$, $0<T\leq\overline{T}_2$ and $\mathbf{v}_0\in \mathcal{H}^k(-1,1)$ with $\|\mathbf{v}_0\|_{\mathcal{H}^k}\leq \frac{\rho}{M_1}$, there is a unique solution $\mathbf{v}_{\alpha,T}\in B_{\rho}^{\mathcal{X}^k}$, satisfying  
    \begin{equation*}
        \mathbf{v}_{\alpha,T}(\tau_{T}) = \mathbf{S}_{\alpha}(\tau_{T})(\mathbf{v}_0 - \mathbf{C}_{\alpha,T}[\mathbf{v}_0,\mathbf{v}_{\alpha,T}]) + \int_{0}^{\tau_{T}}\mathbf{S}_{\alpha}(\tau_{T}-\tau')[\mathbf{H}_{\alpha,T}(\mathbf{v}_{\alpha,T}(\tau'))+\mathbf{N}(\mathbf{v}_{\alpha,T}(\tau'))]\,d\tau'
    \end{equation*}
    for all $\tau_{T} \geq 0$. 
\end{proposition}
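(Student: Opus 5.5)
The plan is to mirror the proof of Proposition~\ref{prop:existence_of_correction}, with the forcing $\mathcal{E}_{n,T}$ replaced by the decaying linear term $\mathbf{H}_{\alpha,T}$. We work on the light cone $\mathcal{H}^k(-1,1)$, where the spectral results of Section~\ref{section_two} (semigroup decay on $\rg\widetilde{\mathbf P}_\alpha$, the explicit dynamics on $\rg\mathbf P_{0,\alpha}$ and $\rg\mathbf P_{1,\alpha}$, and uniformity in $\alpha$) hold, as recalled from \cite{ghoul2025blow}. The quantity $\mathbf{q}_{\alpha,T}$, constructed on $\mathcal I_{\alpha_0}\supset[-1,1]$, is simply restricted to $[-1,1]$; restriction does not increase norms, so Lemma~\ref{estimates_on_H} is available there.

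\textbf{Step 1: rewrite the map.} Using $\mathbf S_\alpha\mathbf P_{1,\alpha}=e^\tau\mathbf P_{1,\alpha}$ and $\mathbf S_\alpha\mathbf P_{0,\alpha}=\mathbf P_{0,\alpha}+\tau\mathbf L_\alpha\mathbf P_{0,\alpha}$ from Proposition~\ref{prop:linearised_evolution}, I will rewrite the right-hand side as the operator $\mathbf\Psi_{\alpha,T}(\mathbf v_0,\mathbf v)$. This is the same cancellation used in the proof of Proposition~\ref{prop:existence_of_correction}, and it gives
\begin{equation*}
\mathbf S_\alpha(\tau)\widetilde{\mathbf P}_\alpha\mathbf v_0+\int_0^\tau\mathbf S_\alpha(\tau-\tau')\widetilde{\mathbf P}_\alpha\mathbf G(\tau')\,d\tau'-\mathbf P_{0,\alpha}\int_\tau^\infty\mathbf G-\mathbf L_\alpha\mathbf P_{0,\alpha}\int_\tau^\infty(\tau-\tau')\mathbf G-\mathbf P_{1,\alpha}\int_\tau^\infty e^{\tau-\tau'}\mathbf G,
\end{equation*}
where $\mathbf G=\mathbf H_{\alpha,T}(\mathbf v)+\mathbf N(\mathbf v)$.

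\textbf{Step 2: self-map.} Let $\mathbf v\in B^{\mathcal X^k}_\rho$. Lemma~\ref{estimates_on_H} gives $\|\mathbf H_{\alpha,T}(\mathbf v)(\tau')\|\lesssim (T/r_0)^2\rho e^{-2\omega\tau'}$, and Lemma~\ref{lem:estimates_v2} gives $\|\mathbf N(\mathbf v)(\tau')\|\lesssim\rho^2e^{-2\omega\tau'}$.

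For the stable Duhamel integral, the integrand is bounded by $e^{-\omega_0(\tau-\tau')}e^{-2\omega\tau'}$, and integrating yields $\lesssim e^{-\omega\tau}$. For the tail integrals, the rate $e^{-2\omega\tau'}$ gives $\int_\tau^\infty (1+|\tau-\tau'|)e^{-2\omega\tau'}\,d\tau'\lesssim e^{-2\omega\tau}$. The $\mathbf P_1$ term has kernel $e^{\tau-\tau'}\le1$, so it is handled the same way.

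Using the uniform bounds on $\mathbf P_{\lambda,\alpha}$ and on $\mathbf L_\alpha\mathbf P_{0,\alpha}$ (a finite-rank operator), this yields
\[
\|\mathbf\Psi(\tau)\|\le C_*\Big(\tfrac{\rho}{M_1}+(T/r_0)^2\rho+\rho^2\Big)e^{-\omega\tau}.
\]
Choosing $M_1\ge 4C_*$, $\delta_1\le 1/(4C_*)$, and $\overline T_2$ with $C_*(\overline T_2/r_0)^2\le1/4$ makes $\mathbf\Psi$ map $B_\rho^{\mathcal X^k}$ into itself.

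\textbf{Step 3: contraction.} For $\mathbf v,\mathbf v'\in B_\rho^{\mathcal X^k}$, linearity of $\mathbf H$ gives $\|\mathbf H(\mathbf v-\mathbf v')\|\lesssim (T/r_0)^2e^{-2\omega\tau}\|\mathbf v-\mathbf v'\|_{\mathcal X}$. The Lipschitz bound for $\mathbf N$ in Lemma~\ref{lem:estimates_v2} gives $\lesssim\rho e^{-2\omega\tau}\|\mathbf v-\mathbf v'\|_{\mathcal X}$. The same integrals as in Step 2 then give a Lipschitz constant $C_*((T/r_0)^2+\rho)\le\tfrac12$, possibly after shrinking $\delta_1$ and $\overline T_2$. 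Banach's fixed point theorem then yields a unique fixed point $\mathbf v_{\alpha,T}$ in the ball.

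\textbf{Main obstacle.} The delicate point is that all constants must be uniform in $\alpha\in[\alpha_0-\delta_0,\alpha_0+\delta_0]$ and $T\le\overline T_2$, and that the light-cone estimates must be compatible with the extended-interval bounds on $\mathbf q_{\alpha,T}$. I would first check that the constant $M$ in the uniform Gearhardt--Pr\"uss--Greiner bound and the projection bounds on $(-1,1)$ hold uniformly in $\alpha$, as in Proposition~\ref{prop:linearised_evolution} with $R_{\alpha_0}$ replaced by $1$. I would also check that the factor $(T/r_0)^2$ in Lemma~\ref{estimates_on_H} can be made small independently of $\rho$. This is exactly why the smallness of $\mathbf H$ is taken from $T$ rather than from $\rho$.
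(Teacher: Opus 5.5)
Your proposal is correct and follows essentially the same route as the paper. You rewrite the modified Duhamel map using $\mathbf{S}_\alpha\mathbf{P}_{1,\alpha}=e^{\tau}\mathbf{P}_{1,\alpha}$ and $\mathbf{S}_\alpha\mathbf{P}_{0,\alpha}=\mathbf{P}_{0,\alpha}+\tau\mathbf{L}_\alpha\mathbf{P}_{0,\alpha}$, prove the self-map and contraction bounds from Lemma~\ref{estimates_on_H} (smallness from $T/r_0$) and the quadratic estimates on $\mathbf{N}$ (smallness from $\delta_1$), and conclude by Banach's fixed point theorem; your explicit choices $M_1\ge 4C_*$ and $\delta_1\le 1/(4C_*)$ make the uniformity in $0<\rho\le\delta_1$ transparent, where the paper covers this only with a closing remark that shrinking $\rho$ improves the estimates.
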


\begin{proof}
As before, the fixed point operator can be written as \begin{equation}\label{compact_expression_fixed_point}
\begin{aligned}
\mathbf{\Phi}_{\alpha,T}(\mathbf{v}_0,\mathbf{v})(\tau)
&= \mathbf{S}_{\alpha}(\tau)\widetilde{\mathbf{P}}_{\alpha}\mathbf{v}_0
  + \int_{0}^{\tau}\mathbf{S}_{\alpha}(\tau-\tau')
    \widetilde{\mathbf{P}}_{\alpha}(\mathbf{H}(\mathbf{v})+ \mathbf{N}(\mathbf{v}))\,d\tau' 
- \mathbf{P}_{0,\alpha}\int_{\tau}^{\infty}(\mathbf{H}(\mathbf{v})+ \mathbf{N}(\mathbf{v}))\,d\tau' \\
&\quad - \mathbf{L}_{\alpha}\mathbf{P}_{0,\alpha}\int_{\tau}^{\infty}(\tau-\tau')
    (\mathbf{H}(\mathbf{v})+ \mathbf{N}(\mathbf{v}))\,d\tau' - \mathbf{P}_{1,\alpha}\int_{\tau}^{\infty}e^{\tau-\tau'}
    (\mathbf{H}(\mathbf{v})+ \mathbf{N}(\mathbf{v}))\,d\tau'.
\end{aligned}
\end{equation}
    Let $\mathbf{v}\in B_{\delta_1}^{\mathcal{X}^k}$ and $\mathbf{v}_0 \in B_{\frac{\delta_1}{M_1}}^{\mathcal{H}^k}$ for $0<\delta_1<\delta_0$ and $M_1>1$ to be determined. First consider that $0<T<\overline{T}_1$, estimating term by term (omitting subscripts), we have
   \begin{align*}
\|\mathbf{\Phi}_{\alpha,T}(\mathbf{v}_0,\mathbf{v})(\tau)\|_{\mathcal{H}^k}
\lesssim\;& e^{-\omega \tau}\|\mathbf{v}_0\|_{\mathcal{H}^k} \\
&+ \int_{0}^{\tau} e^{-\omega(\tau-\tau')}
   \big(\|\mathbf{H}(\mathbf{v})(\tau')\|_{\mathcal{H}^k}
   +\|\mathbf{N}(\mathbf{v})(\tau')\|_{\mathcal{H}^k}\big)\, d\tau' \\
&+ \int_{\tau}^{\infty}
   \big(\|\mathbf{H}(\mathbf{v})(\tau')\|_{\mathcal{H}^k}
   +\|\mathbf{N}(\mathbf{v})(\tau')\|_{\mathcal{H}^k}\big)\, d\tau' \\
&+ \int_{\tau}^{\infty}(\tau'-\tau)
   \big(\|\mathbf{H}(\mathbf{v})(\tau')\|_{\mathcal{H}^k}
   +\|\mathbf{N}(\mathbf{v})(\tau')\|_{\mathcal{H}^k}\big)\, d\tau' \\
&+ \int_{\tau}^{\infty} e^{\tau-\tau'}
   \big(\|\mathbf{H}(\mathbf{v})(\tau')\|_{\mathcal{H}^k}
   +\|\mathbf{N}(\mathbf{v})(\tau')\|_{\mathcal{H}^k}\big)\, d\tau'.
\end{align*}
Now we apply estimates from Lemma \ref{estimates_on_H}, and the fact that $\mathbf{v}\in B_{\delta_1}^{\mathcal{X}^k}$, i.e. $\|\mathbf{v}\|_{\mathcal{H}^k} \lesssim \delta_1 e^{-\omega \tau}$ for all $\tau\geq 0$
\begin{align*}
    \|\mathbf{\Phi}_{\alpha,T}(\mathbf{v}_0,\mathbf{v})(\tau)\|_{\mathcal{H}^k} \lesssim e^{-\omega \tau}\|\mathbf{v}_0\|_{\mathcal{H}^k} + \int_{0}^{\tau}e^{-\omega(\tau-\tau')}\left(\left(\frac{T}{r_0}\right)^2  \delta_1 e^{-2\omega \tau'}+\delta_1^2 e^{-2\omega \tau'}\right)\,d\tau' \\
    + \int_{\tau}^{\infty}\left(\frac{T}{r_0}\right)^2  \delta_1 e^{-2\omega \tau'} + \delta_1^2 e^{-2\omega \tau'}\,d\tau' + \int_{\tau}^{\infty}(\tau'-\tau)\left(\left(\frac{T}{r_0}\right)^2   \delta_1 e^{-2\omega \tau'} + \delta_1^2 e^{-2\omega \tau'}\right)\,d\tau' \\
    + \int_{\tau}^{\infty}e^{\tau-\tau'}\left(\left(\frac{T}{r_0}\right)^2   \delta_1 e^{-2\omega \tau'} + \delta_1^2 e^{-2\omega \tau'}\right)\,d\tau'.
\end{align*}
After computing the integrals, and bounding terms such as $e^{-2\omega \tau}\leq e^{-\omega \tau}$, we obtain 
\begin{align*}
     \|\mathbf{\Phi}_{\alpha,T}(\mathbf{v}_0,\mathbf{v})(\tau)\|_{\mathcal{H}^k} \lesssim e^{-\omega \tau}\frac{\delta_1}{M_1} + e^{-\omega \tau}\left(4\left(\frac{T}{r_0}\right)^2 \delta_1 + 4\delta_1^2\right) \leq \delta_1 e^{-\omega \tau}
\end{align*}
where the last inequality holds for $\delta_1$ small enough, $M_1$ large enough and possibly taking $0<T<\overline{T}_1$. Thus for $\mathbf{v}_0 \in B_{\frac{\delta_1}{M_1}}^{\mathcal{H}^k}$, we have for all $
|\alpha-\alpha_0|\leq \delta_0$, $\mathbf{\Phi}_{\alpha,T}(\mathbf{v}_0,\cdot):B_{\delta_1}^{\mathcal{X}^k} \to B_{\delta_1}^{\mathcal{X}^k}$. We now show that $\mathbf{\Phi}_{\alpha,T}(\mathbf{v}_0,\cdot)$ is a contraction. Firstly,
\begin{align*}
    \|\mathbf{\Phi}_{\alpha,T}(\mathbf{v}_0,\mathbf{v}) - \mathbf{\Phi}_{\alpha,T}(\mathbf{v}_0,\mathbf{v}')\|_{\mathcal{H}^k} \lesssim \int_{0}^{\tau}e^{-\omega(\tau-\tau')}(\|\mathbf{H}(\mathbf{v}-\mathbf{v}')\|_{\mathcal{H}^k} + \|\mathbf{N}(\mathbf{v})-\mathbf{N}(\mathbf{v}')\|_{\mathcal{H}^k})\,d\tau' \\
    + \int_{\tau}^{\infty}\|\mathbf{H}(\mathbf{v}-\mathbf{v}')\|_{\mathcal{H}^k} + \|\mathbf{N}(\mathbf{v})-\mathbf{N}(\mathbf{v}')\|_{\mathcal{H}^k}\,d\tau' + \int_{\tau}^{\infty}(\tau'-\tau)(\|\mathbf{H}(\mathbf{v}-\mathbf{v}')\|_{\mathcal{H}^k} + \|\mathbf{N}(\mathbf{v})-\mathbf{N}(\mathbf{v}')\|_{\mathcal{H}^k})\,d\tau' \\
    +\int_{\tau}^{\infty}e^{\tau-\tau'}(\|\mathbf{H}(\mathbf{v}-\mathbf{v}')\|_{\mathcal{H}^k} + \|\mathbf{N}(\mathbf{v})-\mathbf{N}(\mathbf{v}')\|_{\mathcal{H}^k})\,d\tau'.
\end{align*}
Now using Lemmas \ref{lem:estimates_v2}-\ref{estimates_on_H}, and $\|\mathbf{v}-\mathbf{v}'\|_{\mathcal{H}^k}\leq e^{-\omega \tau}\|\mathbf{v}-\mathbf{v}'\|_{\mathcal{X}^k}$ for all $\tau\geq 0$, we have 
\begin{align*}
\|\mathbf{\Phi}_{\alpha,T}(\mathbf{v}_0,\mathbf{v}) 
    - \mathbf{\Phi}_{\alpha,T}(\mathbf{v}_0,\mathbf{v}')\|_{\mathcal{H}^k}
&\lesssim
\|\mathbf{v}-\mathbf{v}'\|_{\mathcal{X}^k}
\Biggl\{
    \int_{0}^{\tau} e^{-\omega(\tau-\tau')}
    \left[\left(\frac{T}{r_0}\right)^2
          e^{-2\omega \tau'}
          + \delta_1 e^{-2\omega \tau'}\right]\, d\tau' \\
&\qquad\quad
    + \int_{\tau}^{\infty}
      \left[\left(\frac{T}{r_0}\right)^2
          e^{-2\omega \tau'}
          + \delta_1 e^{-2\omega \tau'}\right] d\tau' \\
&\qquad\quad
    + \int_{\tau}^{\infty}(\tau'-\tau)
     \left[\left(\frac{T}{r_0}\right)^2
          e^{-2\omega \tau'}
          + \delta_1 e^{-2\omega \tau'}\right] d\tau' \\
&\qquad\quad
    + \int_{\tau}^{\infty} e^{\tau-\tau'}
      \left[\left(\frac{T}{r_0}\right)^2
          e^{-2\omega \tau'}
          + \delta_1 e^{-2\omega \tau'}\right] d\tau'
\Biggr\} \\
    &\lesssim \|\mathbf{v}-\mathbf{v}'\|_{\mathcal{X}^k}
\Bigl\{4\left(\frac{T}{r_0}\right)^2 + \delta_1)e^{-\omega \tau}\Bigr\}.
\end{align*}
After taking $\delta_1>0$ small enough, and $T>0$ possibly smaller to obtain $\overline{T}_2$, we can ensure 
\begin{equation*}
    \|\mathbf{\Phi}_{\alpha,T}(\mathbf{v}_0,\mathbf{v}) 
    - \mathbf{\Phi}_{\alpha,T}(\mathbf{v}_0,\mathbf{v}')\|_{\mathcal{H}^k} \leq \frac{1}{2}e^{-\omega \tau}\|\mathbf{v}-\mathbf{v}'\|_{\mathcal{X}^k}
\end{equation*}
which yields the claim. Thus, by Banach's fixed point theorem we obtain a unique fixed point $\mathbf{v}_{\alpha,T} \in B_{\delta_1}^{\mathcal{X}^k}$. We remark that replacing $\delta_1>0$ by $\rho>0$ where $0<\rho\leq \delta_1$ only improves the above estimates and so the conclusion holds for the \textit{same} $T$.
\end{proof}
\subsection{Finite-dimensional reduction}
We recall our ``background'' family of blow-up solutions given by 
\begin{equation}\label{eq:background_sols_sec_4}
    \mathcal{U}_{\alpha_0,\kappa_0,T_0}(\tau_{T_0},y_{T_0}) = U_{\alpha_0,\kappa_0}(\tau_{T_0},y_{T_0}) - \frac{n-1}{2}\log\left(1+\frac{T_0}{r_0}y_{T_0}e^{-\tau_{T_0}}\right) + \mathbf{q}_{\alpha_0,T_0}^{(1)}(\tau_{T_0},y_{T_0})
\end{equation}
for all $T_0 \leq \overline{T}_1$ (and other parameters) as well as the special case for $n=3$ when 
\begin{equation*}
    \mathcal{U}^{(3)}_{\alpha_0,\kappa_0,T_0}(\tau_{T_0},y_{T_0}) = U_{\alpha_0,\kappa_0}(\tau_{T_0},y_{T_0}) - \log\left(1+\frac{T_0}{r_0}y_{T_0}e^{-\tau_{T_0}}\right).
\end{equation*}In the previous subsection, for possibly smaller $T\leq \overline{T}_2\leq \overline{T}_1$, we further proved the existence of family of solutions to \eqref{similarity_variables_stability_sec} of the form 
\begin{equation}\label{eq:form_of_solution}
    \mathbf{u}^{(1)}(\tau_{T},y_{T}) = U_{\alpha,\kappa}(\tau_{T},y_{T})+\mathbf{q}_{\alpha,T}^{(1)}(\tau_{T},y_{T}) + \mathbf{v}^{(1)}_{\alpha,T}(\tau_{T},y_{T}).
\end{equation}
We now want to find a solution of the form \eqref{eq:form_of_solution} for appropriate $(\alpha,\kappa,T)$, but which arises from a small initial perturbation to the family \eqref{eq:background_sols_sec_4} at parameters $(\alpha_0,\kappa_0,T_0)$. 

Before this, we make some important remarks. Observe that our construction relied on a sufficiently small blow-up time. Thus, to proceed with modulation of $T$, once a sufficiently small $\overline{T}$ is found with $\overline{T}\leq \overline{T}_2 \leq \overline{T}_1$, we will modulate inside the interval $(0,\overline{T})$. This will require a small interval around any $T_0 \in (0,\overline{T})$ which also lies in such an admissible range. I.e., we will require a radius $\eta_{\overline{T},T_0,\alpha_0}$ 
\begin{equation}\label{eq:condition_on_delta_1}
    \eta_{\overline T,T_0,\alpha_0} \leq \frac{1}{T_0}\min \left\{\frac{T_0}{2},\frac{\overline{T}-T_0}{2},T_0(R_{\alpha_0}-1),T_0\delta_0\right\}.
\end{equation}
This ensures three crucial properties. The first two quantities we take the minimum of ensure that 
\begin{equation}\label{eq:shrinking_delta_1}
    T\in [T_0-\eta_{\overline T,T_0,\alpha_0} T_0,T_0+\eta_{\overline T,T_0,\alpha_0} T_0]\subset \left(0,\overline{T}\right)
\end{equation}
and the final condition ensures that $T\leq T_0+\eta_{\overline T,T_0,\alpha_0} T_0 < T_0R_{\alpha_0}$, where this extra room was provided by Section \ref{section_two}, i.e.
\begin{equation*}
    \left(-\frac{T}{T_0},\frac{T}{T_0}\right) \subset (-R_{\alpha_0},R_{\alpha_0}) = \mathcal{I}_{\alpha_0}
\end{equation*}
holds for $T<T_0 R_{\alpha_0}$.
Finally, the last condition merely ensures that $\eta_{\overline T,T_0,\alpha_0}<\delta_0$ so that the $|\alpha-\alpha_0|<\delta_0$ are admissible per Section \ref{section_two} (recall $\delta_0 \coloneqq \min\{1,\frac{\alpha_0}{4}\}$).
\begin{definition}\label{def:initial_data_decomposition}
    Consider some $\overline{T}\leq \overline{T}_2$ to be determined later. Fix $0<T_0<\overline{T}$. Then for $\alpha\in [\alpha_0-\delta_0,\alpha_0+\delta_0]$, $\kappa,\kappa_0 \in \mathbb{R}$, and $T\in [T_0-\eta_{\overline T,T_0,\alpha_0} T_0,T_0+\eta_{\overline T,T_0,\alpha_0} T_0]$ we define the initial-data operator 
    \begin{equation*}
        \mathbf{U}_{\alpha,\kappa,T}:\mathcal{H}^k(\mathbb{R})\to \mathcal{H}^k(-1,1), \qquad \mathbf{f}\mapsto \mathbf{f}_{T} + \mathbf{f}_{0,T} - \mathbf{f}_{\alpha,\kappa,T}
    \end{equation*}
    where \begin{align*}
        \mathbf{f}(y) = \begin{pmatrix}
            f_1(y) \\
            f_2(y)
        \end{pmatrix}
    \end{align*} is the initial data perturbation and 
    \begin{align*}
        \mathbf{f}_{T}(y) = \begin{pmatrix}
            f_1(Ty) \\
            Tf_2(Ty)
        \end{pmatrix}, \quad \mathbf{f}_{0,T}(y) = \begin{pmatrix}
            \widetilde{U}_{\alpha_0,\kappa_0}(\frac{T}{T_0}y) -\frac{(n-1)}{2}\log(1+\frac{T}{r_0}y)+ \mathbf{q}_{\alpha_0,T_0}^{(1)}(0,\frac{T}{T_0}y) \\
            \frac{T}{T_0}\alpha_0 + (\frac{T}{T_0})^2 y \, \widetilde{U}_{\alpha_0,\kappa_0}'(\frac{T}{T_0}y) + \frac{T}{T_0}\mathbf{q}_{\alpha_0,T_0}^{(2)}(0,\frac{T}{T_0}y)
        \end{pmatrix} 
    \end{align*}
    \begin{align*}
        \mathbf{f}_{\alpha,\kappa,T}(y) = \begin{pmatrix}
            \widetilde{U}_{\alpha,\kappa}(y) - \frac{(n-1)}{2}\log(1+\frac{T}{r_0}y)+ \mathbf{q}_{\alpha,T}^{(1)}(0,y) \\
            \alpha + y\widetilde{U}'_{\alpha,\kappa}(y) + \mathbf{q}_{\alpha,T}^{(2)}(0,y)
        \end{pmatrix} 
    \end{align*}
\end{definition}
\begin{remark}
    Note that the contribution from $\log\left(\frac{r}{r_0}\right)$ in $\mathbf{f}_{0,T}$ and $\mathbf{f}_{\alpha,\kappa,T}$ cancels when they are subtracted. Moreover, there is no contribution in the second entry, i.e. the similarity time derivative for each of these functions.  
\end{remark}
\begin{lemma}(Taylor expansion of initial data).\label{initial_data_decomposition} 
Let $\overline{T}\leq \overline{T}_2$ and fix $0<T_0<\overline{T}$, $\kappa_0 \in \mathbb{R}$ and $T\in [T_0-\eta_{\overline T,T_0,\alpha_0} T_0,T_0+\eta_{\overline T,T_0,\alpha_0} T_0]$. Denote 
\begin{equation*}
    \lambda = \frac{T}{T_0}.
\end{equation*}
For any
\[
\alpha \in [\alpha_0-\eta_{\overline T,T_0,\alpha_0},\alpha_0+\eta_{\overline T,T_0,\alpha_0}],\qquad
\kappa \in [\kappa_0-\eta_{\overline T,T_0,\alpha_0},\kappa_0+\eta_{\overline T,T_0,\alpha_0}],\qquad
\lambda\in [1-\eta_{\overline T,T_0,\alpha_0},1+\eta_{\overline T,T_0,\alpha_0}],
\] 
and for any \(\mathbf f\in \mathcal H^k(\mathbb{R})\), we have
\[
\mathbf U_{\alpha,\kappa,T}(\mathbf f)
=
\mathbf f_T
+
\Big[(\kappa_0-\kappa)-\alpha(\lambda-1)\Big]\mathbf f_{0,\alpha}
+
(\lambda-1)\mathbf f_{1,\alpha}
+
(\alpha_0-\alpha)\mathbf g_{0,\alpha}
+
\mathbf h_{\alpha,T}
+
\mathbf r\!\left(\alpha,\lambda\right),
\]
where
\[
\mathbf h_{\alpha,T}(y)
:=
\begin{pmatrix}
\mathbf q_{\alpha_0,T_0}^{(1)}\!\left(0,\frac{T}{T_0}y\right)-\mathbf q_{\alpha,T}^{(1)}(0,y)\\[1mm]
\frac{T}{T_0}\mathbf q_{\alpha_0,T_0}^{(2)}\!\left(0,\frac{T}{T_0}y\right)-\mathbf q_{\alpha,T}^{(2)}(0,y)
\end{pmatrix},
\]
and the remainder satisfies
\[
\left\|\mathbf r\!\left(\alpha,\lambda\right)\right\|_{\mathcal H^k(-1,1)}
\lesssim_{\alpha_0,k}
|\alpha-\alpha_0|^2+\left|\lambda-1\right|^2.
\]
Moreover,
\begin{equation}\label{eq:initial_data_h_expression}
\|\mathbf h_{\alpha,T}\|_{\mathcal H^k(-1,1)}
\lesssim_{n,\theta,\omega_0,\gamma,\alpha_0,k}\frac{\overline{T}}{r_0}\left(|\alpha-\alpha_0|+\left|\lambda-1\right| \right)
\end{equation}
and finally, given $\mathbf{f}\in \mathcal{H}^k(\mathbb{R})$, for all $T<\frac{r_0}{2R_{\alpha_0}}$ we have the uniform bound
\begin{equation}\label{eq:uniform_bound_scaling}
    \|\mathbf{f}_{T}(\cdot)\|_{\mathcal{H}^k(-1,1)} \leq C_{\text{sc}}\|\mathbf{f}\|_{\mathcal{H}^k(\mathbb{R})}
\end{equation}
where $C_{sc}$ depends only on $\theta,r_0,\alpha_0$ and $k$. 
\end{lemma}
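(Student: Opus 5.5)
The plan is to start from Definition~\ref{def:initial_data_decomposition} and split $\mathbf U_{\alpha,\kappa,T}(\mathbf f)-\mathbf f_T$ into three groups. The first is the explicit one-dimensional profile part. The second is the logarithmic radial part. The third is the implicit correction part, which by definition is exactly $\mathbf h_{\alpha,T}$.

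The logarithmic terms $-\frac{n-1}{2}\log(1+\frac{T}{r_0}y)$ appear identically in $\mathbf f_{0,T}$ and $\mathbf f_{\alpha,\kappa,T}$, so they cancel exactly, as in the remark after the definition. The estimate therefore reduces to the one-dimensional computation of \cite{ghoul2025blow}, carried out on $y\in[-1,1]$. Set
\[
F(\alpha,\kappa,\lambda)(y)=\begin{pmatrix}\widetilde U_{\alpha_0,\kappa_0}(\lambda y)-\widetilde U_{\alpha,\kappa}(y)\\ \lambda\alpha_0+\lambda^2y\widetilde U'_{\alpha_0,\kappa_0}(\lambda y)-\alpha-y\widetilde U'_{\alpha,\kappa}(y)\end{pmatrix}.
\]
This vanishes at $(\alpha_0,\kappa_0,1)$, and it is smooth (indeed analytic) in $(\alpha,\kappa,\lambda,y)$ on the relevant set. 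The reason is that $\lambda y\in[-R_{\alpha_0},R_{\alpha_0}]$ by the choice of $\eta$ in \eqref{eq:condition_on_delta_1}, and that $\sqrt{1+\alpha}+y\ge k_{\alpha_0}>0$ uniformly.

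I will Taylor expand $F$ to first order about $(\alpha,\kappa,1)$, that is, in the variables $\alpha_0-\alpha$ and $\lambda-1$. The $\kappa$-derivative gives $(\kappa_0-\kappa)\mathbf f_{0,\alpha}$ directly. The $\lambda$-derivative at $\lambda=1$ gives $(\alpha,\,y\widetilde U'+y(y\widetilde U')')$ plus the $\alpha$ contributions. Rewriting this in terms of the symmetry modes should produce $(\lambda-1)\mathbf f_{1,\alpha}-\alpha(\lambda-1)\mathbf f_{0,\alpha}$. This matches the paper's formula, since $y\widetilde U'_{\alpha}=-\alpha y/(\sqrt{1+\alpha}+y)=-\alpha+\alpha\sqrt{1+\alpha}/(\sqrt{1+\alpha}+y)$, which yields the first component of $\mathbf f_{1,\alpha}$ minus $\alpha$. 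The $\alpha$-derivative of $(\widetilde U_{\alpha,\kappa},\alpha+y\widetilde U'_{\alpha,\kappa})$ should give exactly $\mathbf g_{0,\alpha}$; I will check this against the explicit formula in Proposition~\ref{spectral_inclusion_prop}. Expanding around $\alpha$ rather than $\alpha_0$ lets the modes carry the subscript $\alpha$, and it only changes the second-order terms. The remainder $\mathbf r$ is controlled by the integral form of Taylor's theorem, using $W^{k+1,\infty}$ bounds on the second derivatives. These bounds are uniform over the compact parameter box, and mixed terms $|\alpha-\alpha_0||\lambda-1|$ are absorbed by Young's inequality.

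For \eqref{eq:initial_data_h_expression}, I will write
\[
\mathbf h_{\alpha,T}=\big[\mathbf q_{\alpha_0,T_0}(0,\lambda\cdot)\text{ rescaled}-\mathbf q_{\alpha_0,T_0}(0,\cdot)\big]+\big[\mathbf q_{\alpha_0,T_0}(0)-\mathbf q_{\alpha,T}(0)\big].
\]
The second bracket is bounded by \eqref{eq:lipschitz_in_params} with $\mathbf q_0=0$, which gives a bound $\lesssim\frac{\overline T}{r_0}\big(|\alpha-\alpha_0|+\frac{T_0}{r_0}|\lambda-1|\big)$. The first bracket is the main obstacle. A dilation difference $g(\lambda y)-g(y)$ costs one derivative, which is exactly why the data were controlled on the extended interval $\mathcal I_{\alpha_0}$. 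I would try $\|g(\lambda\cdot)-g\|_{H^{k}(-1,1)}\lesssim|\lambda-1|\|g\|_{H^{k+1}(\mathcal I_{\alpha_0})}$, using the fundamental theorem of calculus in $\lambda$. The problem is that $\mathbf q_{\alpha_0,T_0}(0)\in\mathcal H^k_{\alpha_0}$ only, and one derivative is missing. The fix is that $\mathbf q_{\alpha_0,T_0}(0)\in\rg\mathbf P_{\alpha_0}$, a span of the three explicit smooth functions $\mathbf g_{0,\alpha_0},\mathbf f_{0,\alpha_0},\mathbf f_{1,\alpha_0}$. Its coefficients are bounded by $\|\mathbf q_{\alpha_0,T_0}\|_{\mathcal X}\lesssim(T_0/r_0)^2$, so it lies in every $H^m(\mathcal I_{\alpha_0})$ with the same bound. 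This gives $\lesssim(T_0/r_0)^2|\lambda-1|\le\frac{\overline T}{r_0}|\lambda-1|$. The prefactor $\lambda$ in the second component contributes $|\lambda-1|$ times the same bound.

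Finally, \eqref{eq:uniform_bound_scaling} is a change of variables. We have $\partial_y^j f_1(Ty)=T^j f_1^{(j)}(Ty)$ and $\|f(T\cdot)\|_{L^2(-1,1)}=T^{-1/2}\|f\|_{L^2(-T,T)}$. Since $T<r_0/(2R_{\alpha_0})$, every power $T^{j-1/2}$ with $j\ge1$ is bounded by a constant depending on $r_0,\alpha_0,k$. The $j=0$ term of $f_1$ is handled via $\|f_1\|_{L^\infty}\lesssim\|f_1\|_{H^1}$ on $(-1,1)$. The radial $H^k$ norm is comparable to the $r$-norm near $r_0$ since $r\sim r_0$, which accounts for the $\theta$-dependence.
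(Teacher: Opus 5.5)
Your proposal is correct and follows the paper's proof essentially step for step. It uses the same cancellation of the logarithmic terms, the same splitting of $\mathbf h_{\alpha,T}$ into a dilation part and a parameter part, and the same appeal to $\mathbf q_{\alpha_0,T_0}(0)\in\rg(\mathbf P_{\alpha_0})$ to recover the derivative lost under dilation. It also uses \eqref{eq:lipschitz_in_params} with $\mathbf q_0=\mathbf 0$ and the same change of variables for \eqref{eq:uniform_bound_scaling}. The only minor difference is that you Taylor expand the $\alpha_0$-profile about $(\alpha,\kappa,1)$, so $\mathbf f_{0,\alpha},\mathbf f_{1,\alpha},\mathbf g_{0,\alpha}$ appear directly, whereas the paper expands about $(\alpha_0,\kappa_0,1)$ and then absorbs $\mathbf f_{1,\alpha}-\mathbf f_{1,\alpha_0}$, $\mathbf g_{0,\alpha}-\mathbf g_{0,\alpha_0}$ and the mixed term into $\mathbf r$; both routes give the same quadratic remainder bound.
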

\begin{proof}
    For fixed $y\in [-1,1]$, Taylor's theorem applied to 
    \begin{equation*}
        \mathbf{G}:[\alpha_0-\delta_0,\alpha_0+\delta_0]\times [\kappa_0-\delta_0,\kappa_0+\delta_0] \times [1-\delta_0 ,1+\delta_0]\mapsto \mathbb{R}^2, \qquad (\alpha,\kappa,\lambda)\mapsto \mathbf{f}_{0,T}(y) - \mathbf{f}_{\alpha,\kappa,T}(y) - \mathbf{h}_{\alpha,T}(y)
    \end{equation*}
    around the point $(\alpha_0,\kappa_0,1)$ first yields  
    \begin{equation*}
       \mathbf{f}_{0,T}(y)-\mathbf{f}_{\alpha,\kappa,T}(y)-\mathbf{h}_{\alpha,T}(y) = [(\kappa_0-\kappa)+\alpha_0(1-\lambda)]\mathbf{f}_{0,\alpha_0} + (\lambda-1)\mathbf{f}_{1,\alpha_0} + (\alpha_0-\alpha)\mathbf{g}_{0,\alpha_0} + \mathbf{r}_0(\alpha,\lambda)
    \end{equation*}
    where 
    \begin{align*}
        \mathbf{r}_0(\alpha,\lambda) = (\lambda-1)^2\int_{0}^{1}(1-s)\partial_{\lambda \lambda}\mathbf{A}(1+s(\lambda-1),y)\,ds - (\alpha-\alpha_0)^2\int_{0}^{1}(1-s)\partial_{\alpha \alpha}\mathbf{B}(\alpha_0+s(\alpha-\alpha_0),y)\,ds
    \end{align*}
    and 
    \begin{align*}
        \mathbf{A}(\lambda,y) = \begin{pmatrix}
            -\alpha_0\log(\sqrt{1+\alpha_0}+\lambda y)+\kappa_0 \\
            \alpha_0 \lambda - \frac{\alpha_0 \lambda^2 y}{\sqrt{1+\alpha_0}+\lambda y}
        \end{pmatrix},\qquad \mathbf{B}(\alpha,y) = \begin{pmatrix}
            -\alpha \log(\sqrt{1+\alpha}+y) \\
            \alpha - \frac{\alpha y}{\sqrt{1+\alpha}+y}
        \end{pmatrix}. 
    \end{align*}
    Therefore, we can write 
    \begin{equation*}
         \mathbf{f}_{0,T}(y) - \mathbf{f}_{\alpha,\kappa,T}(y) - \mathbf{h}_{\alpha,T}(y) = \left[(\kappa_0-\kappa)-\alpha\left(\lambda-1\right)\right]\mathbf{f}_{0,\alpha} + \left(\lambda-1 \right)\mathbf{f}_{1,\alpha} + (\alpha_0-\alpha)\mathbf{g}_{0,\alpha} + \mathbf{r}\left(\alpha,\lambda\right)
    \end{equation*}
    where we added and subtracted such that the new remainder absorbs the left over terms:
    \begin{align*}
        \mathbf{r}(\alpha,\lambda) &= (\kappa_0-\kappa)(\mathbf{f}_{0,\alpha_0}-\mathbf{f}_{0,\alpha}) + (1-\lambda)(\alpha_0\mathbf{f}_{0,\alpha_0}-\alpha \mathbf{f}_{0,\alpha}) + (1-\lambda)(\mathbf{f}_{1,\alpha}-\mathbf{f}_{1,\alpha_0}) \\
        &+ (\alpha_0-\alpha)(\mathbf{g}_{0,\alpha_0}-\mathbf{g}_{0,\alpha}) + \mathbf{r}_0(\alpha,\lambda) \\
        &= (1-\lambda)(\alpha_0\mathbf{f}_{0,\alpha_0}-\alpha \mathbf{f}_{0,\alpha}) + (1-\lambda)(\mathbf{f}_{1,\alpha}-\mathbf{f}_{1,\alpha_0}) + (\alpha_0-\alpha)(\mathbf{g}_{0,\alpha_0}-\mathbf{g}_{0,\alpha}) + \mathbf{r}_0(\alpha,\lambda).
    \end{align*}
    Therefore we have the bound
    \begin{equation*}
        \left \|\mathbf{r}\left(\alpha,\lambda\right)\right\|_{\mathcal{H}^k} \leq C_r |\alpha-\alpha_0|^2 + \left|\lambda-1\right|^2
    \end{equation*}
    where $C_r$ only depends on $\alpha_0$ and the regularity $k$.  
    Thus, it remains to prove the bound \eqref{eq:initial_data_h_expression} for $\mathbf{h}_{\alpha,T}$.  By adding and subtracting, we begin by separating the scaled terms (with fixed parameters $\alpha_0,T_0$), and the terms where the parameter varies by writing $\mathbf{h}_{\alpha,T}=\mathbf{h}_{T}^s + \mathbf{h}_{\alpha,T}^p$ where
\begin{align}\label{eq:decomposing_h}
        \mathbf{h}_{T}^s(y) =\begin{pmatrix}
            \mathbf{q}_{\alpha_0,T_0}^{(1)}(0,\lambda y) - \mathbf{q}_{\alpha_0,T_0}^{(1)}(0,y) \\
            \lambda\mathbf{q}_{\alpha_0,T_0}^{(2)}(0,\lambda y) - \mathbf{q}_{\alpha_0,T_0}^{(2)}(0,y)
    \end{pmatrix} 
    \end{align}
    and 
    \begin{align}
        \mathbf{h}_{\alpha,T}^{p}(y) = \begin{pmatrix}\mathbf{q}_{\alpha_0,T_0}^{(1)}(0,y)-\mathbf{q}_{\alpha,T}^{(1)}(0,y) \\
\mathbf{q}_{\alpha_0,T_0}^{(2)}(0,y) - \mathbf{q}_{\alpha,T}^{(2)}(0,y)
        \end{pmatrix}.
    \end{align}
First we estimate the scaling part $\mathbf h_{T}^s$. Since the constructed initial datum
$\mathbf q_{\alpha_0,T_0}(0,\cdot)$ lies in the symmetry space, there exist constants
$c_1,c_2,c_3\in\mathbb R$ (dependent on $\alpha_0,T_0,\kappa_0)$ such that
\[
\mathbf q_{\alpha_0,T_0}(0,\cdot)
=
c_1\mathbf f_{0,\alpha_0}
+c_2\mathbf f_{1,\alpha_0}
+c_3\mathbf g_{0,\alpha_0},
\]
and by equivalence of norms on the finite-dimensional space $\rg(\mathbf{P}_{\alpha_0}^{(\alpha_0)})$,
\[
|c_1|+|c_2|+|c_3|
\lesssim_{\alpha_0,k}
\|\mathbf q_{\alpha_0,T_0}(0)\|_{\mathcal H^k(-1,1)}
\lesssim_{\omega_0,\gamma,\alpha_0,n,k} \left(\frac{T_0}{r_0}\right)^2 \leq \left(\frac{\overline{T}}{r_0}\right)^2.
\]
Now let $\Phi=(\Phi^{(1)},\Phi^{(2)})^\top$ be one of the basis functions
$\mathbf f_{0,\alpha_0},\mathbf f_{1,\alpha_0},\mathbf g_{0,\alpha_0}$.
Since $T<T_0R_{\alpha_0}$, we have $(-\lambda,\lambda)\subset \mathcal{I}_{\alpha_0}$, and hence $\Phi(\lambda \cdot)$ is smooth on $y\in [-1,1]$. For $0\le j\le k+1$,
\[
\partial_y^j\big(\Phi^{(1)}(\lambda y)-\Phi^{(1)}(y)\big)
=
\lambda^j \partial_y^j\Phi^{(1)}(\lambda y)-\partial_y^j\Phi^{(1)}(y)
=
\int_1^\lambda \partial_\mu\big(\mu^j \partial_y^j\Phi^{(1)}(\mu y)\big)\,d\mu,
\]
and therefore
\[
\|\Phi^{(1)}(\lambda\cdot)-\Phi^{(1)}(\cdot)\|_{H^{k+1}(-1,1)}
\lesssim_{\alpha_0,k}
|\lambda-1|.
\]
Similarly, for $0\le j\le k$,
\[
\partial_y^j\big(\lambda\Phi^{(2)}(\lambda y)-\Phi^{(2)}(y)\big)
=
\lambda^{j+1}\partial_y^j\Phi^{(2)}(\lambda y)-\partial_y^j\Phi^{(2)}(y)
=
\int_1^\lambda \partial_\mu\big(\mu^{j+1}\partial_y^j\Phi^{(2)}(\mu y)\big)\,d\mu,
\]
which yields
\[
\|\lambda\Phi^{(2)}(\lambda\cdot)-\Phi^{(2)}(\cdot)\|_{H^{k}(-1,1)}
\lesssim_{\alpha_0,k}
|\lambda-1|.
\]
Applying these bounds to each basis function and summing with the coefficients $c_i$, we obtain
\[
\|\mathbf h_{T}^s\|_{\mathcal H^k(-1,1)}
\lesssim_{\alpha_0,k}
\left(\frac{\overline{T}}{r_0}\right)^2 |\lambda-1|.
\]
Next, for the Lipschitz in parameters part, $\mathbf{h}_{\alpha,T}^p$, we apply the estimate \eqref{eq:lipschitz_in_params} from Proposition \ref{prop:existence_of_correction} with $\mathbf{q}_0=\mathbf{0}$ at $\tau=0$ and for $\alpha\in[\alpha_0-\delta_0,\alpha_0+\delta_0]$, $0<T<\overline{T}<\overline{T}_1$:
\begin{equation*}
    \|\mathbf{q}_{\alpha,T}(0,\cdot)-\mathbf{q}_{\alpha_0,T_0}(0,\cdot)\|_{\mathcal{H}_{\alpha_0}^k} \lesssim \frac{T+T_0}{r_0}\left(\frac{|T-T_0|}{r_0}+|\alpha-\alpha_0|\right) \lesssim \frac{\overline{T}}{r_0}\left(|1-\lambda|+|\alpha-\alpha_0|\right).
\end{equation*}
Now, to prove the scaling bound \eqref{eq:uniform_bound_scaling} take $\mathbf{f}\in H^{k+1}(\mathbb{R})\times H^k(\mathbb{R})$ for some $k\geq \lceil c_{\alpha_0} \rceil+1$ and $0<T<\frac{r_0}{R_{\alpha_0}}$. Firstly, using Sobolev embedding, 
\begin{equation*}
    \|f_1(T\cdot)\|_{L^2(-1,1)}\leq \sqrt{2}\|f_1(T\cdot)\|_{L^\infty(-1,1)}\leq \sqrt{2}\|f_1(\cdot)\|_{L^\infty(\mathbb{R})}\lesssim \|f_1(\cdot)\|_{H^1(\mathbb{R})}.
\end{equation*}
Next, for $1\leq j \leq k+1$, 
\begin{equation*}
    \|\partial_y^j f_1(T\cdot)\|_{L^2(-1,1)} = T^{j-\frac{1}{2}}\|\partial_y^j f_1(\cdot)\|_{L^2(-T,T)}\leq \left(\frac{r_0}{2R_{\alpha_0}}\right)^{j-\frac{1}{2}}\|\partial_y^j f_1(\cdot)\|_{L^2(\mathbb{R})}.
\end{equation*}
For the second component, we have 
\begin{equation*}
    \|Tf_2(T\cdot)\|_{L^2(-1,1)} = T^{\frac{1}{2}}\|f_2(\cdot)\|_{L^2(-T,T)}\leq \left( \frac{ r_0}{2R_{\alpha_0}}\right)^{\frac{1}{2}}\|f_2(\cdot)\|_{L^2(\mathbb{R})}.
\end{equation*}
Finally, for $1\leq j \leq k$,
\begin{equation*}
    \|\partial_y^j Tf_2(T\cdot)\|_{L^2(-1,1)} = T^{j+\frac{1}{2}}\|\partial_y^j f_2(\cdot)\|_{L^2(-T,T)} \leq \left(\frac{ r_0}{2R_{\alpha_0}}\right)^{j+\frac{1}{2}}\|\partial_y^j f_2(\cdot)\|_{L^2(\mathbb{R})}.
\end{equation*}
Therefore,
\begin{align*}
    \|\mathbf{f}_{T}(\cdot)\|_{\mathcal{H}^k(-1,1)} &\lesssim 2\|f_1(\cdot)\|_{H^1(\mathbb{R})} + \sum_{j=1}^{k+1}\left( \frac{r_0}{2R_{\alpha_0}}\right)^{j-\frac{1}{2}}\|\partial_y^j f_1(\cdot)\|_{L^2(\mathbb{R})}+ \left(\frac{ r_0}{2R_{\alpha_0}}\right)^{\frac{1}{2}}\|f_2(\cdot)\|_{L^2(\mathbb{R})}\\
    &+ \sum_{j=1}^{k}\left(\frac{r_0}{2R_{\alpha_0}} \right)^{j+\frac{1}{2}}\|\partial_y^j f_2(\cdot)\|_{L^2(\mathbb{R})} \leq C_{sc}(r_0,\alpha_0,k)\|\mathbf{f}(\cdot)\|_{\mathcal{H}^k(\mathbb{R})}.
\end{align*}
\end{proof}
\begin{proposition}\label{prop:finite_dim_reduction}
Let \(0<\delta_1<\delta_0\), \(M_1>1\), and
\(\overline T_2>0\) be the constants from
Proposition~\ref{existence_of_v}. 
Then there exist \(M_2>1\) and \(0<\overline T\le \overline T_2\) such that
the following holds. For every \(T_0\in(0,\overline T)\), there exists
\(\delta_2=\delta_2(T_0)>0\) such that, for $\mathbf{f}\in\mathcal{H}^k(\mathbb{R})$ satisfying 
$\|\mathbf f\|_{\mathcal H^k(\mathbb R)}
\le
\frac{\delta_2}{M_2^2}$,
there exist parameters
\[
\alpha^\star\in
\left[
\alpha_0-\frac{\delta_2}{M_2},
\alpha_0+\frac{\delta_2}{M_2}
\right],
\qquad
\kappa^\star\in
\left[
\kappa_0-\frac{\delta_2}{M_2},
\kappa_0+\frac{\delta_2}{M_2}
\right],
\]
and
\[
T^\star\in
\left[
T_0\left(1-\frac{\delta_2}{M_2}\right),
T_0\left(1+\frac{\delta_2}{M_2}\right)
\right]
\subset (0,\overline T),
\]
with
\[
\left(-\frac{T^\star}{T_0},\frac{T^{\star}}{T_0}\right)\subset \mathcal{I}_{\alpha_0},
\]
and a unique 
\(\mathbf v_{\alpha^\star,\kappa^\star,T^\star} \in C([0,\infty);\mathcal{H}^k(-1,1))\) with $\|\mathbf{v}_{\alpha^\star,\kappa^\star,T^\star}\|_{\mathcal{X}^k}\leq \delta_2$ satisfying 
\begin{equation}\label{eq:final_prop_mild_sol}
    \mathbf v_{\alpha^\star,\kappa^\star,T^\star}(\tau_{T^\star}) = \mathbf{S}_{\alpha^\star}(\tau_{T^\star})\mathbf{U}_{\alpha^\star,\kappa^\star,T^\star}(\mathbf{f}) + \int_{0}^{\tau_{T^\star}}\mathbf{S}_{\alpha^\star}(\tau_{T^\star}-\tau')\left[\mathbf{H}_{\alpha^{\star},T^{\star}}(\mathbf{v}_{\alpha^\star,\kappa^\star,T^\star}(\tau')) + \mathbf{N}(\mathbf{v}_{\alpha^\star,\kappa^\star,T^\star}(\tau'))\right]d\tau'
\end{equation}
\end{proposition}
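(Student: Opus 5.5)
The plan is the standard Lyapunov--Perron/Brouwer modulation argument. The new feature is the frame-comparison error $\mathbf h_{\alpha,T}$, which we absorb using the smallness of $\overline T/r_0$. For parameters $(\alpha,\kappa,T)$ in the admissible box, with $\lambda=T/T_0$, we apply Proposition~\ref{existence_of_v} with $\mathbf v_0=\mathbf U_{\alpha,\kappa,T}(\mathbf f)$ and $\rho=\delta_2$. This gives a solution $\mathbf v_{\alpha,T}$ of the modified Duhamel equation.

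First we check the size of the data. Lemma~\ref{initial_data_decomposition} gives
\[
\|\mathbf U_{\alpha,\kappa,T}(\mathbf f)\|_{\mathcal H^k}\lesssim C_{sc}\frac{\delta_2}{M_2^2}+\frac{\delta_2}{M_2}+\frac{\overline T}{r_0}\frac{\delta_2}{M_2}+\frac{\delta_2^2}{M_2^2}.
\]
Taking $M_2\gg M_1$, with $\delta_2\le\delta_1$ and $\delta_2\le \eta_{\overline T,T_0,\alpha_0}$ (this is why $\delta_2$ depends on $T_0$), makes this at most $\delta_2/M_1$. Hence $\mathbf v_{\alpha,T}$ exists for every parameter in the box. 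Moreover, $\mathbf v_{\alpha,T}$ solves \eqref{eq:final_prop_mild_sol} if and only if the correction $\mathbf C_{\alpha,T}[\mathbf U_{\alpha,\kappa,T}(\mathbf f),\mathbf v_{\alpha,T}]$ vanishes. Since this correction lies in $\rg\mathbf P_\alpha$, which is three-dimensional by Proposition~\ref{prop:linearised_evolution}(iii), this is a finite-dimensional condition.

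Next we expand the correction in the basis $\{\mathbf f_{0,\alpha},\mathbf f_{1,\alpha},\mathbf g_{0,\alpha}\}$. We have $\mathbf P_\alpha\mathbf f_{0,\alpha}=\mathbf f_{0,\alpha}$, and likewise for the other two basis vectors. So by the Taylor expansion of Lemma~\ref{initial_data_decomposition},
\[
\mathbf C_{\alpha,T}=\big[(\kappa_0-\kappa)-\alpha(\lambda-1)\big]\mathbf f_{0,\alpha}+(\lambda-1)\mathbf f_{1,\alpha}+(\alpha_0-\alpha)\mathbf g_{0,\alpha}+\mathbf P_\alpha\big(\mathbf f_T+\mathbf h_{\alpha,T}+\mathbf r\big)+\mathcal I(\mathbf v_{\alpha,T}).
\]
Here $\mathcal I$ collects the integral terms, which are $O(\delta_2^2+(\overline T/r_0)^2\delta_2)$ by Lemma~\ref{estimates_on_H} and Lemma~\ref{lem:estimates_v2}. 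We take the coordinates of $\mathbf C_{\alpha,T}$ in this basis, using the dual functionals of the basis. By equivalence of norms on $\rg\mathbf P_\alpha$ these are uniform in $\alpha$ and Lipschitz, via \eqref{eq:lipschitz_dep_riesz_projec}. Vanishing of the coordinates becomes a fixed-point equation for $(\alpha,\kappa,\lambda)$:
\[
\alpha-\alpha_0=G_1,\qquad \lambda-1=G_2,\qquad \kappa-\kappa_0=G_3-\alpha G_2 .
\]
Each $G_i$ is bounded by
\[
C\Big(\frac{\delta_2}{M_2^2}+\frac{\overline T}{r_0}\frac{\delta_2}{M_2}+\frac{\delta_2^2}{M_2^2}+\delta_2^2+\Big(\frac{\overline T}{r_0}\Big)^2\delta_2\Big).
\]
Choose $M_2$ large, and then $\overline T/r_0$ small with $\overline T/r_0\lesssim 1/M_2$, which fixes $\overline T\le\overline T_2$ independently of $T_0$. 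Then, for $\delta_2$ small, the right-hand side is at most $\delta_2/M_2$, and the $\kappa$-equation also closes. So the map sends the box $[\pm\delta_2/M_2]^3$ into itself.

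Continuity of this map in the parameters follows from several facts. The $\mathbf v_{\alpha,T}$ depend continuously on $(\alpha,T)$: we prove this by rerunning the contraction, using \eqref{eq:lipschitz_dep_stable_semigroup}, the Lipschitz bound \eqref{eq:lipschitz_in_params} for $\mathbf q_{\alpha,T}$ (hence for $\mathbf H_{\alpha,T}$), and continuity of $T\mapsto\mathbf f_T$ in $\mathcal H^k(-1,1)$ by dominated convergence. The maps $\alpha\mapsto\mathbf P_\alpha$ and the data operator $\mathbf U_{\alpha,\kappa,T}$ are also continuous. Brouwer's fixed point theorem then yields $(\alpha^\star,\kappa^\star,T^\star)$ at which the correction vanishes, giving \eqref{eq:final_prop_mild_sol}. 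Uniqueness of $\mathbf v$ comes from the contraction in Proposition~\ref{existence_of_v}. The inclusion $(-T^\star/T_0,T^\star/T_0)\subset\mathcal I_{\alpha_0}$ follows from $\delta_2\le\eta_{\overline T,T_0,\alpha_0}$ and \eqref{eq:condition_on_delta_1}.

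The main obstacle is the continuity step and the $\mathbf h$-term. $\mathbf h_{\alpha,T}$ is only controlled by $\frac{\overline T}{r_0}(|\alpha-\alpha_0|+|\lambda-1|)$, so it is linear in the unknowns rather than quadratic. It must be beaten by smallness of $\overline T/r_0$ relative to $1/M_2$, and this is the reason for choosing $\overline T$ after $M_2$ but before $T_0$. A second difficulty is that proving continuity of $(\alpha,T)\mapsto\mathbf v_{\alpha,T}$ requires Lipschitz dependence of $\mathbf q_{\alpha,T}$ on both parameters. This is exactly where the implicit background enters, and I would handle it by a difference estimate in $\mathcal X^k$ closed with the factor $\tfrac12$ from the contraction.
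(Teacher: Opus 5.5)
Your proposal is correct and follows essentially the same route as the paper. Both arguments apply Proposition~\ref{existence_of_v} with data $\mathbf U_{\alpha,\kappa,T}(\mathbf f)$, expand the correction in the basis $\{\mathbf g_{0,\alpha},\mathbf f_{0,\alpha},\mathbf f_{1,\alpha}\}$ via Lemma~\ref{initial_data_decomposition}, and close a Brouwer argument on the box, with the choices made in the order $M_2$, $\overline T$, $T_0$, $\delta_2$ and continuity of $(\alpha,\kappa,T)\mapsto\mathbf v_{\alpha,\kappa,T}$ obtained by rerunning the contraction.

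One small correction to your explanation: the frame-comparison term $\mathbf h_{\alpha,T}$ already carries a factor $\delta_2/M_2$, so it only needs $\overline T/r_0$ small in absolute terms. What actually forces $\overline T$ to be chosen after $M_2$ is the $\mathbf H_{\alpha,T}$ tail. In your bound it has size $(\overline T/r_0)^2\delta_2$, and in the paper's $\frac{\overline T}{r_0}\delta_2$; either way it lacks a $1/M_2$ factor.
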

\begin{proof}
We use Lemma~\ref{initial_data_decomposition} in its uniform form. Namely,
for every \(0<\Theta\le \overline T_2\), every \(0<T_0<\Theta\), and every
admissible \((\alpha,\kappa,T)\), with $\lambda:=\frac{T}{T_0}$, we have
\begin{equation}\label{eq:uniform_initial_decomposition_for_prop45}
\begin{aligned}
\mathbf U_{\alpha,\kappa,T}(\mathbf f)
&=
\mathbf f_T
+
\big[(\kappa_0-\kappa)-\alpha(\lambda-1)\big]\mathbf f_{0,\alpha}
+
(\lambda-1)\mathbf f_{1,\alpha}
\\
&+
(\alpha_0-\alpha)\mathbf g_{0,\alpha}
+
\mathbf h_{\alpha,T}
+
\mathbf r(\alpha,\lambda),
\end{aligned}
\end{equation}
where the constants \(C_{\mathrm{sc}},C_h,C_r>0\) are independent of
\(\Theta,T_0,T,\alpha,\kappa\), and
\begin{equation}\label{eq:scaling_bound_for_prop45}
\|\mathbf f_T\|_{\mathcal H^k(-1,1)}
\le
C_{\mathrm{sc}}\|\mathbf f\|_{\mathcal H^k(\mathbb R)},
\end{equation}
\begin{equation}\label{eq:h_bound_for_prop45}
\|\mathbf h_{\alpha,T}\|_{\mathcal H^k(-1,1)}
\le
C_h\frac{\Theta}{r_0}
\bigl(
|\alpha-\alpha_0|+|\lambda-1|
\bigr),
\end{equation}
and
\begin{equation}\label{eq:r_bound_for_prop45}
\|\mathbf r(\alpha,\lambda)\|_{\mathcal H^k(-1,1)}
\le
C_r
\bigl(
|\alpha-\alpha_0|^2+|\lambda-1|^2
\bigr).
\end{equation}
Since the symmetry modes depend smoothly on \(\alpha\), define
\[
C_{\mathrm{sym}}
:=
\sup_{|\alpha-\alpha_0|\le \delta_0}
\left(
\|\mathbf f_{0,\alpha}\|_{\mathcal H^k}
+
\|\mathbf f_{1,\alpha}\|_{\mathcal H^k}
+
\|\mathbf g_{0,\alpha}\|_{\mathcal H^k}
\right)
<\infty.
\]
Next, let
\[
\{\mathbf g_\alpha^1,\mathbf g_\alpha^2,\mathbf g_\alpha^3\}
\]
denote the \(\mathcal H^k\)-dual basis to the ordered basis
\[
\{\mathbf g_{0,\alpha},\mathbf f_{0,\alpha},\mathbf f_{1,\alpha}\}
\]
of \(\operatorname{rg}(\mathbf P_\alpha)\), i.e.
\[
\langle \mathbf g_{0,\alpha},\mathbf g_\alpha^1\rangle_{\mathcal H^k}=1,
\qquad
\langle \mathbf f_{0,\alpha},\mathbf g_\alpha^2\rangle_{\mathcal H^k}=1,
\qquad
\langle \mathbf f_{1,\alpha},\mathbf g_\alpha^3\rangle_{\mathcal H^k}=1,
\]
and all other pairings between the ordered basis and the dual basis vanish.

We now quantitatively record the finite-dimensional bounds that will be used below. By
continuity in \(\alpha\) and compactness of
\(\{\alpha:|\alpha-\alpha_0|\le\delta_0\}\), the following constants are finite:
\[
B_P
:=
\sup_{|\alpha-\alpha_0|\le\delta_0}
\max_{1\le n\le3}
\|\mathbf g_\alpha^n\|_{\mathcal H^k}\,
\|\mathbf P_\alpha\|_{\mathcal{L}(\mathcal{H}^k)},
\]
\[
B_0
:=
\sup_{|\alpha-\alpha_0|\le\delta_0}
\max_{1\le n\le3}
\|\mathbf g_\alpha^n\|_{\mathcal H^k}\,
\|\mathbf P_{0,\alpha}\|_{\mathcal{L}(\mathcal{H}^k)},
\]
\[
B_L
:=
\sup_{|\alpha-\alpha_0|\le\delta_0}
\max_{1\le n\le3}
\|\mathbf g_\alpha^n\|_{\mathcal H^k}\,
\|\mathbf L_\alpha\mathbf P_{0,\alpha}\|_{\mathcal{L}(\mathcal{H}^k)},
\]
and
\[
B_1
:=
\sup_{|\alpha-\alpha_0|\le\delta_0}
\max_{1\le n\le3}
\|\mathbf g_\alpha^n\|_{\mathcal H^k}\,
\|\mathbf P_{1,\alpha}\|_{\mathcal{L}(\mathcal{H}^k)}.
\]
Therefore, for every \(\mathbf{w}\in\mathcal H^k(-1,1)\),
\begin{equation}\label{eq:finite_dimensional_bound_projected_data}
\left|
\left\langle
\mathbf P_\alpha\mathbf{w},\mathbf g_\alpha^n
\right\rangle_{\mathcal H^k}
\right|
\le
B_P\|\mathbf{w}\|_{\mathcal H^k}.
\end{equation}
Moreover, for every Bochner-integrable
\(\mathbf G:[0,\infty)\to\mathcal H^k(-1,1)\),
\begin{equation}\label{eq:finite_dimensional_bound_tail_terms}
\begin{aligned}
&\left|
\left\langle
\mathbf P_{0,\alpha}
\int_0^\infty \mathbf G(\tau)\,d\tau,
\mathbf g_\alpha^n
\right\rangle_{\mathcal H^k}
\right| +
\left|
\left\langle
\mathbf L_\alpha\mathbf P_{0,\alpha}
\int_0^\infty (-\tau)\mathbf G(\tau)\,d\tau,
\mathbf g_\alpha^n
\right\rangle_{\mathcal H^k}
\right|
\\
&+
\left|
\left\langle
\mathbf P_{1,\alpha}
\int_0^\infty e^{-\tau}\mathbf G(\tau)\,d\tau,
\mathbf g_\alpha^n
\right\rangle_{\mathcal H^k}
\right|
\\
&\le
B_0\int_0^\infty
\|\mathbf G(\tau)\|_{\mathcal H^k}\,d\tau
+
B_L\int_0^\infty
\tau\|\mathbf G(\tau)\|_{\mathcal H^k}\,d\tau
+
B_1\int_0^\infty
e^{-\tau}\|\mathbf G(\tau)\|_{\mathcal H^k}\,d\tau.
\end{aligned}
\end{equation}
Let \(C_H,C_N>0\) be constants such that, for
\(|\alpha-\alpha_0|\le\delta_0\), \(0<T\le\overline T_2\), and
\(\mathbf v\in B_{\delta_1}^{\mathcal X^k}\),
\begin{equation}\label{eq:H_bound_for_prop45}
\|\mathbf H_{\alpha,T}(\mathbf v)(\tau)\|_{\mathcal H^k}
\le
C_H\frac{T}{r_0}e^{-\omega\tau}\|\mathbf v(\tau)\|_{\mathcal H^k},
\end{equation}
and
\begin{equation}\label{eq:N_bound_for_prop45}
\|\mathbf N(\mathbf v)(\tau)\|_{\mathcal H^k}
\le
C_N\|\mathbf v(\tau)\|_{\mathcal H^k}^2.
\end{equation}
Define also the constant 
\[
J_{\mathrm{tail}}
:=
\frac{B_0}{2\omega}
+
\frac{B_L}{(2\omega)^2}
+
\frac{B_1}{1+2\omega},
\]
which is precisely the constant obtained by inserting \(e^{-2\omega\tau}\) as $\mathbf{G}$ into
the three integrals in \eqref{eq:finite_dimensional_bound_tail_terms}.
Next let 
\[
K_{\mathrm{init}}
:=
C_{\mathrm{sc}}
+
C_{\mathrm{sym}}(\alpha_0+4)
+
2C_h
+
2C_r,
\]
and
\[
K_f:=B_PC_{\mathrm{sc}},
\qquad
K_h:=2B_PC_h,
\qquad
K_r:=2B_PC_r,
\]
\[
K_H:=J_{\mathrm{tail}}C_H,
\qquad
K_N:=J_{\mathrm{tail}}C_N.
\]
We now choose the parameters. First choose \(M_2>1\) so large that
\begin{equation}\label{eq:M2_choice_prop45}
\frac{K_{\mathrm{init}}}{M_2}
\le
\frac1{M_1},
\qquad
\frac{K_f}{M_2}
\le
\frac1{12(\alpha_0+2)}.
\end{equation}
Next choose the final time ceiling \(0<\overline T\le\overline T_2\) so small
that
\begin{equation}\label{eq:Tbar_choice_prop45}
\frac{\overline{T}}{r_0}<1
\qquad \text{and} \qquad
K_h\frac{\overline{T}}{r_0}+K_HM_2\frac{\overline{T}}{r_0}
\le
\frac1{12(\alpha_0+2)}.
\end{equation}
Now fix \(T_0\in(0,\overline T)\). Define
\[
\eta_{\overline T,T_0,\alpha_0}
:=
\frac12
\min
\left\{
1,\,
\delta_0,\,
R_{\alpha_0}-1,\,
\frac{\overline T-T_0}{T_0}
\right\}.
\]
Choose \(\delta_2=\delta_2(T_0)>0\) so small that
\begin{equation}\label{eq:delta2_basic_choice_prop45}
\delta_2\le\delta_1,
\qquad
\delta_2\le1,
\qquad
\frac{\delta_2}{M_2}
\le
\eta_{\overline T,T_0,\alpha_0},
\end{equation}
and
\begin{equation}\label{eq:delta2_nonlinear_choice_prop45}
K_r\frac{\delta_2}{M_2}
+
K_NM_2\delta_2
\le
\frac1{12(\alpha_0+2)}.
\end{equation}
Define the modulation box
\[
\mathcal D
:=
[\alpha_0-\frac{\delta_2}{M_2},\alpha_0+\frac{\delta_2}{M_2}]
\times
[\kappa_0-\frac{\delta_2}{M_2},\kappa_0+\frac{\delta_2}{M_2}]
\times
[T_0(1-\frac{\delta_2}{M_2}),T_0(1+\frac{\delta_2}{M_2})].
\]
We first check that the whole box is admissible. Since
\(\frac{\delta_2}{M_2}\le \eta_{\overline T,T_0,\alpha_0}\), we have \(\frac{\delta_2}{M_2}\le\frac{1}{2}\). Hence
\(T>0\) on \(\mathcal D\). Moreover,
\[
T\le T_0(1+\frac{\delta_2}{M_2})
\le
T_0+\frac{\overline T-T_0}{2}
=
\frac{T_0+\overline T}{2}
<
\overline T.
\]
Thus \(T\in(0,\overline T)\subset(0,\overline T_2)\) on \(\mathcal D\).
Furthermore,
\[
\lambda:=\frac{T}{T_0}\in[1-\frac{\delta_2}{M_2},1+\frac{\delta_2}{M_2}],
\]
and since
\[
\frac{\delta_2}{M_2}\le \frac{R_{\alpha_0}-1}{2},
\]
we have
\[
1+\frac{\delta_2}{M_2}<R_{\alpha_0}.
\]
Consequently,
\[
(-\lambda,\lambda)\subset (-R_{\alpha_0},R_{\alpha_0})=\mathcal{I}_{\alpha_0}.
\]
Finally,
\[
\frac{\delta_2}{M_2}\le \frac{\delta_0}{2},
\]
so \(|\alpha-\alpha_0|<\delta_0\) on \(\mathcal D\). Therefore the uniform
Lemma~\ref{initial_data_decomposition} applies throughout \(\mathcal D\) with $\Theta=\overline T$.
Let \((\alpha,\kappa,T)\in\mathcal D\), and assume
\[
\|\mathbf f\|_{\mathcal H^k(\mathbb R)}
\le
\frac{\delta_2}{M_2^2}.
\]
Indeed, using Lemma \ref{initial_data_decomposition} with
\(\Theta=\overline T\), the choice of $K_{\mathrm{init}}$, \eqref{eq:M2_choice_prop45} and the fact that $\frac{\overline{T}}{r_0},\frac{\delta_2}{M_2}<1$ we have 
\begin{align*}
\|\mathbf U_{\alpha,\kappa,T}(\mathbf f)\|_{\mathcal H^k}
&\le
C_{\mathrm{sc}}\frac{\delta_2}{M_2^2}
+
C_{\mathrm{sym}}
\left(
|\kappa_0-\kappa|
+
|\alpha|\,|\lambda-1|
+
|\lambda-1|
+
|\alpha_0-\alpha|
\right)
\\
&
+
C_h\frac{\overline{T}}{r_0}
\left(
|\alpha-\alpha_0|+|\lambda-1|
\right)
+
C_r
\left(
|\alpha-\alpha_0|^2+|\lambda-1|^2
\right) \\
&\leq C_{\mathrm{sc}}\frac{\delta_2}{M_2^2} + C_{\mathrm{sym}}(\alpha_0+4)\frac{\delta_2}{M_2} + 2C_{h}\frac{\overline{T}}{r_0}\frac{\delta_2}{M_2}+ 2C_r\left(\frac{\delta_2}{M_2}\right)^2 \\
&\leq K_{\mathrm{init}}\frac{\delta_2}{M_2} \\
&\leq \frac{\delta_2}{M_1}
\end{align*}
Therefore Proposition~\ref{existence_of_v}, applied with
$\rho=\delta_2$, and 
$\mathbf v_0=\mathbf U_{\alpha,\kappa,T}(\mathbf f)$
yields a unique fixed point $\mathbf v_{\alpha,\kappa,T}\in B_{\delta_2}^{\mathcal X^k}$. The goal now is to find parameters $(\alpha^\star,\kappa^\star,T^\star)$ such that the correction vanishes. Since the correction belongs to the finite-dimensional symmetry space, it suffices to prove the existence of parameters such that the linear functional 
\begin{equation*}
    \ell:\rg(\mathbf{P}_{\alpha})\to \mathbb{R}, \qquad \mathbf{g}\mapsto \left\langle \mathbf{C}_{\alpha,T}(\mathbf{U}_{\alpha,\kappa,T}(\mathbf{f}),\mathbf{v}_{\alpha,\kappa,T})\, , \, \mathbf{g}\right \rangle_{\mathcal{H}^k}
\end{equation*}
is zero on a basis of $\rg(\mathbf{P}_{\alpha})$, such as the dual basis. For this we use a Brouwer fixed point argument. 
We shall first use that the map
\[
(\alpha,\kappa,T)\mapsto \mathbf v_{\alpha,\kappa,T}
\]
is continuous from \(\mathcal D\) into \(\mathcal X^k\). This follows from
the same contraction argument as in Proposition~\ref{existence_of_v}. Indeed,
the contraction constant is uniform on \(\mathcal D\). If
\(p_j=(\alpha_j,\kappa_j,T_j)\to p=(\alpha,\kappa,T)\), and if $\Phi_p$
denotes the fixed point map from Proposition~\ref{existence_of_v} with datum
\(\mathbf U_p(\mathbf f)\), then,
\[
\frac{1}{2}\|\mathbf v_{p_j}-\mathbf v_p\|_{\mathcal X^k}
\le
\|\Phi_{p_j}(\mathbf v_p)-\Phi_p(\mathbf v_p)\|_{\mathcal X^k}.
\]
The right-hand side tends to zero by the continuity of
\(\mathbf U_{\alpha,\kappa,T}(\mathbf f)\), \(\mathbf H_{\alpha,T}\), the
semigroup, and the finite-dimensional projections with respect to the
parameters. Hence
\[
\mathbf v_{p_j}\to\mathbf v_p
\quad\text{in }\mathcal X^k.
\]
For brevity, write
\[
\mathbf G_{\alpha,T}(\mathbf v)
:=
\mathbf H_{\alpha,T}(\mathbf v)+\mathbf N(\mathbf v).
\]
For \(n=1,2,3\), define \(F_n:\mathcal D\to\mathbb R\) by
\begin{align}
F_n(\alpha,\kappa,T)
:={}&
\left\langle
\mathbf P_\alpha\mathbf f_T,\mathbf g_\alpha^n
\right\rangle_{\mathcal H^k}
+
\left\langle
\mathbf P_\alpha\mathbf h_{\alpha,T},\mathbf g_\alpha^n
\right\rangle_{\mathcal H^k}
+
\left\langle
\mathbf P_\alpha\mathbf r(\alpha,T/T_0),\mathbf g_\alpha^n
\right\rangle_{\mathcal H^k}
\nonumber
\\
&+
\left\langle
\mathbf P_{0,\alpha}
\int_0^\infty
\mathbf G_{\alpha,T}(\mathbf v_{\alpha,\kappa,T})(\tau')\,d\tau',
\mathbf g_\alpha^n
\right\rangle_{\mathcal H^k}
\nonumber
\\
&+
\left\langle
\mathbf L_\alpha\mathbf P_{0,\alpha}
\int_0^\infty
(-\tau')
\mathbf G_{\alpha,T}(\mathbf v_{\alpha,\kappa,T})(\tau')\,d\tau',
\mathbf g_\alpha^n
\right\rangle_{\mathcal H^k}
\nonumber
\\
&+
\left\langle
\mathbf P_{1,\alpha}
\int_0^\infty
e^{-\tau'}
\mathbf G_{\alpha,T}(\mathbf v_{\alpha,\kappa,T})(\tau')\,d\tau',
\mathbf g_\alpha^n
\right\rangle_{\mathcal H^k}.
\label{eq:definition_of_Fn_prop45}
\end{align}
By the continuity just established, each \(F_n\) is continuous on \(\mathcal D\).
Define the continuous map $\mathcal F:\mathcal D\to\mathbb R^3$ by
\begin{equation}\label{eq:Brouwer_map_prop45}
\mathcal F(\alpha,\kappa,T)
:=
\left(
\alpha_0+F_1(\alpha,\kappa,T),
\,
\kappa_0+F_2(\alpha,\kappa,T)+\alpha F_3(\alpha,\kappa,T),
\,
T_0-T_0F_3(\alpha,\kappa,T)
\right).
\end{equation}
We now prove that
\[
\mathcal F(\mathcal D)\subset\mathcal D.
\]
Indeed, let \((\alpha,\kappa,T)\in\mathcal D\). Since \(T<\overline T\) and $\|\mathbf v_{\alpha,\kappa,T}(\tau)\|_{\mathcal H^k}
\le
\delta_2e^{-\omega\tau}$, estimates
\eqref{eq:H_bound_for_prop45} and \eqref{eq:N_bound_for_prop45} give
\[
\|\mathbf G_{\alpha,T}(\mathbf v_{\alpha,\kappa,T})(\tau)\|_{\mathcal H^k}
\le
C_H\frac{\overline{T}}{r_0}\delta_2 e^{-2\omega\tau}
+
C_N\delta_2^2 e^{-2\omega\tau}.
\]
Inserting this into
\eqref{eq:finite_dimensional_bound_tail_terms}, the last three terms of $F_n$ are estimated as
\begin{equation}\label{eq:tail_bound_prop45}
\begin{aligned}
&\left|
\left\langle
\mathbf P_{0,\alpha}
\int_0^\infty
\mathbf G_{\alpha,T}(\mathbf v_{\alpha,\kappa,T})(\tau')\,d\tau',
\mathbf g_\alpha^n
\right\rangle_{\mathcal H^k}
\right|
+
\left|
\left\langle
\mathbf L_\alpha\mathbf P_{0,\alpha}
\int_0^\infty
(-\tau')
\mathbf G_{\alpha,T}(\mathbf v_{\alpha,\kappa,T})(\tau')\,d\tau',
\mathbf g_\alpha^n
\right\rangle_{\mathcal H^k}
\right|
\\
&\quad+
\left|
\left\langle
\mathbf P_{1,\alpha}
\int_0^\infty
e^{-\tau'}
\mathbf G_{\alpha,T}(\mathbf v_{\alpha,\kappa,T})(\tau')\,d\tau',
\mathbf g_\alpha^n
\right\rangle_{\mathcal H^k}
\right|
\\
&\le
K_H\frac{\overline{T}}{r_0}\delta_2
+
K_N\delta_2^2.
\end{aligned}
\end{equation}
The first three terms in \(F_n\) are estimated using
\eqref{eq:finite_dimensional_bound_projected_data},
\eqref{eq:scaling_bound_for_prop45},
\eqref{eq:h_bound_for_prop45}, and \eqref{eq:r_bound_for_prop45}. Namely,
\[
\left|
\left\langle
\mathbf P_\alpha\mathbf f_T,\mathbf g_\alpha^n
\right\rangle_{\mathcal H^k}
\right|
\le
K_f\frac{\delta_2}{M_2^2},
\]
\[
\left|
\left\langle
\mathbf P_\alpha\mathbf h_{\alpha,T},\mathbf g_\alpha^n
\right\rangle_{\mathcal H^k}
\right|
\leq
K_h\frac{\overline T}{r_0}\frac{\delta_2}{M_2},
\]
and
\[
\left|
\left\langle
\mathbf P_\alpha\mathbf r\left(\alpha,\frac{T}{T_0}\right),\mathbf g_\alpha^n
\right\rangle_{\mathcal H^k}
\right|
\le
K_r\frac{\delta_2^2}{M_2^2}.
\]
Combining with \eqref{eq:tail_bound_prop45}, we obtain
for \(n=1,2,3\),
\begin{align}
|F_n(\alpha,\kappa,T)|
&\le
K_f\frac{\delta_2}{M_2^2}
+
K_h\frac{\overline{T}}{r_0}\frac{\delta_2}{M_2}
+
K_r\frac{\delta_2^2}{M_2^2}
+
K_H\frac{\overline{T}}{r_0}\,\delta_2
+
K_N\delta_2^2
\nonumber
\\
&=
\frac{\delta_2}{M_2}
\left(
\frac{K_f}{M_2}
+
K_h\frac{\overline{T}}{r_0}
+
K_r\frac{\delta_2}{M_2}
+
K_HM_2\frac{\overline{T}}{r_0}
+
K_NM_2\delta_2
\right).
\label{eq:Fn_bound_before_choices_prop45}
\end{align}
Using \eqref{eq:M2_choice_prop45}, \eqref{eq:Tbar_choice_prop45}, and
\eqref{eq:delta2_nonlinear_choice_prop45}, 
we obtain
\begin{equation*}
    |F_n(\alpha,\kappa,T)|\leq \frac{\delta_2}{4(2+\alpha_0)M_2} \qquad n=1,2,3.
\end{equation*}
Therefore, for each component of $\mathcal{F}$,
\[
|\alpha_0+F_1(\alpha,\kappa,T)-\alpha_0|
\le
\frac{\delta_2}{4(2+\alpha_0)M_2}
\le \frac{\delta_2}{M_2}.
\]
Second,
\[
|T_0-T_0F_3(\alpha,\kappa,T)-T_0|
=
T_0|F_3(\alpha,\kappa,T)|
\le
T_0\frac{\delta_2}{4(\alpha_0+2)M_2}
\le
T_0\frac{\delta_2}{M_2}.
\]
Third, using \(|\alpha|\leq \alpha_0 +1\),
\begin{align*}
&|\kappa_0+F_2(\alpha,\kappa,T)+\alpha F_3(\alpha,\kappa,T)-\kappa_0|
\le
|F_2(\alpha,\kappa,T)|
+
|\alpha|\,|F_3(\alpha,\kappa,T)|
\\
&
\le
(2+\alpha_0)\frac{\delta_2}{4(2+\alpha_0)M_2}
\le \frac{\delta_2}{M_2}.
\end{align*}
Therefore
$
\mathcal F(\mathcal D)\subset\mathcal D
$
and so Brouwer's fixed point theorem gives
\[
(\alpha^\star,\kappa^\star,T^\star)\in\mathcal D
\]
such that
\[
\mathcal F(\alpha^\star,\kappa^\star,T^\star)
=
(\alpha^\star,\kappa^\star,T^\star).
\]
The fixed point identities are then
\begin{equation}\label{eq:fixed_point_identities_prop45}
(\alpha_0-\alpha^\star)
+
F_1(\alpha^\star,\kappa^\star,T^\star)
=0,
\end{equation}
\begin{equation}\label{eq:fixed_point_identity_T_prop45}
\left(\frac{T^{\star}}{T_0}-1\right)
+
F_3(\alpha^\star,\kappa^\star,T^\star)
=0,
\end{equation}
and
\begin{equation}\label{eq:fixed_point_identity_kappa_prop45}
(\kappa_0-\kappa^\star)
-
\alpha^\star\left(\frac{T^{\star}}{T_0}-1\right)
+
F_2(\alpha^\star,\kappa^\star,T^\star)
=0.
\end{equation}

By the initial-data decomposition
\eqref{eq:uniform_initial_decomposition_for_prop45}, the definition of the
correction operator, and the definition of \(F_n\), the identities
\eqref{eq:fixed_point_identities_prop45},
\eqref{eq:fixed_point_identity_T_prop45}, and
\eqref{eq:fixed_point_identity_kappa_prop45} are precisely
\[
\left\langle
\mathbf C_{\alpha^\star,T^\star}
\left[
\mathbf U_{\alpha^\star,\kappa^\star,T^\star}(\mathbf f),
\mathbf v_{\alpha^\star,\kappa^\star,T^\star}
\right],
\mathbf g_{\alpha^\star}^n
\right\rangle_{\mathcal H^k}
=0,
\qquad n=1,2,3.
\]
which concludes the proof. 
\end{proof}
\begin{remark}
    In the case of $n=3$, for which we have the explicit solution $v^{(3)}_{\alpha,\kappa,T}$, then $\mathbf{H}_{\alpha,T} = \mathbf{h}_{\alpha,T}\equiv 0$. The proof then simplifies drastically and we may take the final time ceiling as $\overline{T}=r_0$. 
\end{remark}
\subsection{Proof of Theorem \ref{thm:stability}}
\begin{proof}
    Fix $n\geq 2$, $r_0>0$ and $\omega_0 \in (0,1)$. Let $\alpha_0>0$, $k\geq \lceil c_{\alpha_0} \rceil +1$, $\gamma \in (0,\omega_0)$ so that $\omega = \omega_0 - \gamma$, and $\kappa_0 \in \mathbb{R}$. By Proposition \ref{prop:finite_dim_reduction}, there exist $M_2>1$ and $0<\overline{T}<\overline{T}_2$, where $\overline{T}_2$ is the constant from Proposition \ref{existence_of_v} such that, after taking $T_0 \in (0,\overline{T})$, there exist $\delta_2=\delta_2(T_0)>0$, such that if we take $(f,g) \in H_{\text{rad}}^{k+1}(\mathbb{R}^n)\times H_{\text{rad}}^k(\mathbb{R}^n)$ satisfying $\|(f,g)\|_{H^{k+1}(\mathbb{R}^n)\times H^k(\mathbb{R}^n)}\leq \frac{\delta_2}{M_2^2}$, then there exists a unique mild solution $\mathbf{v}_{\alpha^{\star},\kappa^{\star},T^{\star}}=(\mathbf{v}_{\alpha^{\star},\kappa^{\star},T^{\star}}^{(1)},\mathbf{v}_{\alpha^{\star},\kappa^{\star},T^{\star}}^{(2)})$ to \eqref{eq:final_prop_mild_sol} which is also a classical solution to \eqref{eq:abstract_cauchy_prob_v}. If we let 
    \begin{align*}
        v_{\alpha^{\star},\kappa^{\star},T^{\star}}(r,t) =& U_{\alpha^{\star},\kappa^{\star}}\left(-\log\left(1-\frac{t}{T^{\star}}\right),\frac{r-r_0}{T^{\star}-t} \right) - \frac{n-1}{2}\log\left(\frac{r}{r_0}\right) + \zeta_{\alpha^{\star},T^{\star}}\left(-\log\left(1-\frac{t}{T^{\star}}\right),\frac{r-r_0}{T^{\star}-t} \right) \\
        &+ \mathbf{v}_{\alpha^{\star},\kappa^{\star},T^{\star}}^{(1)}\left(-\log\left(1-\frac{t}{T^{\star}}\right),\frac{r-r_0}{T^{\star}-t} \right),
    \end{align*}
    then $v_{\alpha^{\star},\kappa^{\star},T^{\star}}$ gives the unique solution to the Cauchy problem for \eqref{Nd-equation} in $\mathcal{A}_{1}(r_0,T^{\star})$ with initial data 
    \begin{align*}
        v_{\alpha^{\star},\kappa^{\star},T^{\star}}(x,0) = v_{\alpha_0,\kappa_0,T_0}(x,0) + f(x), \qquad \partial_t v_{\alpha^{\star},\kappa^{\star},T^{\star}}(x,0) = \partial_t v_{\alpha_0,\kappa_0,T_0}(x,0) + g(x) \qquad (x\in A_{T^{\star}}(r_0)).
    \end{align*}
    Moreover, from Proposition \ref{prop:finite_dim_reduction} we infer the bounds for all $0<t<T^{\star}$
\[
(T^\star-t)^{-\frac12+j}
\|\partial_r^j\eta(\cdot,t)\|_{L^2(B_{T^\star-t}(r_0))}
\lesssim \delta_2 \left(1-\frac{t}{T^{\star}}\right)^\omega,
\qquad j=0,1,\dots,k+1,
\]
and
\[
(T^\star-t)^{\frac12+j}
\|\partial_r^j\partial_t\eta(\cdot,t)\|_{L^2(B_{T^\star-t}(r_0))}
\lesssim \delta_2 \left(1-\frac{t}{T^{\star}}\right)^\omega,
\qquad j=0,1,\dots,k. 
\]
\end{proof}
\appendix 
\section{Non-existence of generalised eigenfunctions}
\begin{lemma}\label{Appendix_lemma_eigenfunctions}
    Given $\alpha>0$, $k\geq \lceil c_{\alpha}\rceil+1$, there are no solutions $\mathbf{v}\in \mathcal{H}^k(-1,1)$ to $\mathbf{L}_{\alpha}\mathbf{v}=\mathbf{g}_{0,\alpha}$.
\end{lemma}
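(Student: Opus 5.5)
Assume for contradiction that $\mathbf v=(v_1,v_2)\in\mathcal H^k(-1,1)$ solves $\mathbf L_\alpha\mathbf v=\mathbf g_{0,\alpha}$. The plan is to reduce this to a scalar ODE for $v_1$ and then show, by an explicit reduction of order at $y=1$, that every solution of that ODE fails to lie in $H^{k+1}(-1,1)$.

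\emph{Reduction to a scalar ODE.} The first component of the system gives $v_2=y\partial_yv_1+g^{(1)}$, where $\mathbf g_{0,\alpha}=(g^{(1)},g^{(2)})$. Substituting this into the second component yields
\[
(1-y^2)v_1''-\Big(2y+\tfrac{2\alpha}{\sqrt{1+\alpha}+y}\Big)v_1'=F,\qquad F:=g^{(2)}+g^{(1)}+y\,\partial_y g^{(1)}.
\]
This is the $\lambda=0$ operator of \eqref{Eigenequation}, up to sign, with a source term. Since $v_1$ does not appear undifferentiated, it is a first-order linear ODE for $w:=v_1'$. Writing $s:=\sqrt{1+\alpha}$, a partial-fraction computation gives the integrating factor
\[
\mu(y)=(1+y)^{-\frac{\alpha}{s-1}}\,(1-y)^{\frac{\alpha}{s+1}}\,(s+y)^{2}\cdot(\text{const}).
\]
The exponents are the residues of $-(2y+2\alpha/(s+y))/(1-y^2)$, namely $-(s-1)$ at $y=-1$ and $s+1$ at $y=1$, since $\alpha/(s\mp1)=s\pm1$. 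Hence
\[
w(y)=\mu(y)^{-1}\Big(C+\int_{y_*}^{y}\frac{\mu(z)F(z)}{1-z^2}\,dz\Big).
\]

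\emph{Local analysis at $y=1$.} Near $y=1$, $\mu^{-1}\sim(1-y)^{-(s+1)}$ and $\mu/(1-z^2)\sim(1-z)^{s}$. Regularity at $y=1$ therefore forces $C$ to be chosen so that the bracket vanishes at $y=1$. In that case $w=\mu^{-1}\int_1^y\mu F/(1-z^2)$, which is analytic or smooth apart from possible terms of the form $(1-y)^{m}\log(1-y)$ when $s\in\mathbb N$; this is exactly why the paper excludes $s\in\mathbb N\setminus\{1\}$. The same treatment applies at $y=-1$, where $\mu^{-1}\sim(1+y)^{s-1}$. There the homogeneous solution $(1+y)^{s-1}$ has finite regularity: it lies in $H^{k}$ only for $k<s-\tfrac12$. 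Since $k\ge\lceil c_\alpha\rceil+1>\sqrt{1+\alpha}+2$, the coefficient of $(1+y)^{s-1}$ must vanish, so the constant is pinned at both ends.

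\emph{Compatibility condition.} With both constants fixed, an $H^{k+1}$ solution exists only if
\[
\int_{-1}^{1}\frac{\mu(z)F(z)}{1-z^2}\,dz=0,
\]
a Fredholm-type solvability condition; equivalently, $F$ must be orthogonal to the kernel of the formal adjoint. The main obstacle is verifying that this integral is nonzero for every $\alpha>0$, rather than for the special value $\alpha=3$ treated in \cite{ghoul2025blow}. I would compute $F$ explicitly from $\mathbf g_{0,\alpha}$. Since $\mathbf L_\alpha\mathbf g_{0,\alpha}=\mathbf f_{0,\alpha}=(1,0)$, $F$ should simplify to a rational function of $(s+y)$ plus a logarithmic term. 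I would then try to integrate the resulting expression exactly by recognising $\mu F/(1-z^2)$ as a derivative plus a term of definite sign; the target identity is that the integral equals $\int_{-1}^{1}\mu(z)/(1-z^2)\cdot(\text{positive function})\,dz$. Positivity should come from $s+y>0$ on $[-1,1]$ and from $\alpha>0$. If an exact sign argument fails, the fallback is to expand the integrand in hypergeometric form (the integral is of Euler-beta type in $z$ after the substitution $z\mapsto(1+z)/2$) and show it is nonvanishing as a function of $s>1$.
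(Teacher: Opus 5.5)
There is a genuine gap, and it lies exactly where the content of the lemma is.

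First, your local analysis at $y=-1$ is wrong, and as a result the compatibility condition you formulate is meaningless. Your integrating factor drops the contribution $\frac{2y}{1-y^2}=\frac{1}{1-y}-\frac{1}{1+y}$. Your displayed formula for $\mu$ and the asymptotics you quote for $\mu^{-1}$ also disagree with each other. The correct computation is
\[
\frac{2y+\frac{2\alpha}{s+y}}{1-y^2}=\frac{s}{1-y}+\frac{s}{1+y}-\frac{2}{s+y},
\qquad
\mu(y)=(s+y)^2\Bigl(\frac{1-y}{1+y}\Bigr)^{s},
\]
so $\mu^{-1}\sim(1-y)^{-s}$ at $y=1$ and $\mu^{-1}\sim(1+y)^{s}$ at $y=-1$.

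Your treatment of $y=1$ survives this correction. Since $\mu^{-1}\notin L^2$ near $1$, the single constant is fixed and $\partial_y v_1=-\mu^{-1}\int_y^1\frac{\mu F}{1-z^2}\,dz$. This function is analytic at $y=1$, so no logarithms arise there.

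At $y=-1$, however, the homogeneous solution vanishes rather than blows up. The integrand $\frac{\mu F}{1-z^2}$ behaves like $(1+z)^{-s-1}$, which is not integrable for $s>1$; this remains true with your exponents. So $\int_{-1}^{1}\frac{\mu F}{1-z^2}\,dz$ diverges and cannot serve as a solvability condition. The real obstruction is local at $y=-1$. You must show that $(s+y)^2(1-y)^s\partial_y v_1$ equals a function smooth near $-1$ plus $C(1+y)^s$, or plus $C(1+y)^s\log(1+y)$ when $s\in\mathbb N$, with $C\neq0$. For $s\notin\mathbb N$, $C$ is the Hadamard finite part of your divergent integral. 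For $s\in\mathbb N$, $C$ is (up to sign) the $s$-th Taylor coefficient at $-1$ of the smooth function $(s+z)^2(1-z)^{s-1}F(z)$, which is the paper's $G$ up to sign. So the integer case produces logarithms at $y=-1$ and has to be treated, not excluded.

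Second, you never prove $C\neq 0$, and your primary strategy cannot work. In terms of $p=s$, the paper computes the coefficient of $(1+y)^p\log(1+y)$, respectively $(1+y)^p$, in $(p+y)^2(1-y)^p\partial_y v_1$:
\[
C_p=\frac{(-1)^p(p+1)^{p+1}}{2p(p-1)^{p-1}}\quad(p\in\mathbb N\setminus\{1\}),
\qquad
C_p=\frac{\pi(p+1)^{p+1}}{2p(p-1)^{p-1}\sin(\pi p)}\quad(p\notin\mathbb N).
\]
These alternate in sign with $\lfloor p\rfloor$ and blow up as $p$ approaches an integer, as expected for a finite-part integral. Hence no representation of this quantity as the integral of a positive function can hold for all $\alpha>0$.

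The paper obtains the integer case by Taylor expanding $G$ to order $p$ and integrating termwise; the $n=p$ term produces $\log\frac{2}{1+y}$. For the non-integer case it proceeds as follows:
\begin{itemize}
\item it removes the logarithm via $W(t)=(2-t)^p\bigl[(-t-\frac{(p-1)^2}{2p})\log(p+t-1)+b(t)\bigr]$;
\item it passes to $x=\frac{t}{2-t}$ and solves $xB'-pB=\dots$ explicitly, using the decay condition $x^{-p}B(x)\to0$ as $x\to\infty$;
\item it reads off the $x^p$ coefficient from $\int_0^\infty\frac{u^{m-p}}{1+u}\,du=\frac{(-1)^m\pi}{\sin(\pi p)}$.
\end{itemize}
Your hypergeometric fallback points roughly in this direction. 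As written, though, it is an intention rather than an argument, and it is aimed at a divergent integral rather than its finite part.
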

\begin{proof}
    Since $c_{\alpha}>\sqrt{1+\alpha}$, we actually prove that for $k\geq \lceil \sqrt{1+\alpha} +\frac{1}{2}\rceil $ there are no solutions to the equation, which settles our claim. $\mathbf{L}_{\alpha}\mathbf{v}= \mathbf{g}_{0,\alpha}$ is the following ODE system
\begin{align}\label{No_more_generalised_eigenfunctions}
    \begin{cases}
        -y\partial_y v_1 + v_2 = g_{0,\alpha,1} \\
        \partial_{yy} v_1 -\frac{2\alpha}{\sqrt{1+\alpha}+y}\partial_y v_1 -v_2 -y\partial_y v_2 = g_{0,\alpha,2},
    \end{cases}
    \end{align}
    and we suppose for contradiction that there exists a solution $(v_1,v_2)^{\top}\in \mathcal{H}^k(-1,1)$ to \eqref{No_more_generalised_eigenfunctions} for $k\geq \lceil \sqrt{1+\alpha} +\frac{1}{2}\rceil$. By \eqref{No_more_generalised_eigenfunctions} $v_1$ solves
    \begin{equation*}
        (1-y^2)\partial_{yy}v_1 - \left(\frac{2\alpha}{\sqrt{1+\alpha}+y}+2y\right)\partial_y v_1 = y\partial_y g_{0,\alpha,1} + g_{0,\alpha,2} + g_{0,\alpha,1} \coloneqq -g(y).
    \end{equation*}
    Computing the integrating factor $\mu(y)$ we have
    \begin{equation}\label{eq:solution_with_int_fact}
        \partial_y v_1(y) = c\mu(y)^{-1} + \mu(y)^{-1}\int_{y}^{1}\frac{\mu(z)g(z)}{1-z^2}\,dz
    \end{equation}
    where \begin{equation*}
        \mu(y) \coloneqq (\sqrt{1+\alpha}+y)^2\left(\frac{1-y}{1+y}\right)^{\sqrt{1+\alpha}}.
    \end{equation*}
    Since $\|\mu(y)^{-1}\|_{L^2(r,1)}^2 \sim (1-r)^{-2\sqrt{1+\alpha}+1}$ for $r\sim 1^{-}$, and noting that the integral term is bounded near $y=1$ (which can be shown via l'Hôpital's rule), this implies $c=0$ since we supposed $\partial_y v_1 \in L^2(-1,1)$.

    In the rest of the analysis, we now denote $p\coloneqq \sqrt{1+\alpha}>1$. We wish to investigate the regularity of the solution given by \eqref{eq:solution_with_int_fact} (with $c=0$) at $y=-1$, and thus put terms which are regular at $y=-1$ to the left-hand side. Indeed, by definition of $g(z)$, \eqref{eq:solution_with_int_fact} becomes
    \begin{align}\label{Terms_one_side}
        (p+y)^2(1-y)^p \partial_y v_1(y) = (1+y)^p\int_{y}^{1}\frac{G(z)}{(1+z)^{1+p}}\,dz
    \end{align}
    where 
    \begin{equation}
        \label{eq_for_G}
        G(z) \coloneqq (1-z)^{p-1}\left(-z^2 + \left(\frac{p^2-1}{2p}\right)z + \frac{p^2+1}{2}-(p+z)^2\log(p+z)\right).
    \end{equation}
To prove our claim we will show that \eqref{Terms_one_side} takes the form
\[
(p+y)^2(1-y)^p \partial_y v_1(y)=h(y)+C_p\,\psi_p(y)
\]
with $h$ smooth near $y=-1$, the following two cases for $\psi_p$
\[
\psi_p(y):=
\begin{cases}
(1+y)^p, & p\notin \mathbb N,\\[1mm]
(1+y)^p\log(1+y), & p\in\mathbb N,
\end{cases}
\]
and crucially the coefficient \(C_p\neq0\).
This will yield our claim. Indeed, letting $p=\lfloor p \rfloor + \sigma \coloneqq m +\sigma$, then for the case $p\in \mathbb{N}$, $\partial_y^{p+1}((1+y)^p\log(1+y))\sim (1+y)^{-1}$ which is not in $L^2(-1,-\frac{1}{2})$, thus $\partial_y v_1 \notin H^{p+1}(-1,-\frac{1}{2})$. 
For $p\notin \mathbb{N}$ and $\sigma \in (0,\frac{1}{2}]$ then $\partial_y^{m+1}(1+y)^p \sim (1+y)^{\sigma -1}$ which is not in $L^2(-1,-\frac{1}{2})$, so $\partial_y v_1 \notin H^{m+1}(-1,-\frac{1}{2})$. Then for $\sigma \in (\frac{1}{2},1)$, $\partial_y^{m+2}(1+y)^p \sim (1+y)^{\sigma -2}$ which is not in $L^2(-1,-\frac{1}{2})$ so $\partial_y v_1 \notin H^{m+2}(-1,-\frac{1}{2})$. All cases imply that $\partial_y v_1 \notin H^{\lceil p +\frac{1}{2}\rceil}(-1,1)$. 
\\
\\
Firstly, note that $G$ is $C^\infty$ near $z=-1$. Thus for any $N\in\mathbb N$ we Taylor expand around $z=-1$
\begin{equation}\label{eq:TaylorG}
G(z)=\sum_{n=0}^N \frac{1}{n!}G^{(n)}(-1)(1+z)^n + (1+z)^{N+1}R_N(z)
\end{equation}
with $R_N$ bounded on $[-1,1]$ and $C^\infty$ near $z=-1$. We now consider the following cases, $p \in \mathbb{N}\setminus \{1\}$ and $p\notin \mathbb{N}$.

Case 1: $p\in\mathbb N\setminus\{1\}$.
We take a Taylor expansion  of $G$ up to order $p$, i.e. $N=p$ in \eqref{eq:TaylorG}. Then inserting the expansion into \eqref{Terms_one_side} we compute for $n\in \{0,\dots, p-1\}$,
\[
\int_y^1 (1+z)^{n-p-1}\,dz=\frac{2^{n-p}-(1+y)^{n-p}}{n-p},
\]
while for $n=p$ one has
\[
\int_y^1 \frac{dz}{1+z}=\log\left(\frac{2}{1+y}\right).
\]
Thus
\begin{equation}\label{eq:wm-log}
(p+y)^2(1-y)^p \partial_y v_1(y)=C_p(1+y)^p\log(1+y)+ H(y)
\end{equation}
where $H$ is smooth at $y=-1$ (in fact $C^\infty([-1,1])$) and 
\begin{equation}\label{eq:Cm}
C_p = -\frac{G^{(p)}(-1)}{p!}=\frac{(-1)^{p}(p+1)^{p+1}}{2p(p-1)^{p-1}}.
\end{equation}
We therefore see that $C_p\neq 0$. 

Case 2: \(p\notin\mathbb N\). Let \(m:=\lfloor p\rfloor\), we translate $y=-1$ to $0$ via the change of variables $t\coloneqq 1+y$ and define
\[
W(t):=(p+t-1)^2(2-t)^p\,\partial_y v_1(t-1)
\]
so that we now consider $t\to 0$. Then \eqref{Terms_one_side} becomes
\begin{equation}\label{integral_form_of_W}
W(t)=t^p\int_t^2 \frac{G(s-1)}{s^{p+1}}\,ds,
\end{equation}
which satisfies the ODE 
\begin{equation}\label{ode_w_appendix}
    tW'(t)-pW(t)=-G(t-1)
\end{equation}
or equivalently with the explicit expression for $G$  
\[
tW'(t)-pW(t)
=-(2-t)^{p-1}\Bigl(
-(t-1)^2+\frac{p^2-1}{2p}(t-1)+\frac{p^2+1}{2}-(p+t-1)^2\log(p+t-1)
\Bigr).
\]
For convenience, we eliminate the logarithmic term by defining a new function $b(t)$ via 
\begin{equation}\label{eq:def_of_small_b}
    W(t)=(2-t)^p\left[\left(-t-\frac{(p-1)^2}{2p}\right)\log(p+t-1)+b(t)\right]
\end{equation}
so that \eqref{ode_w_appendix} simplifies to
\begin{equation}\label{eq:for_little_b}
    t(2-t)b'(t)-2pb(t)
=
-\frac{(p-1)(p+1)(p^2+2pt-2p+1)}{2p(p+t-1)}.
\end{equation}
We make a further change of variables 
\[
x:=\frac{t}{2-t}\in [0,\infty),
\qquad\qquad
t=\frac{2x}{1+x} \in [0,2]
\]
and denote $B(x):=b(t(x))$. The equation \eqref{eq:for_little_b} then becomes
\[
xB'(x)-pB(x)
=
-\frac{(p-1)(p+1)^2}{4p}
+
\frac{(p-1)(p+1)}{2p}\frac{1}{1+\frac{p+1}{p-1}x}.
\]

Firstly, observe from \eqref{integral_form_of_W} that \(W(t)\to0\) as \(t\to2\), which implies that $(2-t)^p b(t) \to 0$ as $t\to 2$, which further implies $\frac{B(x)}{x^p}\to 0$ as \(x\to\infty\). Together with the fact that $Cx^p$ solves the homogeneous equation, variation of constants gives
\[
B(x)
=
\frac{(p-1)(p+1)^2}{4p^2}
-\frac{(p-1)(p+1)}{2p}\,
x^p\int_x^\infty \frac{\xi^{-p-1}}{1+\frac{p+1}{p-1}\xi}\,d\xi.
\]
Substituting this into \eqref{eq:def_of_small_b}, we obtain
\[
W(t)
=
\text{\rm(a \(C^\infty\)-term near \(t=0\))}
-\frac{(p-1)(p+1)}{2p}
t^p\int_{x(t)}^\infty \frac{\xi^{-p-1}}{1+\frac{p+1}{p-1}\xi}\,d\xi
\]
and in the $y$ variable, noting that $x=\frac{1+y}{1-y}$
\[
(p+y)^2(1-y)^p\partial_y v_1(y)
=
\text{\rm(a \(C^\infty\)-term near \(y=-1\))}
-\frac{(p-1)(p+1)}{2p}(1-y)^p
x^p\int_x^\infty \frac{\xi^{-p-1}}{1+\frac{p+1}{p-1}\xi}\,d\xi
\]
As $y\to -1$, $x\to 0$, so it remains to analyse the small-$x$ asymptotic of 
\[
x^p\int_x^\infty \frac{\xi^{-p-1}}{1+\frac{p+1}{p-1}\xi}\,d\xi
=
\Bigl(\frac{p+1}{p-1}x\Bigr)^p
\int_{\frac{p+1}{p-1}x}^\infty \frac{u^{-p-1}}{1+u}\,du.
\]
Since $p\notin \mathbb{N}$, we use the identities $\frac1{1+u}=\sum_{j=0}^{m}(-1)^j u^j+(-1)^{m+1}\frac{u^{m+1}}{1+u}$ and $\int_0^\infty \frac{u^{m-p}}{1+u}\,du
=
\frac{(-1)^m\pi}{\sin(\pi p)}$ to obtain
\[
\Bigl(\frac{p+1}{p-1}x\Bigr)^p
\int_{\frac{p+1}{p-1}x}^\infty \frac{u^{-p-1}}{1+u}\,du
=
\sum_{j=0}^{m}\frac{(-1)^j}{p-j}\Bigl(\frac{p+1}{p-1}x\Bigr)^j
-\frac{\pi}{\sin(\pi p)}\Bigl(\frac{p+1}{p-1}x\Bigr)^p
+\mathcal{O}(x^{m+1}).
\]
Hence
\[
(p+y)^2(1-y)^p\partial_y v_1(y)
=
\text{\rm(a \(C^\infty\)-term near \(y=-1\))}
+
\frac{\pi (p+1)^{p+1}}{2p(p-1)^{p-1}\sin(\pi p)}(1+y)^p
+
\mathcal{O}\bigl((1+y)^{m+1}\bigr).
\]
Since \(p\notin\mathbb N\), we have \(\sin(\pi p)\neq0\), so the coefficient of \((1+y)^p\) is nonzero. 
\end{proof}
\bibliographystyle{plain}
\bibliography{Radial_symmetry_refs}
\end{document}